\font\smallit=cmti10
\font\smalltt=cmtt10
\renewcommand\section{\@startsection {section}{1}{\z@}
{-30pt \@plus -1ex \@minus -.2ex}
{2.3ex \@plus.2ex}
{\normalfont\normalsize\bfseries\boldmath}}
\renewcommand\subsection{\@startsection{subsection}{2}{\z@}
{-3.25ex\@plus -1ex \@minus -.2ex}
{1.5ex \@plus .2ex}
{\normalfont\normalsize\bfseries\boldmath}}
\renewcommand{\@seccntformat}[1]{\csname the#1\endcsname. }
\theoremstyle{plain}
\newtheorem{theorem}{Theorem}[section]
\newtheorem{lemma}[theorem]{Lemma}
\newtheorem{proposition}[theorem]{Proposition}
\newtheorem{corollary}[theorem]{Corollary}
\theoremstyle{definition}
\newtheorem{definition}[theorem]{Definition}
\theoremstyle{definition}  
\newtheorem{remark}[theorem]{Remark}
\newtheorem{example}[theorem]{Example}
\begin{document}

\begin{center}
\uppercase{\bf On the combinatorial value of Hex positions}
\vskip 20pt
{\bf Peter Selinger}\\
{\smallit Department of Mathematics and Statistics, Dalhousie
  University, Nova Scotia, Canada}\\
\end{center}
\vskip 20pt

\centerline{\smallit } 
\vskip 30pt 

\centerline{\bf Abstract}

\noindent
  We develop a theory of combinatorial games that is appropriate for
  describing positions in Hex and other monotone set coloring games.
  We consider two natural conditions on such games: a game is {\em
    monotone} if all moves available to both players are good, and
  {\em passable} if in each position, at least one player has at least
  one good move available. The latter condition is equivalent to
  saying that if passing were permitted, no player would benefit from
  passing.  Clearly every monotone game is passable, and we prove that
  the converse holds up to equivalence of games. We give some examples
  of how this theory can be applied to the analysis of Hex positions.

\pagestyle{myheadings}
\markright{\smalltt INTEGERS: 22 (2022)\hfill}
\thispagestyle{empty}
\baselineskip=12.875pt
\vskip 30pt 

\section{Introduction}

Hex is a strategic perfect information game for two players, invented
in 1942 by Piet Hein and later independently discovered by John Nash
{\cite{Hein,Nash}}. Hex is attractive because its rules are extremely
simple, yet the game possesses a surprising amount of strategic
depth. Hex is played on a parallelogram-shaped board, typically of
size $n\times n$, that is made of hexagonal fields called {\em hexes}
or {\em cells}, as shown in Figure~\ref{fig-board}(a). One pair of
opposing edges is colored black, and the other pair is colored
white. The players, called Black and White, alternate placing a stone
of their color on any empty hex, with Black starting.  Once placed,
these stones are never moved or removed. Each player's objective is to
connect the two edges of that player's color with an unbroken chain of
that player's stones. It is easy to see that if the board is
completely filled with stones, exactly one player will have such a
connection; to see this, consider the connected component of one of
Black's edges. This either includes the other black edge, in which
case Black has a connection, or it does not, in which case the white
stones along its boundary form a connection for White. Consequently,
there are no draws in the game of Hex; exactly one player will win.
Also, the game always ends in a finite number of moves.
Conventionally, the game ends as soon as one player has completed a
connection. However, since the winner cannot change after that point,
it would be equivalent, and sometimes simpler from a theoretical point
of view, to require the game to continue until all hexes have been
filled. Figure~\ref{fig-board}(b) shows an example of a completed game
that has been won by Black.

\begin{figure}
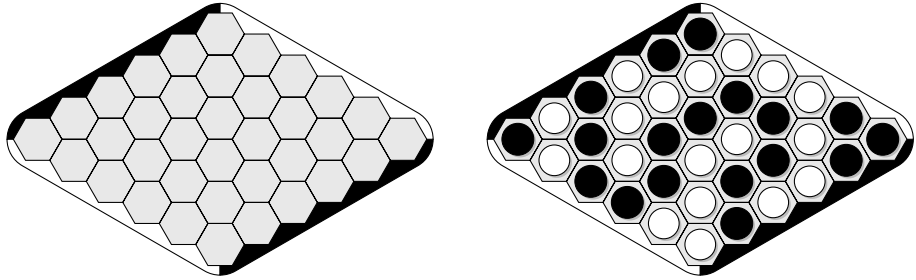

  \[
  \begin{hexboard}[scale=0.8]
    \board(6,6)
  \end{hexboard}
  \qquad
  \begin{hexboard}[scale=0.8]
    \shadows
    \board(6,6)
    \black(1,1)\white(2,1)\black(3,1)\white(4,1)\black(5,1)\black(6,1)
    \white(1,2)\black(2,2)\white(3,2)\white(4,2)\white(5,2)\white(6,2)
    \black(1,3)\white(2,3)\black(3,3)\black(4,3)\black(5,3)\white(6,3)
    \black(1,4)\black(2,4)\white(3,4)\white(4,4)\black(5,4)\white(6,4)
    \white(1,5)\white(2,5)\black(3,5)\black(4,5)\white(5,5)\black(6,5)
    \white(1,6)\black(2,6)\white(3,6)\white(4,6)\black(5,6)\black(6,6)
  \end{hexboard}
  \]
  \caption{(a) A Hex board of size $6\times 6$. (b) A winning position
  for Black.}
  \label{fig-board}
\end{figure}

Hex has many interesting properties. For example, there is an easy
non-con\-struc\-tive proof that Hex on a board of size $n\times n$ is a
win for the first player {\cite{Nash}}; however, no actual winning
strategy is known for $n\geq 11$. The proof is by a strategy stealing
argument: if the second player had a winning strategy, the first
player could mentally place an opponent's stone on the board and then
play the same strategy.  Determining whether a winning strategy exists
in a given position on a board of arbitrary size is known to be a
PSPACE-complete problem {\cite{pspace}}. Of interest to us in this
paper is another easy property: Hex is a monotone game. Informally, by
this we mean that additional black stones on the board can only help
Black, and additional white stones can only help White. A consequence
is that making a move is always at least as good as making no move;
although the ability to pass is not typically part of the rules of
Hex, passing would never be to a player's advantage if it were
permitted.

In practice, the rules of Hex, as stated above, give too much of an
advantage to the first player. Although no explicit winning strategy
for Black is known for large enough board sizes, Black still ends up
winning a majority of games. For this reason, the additional {\em swap
  rule} is employed. It states that after Black's first move, White
may choose to switch colors. This rule incentivizes Black to play a
first move that is as fair as possible, and therefore leads to a more
balanced game. The swap rule is not relevant for the rest of this
paper and we will not consider it here.

Combinatorial game theory is a formalism for the study of sequential
perfect information games that was introduced by Conway {\cite{ONAG}}
and Berlekamp, Conway, and Guy {\cite{WinningWays}}. Its roots go
back further to the study of impartial games such as Nim.
Combinatorial game theory was initially developed for {\em normal
  play} games, in which the first player who is unable to make a move
loses the game.  It has also been adapted to a variety of other
conventions, such as {\em mis\`ere play}, in which the first player
who is unable to move wins, or {\em scoring games}, in which the final
outcome is a numerical score.

In this paper, we develop a variant of combinatorial game theory that
is appropriate for Hex and other monotone set coloring games. The main
difference between the games we describe here and other kinds of
combinatorial games is the winning condition. We already mentioned
that in normal play games, the loser is the first player who cannot
move. In some games like Gomoku (also known as ``Five in a Row''), the
winner is the first player who achieves a winning condition, such as
building a straight line of 5 of the player's stones. By contrast,
although Hex also has a winning condition, it is immaterial {\em when}
the winning condition is achieved. As we will see, this feature, along
with monotonicity, gives rise to a particular family of combinatorial
games with attractive mathematical properties.

An earlier version of this paper appeared on the arXiv at
{\cite{S2021-hex-cgt-arxiv}}. Several problems that were left open in the
earlier version have since been solved, including in
{\cite{DSW2021-gadgets}} and {\cite{DS2022-hex-l5}}. The present paper
has been updated accordingly.

\subsection{Related Work}

Set coloring games were considered by van Rijswijck
{\cite{VanRijswijck}}, and earlier, under the name ``division games'',
by Yamasaki {\cite{Yamasaki}}. These works only considered games with
two atomic outcomes (i.e., Black wins or White wins), rather than
local games over a partially ordered set of outcomes as we do
here. Monotone games, i.e., those where making a move is always at
least as good as passing, are sometimes called ``regular'' games in
the literature.

Some authors, such as van Rijswijck {\cite{VanRijswijck}} and
Henderson and Hayward {\cite{star-decomposition}}, have applied ideas
from combinatorial game theory to Hex and other set coloring
games. For example, they considered notions of dominated and
reversible moves appropriate to these games. However, while these
works were in the ``spirit'' of combinatorial games, they did not
develop the ``letter'' of an actual combinatorial game theory for
Hex. For example, Henderson and Hayward explicitly state that ``Hex is not
a combinatorial game in the strictest sense'', and consider only the
{\em outcome class} of games (such as positive, negative, or fuzzy),
rather than their full combinatorial value. The present paper remedies
this situation by providing such a theory.

Play in local Hex regions, as opposed to on the entire board, has also
been considered in the literature, though not at the same level of
generality as we do here. For example, what we call an $n$-terminal
region is called the carrier of a $2n$-sided decomposition by
Henderson and Hayward {\cite{star-decomposition}}.

The notion of combinatorial games we develop in this paper is closely
related to the concept of passing, i.e., allowing a player not to make
a move. In general, the notion of passing is problematic in
combinatorial game theory, because it creates the possibility of games
with infinite plays. The theory of {\em loopy games}
{\cite[Ch.~11]{WinningWays}} was developed to deal with such
infinite games. By contrast, the {\em passable games} we develop in
this paper are not loopy: in fact, passing is not allowed in these
games, all plays are finite, and therefore ordinary well-founded
Conway induction can always be used on them. However, passable games
have the property that they are {\em equivalent} to games in which
passing is permitted. The defining property of such games is that no
player has an incentive to pass, and therefore it would not alter the
nature of the game if passing were permitted.

Another related concept from combinatorial game theory is {\em
  temperature}, a notion that quantifies, roughly speaking, how
motivated the players are to make a move
{\cite[Ch.~6]{WinningWays}}. A game, or a component of a game, is
{\em hot} if the players can gain an advantage by making a move, and
{\em cold} if the players would prefer not to move. (Thus, the hotter
a game is, the more urgency the players feel to move in it. In cold
games, players only make a move when they have no other choice.) In
these terms, Hex and all monotone set coloring games are hot: making
the next move is never disadvantageous. We do not explicitly use the
concept of temperature in this paper, except to note that there is an
upper limit on how hot a Hex position can be
(Proposition~\ref{prop-n-terminal-limit}).

For book-length treatments of modern Hex strategy, see {\cite{Browne}}
and especially {\cite{Seymour}}. Seymour has also created an excellent
collection of Hex puzzles {\cite{Seymour-puzzles}}. For a detailed
scholarly study of the history of Hex, see
{\cite{Hayward-full-story}}.

\subsection{Contents}

The rest of this paper is organized as follows. In
Section~\ref{sec-outcomes}, we consider play in local Hex regions and
define the poset of outcomes for such a region. In
Section~\ref{sec-set-coloring-games}, we describe the class of set
coloring games to which our combinatorial game theory applies. In
Section~\ref{sec-combinatorial-games}, we describe a class of
combinatorial games appropriate for Hex and other monotone set
coloring games. These games are defined over a given poset of atomic
outcomes. We motivate and define the order relations $\leq$ and
$\tri$, whose definition is subtle and is the main technical vehicle
making these games work. We show that these game admit canonical
forms, and we define the class of monotone games. In
Section~\ref{sec-left-right}, we define relations of left and right
order and equivalence, which are useful for technical reasons. In
Section~\ref{sec-fundamental}, we define passable games and prove the
fundamental theorem of monotone games, which states that monotone
games and passable games are the same up to equivalence. In
Section~\ref{sec-linear}, we prove certain special properties of games
over linearly ordered sets of atoms; in particular, on this class of
games, canonical forms of monotone games are monotone. In
Section~\ref{sec-operations}, we show that certain common operations
on combinatorial games, and especially the sum operation, can be
generalized to passable games, and we consider copy-cat strategies in
this context. In Section~\ref{sec-contextual}, we introduce the
concept of global decisiveness and show that otherwise non-equivalent
games sometimes become equivalent in its presence. In
Section~\ref{sec-enumeration}, we enumerate all passable game values
over certain small atom sets, and show that in all other cases, the
set of passable game values is infinite. In
Section~\ref{sec-realizable}, we show that many, but not all, abstract
passable game values are realizable as Hex positions. In
Section~\ref{sec-k-by-n}, we give an application of this theory by
computing the size of the minimal virtual connection on Hex boards of
size $4\times n$, for all $n$. Finally, in
Section~\ref{sec-conclusion}, we list some open problems and point to
avenues for future work.

\section{Local Play in Hex}
\label{sec-outcomes}

At the end of a game of Hex, there are only two possible outcomes,
which we denote by $\top$ (Black wins) and $\bot$ (White
wins). However, when we are concerned with play in some local {\em
  region} of the board, the set of possible outcomes can be richer. We
first illustrate this idea with some examples.

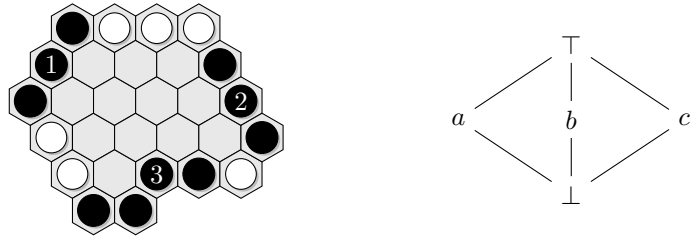
\begin{figure}
  \[
  \m{$
    \begin{hexboard}[scale=0.8]
      \shadows
      \rotation{-30}
      \foreach\i in {3,...,6} {\hex(1,\i)}
      \foreach\i in {2,...,6} {\hex(2,\i)}
      \foreach\i in {1,...,5} {\hex(3,\i)}
      \foreach\i in {1,...,5} {\hex(4,\i)}
      \foreach\i in {1,...,5} {\hex(5,\i)}
      \foreach\i in {1,...,4} {\hex(6,\i)}
      \black(3,1)
      \black(2,2)\label{1}
      \black(1,3)
      \white(1,4)
      \white(1,5)
      \black(1,6)
      \black(2,6)
      \black(3,5)\label{3}
      \black(4,5)
      \white(5,5)
      \black(6,4)
      \black(6,3)\label{2}
      \black(6,2)
      \white(6,1)
      \white(5,1)
      \white(4,1)    
    \end{hexboard}
  $}
  \hspace{2cm}
  \m{
    \begin{tikzpicture}[scale=0.5]
      \node(top) at (0,2) {$\top$};
      \node(a) at (-3,0) {$a$};
      \node(b) at (0,0) {$b$};
      \node(c) at (3,0) {$c$};
      \node(bot) at (0,-2) {$\bot$};
      \draw (bot) -- (a) -- (top);
      \draw (bot) -- (b) -- (top);
      \draw (bot) -- (c) -- (top);
    \end{tikzpicture}
    }
  \]
  \caption{(a) A 3-terminal Hex region. (b) The outcome poset for this
    region.}
  \label{fig-region}
\end{figure}
Consider the board region shown in Figure~\ref{fig-region}(a). We
imagine that this region is part of a larger game. The region is
entirely surrounded by black and white stones. Among these boundary
stones, there are three connected groups of black stones, which we
have labelled 1, 2, and 3.  (Dually, there are also three connected
groups of white stones.) We refer to this type of region as a {\em
  3-terminal region}. Once the enclosed region has been completely
filled with stones, it is clear that the only thing within the region
that can affect the final outcome of the surrounding game is which of
the three black terminals are connected to each other (or
equivalently, which of the three white terminals are connected to each
other). We refer to this as the {\em outcome} of the region. There are
five possible outcomes for a 3-terminal region. We denote them by
$\s{\bot,a,b,c,\top}$, and they are defined as follows:
\begin{itemize}
\item $\bot$: None of Black's terminals are connected to each other.
\item $a$: Terminals $2$ and $3$ are connected to each other, but
  terminal $1$ is not.
\item $b$: Terminals $1$ and $3$ are connected to each other, but
  terminal $2$ is not.
\item $c$: Terminals $1$ and $2$ are connected to each other, but
  terminal $3$ is not.
\item $\top$: All of Black's terminals are connected to each other.
\end{itemize}
Here is an example of each outcome:
\begingroup
\def\scale{0.65}
\[
\m{
  \scalebox{\scale}{$\begin{hexboard}
      \rotation{-30}
      \noshadows
      \foreach\i in {3,...,6} {\hex(1,\i)\white(1,\i)}
      \foreach\i in {2,...,6} {\hex(2,\i)\white(2,\i)}
      \foreach\i in {1,...,5} {\hex(3,\i)\white(3,\i)}
      \foreach\i in {1,...,5} {\hex(4,\i)\white(4,\i)}
      \foreach\i in {1,...,5} {\hex(5,\i)\white(5,\i)}
      \foreach\i in {1,...,4} {\hex(6,\i)\white(6,\i)}
      \black(3,1)
      \black(2,2)
      \black(1,3)
      \white(1,4)
      \white(1,5)
      \black(1,6)
      \black(2,6)
      \black(3,5)
      \black(4,5)
      \white(5,5)
      \black(6,4)
      \black(6,3)
      \black(6,2)
      \white(6,1)
      \white(5,1)
      \white(4,1)  
    \end{hexboard}$}} = \bot,
\quad
\m{
  \scalebox{\scale}{$\begin{hexboard}
      \rotation{-30}
      \noshadows
      \foreach\i in {3,...,6} {\hex(1,\i)\white(1,\i)}
      \foreach\i in {2,...,6} {\hex(2,\i)\white(2,\i)}
      \foreach\i in {1,...,5} {\hex(3,\i)\white(3,\i)}
      \foreach\i in {1,...,5} {\hex(4,\i)\white(4,\i)}
      \foreach\i in {1,...,5} {\hex(5,\i)\white(5,\i)}
      \foreach\i in {1,...,4} {\hex(6,\i)\white(6,\i)}
      \black(3,1)
      \black(2,2)
      \black(1,3)
      \white(1,4)
      \white(1,5)
      \black(1,6)
      \black(2,6)
      \black(3,5)
      \black(4,5)
      \white(5,5)
      \black(6,4)
      \black(6,3)
      \black(6,2)
      \white(6,1)
      \white(5,1)
      \white(4,1)
      \black(5,3)
      \black(4,4)
    \end{hexboard}$}} = a,
\]\[
\m{
  \scalebox{\scale}{$\begin{hexboard}
      \rotation{-30}
      \noshadows
      \foreach\i in {3,...,6} {\hex(1,\i)\white(1,\i)}
      \foreach\i in {2,...,6} {\hex(2,\i)\white(2,\i)}
      \foreach\i in {1,...,5} {\hex(3,\i)\white(3,\i)}
      \foreach\i in {1,...,5} {\hex(4,\i)\white(4,\i)}
      \foreach\i in {1,...,5} {\hex(5,\i)\white(5,\i)}
      \foreach\i in {1,...,4} {\hex(6,\i)\white(6,\i)}
      \black(3,1)
      \black(2,2)
      \black(1,3)
      \white(1,4)
      \white(1,5)
      \black(1,6)
      \black(2,6)
      \black(3,5)
      \black(4,5)
      \white(5,5)
      \black(6,4)
      \black(6,3)
      \black(6,2)
      \white(6,1)
      \white(5,1)
      \white(4,1)
      \black(3,2)
      \black(3,3)
      \black(3,4)
    \end{hexboard}$}} = b,
\quad
\m{
  \scalebox{\scale}{$\begin{hexboard}
      \rotation{-30}
      \noshadows
      \foreach\i in {3,...,6} {\hex(1,\i)\white(1,\i)}
      \foreach\i in {2,...,6} {\hex(2,\i)\white(2,\i)}
      \foreach\i in {1,...,5} {\hex(3,\i)\white(3,\i)}
      \foreach\i in {1,...,5} {\hex(4,\i)\white(4,\i)}
      \foreach\i in {1,...,5} {\hex(5,\i)\white(5,\i)}
      \foreach\i in {1,...,4} {\hex(6,\i)\white(6,\i)}
      \black(3,1)
      \black(2,2)
      \black(1,3)
      \white(1,4)
      \white(1,5)
      \black(1,6)
      \black(2,6)
      \black(3,5)
      \black(4,5)
      \white(5,5)
      \black(6,4)
      \black(6,3)
      \black(6,2)
      \white(6,1)
      \white(5,1)
      \white(4,1)
      \black(3,2)
      \black(3,3)
      \black(4,3)
      \black(5,3)
    \end{hexboard}$}} = c,
\quad
\m{
  \scalebox{\scale}{$\begin{hexboard}
      \rotation{-30}
      \noshadows
      \foreach\i in {3,...,6} {\hex(1,\i)\black(1,\i)}
      \foreach\i in {2,...,6} {\hex(2,\i)\black(2,\i)}
      \foreach\i in {1,...,5} {\hex(3,\i)\black(3,\i)}
      \foreach\i in {1,...,5} {\hex(4,\i)\black(4,\i)}
      \foreach\i in {1,...,5} {\hex(5,\i)\black(5,\i)}
      \foreach\i in {1,...,4} {\hex(6,\i)\black(6,\i)}
      \black(3,1)
      \black(2,2)
      \black(1,3)
      \white(1,4)
      \white(1,5)
      \black(1,6)
      \black(2,6)
      \black(3,5)
      \black(4,5)
      \white(5,5)
      \black(6,4)
      \black(6,3)
      \black(6,2)
      \white(6,1)
      \white(5,1)
      \white(4,1)  
    \end{hexboard}$}} = \top.
\]
\endgroup
There is a natural partial order on the set of outcomes of any
region. Namely, if $x$ and $y$ are local outcomes, we say that $x\leq
y$ if for every way of embedding the region in a larger board, and for
every way of completely filling the rest of the board with stones, if
the position with outcome $x$ in the region is winning for Black, then
so is the corresponding position with outcome $y$ in the region. In
case of a 3-terminal region, we have $\bot< a,b,c < \top$, where $a$,
$b$, and $c$ are incomparable.  A Hasse diagram for this partially
ordered set is shown in Figure~\ref{fig-region}(b). We emphasize that
all 3-terminal regions have outcomes in the poset
$\s{\bot,a,b,c,\top}$, no matter the shape or size of the region.

We note that in a 2-terminal region, there are only two possible
outcomes: either Black or White connects their terminals. The entire
Hex board forms a 2-terminal region, with the colored edges as the
terminals. In a 4-terminal region, there are 14 possible outcomes, and
in a 5-terminal region, there are 42 possible outcomes. More
generally, the number of outcomes for an $n$-terminal region is equal
to the number of non-crossing partitions of $\s{1,\ldots,n}$, which is
equal to the $n$th Catalan number $C(n)=\frac{(2n)!}{n!(n+1)!}$
{\cite{A000108}}.

As pointed out by Henderson and Hayward {\cite{star-decomposition}},
it is not actually necessary for consecutive opposite-colored
terminals in an $n$-terminal region to touch. It is sufficient for
such terminals to be connected by the two-colored bridge pattern shown
in Figure~\ref{fig-two-colored-bridges}(a). This pattern serves to
separate the cells marked $a$ and $b$ so that one is inside and the
other outside the region. The reason for this is that we can, without
loss of generality, consider $a$ and $b$ to be non-adjacent. Indeed,
if they are both occupied by the same color, they are indirectly
connected anyway, either via the black stone on the left or the white
one on the right. For example, Figure~\ref{fig-two-colored-bridges}(b)
shows a 3-terminal region that is bounded by stones and two-colored
bridges.
\begin{figure}
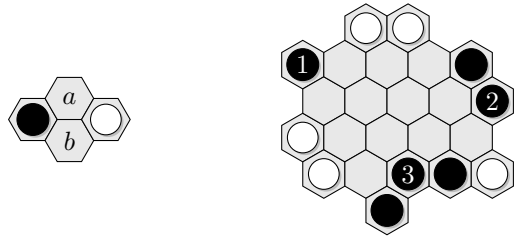

\[
\m{$
  \begin{hexboard}[scale=0.8]
    \shadows
    \hex(1,1)\black(1,1)
    \hex(1,2)\label{$b$}
    \hex(2,1)\label{$a$}
    \hex(2,2)\white(2,2)
  \end{hexboard}
  $}
\hspace{2cm}
  \m{$
    \begin{hexboard}[scale=0.8]
      \shadows
      \rotation{-30}
      \foreach\i in {4,...,5} {\hex(1,\i)}
      \foreach\i in {2,...,6} {\hex(2,\i)}
      \foreach\i in {2,...,5} {\hex(3,\i)}
      \foreach\i in {1,...,5} {\hex(4,\i)}
      \foreach\i in {1,...,5} {\hex(5,\i)}
      \foreach\i in {2,...,3} {\hex(6,\i)}
      \black(2,2)\label{1}
      \white(1,4)
      \white(1,5)
      \black(2,6)
      \black(3,5)\label{3}
      \black(4,5)
      \white(5,5)
      \black(6,3)\label{2}
      \black(6,2)
      \white(5,1)
      \white(4,1)    
    \end{hexboard}
  $}
\]
\caption{(a) A two-colored bridge. (b) A 3-terminal region surrounded
  by stones and two-colored bridges.}
\label{fig-two-colored-bridges}
\end{figure}
\begin{figure}
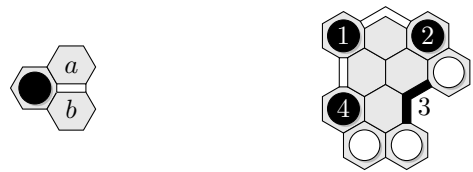

\[
\m{$
  \begin{hexboard}[scale=0.8]
    \shadows
    \hex(1,1)\black(1,1)
    \hex(1,2)\label{$b$}
    \hex(2,1)\label{$a$}
    \draw[color=black,fill=white]
    \coord(1.4,1.2) -- 
    \coord(1.2,1.4) -- 
    \coord(1.6,1.8) -- 
    \coord(1.8,1.6) --
    cycle;
  \end{hexboard}
  $}
\hspace{3cm}
\m{$
  \begin{hexboard}[scale=0.8]
    \shadows
    \rotation{-30}
    \foreach\i in {3,...,4} {\hex(1,\i)}
    \foreach\i in {1,...,4} {\hex(2,\i)}
    \foreach\i in {1,...,2} {\hex(3,\i)}
    \foreach\i in {1,...,2} {\hex(4,\i)}
    \black(1,3)\label{4}
    \white(1,4)
    \black(2,1)\label{1}
    \white(2,4)
    \black(4,1)\label{2}
    \white(4,2)
    \cell(2.95,2.95)\label{3}
    \draw[color=black,fill=white]
    \coord(1.6,1.6) --
    \coord(1.2,2.4) --
    \coord(1.4,2.4) --
    \coord(1.8,1.6) --
    cycle;
    \draw[color=black,fill=white]
    \coord(2.6,0.6) -- 
    \coord(2.6,0.8) -- 
    \coord(3.2667,0.4667) -- 
    \coord(3.6,0.8) --
    \coord(3.8,0.6) -- 
    \coord(3.4,0.2) --
    cycle;
    \draw[color=black,fill=black]
    \coord(2.2,3.4) -- 
    \coord(2.4,3.4) -- 
    \coord(2.7333,2.7333) -- 
    \coord(3.4,2.4) --
    \coord(3.4,2.2) -- 
    \coord(2.6,2.6) --
    cycle;
  \end{hexboard}
  $}
\]
\caption{(a) An invisible white terminal next to a black stone. (b) A
  4-terminal region with several invisible terminals.}
\label{fig-invisible-terminal}
\end{figure}

Another interesting phenomenon is that the boundary of a Hex region
can include what we may call ``invisible terminals''. For example, in
Figure~\ref{fig-invisible-terminal}(a), we have inserted a thin white
rectangle between two empty cells next to a black stone. Note that we
may regard this rectangle as an additional (strangely shaped) board
cell that is occupied by White. Indeed, doing so does not affect the
connectivity of other cells: if $a$ and $b$ are both occupied by
White, they are connected via the white rectangle, and if they are
both occupied by Black, they are connected via the black stone. Since
such thin rectangles are only imagined and not usually visible, we
call them {\em invisible rectangles}. They can be white or black. When
invisible rectangles form part of a region's boundary, as in
Figure~\ref{fig-invisible-terminal}(b), we refer to them as {\em
  invisible terminals}. The two-colored bridge of
Figure~\ref{fig-two-colored-bridges} is in fact a special case of an
invisible terminal whose color does not matter.

We can also consider other types of regions besides $n$-terminal
regions. For example, consider the region shown in
Figure~\ref{fig-region-gap}(a). This is a 2-terminal region whose
boundary contains a gap marked ``$\ast$''. In this example, we do not
consider the gap itself to be part of the region. Within the region,
there are only three distinct outcomes:
\begin{itemize}
\item $\bot$: White's terminals are connected.
\item $a$: Neither Black's nor White's terminals are connected within
  the region (but Black's terminals would be connected if the cell
  marked ``$\ast$'' were occupied by a black stone, and similarly for
  White's terminals).
\item $\top$: Black's terminals are connected.
\end{itemize}
Here is an example of each outcome:
\[
\def\scale{0.7}
\m{\scalebox{\scale}{$
  \begin{hexboard}
    \rotation{-30}
    \foreach\i in {2,...,4} {\hex(1,\i)\white(1,\i)}
    \foreach\i in {1,...,4} {\hex(2,\i)\white(2,\i)}
    \foreach\i in {2,...,4} {\hex(3,\i)\white(3,\i)}
    \foreach\i in {1,...,4} {\hex(4,\i)\white(4,\i)}
    \foreach\i in {1,...,3} {\hex(5,\i)\white(5,\i)}
    \black(2,1)
    \black(1,2)
    \black(1,3)
    \white(1,4)
    \white(2,4)
    \white(3,4)
    \black(4,4)
    \black(5,3)
    \black(5,2)
    \white(5,1)
    \white(4,1)
    \cell(3,1)\label{\Large$\ast$}
  \end{hexboard}$}} = \bot,
\quad
\m{\scalebox{\scale}{$
  \begin{hexboard}
    \rotation{-30}
    \foreach\i in {2,...,4} {\hex(1,\i)\white(1,\i)}
    \foreach\i in {1,...,4} {\hex(2,\i)\white(2,\i)}
    \foreach\i in {2,...,4} {\hex(3,\i)\black(3,\i)}
    \foreach\i in {1,...,4} {\hex(4,\i)\black(4,\i)}
    \foreach\i in {1,...,3} {\hex(5,\i)\black(5,\i)}
    \black(2,1)
    \black(1,2)
    \black(1,3)
    \white(1,4)
    \white(2,4)
    \white(3,4)
    \black(4,4)
    \black(5,3)
    \black(5,2)
    \white(5,1)
    \white(4,1)
    \cell(3,1)\label{\Large$\ast$}
  \end{hexboard}$}} = a,
\quad
\m{\scalebox{\scale}{$
  \begin{hexboard}
    \rotation{-30}
    \foreach\i in {2,...,4} {\hex(1,\i)\black(1,\i)}
    \foreach\i in {1,...,4} {\hex(2,\i)\black(2,\i)}
    \foreach\i in {2,...,4} {\hex(3,\i)\black(3,\i)}
    \foreach\i in {1,...,4} {\hex(4,\i)\black(4,\i)}
    \foreach\i in {1,...,3} {\hex(5,\i)\black(5,\i)}
    \black(2,1)
    \black(1,2)
    \black(1,3)
    \white(1,4)
    \white(2,4)
    \white(3,4)
    \black(4,4)
    \black(5,3)
    \black(5,2)
    \white(5,1)
    \white(4,1)
    \cell(3,1)\label{\Large$\ast$}
  \end{hexboard}$}} = \top.
\]
The set of outcomes is linearly ordered; we have $\bot<a<\top$. The
order is also shown in Figure~\ref{fig-region-gap}(b).

Note that we could have alternatively regarded the region in
Figure~\ref{fig-region-gap} as a 3-terminal region, by inserting two
invisible terminals (a white one and a black one) between the region
and the gap. However, this would have yielded a less precise outcome
poset, because a generic 3-terminal region has five possible outcomes,
whereas a 2-terminal region with gap only has three possible outcomes.

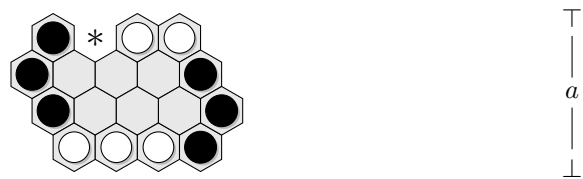
\begin{figure}
  \[
  \m{$
    \begin{hexboard}[scale=0.8]
      \shadows
      \rotation{-30}
      \foreach\i in {2,...,4} {\hex(1,\i)}
      \foreach\i in {1,...,4} {\hex(2,\i)}
      \foreach\i in {2,...,4} {\hex(3,\i)}
      \foreach\i in {1,...,4} {\hex(4,\i)}
      \foreach\i in {1,...,3} {\hex(5,\i)}
      \black(2,1)
      \black(1,2)
      \black(1,3)
      \white(1,4)
      \white(2,4)
      \white(3,4)
      \black(4,4)
      \black(5,3)
      \black(5,2)
      \white(5,1)
      \white(4,1)
      \cell(3,1)\label{\Large$\ast$}
    \end{hexboard}
    $}
  \hspace{4cm}
  \m{
    \begin{tikzpicture}[scale=0.5]
      \node(top) at (0,2) {$\top$};
      \node(a) at (0,0) {$a$};
      \node(bot) at (0,-2) {$\bot$};
      \draw (bot) -- (a) -- (top);
    \end{tikzpicture}
    }
  \]
  \caption{(a) A 2-terminal region with a gap. (b) The outcome poset
    for this region.}
  \label{fig-region-gap}
\end{figure}

As another example, consider the region shown in
Figure~\ref{fig-region-edges}(a). This is a 3-terminal region, with
the additional property that two of White's terminals are board edges
(or connected to board edges). This edge condition affects the outcome
poset: of the five possible outcomes $\s{\bot,a,b,c,\top}$ for a
generic 3-terminal region, two now become equivalent. Namely, if
Black's terminal 3 is connected to neither terminal 1 nor terminal 2,
then White's two edges are connected, which means that White wins the
surrounding game. In that case, it does not matter whether terminals 1
and 2 are connected to each other, so the outcomes $\bot$ (none of the
Black's terminals are connected) and $c$ (only terminals 1 and 2 are
connected) are now equivalent. Consequently, this type of region has
only four distinct outcomes $\s{\bot,a,b,\top}$, and their partial
order is as shown in Figure~\ref{fig-region-edges}(b). We call such a
region a {\em fork}, because all that matters about the outcomes is
what terminal 3 is connected to (left, right, both, or none).
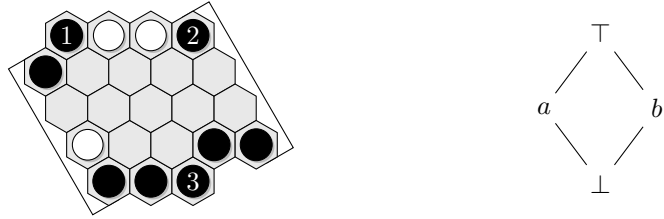
\begin{figure}
  \[
  \m{$
    \begin{hexboard}[scale=0.8]
      \shadows
      \rotation{-30}
      \foreach\i in {2,...,5} {\hex(1,\i)}
      \foreach\i in {1,...,5} {\hex(2,\i)}
      \foreach\i in {1,...,5} {\hex(3,\i)}
      \foreach\i in {1,...,4} {\hex(4,\i)}
      \foreach\i in {1,...,4} {\hex(5,\i)}
      \black(1,2)
      \black(2,1)\label{1}
      \white(3,1)
      \white(4,1)
      \black(5,1)\label{2}
      \white(1,4)
      \black(1,5)
      \black(2,5)
      \black(3,5)\label{3}
      \black(4,4)
      \black(5,4)
      \edge[\sw\noacutecorner\noobtusecorner](1,2)(1,5)
      \edge[\ne\noacutecorner\noobtusecorner](5,4)(5,1)
    \end{hexboard}
    $}
  \hspace{3cm}
  \m{
    \begin{tikzpicture}[scale=0.5]
      \node(top) at (0,2) {$\top$};
      \node(a) at (-1.5,0) {$a$};
      \node(b) at (1.5,0) {$b$};
      \node(bot) at (0,-2) {$\bot$};
      \draw (bot) -- (a) -- (top);
      \draw (bot) -- (b) -- (top);
    \end{tikzpicture}
    }
  \]
  \caption{(a) A 3-terminal region with edges, or ``fork''. (b) The
    outcome poset for this region.}
  \label{fig-region-edges}
\end{figure}

In general, a Hex region consists of a set of cells, some of which may
already be occupied by black or white stones (such as the boundaries
in the above examples). We may or may not stipulate additional
restrictions on how the region can be embedded in a larger game of
Hex; for example, we may require certain stones to be connected to
board edges. A {\em completion} of the region is an assignment of
black and white stones to all of its empty cells. We define a preorder
on the set of completions as outlined above, i.e., $x\leq y$ if for
every allowable embedding of the region in a larger game of Hex, and
for every completion of the remaining game, if $x$ gives a winning
position for Black then so does $y$. Two completions are {\em
  equivalent} is $x\leq y$ and $y\leq x$, and an {\em outcome} for the
region is an equivalence class of completions.

\section{Set Coloring Games}\label{sec-set-coloring-games}

Before we develop a combinatorial game theory for Hex, we describe a
more general class of games to which this theory can be applied. These
are the set coloring games of van Rijswijck {\cite{VanRijswijck}}. We
will only be concerned with set coloring games with two players, each
of whom uses one color. Rather than just considering games in which
one player wins and the other loses, we will consider set coloring
games over some arbitrary poset of outcomes. They are defined as
follows. Let $\Bool=\s{\bot,\top}$ be the set of booleans. Here,
$\bot$ denotes ``false'' or ``bottom'', and $\top$ denotes ``true'' or
``top''. If $X$ and $Y$ are sets, we write $Y^X$ for the set of
functions from $X$ to $Y$.

\begin{definition}[Set coloring game]
  Let $A$ be a partially ordered set, whose elements we call {\em
    outcomes}.  A {\em set coloring game} over $A$ is given by a
  finite set $X$ of {\em cells}, and a function $\pi:\Bool^X\to A$
  called a {\em payoff function}. It is played as follows: there is a
  game board, initially empty, whose cells are in one-to-one
  correspondence with the elements of $X$. The players, whom we call
  Black and White, take turns, with Black starting. Alternately, Black
  colors one cell with color $\top$, and White colors one cell with
  color $\bot$. This continues until all cells are colored. A fully
  colored game board corresponds to a function from $X$ to $\Bool$,
  i.e., an element $f\in\Bool^X$. Then the outcome of the game is
  $\pi(f)\in A$.
\end{definition}

We call a completely filled board an {\em atomic} position. An atomic
position has no moves for either player. All other positions have at
least one move for each player. Of particular interest to us are
set coloring games that are monotone.

\begin{definition}[Monotone set coloring game]
  We equip the set $\Bool$ with the natural order $\bot<\top$. Note
  that this induces a partial order on $\Bool^X$ given pointwise,
  i.e., $f\leq g$ if for all $x\in X$, we have $f(x)\leq g(x)$. A set
  coloring game $(X,\pi)$ is called {\em monotone} if its payoff
  function $\pi$ is a monotone function with respect to this order.
\end{definition}

Evidently, Hex is a monotone set coloring game over $\Bool$, since the
winning condition (Black has a connection between her two edges)
remains valid when changing any number of stones from white to
black. Moreover, any Hex region is a monotone set coloring game over
its outcome poset. For example, a 3-terminal region, as defined in
Section~\ref{sec-outcomes}, is a monotone set coloring game over
$A=\s{\bot,a,b,c,\top}$.

We can also consider certain generalizations of Hex that are monotone
set coloring games. A {\em planar connection game} is like Hex, except
the board is tiled by arbitrary polygonal cells, not necessarily
hexagons. To ensure that there is a unique winner, we must require
that at most three cells meet at any point. See {\cite{Pex}} for an
example. Another, even more general example of a monotone set coloring
game is the {\em vertex Shannon game}. It is played on an arbitrary
undirected graph with two distinguished vertices called {\em terminals}. The
players alternately color a non-terminal vertex in their color until
all vertices are colored; Black wins if and only if in the resulting
coloring, the two terminals are connected by a path of black
vertices. (Unlike Hex and other planar connection games, the class of
vertex Shannon games is not obviously self-dual, i.e., it is not
obviously invariant under switching the roles of the two players.)

\section{A Class of Combinatorial Games}\label{sec-combinatorial-games}

We are now ready to develop a notion of combinatorial games that is
appropriate for Hex and other monotone set coloring games.  In many
ways, the theory of these games is similar to standard combinatorial
game theory, say for normal play games. As already mentioned in the
introduction, the main difference is how the games end. Our games end
when an atomic position is reached, and in any non-atomic position,
there is at least one possible move for each player (so that the game
can never end due to a player's inability to make a move). In
addition, as we will see, our games are designed so that when $a$ is
an atomic outcome, the games $a$ and $\g{a|a}$ are equivalent. This
reflects the fact that in monotone set coloring games, once the
outcome in a region is determined, it does not matter if the players
are allowed additional useless moves. In this regard, our theory
differs from the class of games, such as Gomoku, where the outcome is
determined by which player {\em first} achieves a winning
condition. In such games, $\g{\top|\top}$ is not equivalent to $\top$,
because if one player needs one more move to achieve a win, the other
player might win first by playing in another region of the game.

Our notion of games has different properties than normal play
games. For example, in normal play games, $G\tri H$ holds if and only
if $H\nleq G$, whereas in our games, we often have $G\tri G$. Thus,
while many aspects of our proofs are the same as for other kinds of
combinatorial games, some crucial details are different.
Consequently, we give full proofs even of results that seem
elementary. Where proofs have been omitted, they are completely
routine.

\subsection{Games over an Outcome Poset}

\begin{definition}[Game over a poset]\label{def-game}
  Let $A$ be a partially ordered set, whose elements we call {\em
    atoms} or {\em outcomes}. Games over $A$ are defined inductively:
  \begin{itemize}
  \item For every $a\in A$, $[a]$ is a game, called an {\em atomic
    game}. We usually write $a$ instead of $[a]$ when no confusion
    arises.
  \item If $L$ and $R$ are non-empty sets of games, then $G=\g{L|R}$
    is a game, called a {\em composite game}. As usual, $L$ and $R$
    are called the sets of {\em left} and {\em right options} of $G$,
    respectively.
  \end{itemize}
\end{definition}

The fact that it is an inductive definition means that games are
freely generated by the two rules above. When we say that two games
are {\em equal}, we mean that they are literally the same game, i.e.,
they are both atomic with equal atoms, or they are both composite with
equal sets of left options and equal sets of right options. This
should not be confused with the notion of {\em equivalence} of games
that we will define later. Note that we have required non-atomic
games to have at least one left option and at least one right
option. Such games are also called ``all-small'' or ``dicot'' in the
context of normal-play and mis\`ere games, respectively
{\cite{ONAG,Milley2013}}.

We will follow the usual notational conventions of combinatorial game
theory. For example, we write $\g{x,y|z,w}$ instead of
$\g{\s{x,y}|\s{z,w}}$. We often write $G^L$ and $G^R$ for a typical
left and right option of $G$, respectively. We sometimes write $G =
\g{G^L|G^R}$ to indicate a game that has (possibly more than one)
typical left and right option. As is usual in combinatorial game
theory, the two players are called Left and Right. In set coloring
games, we identify Black with Left and White with Right.

\begin{definition}[Well-founded relation and Conway induction]
  On the collection of all games over $A$, we define $\ll$ to be the
  smallest reflexive transitive relation such that for any composite
  game $G$, we have $G^L\ll G$ for all left options $G^L$ of $G$ and
  and $G^R\ll G$ for all right options $G^R$ of $G$. Then $\ll$ is a
  well-founded partial order, i.e., it has no infinite strictly
  descending sequences. If $G\ll H$, we say that $G$ is a {\em
    position} of $H$. In other words, the positions of $H$ are $H$
  itself, the options of $H$, the options of options of $H$, and so
  on. If $G\ll H$ and $G\neq H$, we say that $G$ is a {\em smaller}
  game than $H$. Informally, it means that $G$ was defined ``before''
  $H$. We can prove statements about games by induction on this
  well-founded relation. This kind of induction is often called {\em
    Conway induction}.
\end{definition}

We will often (but not always) assume that the outcome set $A$ has
top and bottom elements, which we denote by $\top$ and $\bot$,
respectively.

\subsection{Order}\label{ssec-order}

We now define relations $G\leq H$ and $G\tri H$ on games. As in
standard combinatorial game theory, the intuition is that $G\leq H$
means that from Left's point of view, being first to move in $H$ is at
least as good as being first to move in $G$, and being second to move
in $H$ is at least as good as being second to move in $G$. Also,
$G\tri H$ means that from Left's point of view, being first to move in
$H$ is at least as good as being second to move in $G$. As usual, we
take Left's point of view, i.e., ``better'' means ``better for Left''
unless stated otherwise.

In combinatorial game theory, it is common to define these relations
by first defining the sum and negation operations on games, and then
to take $G\leq H$ to mean $0\leq H-G$. For reasons that will become
apparent later, this definition is not convenient in our setting ---
among other things, because there is no game ``$0$'', and because the
sum of games is not well-defined until we consider monotone games,
which requires the order $\leq$ to be defined first. We therefore
define the relations $G\leq H$ and $G\tri H$ directly, without
reference to sums.

Except for the atomic cases, the definitions of $\leq$ and $\tri$ are
the same as in standard combinatorial game theory. Since an atomic game
has no left or right options, by convention, when $G$ is atomic, we
take any statement of the form ``for all left options $G^L$'' to be
vacuously true, and any statement of the form ``there exists a left
option $G^L$'' to be trivially false.

\begin{definition}\label{def-order}
  For games over a partially ordered set $A$, the relations $\leq$ and
  $\tri$ are defined by mutual recursion as follows.
  \begin{itemize}
  \item $G\leq H$ if all three of the following conditions hold:
    \begin{enumerate}
    \item All left options $G^L$ satisfy $G^L\tri H$, and
    \item all right options $H^R$ satisfy $G\tri H^R$, and
    \item if $G$ or $H$ is atomic, then $G\tri H$.
    \end{enumerate}
  \item $G\tri H$ if at least one of the following
    conditions holds:
    \begin{enumerate}
    \item There exists a right option $G^R$ such that $G^R\leq H$, or
    \item there exists a left option $H^L$ such that $G\leq H^L$, or
    \item $G=[a]$ and $H=[b]$ are atomic and $a\leq b$.
    \end{enumerate}
  \end{itemize}
\end{definition}

Since this definition is at the heart of this paper, and since it is
not self-evident that it captures the correct notions, some further
explanations are in order. The fact that it is the ``right''
definition will also be substantiated by the theorems and applications
that follow later.

The first thing to note is that clauses 1.\@ and 2.\@ in the
definition of $\leq$ and $\tri$ are exactly the standard ones that can
be found, for example, in normal play games. Thus, if both games are
composite, there is nothing unusual in this definition. The novelty
lies in the treatment of atomic games.

To understand the atomic cases, consider the postulate that the game
$a$ should be equivalent to $\g{a|a}$, or in symbols,
$a\eq\g{a|a}$. The reason for the postulate is that it is true in
monotone set coloring games. Specifically, in a set coloring game, a
cell is called {\em dead} (or sometimes {\em negligible}) if the color
of that cell does not affect the outcome of the game
{\cite{Bjornsson-Hayward-Johanson-VanRijswijck,Yamasaki}}. The atomic
game $a$ represents a completely filled board, and the game $\g{a|a}$
represents a board with one remaining dead cell.  In a monotone set
coloring game, adding a dead cell does not change the strategic value
of the game, because playing there is not to either player's
advantage, and the cell will eventually be filled anyway.

If we postulate $a\eq\g{a|a}$ and plug this into the usual recursive
clauses for $\leq$ and $\tri$, we get the following properties:
\begin{equation}\label{eq-circular}
\begin{tabular}{ccc@{~}c@{~}c}
  $a\leq H$ &$\iff$& $a\tri H$ &and& $\forall H^R.a\tri H^R$.\\
  $a\tri H$ &$\iff$& $a\leq H$ &or& $\exists H^L.a\leq H^L$.\\
\end{tabular}
\end{equation}
Note, however, that we cannot take {\eqref{eq-circular}} as a
definition, because it is circular: $a\leq H$ is described in terms of
$a\tri H$ and vice versa. It turns out, however, that the clauses
in {\eqref{eq-circular}} have both a largest and a smallest solution, i.e.,
among the pairs of relations $(\leq,\tri)$ satisfying
{\eqref{eq-circular}}, there is a largest and a smallest one. It turns
out that the smallest solution is the one that we need. More
generally, consider a circular system of boolean equations of the form
\begin{equation}\label{eq-circular-propositional}
\begin{array}{ccccc}
  P &\iff& Q &\cand& R, \\
  Q &\iff& P &\cor&  S. \\
\end{array}
\end{equation}
It is easy to check that this system has exactly five solutions:
$(P,Q,R,S)=(\top,\top,\top,\top)$, $(\bot,\top,\bot,\top)$,
$(\top,\top,\top,\bot)$, $(\bot,\bot,\top,\bot)$, and
$(\bot,\bot,\bot,\bot)$.  In particular, unless $(R,S)=(\top,\bot)$,
$P$ and $Q$ are uniquely determined. In case $(R,S)=(\top,\bot)$,
there are the two solutions $(P,Q)=(\top,\top)$ and
$(P,Q)=(\bot,\bot)$. If we choose the second of these, the equations
{\eqref{eq-circular-propositional}} simplify to
\[
\begin{array}{ccccc}
  P &\iff& Q &\cand& R, \\
  Q &\iff& && S. \\
\end{array}
\]
Applying this process to {\eqref{eq-circular}} leads us to define
\[
\begin{tabular}{ccc@{~}c@{~}c}
  $a\leq H$ &$\iff$& $a\tri H$ &and& $\forall H^R.a\tri H^R$,\\
  $a\tri H$ &$\iff$& && $\exists H^L.a\leq H^L$.\\
\end{tabular}
\]
Note that $a\leq H$ nevertheless implies $a\tri H$, so that
{\eqref{eq-circular}} is also satisfied.

For the case where both games are atomic, the postulates $a\eq\g{a|a}$
and $b\eq\g{b|b}$ immediately yield that $a\leq b$ if and only if $a\tri
b$. This makes sense because if a game is atomic, its value is already
determined, so it no longer matters whose turn it is. The ordering of
atomic games is given a priori by the poset structure on the set of
atoms.

In summary, we arrive at the following desired properties:
\begin{itemize}
\item If both $G$ and $H$ are composite:\par
  \begin{tabular}{ccc@{~}c@{~}c}
    $G\leq H$ &$\iff$& $(\forall G^L. G^L\tri H)$ &and& $(\forall H^R.G\tri H^R)$,\\
    $G\tri H$ &$\iff$& $(\exists G^R. G^R\leq H)$ &or& $(\exists H^L.G\leq H^L)$.\\
  \end{tabular}
  
\item If $G$ is composite and $H=[b]$ is atomic:\par
  \begin{tabular}{ccc@{~}c@{~}c}
    $G\leq [b]$ &$\iff$& $(\forall G^L. G^L\tri [b])$ &and& $G\tri [b]$,\\
    $G\tri [b]$ &$\iff$& $\exists G^R. G^R\leq [b]$.
  \end{tabular}
  
\item If $G=[a]$ is atomic and $H$ is composite:\par
  \begin{tabular}{ccc@{~}c@{~}c}
    $[a]\leq H$ &$\iff$& $[a]\tri H$ &and& $\forall H^R.[a]\tri H^R$,\\
    $[a]\tri H$ &$\iff$& && $\exists H^L.[a]\leq H^L$.\\
  \end{tabular}
  
\item If $G=[a]$ and $H=[b]$ are both atomic:\par
  \begin{tabular}{ccccc}
    $[a]\leq [b]$ &$\iff$& $[a]\tri [b]$ &$\iff$& $a\leq b$.
  \end{tabular}
\end{itemize}
It is easy to check that Definition~\ref{def-order} is merely a more
compact statement of these properties.

It is worth emphasizing that we are not saying that the games $a$ and
$\g{a|a}$ are actually equal, as this would give rise to games with
infinite plays. Instead, we are merely saying that
Definition~\ref{def-order} is motivated by the desire to make $a$ and
$\g{a|a}$ {\em equivalent}. As we will see in
Lemma~\ref{lem-atomic-composite} below, this is indeed one of the
consequences of Definition~\ref{def-order}. Thus, we get the property
$a\eq\g{a|a}$ without having to consider infinite games or strategies.

\subsection{Properties of the Order}
\label{ssec-lemmas-order}

The relations $\leq$ and $\tri$ enjoy many of the usual properties
that hold, say, in normal play games, but not all of them. For
example, in normal play games, we have $G\tri H$ if and only if
$H\nleq G$. This is not the case here: all atomic games satisfy $a\tri
a$ and $a\leq a$, and we will see that many non-atomic games satisfy
$G\tri G$ as well.

In this section, we prove some basic properties of the order
relations. Throughout this section, we consider games over a fixed
poset $A$ of atoms.

\begin{remark}[Duality]
  For every statement about games, there is a dual statement, obtained
  by exchanging the roles of the left and right players (and replacing
  $\leq$ by $\geq$ and so on). A property of games is valid if and
  only if its dual is valid, so in the following, we sometimes prove a
  lemma and then use both the lemma and its dual.
\end{remark}

\begin{lemma}[Reflexivity]
  The relation $\leq$ is reflexive.
\end{lemma}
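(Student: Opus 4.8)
The plan is to prove $G\leq G$ for every game $G$ by Conway induction on the well-founded order $\ll$, splitting into the atomic base case and the composite inductive step. The inductive hypothesis is that every game strictly smaller than $G$ is reflexive for $\leq$; I do \emph{not} intend to carry along a companion statement $G\tri G$, for reasons explained at the end. Because $\leq$ and $\tri$ are defined by mutual recursion (Definition~\ref{def-order}), the mechanism of the proof will be to reduce a claim about $G$ at the relation $\leq$ to the inductive hypothesis about the \emph{options} of $G$, passing through a single unfolding of the definition of $\tri$.

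For the base case, suppose $G=[a]$ is atomic. By the atomic clause of Definition~\ref{def-order}, $[a]\leq[a]$ holds if and only if $a\leq b$ with $b=a$, i.e.\ if and only if $a\leq a$ in $A$; this is true because $A$ is a poset and the order on $A$ is reflexive. This is the only genuinely new ingredient compared with the standard normal-play argument.

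For the inductive step, suppose $G=\g{L|R}$ is composite. To conclude $G\leq G$ I must verify the three conditions defining $\leq$. Condition~3 is vacuous, since $G$ is composite and so neither side is atomic. For condition~1, I must show $G^L\tri G$ for every left option $G^L$; here I unfold clause~2 of the definition of $\tri$, for which it suffices to exhibit a left option $H^L$ of the right-hand game (which is $G$ itself) with $G^L\leq H^L$. Taking $H^L=G^L$, this reduces to $G^L\leq G^L$, supplied by the inductive hypothesis because $G^L$ is strictly smaller than $G$. Dually, condition~2 asks for $G\tri G^R$ for every right option $G^R$; unfolding clause~1 of $\tri$ and choosing the right option $G^R$ of the left-hand game $G$ reduces this to $G^R\leq G^R$, again given by the inductive hypothesis. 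This establishes all three conditions, hence $G\leq G$.

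The one point that requires care — and what I would flag as the main pitfall rather than a deep obstacle — is to resist setting up a simultaneous induction that also proves $G\tri G$. That statement is false in general: for atoms with $b\not\leq a$ one checks that $\g{a|b}\tri\g{a|b}$ reduces precisely to $b\leq a$ and therefore fails (consistent with the paper's remark that only \emph{many}, not all, composite games satisfy $G\tri G$). Thus $\tri$ must enter the proof only through a single unfolding of its definition, feeding back the reflexivity hypothesis on the strictly smaller options, never as a reflexivity statement about $\tri$ itself. With that caveat observed, the argument is entirely routine and follows the familiar normal-play template.
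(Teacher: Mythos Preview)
Your proof is correct and follows essentially the same approach as the paper: Conway induction on $G$, the atomic case by reflexivity of the order on $A$, and the composite case by using the inductive hypothesis $G^L\leq G^L$ (resp.\ $G^R\leq G^R$) to obtain $G^L\tri G$ (resp.\ $G\tri G^R$) via the definition of $\tri$. Your added remark warning against a joint induction on $G\tri G$ is a nice observation, though the paper's proof simply omits any such temptation.
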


\begin{proof}
  We prove $G\leq G$ by induction on $G$. For atomic games, we have
  $[a]\leq [a]$ by definition. Suppose that $G$ is composite. To show 
  $G\leq G$, first take any left option $G^L$. By the induction
  hypothesis, $G^L\leq G^L$, hence by  definition of $\tri$, we have
  $G^L\tri G$. Similarly, take any right option $G^R$. By the induction
  hypothesis, $G^R\leq G^R$, hence by definition of $\tri$, we have
  $G\tri G^R$. It follows that $G\leq G$ as desired.
\end{proof}

\begin{lemma}[Transitivity]\label{lem-transitive}
  For games $G,H,K$ over $A$, we have:
  \begin{enumerate}\alphalabels
  \item $G\tri H\leq K$ implies $G\tri K$;
  \item $G\leq H\tri K$ implies $G\tri K$;
  \item $G\leq H\leq K$ implies $G\leq K$.
  \end{enumerate}
\end{lemma}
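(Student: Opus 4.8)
The plan is to prove the three implications simultaneously by Conway induction on the triple $(G,H,K)$, ordered by the product of the well-founded relation $\ll$ on each coordinate (so one triple precedes another when it is $\ll$ in every coordinate and strictly smaller in at least one). This product order is well-founded, since in any infinite descending sequence some single coordinate would have to decrease strictly infinitely often, contradicting the well-foundedness of $\ll$. The induction hypothesis thus asserts that (a), (b), and (c) all hold for every triple strictly smaller than $(G,H,K)$. The whole argument turns on one structural observation: once the definitions are unfolded, the proofs of (a) and (b) appeal only to (b),(c) and (a),(c), respectively, on \emph{strictly smaller} triples, whereas the proof of (c) needs (a) and (b) on the \emph{current} triple. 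I will therefore, within the inductive step, establish (a) first, then (b), and finally (c), so that by the time (c) is treated, (a) and (b) for $(G,H,K)$ are already in hand.

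For (a), assume $G\tri H$ and $H\leq K$, and unfold $G\tri H$ via Definition~\ref{def-order}. If some right option satisfies $G^R\leq H$, then $G^R\leq H\leq K$ gives $G^R\leq K$ by (c) on $(G^R,H,K)$, whence $G\tri K$. If some left option satisfies $G\leq H^L$, then clause 1 of $H\leq K$ forces $H^L\tri K$, so the chain $G\leq H^L\tri K$ yields $G\tri K$ by (b) on $(G,H^L,K)$. In the atomic clause $G=[a]$, $H=[b]$ with $a\leq b$, we use $[b]\leq K$: if $K=[c]$ is atomic we finish by transitivity in the poset $A$, and if $K$ is composite then $[b]\leq K$ supplies a left option with $[b]\leq K^L$, so $[a]\leq[b]\leq K^L$ gives $[a]\leq K^L$ by (c) on $([a],[b],K^L)$, hence $G\tri K$. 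Every recursive call here is to a strictly smaller triple. Statement (b) is exactly the dual of (a) under the duality remark (reverse $A$ and swap the two players), so it follows by the symmetric argument, again invoking only strictly smaller triples.

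For (c), assume $G\leq H\leq K$ and verify the three defining conditions of $G\leq K$. For any left option $G^L$, clause 1 of $G\leq H$ gives $G^L\tri H$, and then (a) on the strictly smaller triple $(G^L,H,K)$ gives $G^L\tri K$; dually, for any right option $K^R$, clause 2 of $H\leq K$ gives $H\tri K^R$, and (b) on $(G,H,K^R)$ gives $G\tri K^R$. There remains the atomic clause: if $G$ or $K$ is atomic we must show $G\tri K$. If $G$ is atomic, then $G\leq H$ already includes $G\tri H$, and (a) applied to $(G,H,K)$ gives $G\tri K$; if $K$ is atomic, then $H\leq K$ already includes $H\tri K$, and (b) applied to $(G,H,K)$ gives $G\tri K$. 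These last two appeals are to the \emph{same} triple $(G,H,K)$ rather than a smaller one, and this is the only point in the entire induction where that occurs. It is legitimate precisely because (a) and (b) for $(G,H,K)$ were established before (c) within the inductive step. This coupling of the atomic case of (c) to (a) and (b) on the current triple is the main obstacle, and arranging the three proofs in the order (a), (b), (c) is exactly what is needed to overcome it.
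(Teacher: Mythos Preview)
Your proof is correct and follows essentially the same approach as the paper: simultaneous Conway induction on the triple $(G,H,K)$, establishing (a) and (b) before (c) within each inductive step so that the atomic clause of (c) may invoke (a) and (b) on the current triple. The case analysis and recursive appeals match the paper's almost step for step, with only cosmetic differences in how Case~3 of (a) is organized.
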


\begin{proof}
  We prove all three properties by simultaneous induction. The base
  cases will be handled uniformly along with the non-base
  cases. Therefore, let $G,H,K$ be any games (atomic or not), and
  assume that (a)--(c) are true for smaller triples of games. (Triples
  of games can be ordered componentwise with respect to the relation
  $\ll$, which is again a well-founded relation.) We first prove (a),
  then (b), then (c) (so that the proof of (c) can use the results of
  (a) and (b), even for games of the same size).

  \begin{enumerate}\alphalabels
  \item To show (a), assume $G\tri H$ and $H\leq K$. We must show
    $G\tri K$.  From the definition of $G\tri H$, there are three
    cases:
    
    \begin{itemize}
      \item Case 1: There exists some $G^R$ such that $G^R\leq
        H$. Since $G^R$ is smaller than $G$, by the induction hypothesis,
        from $G^R\leq H\leq K$, we get $G^R\leq K$. Therefore $G\tri
        K$ as desired.
        
      \item Case 2: There exists some $H^L$ such that $G\leq H^L$. By
        the definition of $H\leq K$, we have $H^L\tri K$. Since $H^L$
        is smaller than $H$, by the induction hypothesis, from $G\leq
        H^L\tri K$, we get $G\tri K$ as desired.
    
      \item Case 3: $G=[a]$ and $H=[b]$ are atomic and $a\leq b$. From
        $H=[b]\leq K$, by definition, we get $[b]\tri K$. By
        definition of $[b]\tri K$, there are two possible cases:

        Case 3.1: There exists some $K^L$ such that $[b]\leq K^L$. In
        that case, since $K^L$ is smaller than $K$, by the induction
        hypothesis, from $G\leq[b]\leq K^L$, we get $G\leq K^L$, hence
        $G\tri K$ as desired.

        Case 3.2: $K=[c]$ is atomic and $b\leq c$. In this case, from
        $a\leq b\leq c$, we get $a\leq c$, hence $G=[a]\tri[c]=K$ as
        desired.
    \end{itemize}
    
  \item The proof of (b) is dual to that of (a).
    
  \item
    To show (c), assume $G\leq H$ and $H\leq K$. We must show $G\leq
    K$. Using the definition of $G\leq K$, there are three things we
    need to prove:

    \begin{itemize}
    \item We must show that every $G^L$ satisfies $G^L\tri K$. So let
        $G^L$ be any left option of $G$.  Since $G\leq H$ was assumed,
        we have $G^L\tri H$. Since $G^L$ is smaller than $G$, by
        the induction hypothesis, from $G^L\tri H\leq K$, we get $G^L\tri
        K$, as desired.
    
      \item We must show that every $K^R$ satisfies $G\tri K^R$. The
        proof is dual to the previous case.
    
      \item Finally, we must show that if either $G$ or $K$ is atomic,
        then $G\tri K$. First assume $G$ is atomic. Since $G$ is
        atomic, from $G\leq H$, we get $G\tri H$.  By (a), from $G\tri
        H\leq K$, we get $G\tri K$, as desired. The case where $K$ is
        atomic is dual.\qedhere
    \end{itemize}

  \end{enumerate}
\end{proof}

\begin{corollary}
  The relation $\leq$ forms a preorder on the collection of games over
  a poset $A$.
\end{corollary}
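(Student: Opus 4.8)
The plan is to recall that a preorder is by definition a reflexive and transitive binary relation, and to observe that both of these properties have already been proved for $\leq$. Hence the corollary requires no new argument; it merely records the combination of two earlier results.

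First, reflexivity of $\leq$ is exactly the Reflexivity lemma established above, which gives $G\leq G$ for every game $G$. Second, transitivity of $\leq$ --- the statement that $G\leq H$ and $H\leq K$ together imply $G\leq K$ --- is precisely part~(c) of Lemma~\ref{lem-transitive}. Putting these together shows that $\leq$ is reflexive and transitive, which is the definition of a preorder, and the proof is complete.

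I expect no real obstacle, since all of the genuine difficulty has already been absorbed into Lemma~\ref{lem-transitive}. There, the three transitivity-type statements (a), (b), (c) are intertwined through the mutually recursive definition of $\leq$ and $\tri$ in Definition~\ref{def-order}, and so they cannot be proved in isolation; the key device is the simultaneous induction on triples of games ordered componentwise by the well-founded relation $\ll$, with (a) and (b) proved first so that they are available when establishing (c). Once that is done, the corollary costs nothing. Finally, it is worth emphasizing that $\leq$ is only a preorder and not a partial order: antisymmetry must fail, because the whole purpose of the atomic clauses in Definition~\ref{def-order} is to force $a$ and $\g{a|a}$ to be equivalent (see Lemma~\ref{lem-atomic-composite}), giving $a\leq\g{a|a}$ and $\g{a|a}\leq a$ for distinct games. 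This is precisely why the paper subsequently factors through the equivalence relation $\eq$ to obtain an honest partial order on equivalence classes of games.
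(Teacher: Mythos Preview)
Your proposal is correct and matches the paper's approach: the corollary is stated without proof in the paper, being an immediate consequence of the Reflexivity lemma and part~(c) of Lemma~\ref{lem-transitive}, exactly as you say. Your additional remarks on why $\leq$ is only a preorder (via $a\eq\g{a|a}$) are accurate and helpful context, though they go beyond what the paper records at this point.
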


At this point, it is customary to quotient out the equivalence
relation induced by this preorder, i.e., to identify games $G$ and $H$
if $G\leq H$ and $H\leq G$. We will not do so here, because we will
sometimes need to speak of properties of games that are not invariant
under equivalence. We will continue to write $G=H$ to mean that $G$
and $H$ are literally equal, and we will say that $G$ and $H$ are {\em
  equivalent}, written $G\eq H$, if $G\leq H$ and $H\leq G$. We will
reserve the word {\em value}, or {\em combinatorial value}, to mean an
equivalence class of games; thus, when we say two games have the same
value, we mean that they are equivalent.

\begin{lemma}\label{lem-composite-monotone}
  If $H\leq H'$, then $\g{H,G^L|G^R}\leq\g{H',G^L|G^R}$ and
  $\g{G^L|G^R,H} \leq \g{G^L|G^R,H'}$.
\end{lemma}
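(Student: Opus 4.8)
The plan is to prove both inequalities directly from Definition~\ref{def-order}, with no induction required; the only external ingredients are reflexivity of $\leq$ (just established) and the hypothesis $H\leq H'$. The key preliminary observation is that all four games appearing in the statement are composite, so the atomic clause~3 in the definition of $\leq$ is vacuous, and it suffices to verify clauses~1 and~2.

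For the first inequality, I would write $P=\g{H,G^L|G^R}$ and $Q=\g{H',G^L|G^R}$ and check the two composite conditions for $P\leq Q$. First, every left option of $P$ must satisfy $\tri Q$: a shared option $G^L$ works because $G^L$ is itself a left option of $Q$ and $G^L\leq G^L$ by reflexivity, so clause~2 in the definition of $\tri$ applies; the distinguished option $H$ works because $H'$ is a left option of $Q$ and $H\leq H'$ by hypothesis, so clause~2 applies with witness $H'$. Second, $P\tri H^R$ must hold for every right option $H^R$ of $Q$; but the right options of $Q$ are exactly the $G^R$, and $P\tri G^R$ holds because $G^R$ is a right option of $P$ with $G^R\leq G^R$, so clause~1 in the definition of $\tri$ applies. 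This gives $P\leq Q$.

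The second inequality I would handle by the mirror-image argument. Writing $P=\g{G^L|G^R,H}$ and $Q=\g{G^L|G^R,H'}$, the left options are common to both, and each satisfies $G^L\tri Q$ by reflexivity as before. Among the right options of $Q$, a shared $G^R$ gives $P\tri G^R$ by reflexivity, while the distinguished option $H'$ gives $P\tri H'$ because $H$ is a right option of $P$ and $H\leq H'$ by hypothesis, so clause~1 of the definition of $\tri$ applies, this time with witness $H$. Hence $P\leq Q$.

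There is no genuine obstacle here: the whole content is bookkeeping about which clause of the definition of $\tri$ to invoke and which option to use as a witness. The one point deserving a moment's attention is that improving a \emph{right} option also \emph{increases} the game in the order $\leq$, which may look backwards compared with the intuition from normal-play theory; mechanically, though, it is forced by the same reflexivity-plus-hypothesis pattern as the first inequality, the only difference being that the witness supplied to clause~1 of $\tri$ is the old right option $H$ rather than the new one $H'$.
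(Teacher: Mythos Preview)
Your proof is correct and is precisely the unpacking of what the paper means by ``follows directly from the definitions.'' The only quibble is notational: midway through the first inequality you briefly write ``$P\tri H^R$ must hold for every right option $H^R$ of $Q$,'' where using the letter $H$ for a generic right option of $Q$ clashes with the $H$ already in play; you immediately clarify these are the $G^R$, so no harm is done, but it would read more cleanly to call them $Q^R$ from the start.
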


\begin{proof}
  This follows directly from the definitions.
\end{proof}

\begin{lemma}\label{lem-more-options}
  If $G$ is composite, we have $G\leq \g{H,G^L|G^R}$. In other
  words, an additional left option can only help Left.
\end{lemma}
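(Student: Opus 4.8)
The plan is to unfold the definition of $\leq$ and verify its three clauses directly for $G\leq\g{H,G^L|G^R}$, relying only on reflexivity of $\leq$ (established above) and the definition of $\tri$. No induction is needed. Write $G'=\g{H,G^L|G^R}$, so that the left options of $G'$ are $H$ together with all the left options of $G$, while the right options of $G'$ are exactly the right options of $G$. Since $G$ is composite, it has at least one right option, hence $G'$ has both a left option ($H$) and a right option, so $G'$ is composite as well. Consequently neither $G$ nor $G'$ is atomic, and clause~3 (the atomic clause) in the definition of $\leq$ is vacuous. It remains to check clauses~1 and~2.

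For clause~1, I must show that every left option $G^L$ of $G$ satisfies $G^L\tri G'$. The key observation is that each such $G^L$ is itself a left option of $G'$. By reflexivity, $G^L\leq G^L$, and since $G^L$ is a left option of $G'$, the second defining clause of $\tri$ (there exists a left option of the right-hand game that dominates the left-hand game) yields $G^L\tri G'$ at once. For clause~2, I must show that every right option of $G'$ satisfies $G\tri(G')^R$. But the right options of $G'$ are precisely the right options $G^R$ of $G$. By reflexivity, $G^R\leq G^R$, and since $G^R$ is a right option of $G$, the first defining clause of $\tri$ (there exists a right option of the left-hand game that is dominated by the right-hand game) gives $G\tri G^R$. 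Together these establish $G\leq G'$.

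The statement is essentially immediate, so there is no genuine obstacle; the only point requiring care is the bookkeeping of which options belong to $G$ as opposed to $G'$, namely that the original left options are inherited by $G'$ and that the right option sets of $G$ and $G'$ coincide. I would also remark that the symmetric fact, that an additional \emph{right} option can only help Right, follows by the duality remark rather than by repeating the argument.
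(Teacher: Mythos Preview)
Your proof is correct and is precisely the direct unfolding of the definitions that the paper alludes to; the paper merely states that the lemma ``follows directly from the definitions'' without writing out the details you have supplied.
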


\begin{proof}
  This follows directly from the definitions.
\end{proof}

\begin{lemma}[Gift horse lemma]
  \label{lem-gift-horse}
  If $G$ is composite, we have $H\tri G$ if and only if $G\eq
  \g{H,G^L|G^R}$.  Dually, if $H$ is composite, we have $H\tri G$ if
  and only if $H\eq\g{H^L|H^R,G}$.
\end{lemma}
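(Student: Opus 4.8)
The plan is to reduce the stated biconditional to a single inequality by exploiting the monotonicity results already in hand. Write $G'=\g{H,G^L|G^R}$ for the game obtained by adjoining $H$ as an extra left option. By Lemma~\ref{lem-more-options}, the inequality $G\leq G'$ holds unconditionally, so proving $G\eq G'$ amounts to proving the reverse inequality $G'\leq G$. Thus the whole lemma collapses to the claim that $G'\leq G$ holds if and only if $H\tri G$.

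To establish this, I would unfold the definition of $G'\leq G$. Both $G$ and $G'$ are composite --- $G$ by hypothesis, and $G'$ because it inherits the (nonempty) set of right options of $G$ and has at least the left option $H$ --- so the atomic clause in the definition of $\leq$ is vacuous, and only two conditions remain. The condition on right options is automatic: the right options of $G'$ are exactly those of $G$, so for any right option $G^R$ of $G$ we may select the matching right option of $G'$ and apply reflexivity $G^R\leq G^R$ to obtain $G'\tri G^R$ straight from the first clause in the definition of $\tri$. The condition on left options splits into two parts. For each left option $G^L$ of $G$, the requirement $G^L\tri G$ is precisely the content of $G\leq G$ and so holds by reflexivity; for the extra left option $H$, the requirement is exactly $H\tri G$. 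Hence $G'\leq G$ holds if and only if $H\tri G$, which combined with the free inequality $G\leq G'$ yields $G\eq G'\iff H\tri G$.

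The second assertion follows by duality, the dual of $\tri$ being its converse relation; equivalently, one can rerun the same argument with the roles of left and right interchanged, now writing $H'=\g{H^L|H^R,G}$, using the dual of Lemma~\ref{lem-more-options} to obtain the free inequality $H'\leq H$ and checking that $H\leq H'$ reduces to $H\tri G$. I do not anticipate any genuine obstacle here: once the reduction through Lemma~\ref{lem-more-options} is made, the verification is pure bookkeeping, and the only place demanding care is the correct handling of the atomic-game convention together with the observation that reflexivity is exactly what produces the auxiliary facts $G^L\tri G$ and $G'\tri G^R$ from the one-step clauses defining $\tri$.
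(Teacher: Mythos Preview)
Your proof is correct and follows essentially the same approach as the paper: both use Lemma~\ref{lem-more-options} to get the free inequality $G\leq G'$ and then reduce the other inequality $G'\leq G$ to the single condition $H\tri G$ by unfolding Definition~\ref{def-order}. The only minor organizational difference is that the paper proves the two directions of the biconditional separately, invoking transitivity (Lemma~\ref{lem-transitive}) for the direction $G\eq G'\Rightarrow H\tri G$, whereas you handle both directions at once by observing that $H\tri G$ is literally one of the clauses in the definition of $G'\leq G$ and that all the other clauses hold unconditionally; this avoids the explicit appeal to transitivity but is otherwise the same argument.
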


\begin{proof}
  It suffices to show the first claim, as the second one is its dual.
  For the right-to-left direction, assume $G\eq \g{H,G^L|G^R}$. By
  definition of $\tri$, we have $H\tri \g{H,G^L|G^R}$. With
  $\g{H,G^L|G^R}\leq G$ and Lemma~\ref{lem-transitive}, this yields
  $H\tri G$.  For the left-to-right direction, assume $H\tri G$. Since
  $G\leq \g{H,G^L|G^R}$ holds by Lemma~\ref{lem-more-options}, we only
  need to show $\g{H,G^L|G^R}\leq G$. But this follows directly from
  the definitions.
\end{proof}

Because of Lemma~\ref{lem-gift-horse}, when $H\tri G$, we say that $H$
is a {\em left gift horse} for $G$, and that $G$ is a {\em right gift
  horse} for $H$.

\begin{lemma}[All games are equivalent to composite games]
  \label{lem-atomic-composite}
  For $a\in A$, we have $a\eq\g{a|a}$.
\end{lemma}

\begin{proof}
  This follows directly from the definitions.
\end{proof}

\begin{lemma}\label{lem-top}
  If $A$ has a top element $\top$, then all games $G$ over $A$ satisfy
  $G\leq\top$ and $G\tri\top$. Dually, if $A$ has a bottom element
  $\bot$, then all games $G$ over $A$ satisfy $\bot\leq G$ and
  $\bot\tri G$.
\end{lemma}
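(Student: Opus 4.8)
The plan is to prove the two assertions $G\leq\top$ and $G\tri\top$ simultaneously by Conway induction on $G$, and then to obtain the dual statements about $\bot$ for free via the duality remark. Here $\top$ abbreviates the atomic game $[\top]$, so throughout we are in the cases of Definition~\ref{def-order} in which the right-hand game is atomic.

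For the base case, suppose $G=[a]$ is atomic. Since $\top$ is the top element of the poset $A$, we have $a\leq\top$, and the atomic clause of Definition~\ref{def-order} (``$[a]\leq[b]\iff[a]\tri[b]\iff a\leq b$'') immediately delivers both $[a]\leq\top$ and $[a]\tri\top$ at once.

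For the inductive step, suppose $G$ is composite and assume both claims hold for all smaller games, in particular for the options of $G$. I would first establish $G\tri\top$: by Definition~\ref{def-game} a composite game has a \emph{non-empty} set of right options, so I may pick some right option $G^R$; since $G^R\ll G$, the induction hypothesis gives $G^R\leq\top$, and hence $G\tri\top$ by the first clause of the definition of $\tri$. Next I would establish $G\leq\top$: because $\top$ is atomic, the definition of $\leq$ reduces to two requirements, namely that every left option $G^L$ satisfy $G^L\tri\top$ (true by the induction hypothesis, as each $G^L$ is smaller than $G$) and that $G\tri\top$ (just shown). This yields $G\leq\top$ and completes the induction.

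I do not anticipate a genuine obstacle; the only delicate point is bookkeeping the atomic clauses of Definition~\ref{def-order} correctly. The one place that truly needs the structure of the definition is the verification of $G\tri\top$ in the inductive step: since $\top$ has no left options, the only route to $G\tri\top$ is through a right option $G^R$ with $G^R\leq\top$, which is precisely where the dicot (non-empty option set) condition from Definition~\ref{def-game} is invoked to supply the witnessing $G^R$. Everything else is a direct unfolding of definitions, and the $\bot$ statements follow by exchanging the roles of the two players.
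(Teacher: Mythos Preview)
Your proof is correct and essentially identical to the paper's: both argue by Conway induction on $G$, dispatch the atomic base case via $a\leq\top$, and in the composite case first obtain $G\tri\top$ from a right option (using the dicot condition) before deducing $G\leq\top$ from the left-option clause together with the atomic clause.
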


\begin{proof}
  We show that $G\leq\top$ and $G\tri\top$ by induction. If $G$ is
  atomic, this is clear by definition. So assume $G$ is composite. To
  show $G\tri\top$, pick any right option $G^R$ of $G$ (recall that
  every composite game has at least one right option). By the induction
  hypothesis, $G^R\leq\top$, so by definition of $\tri$, we have
  $G\tri\top$.  To show $G\leq\top$, first consider any left option
  $G^L$; by the induction hypothesis, we have $G^L\tri\top$. Since $\top$
  has no right options, and since we already showed $G\tri\top$, the
  definition of $\leq$ then implies that $G\leq\top$ as desired.
\end{proof}

If the atom poset $A$ has top and bottom elements, the partial order
of values is particularly nice: it is a complete lattice, as the
following proposition shows.

\begin{proposition}\label{prop-lattice}
  Assume the atom poset has a bottom element. Then any set of games
  has a least upper bound.
\end{proposition}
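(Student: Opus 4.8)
The plan is to construct a least upper bound explicitly and then verify its two defining properties, the construction being least only up to equivalence since $\leq$ is a preorder. First dispose of the case $S=\emptyset$: by the dual of Lemma~\ref{lem-top} the atomic game $[\bot]$ satisfies $[\bot]\leq G$ for every game $G$, so it is the least game overall and hence the least upper bound of $\emptyset$. From now on assume $S\neq\emptyset$. The central auxiliary notion is the following: call a game $X$ \emph{$S$-high} if $G\tri X$ for every $G\in S$. The key object is the game $V=\g{S|[\bot]}$, whose left options are exactly the members of $S$ and whose sole right option is $[\bot]$. I claim $V$ is the \emph{least} $S$-high game. That $V$ is $S$-high is immediate: for $G\in S$ we have $G\leq G$ by reflexivity, and $G$ is a left option of $V$, so $G\tri V$. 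For minimality, let $X$ be any $S$-high game; then $V\leq X$, because each left option of $V$ lies in $S$ and is therefore $\tri X$ (this is exactly $S$-highness), while the right-option clause is free: since $[\bot]\leq Y$ for every $Y$, the right option $[\bot]$ of $V$ gives $V\tri Y$ for all $Y$, in particular for every right option of $X$ and for $X$ itself when $X$ is atomic. This is precisely the point where the existence of a bottom element is indispensable.

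With $V$ in hand, define $U=\g{L|V}$, where $L$ consists of all left options of the composite members of $S$ together with all atomic members of $S$; note $L$ is nonempty because $S\neq\emptyset$. To see $U$ is an upper bound, fix $G\in S$ and check $G\leq U$. If $G$ is composite, each $G^L$ is a left option of $U$, so $G^L\tri U$ by reflexivity, and the single right option $V$ satisfies $G\tri V$ since $V$ is $S$-high; hence $G\leq U$. If $G=[a]$ is atomic, then $[a]$ is itself a left option of $U$, giving $[a]\tri U$, and again $[a]\tri V$ by $S$-highness, which is exactly what the atomic-versus-composite clause of $\leq$ demands.

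It remains to prove leastness, which I expect to be the main obstacle: given any upper bound $W$ of $S$, show $U\leq W$. The left options of $U$ are handled directly, as $G\leq W$ forces $G^L\tri W$ for composite $G$ and $[a]\tri W$ for atomic $[a]\in S$, so every left option of $U$ is $\tri W$. The crux is the right-option clause $U\tri W^R$ for every right option $W^R$ of $W$: the difficulty is that $\leq$ does not in general imply $\tri$, so one cannot simply quote $G\leq W$. The resolution is the two-sided role of $V$. On the one hand, each $W^R$ is automatically $S$-high, since $G\leq W$ yields $G\tri W^R$ for every $G\in S$ (by clause~2 of $\leq$ when $G$ is composite, and by the atomic-composite form of $\leq$ when $G$ is atomic); on the other hand $V$ was built to be the \emph{least} $S$-high game, so $V\leq W^R$, and as $V$ is a right option of $U$ this gives $U\tri W^R$. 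The atomic case $W=[w]$ runs identically with $W^R$ replaced by $[w]$. Hence $U\leq W$, and $U$ is the least upper bound $\bigvee S$. The entire argument thus turns on isolating the class of $S$-high games and recognizing, via the right option $\bot$, that $V=\g{S|[\bot]}$ is simultaneously $S$-high and below every $S$-high game.
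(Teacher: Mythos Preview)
Your proof is correct and follows essentially the same approach as the paper: both construct the least upper bound as $\g{L \mid \g{S \mid \bot}}$, first establishing that $\g{S \mid \bot}$ is the least game $X$ with $G \tri X$ for all $G \in S$ (your ``$S$-high'' notion), and then verifying the upper-bound and leastness properties by the same clause-by-clause unfolding of $\leq$ and $\tri$. The only stylistic difference is that the paper replaces each member of $S$ by an equivalent composite game via Lemma~\ref{lem-atomic-composite}, so that $L$ consists solely of their left options, whereas you handle atomic members of $S$ explicitly by including them in $L$; this is a harmless variation.
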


\begin{proof}
  The least upper bound of the empty set is $\bot$ by
  Lemma~\ref{lem-top}. Next, we show that any pair of games $H,K$ has
  a least upper bound. By Lemma~\ref{lem-atomic-composite}, we can
  assume without loss of generality that $H$ and $K$ are
  composite. Define
  \[ G = \g{H^L, K^L | \g{H, K | \bot}}.
  \]
  We claim that $G$ is the desired least upper bound. We first note
  that $G'=\g{H,K|\bot}$ is the smallest game such that $H,K\tri
  G'$. Indeed, it is obvious that $H,K\tri G'$. Suppose that $G''$ is
  some arbitrary game with $H,K\tri G''$. Then $G'\leq G''$ follows
  directly from the definition of $\leq$.

  Next, we show that $H, K \leq G$. Indeed, all their left options
  satisfy $H^L, K^L \tri G$ by definition of $G$; conversely, the only
  right option of $G$ satisfies $H, K \tri \g{H, K | \bot}$.
  
  Next, consider some arbitrary game $M$ with $H, K \leq M$. We claim
  that $G \leq M$. Without loss of generality, assume that $M$ is
  composite.  First, consider any left option of $G$. They are $H^L$
  or $K^L$. Both satisfy $H^L, K^L \tri M$ by the assumption that $H,
  K \leq M$. This shows the first part of $G \leq M$.  Next, consider
  any right option $M^R$ of $M$. We must show $G \tri M^R$. From the
  assumption $H,K\leq M$, we get $H,K\tri M^R$, and therefore $\g{H, K
    | \bot} \leq M^R$ by the ``first note'' above. This implies $G
  \tri M^R$ as desired. We have shown two of the three conditions for
  $G \leq M$. Since neither $G$ nor $M$ is atomic, the third condition
  does not apply, so $G\leq M$. Therefore $M$ is the least upper bound
  of $H, K$.

  Finally, nothing in the above proof relies on the fact that we have
  exactly two games $H,K$. The same proof works for any finite or even
  infinite non-empty set of games.
\end{proof}

Note that by the dual of Proposition~\ref{prop-lattice}, greatest
lower bounds also exist, provided that the atom poset has a top
element. Also note that Proposition~\ref{prop-lattice} only provides
suprema of {\em sets} of games, whereas the collection of all games
over a given atom poset is in general a proper class. For this reason,
the usual trick of constructing a greatest lower bound as the least
upper bound of all lower bounds does not work. In particular, we
cannot use Proposition~\ref{prop-lattice} to construct a top element
by taking the supremum of all games.

\subsection{Canonical Forms}

The theory of canonical forms is similar to that for other kinds of
combinatorial games, but some adjustments are needed to deal with
atoms. The notion of bypassing a reversible option must be adjusted,
and we need a new notion of passing option. As before, we consider
games over a fixed atom poset $A$.

\begin{definition}[Dominated option]
  Suppose that $H,K$ are distinct left options of $G$. We say that $H$
  {\em dominates} $K$ if $K\leq H$. Dually, for distinct right options
  $H,K$ of $G$, we say that $H$ dominates $K$ if $H\leq K$.
\end{definition}

\begin{lemma}[Removing dominated options]
  If $G$ has a dominated left option $K$, then $G\eq G'$, where $G'$
  is the result of removing the left option $K$ from $G$. The
  dual statement for right options also holds.
\end{lemma}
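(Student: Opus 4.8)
The plan is to establish the equivalence $G\eq G'$ by proving the two inequalities $G'\leq G$ and $G\leq G'$ separately. Throughout, write $G=\g{L|R}$, and let $H\in L$ be a left option that dominates $K$, so that $H\neq K$ and $K\leq H$; then $G'=\g{L\setminus\s{K}|R}$ has the same right options as $G$ and left options $L\setminus\s{K}$. The direction $G'\leq G$ is immediate: since $G$ is obtained from $G'$ by adjoining the single extra left option $K$, Lemma~\ref{lem-more-options} gives $G'\leq G$ at once.

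The substance of the argument is the reverse inequality $G\leq G'$, which I would obtain by unfolding the three conditions in the definition of $\leq$. First observe that both $G$ and $G'$ are composite: the left option set $L\setminus\s{K}$ still contains $H$, and the right options $R$ are unchanged and non-empty. Hence the atomic clause~3 is vacuous and only clauses~1 and~2 remain. For clause~2, the right options of $G'$ are exactly the right options $G^R$ of $G$, and each satisfies $G\tri G^R$ because $G^R\leq G^R$ by reflexivity (clause~1 in the definition of $\tri$). For clause~1, I must check $G^L\tri G'$ for every left option $G^L$ of $G$. If $G^L\neq K$, then $G^L$ is itself a left option of $G'$, so reflexivity $G^L\leq G^L$ witnesses $G^L\tri G'$ via clause~2 of $\tri$. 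The only remaining case is $G^L=K$: here the domination hypothesis $K\leq H$, together with the fact that $H$ is a left option of $G'$, yields $K\tri G'$ by the same clause~2 of $\tri$. This establishes $G\leq G'$, and combining the two inequalities gives $G\eq G'$; the dual statement for right options follows from the duality remark.

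I do not anticipate a genuine obstacle here: the whole argument is a direct unfolding of Definition~\ref{def-order} together with reflexivity and Lemma~\ref{lem-more-options}, which is why the paper can dispatch such results quickly. The two points that warrant a moment's care are the single special case $G^L=K$, where domination (rather than reflexivity) is what makes $K\tri G'$ hold, and the preliminary check that removing $K$ leaves $G'$ composite, so that the atomic clause of $\leq$ never comes into play and the atom-specific subtleties of this theory are avoided entirely.
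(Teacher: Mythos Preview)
Your proof is correct. The paper's argument is slightly more packaged: it observes that $H\tri G'$ (since $H$ is a left option of $G'$), uses transitivity with $K\leq H$ to get $K\tri G'$, and then invokes the Gift Horse Lemma~\ref{lem-gift-horse} to conclude $G\eq G'$ in one stroke. Your argument unfolds the Gift Horse step inline, verifying $G'\leq G$ via Lemma~\ref{lem-more-options} and $G\leq G'$ directly from the definition. The key observation---that $K\leq H$ with $H$ a left option of $G'$ yields $K\tri G'$---is the same in both; you simply read it off clause~2 of the definition of $\tri$ directly rather than going through transitivity, which is in fact a hair shorter.
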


\begin{proof}
  Suppose the left option $K$ is dominated by another left option
  $H$. Since $H$ is a left option of $G'$, we have $H\tri G'$.  Since
  $K\leq H$, by Lemma~\ref{lem-transitive}, we have $K\tri
  G'$. Therefore, $K$ is a left gift horse for $G'$ and $G\eq G'$ by
  the Gift Horse Lemma~\ref{lem-gift-horse}. The proof for right
  options is dual.
\end{proof}
 
\begin{definition}[Reversible option]
  Suppose $H$ is a left option of $G$. We say that $H$ is {\em
    reversible} via $K$ if $H$ has a right option $K$ such that $K\leq
  G$. (Therefore both $G$ and $H$ are necessarily
  composite.) Reversible right options are defined dually.
\end{definition}

\begin{lemma}[Bypassing reversible options]\label{lem-bypassing}
  Suppose $G$ is a game with a left option $H$ that is reversible via
  $K$. Then $G\eq G'$, where
  \begin{itemize}
  \item $G'=\g{K^L,G^L|G^R}$, when $K$ is composite, and
  \item $G'=\g{K,G^L|G^R}$, when $K$ is atomic.
  \end{itemize}
  Here, $G^L$ denotes all left options of $G$ other than $H$, and
  $K^L$ denotes all left options of $K$.
\end{lemma}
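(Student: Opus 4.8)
The plan is to prove both inequalities $G\leq G'$ and $G'\leq G$ directly from Definition~\ref{def-order}, unwinding the three conditions of $\leq$ in each case. Two consequences of reflexivity will be used repeatedly: for any composite game, every left option $X$ satisfies $X\tri$ (that game), since $X\leq X$ witnesses condition 2 of $\tri$, and dually every right option $Y$ satisfies (that game) $\tri Y$. Since $G$, $G'$, and $H$ are all composite (the first two by construction, and $H$ because it has the right option $K$), the atomic condition 3 of $\leq$ is vacuous in both directions, and the right options of $G'$ coincide with those of $G$.

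For $G'\leq G$, condition 2 is immediate: each right option of $G$ is also a right option of $G'$, so $G'\tri G^R$ by the reflexivity fact. For condition 1 I must check that every left option of $G'$ is $\tri G$. The options inherited from $G$, namely those in $G^L$, satisfy this because they are left options of $G$. For the new options I use the hypothesis $K\leq G$: when $K$ is composite, condition 1 of $K\leq G$ states precisely that every $K^L$ satisfies $K^L\tri G$; when $K$ is atomic, $K\leq G$ already implies $K\tri G$ by the atomic characterization of $\leq$. This settles $G'\leq G$ with no appeal to transitivity.

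The direction $G\leq G'$ is the substantive one, and the sublemma $K\leq G'$ is where the main difficulty lies. Condition 2 of $G\leq G'$ holds trivially, since $G'$ and $G$ share their right options and $G\tri G^R$ by reflexivity; and in condition 1 the options inherited in $G^L$ are $\tri G'$ by the reflexivity fact applied to $G'$, so the only real task is to prove $H\tri G'$. I will deduce this from condition 1 of $\tri$ using the right option $K$ of $H$, which reduces everything to showing $K\leq G'$. For this I verify the conditions of $\leq$ for $K\leq G'$: the left options of $K$ (or $K$ itself, when $K$ is atomic) are among the left options of $G'$ and so are $\tri G'$ by reflexivity, while the requirement against each right option $G^R$ of $G'$ is $K\tri G^R$, which follows from the reversibility hypothesis $K\leq G$ together with $G\tri G^R$ (the latter by reflexivity) via Lemma~\ref{lem-transitive}(b). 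This sublemma is the only point at which the reversibility hypothesis and transitivity genuinely interact, and it is also where the composite/atomic dichotomy for $K$ must be handled with care, so that the composite conditions or the atomic conditions of $\leq$ are invoked as appropriate. Once $K\leq G'$ is established, both inequalities close and $G\eq G'$ follows.
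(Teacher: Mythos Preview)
Your proof is correct. Both your argument and the paper's hinge on the same sublemma $K\leq G'$, and both verify it the same way (left options of $K$, or $K$ itself in the atomic case, are left options of $G'$; and $K\tri G^R$ follows from $K\leq G$). The difference is packaging: the paper invokes the Gift Horse Lemma twice (first to adjoin the $K^L$ to $G$, then to drop $H$ from the result), and reduces the atomic case to the composite case via $K\eq\g{K|K}$; you instead unfold both inequalities $G\leq G'$ and $G'\leq G$ directly from Definition~\ref{def-order} and treat the atomic case explicitly. Your route is slightly more self-contained, the paper's slightly more modular. One small remark: where you use transitivity (Lemma~\ref{lem-transitive}(b)) to get $K\tri G^R$ from $K\leq G$ and $G\tri G^R$, the paper just reads $K\tri G^R$ off condition~2 of $K\leq G$, which is marginally simpler.
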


\begin{proof}
  First, consider the case where $K$ is composite. Since $K\leq G$,
  any left option of $K$ is a gift horse for $G$, so that $G\eq
  \g{K^L,H,G^L|G^R}$. To show that $\g{K^L,H,G^L| G^R}\eq G'$, we need
  to show that $H$ is a left gift horse for $G'$, i.e., $H\tri
  G'$. Since $K$ is a right option of $H$, it suffices to show that
  $K\leq G'$. This requires showing three things:
  \begin{itemize}
  \item First, consider any left option $K^L$ of $K$. We must show
    $K^L\tri G'$. But $K^L$ is, by definition, a left option of $G'$,
    so $K^L\tri G'$ as desired.
  \item Second, consider any right option $G'^R$ of $G'$. We must show
    $K\tri G'^R$. But $G'$ has the same right options as $G$, so that
    $G'^R=G^R$ for some right option of $G$. By assumption, $K\leq G$,
    which implies $K\tri G^R$ as desired.
  \item Third, assume $K$ or $G'$ is atomic. But this is not the case
    as we had assumed otherwise.
  \end{itemize}
  This concludes the proof in the case where $K$ is composite.
  The case where $K$ is atomic can be reduced to the previous case,
  because $K\eq\g{K|K}$ by Lemma~\ref{lem-atomic-composite}.
\end{proof}

The two cases of Lemma~\ref{lem-bypassing} are called {\em non-atomic}
and {\em atomic reversibility}, respectively.

\begin{definition}[Passing option]
  Suppose $H$ is a left option of $G$. We say that $H$ is a {\em
    passing option} of $G$ if $H\eq G$. 
\end{definition}

\begin{lemma}[Simplifying passing options]
  If $H$ is a passing option of $G$, then $G\eq H$.
\end{lemma}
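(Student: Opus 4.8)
The plan is to note that the conclusion is immediate once we unfold the definitions, since equivalence $\eq$ was defined as a symmetric conjunction. First I would expand the hypothesis: saying that $H$ is a passing option of $G$ means, by definition, exactly that $H\eq G$; and $H\eq G$ means by definition that $H\leq G$ and $G\leq H$. These are precisely the two inequalities $G\leq H$ and $H\leq G$ required for $G\eq H$, merely listed in the other order. Hence $G\eq H$ follows directly, with no appeal to induction, transitivity (Lemma~\ref{lem-transitive}), or the Gift Horse Lemma~\ref{lem-gift-horse}.

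Accordingly, there is no real obstacle to overcome; the only ``difficulty'' is resisting the temptation to look for a harder argument, since the statement is literally the symmetric restatement of the defining condition of a passing option. Unlike the proofs for removing dominated options and bypassing reversible options, no new game $G'$ needs to be constructed and analysed here, and none of the clauses in Definition~\ref{def-order} need to be checked one at a time.

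In the write-up I would still stress why this trivial-looking fact earns a name alongside the other two simplification steps. Because $H$ is a left option of the composite game $G$, it is a strictly smaller game than $G$ in the well-founded order $\ll$; the lemma therefore licenses collapsing the entire game $G$ down to the single option $H$ without changing its value, which is a genuine reduction in complexity usable in the canonicalization procedure. This both parallels the dominated- and reversible-option lemmas and generalizes Lemma~\ref{lem-atomic-composite}, which is exactly the special case $G=\g{a|a}$ with passing option $H=a$. The dual statement, for a right option $H$ with $H\eq G$, holds by the duality remark.
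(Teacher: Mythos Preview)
Your proof is correct and matches the paper's own proof, which simply states ``This holds by definition.'' Your additional commentary on why the lemma is useful for canonicalization is accurate and well-motivated, but the core argument is identical.
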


\begin{proof}
  This holds by definition.
\end{proof}

\begin{definition}[Canonical form]
  A game $G$ is in {\em canonical form} if $G$ has no dominated,
  reversible, or passing options, and all left and right options of
  $G$ are in canonical form.
\end{definition}

\begin{lemma}[Uniqueness of canonical form]
  If $G\eq G'$ and both $G$ and $G'$ are in canonical form, then $G=G'$.
\end{lemma}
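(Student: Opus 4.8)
The plan is to argue by Conway induction on the pair $(G,G')$, ordered componentwise by the well-founded relation $\ll$, reducing the claim to the assertion that $G$ and $G'$ have identical sets of left options and identical sets of right options. There are three configurations to treat: both games atomic, one atomic and one composite, and both composite. The both-atomic case is immediate, since $[a]\eq[b]$ unwinds to $a\leq b$ and $b\leq a$, so $a=b$ by antisymmetry of the poset $A$, whence $[a]=[b]$.

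For the both-composite case, I would set up a matching between left options, the right options being handled dually. Take any left option $G^L$ of $G$. From $G\leq G'$ we obtain $G^L\tri G'$, and since $G'$ is composite this unfolds into two possibilities. If some right option $(G^L)^R$ satisfies $(G^L)^R\leq G'$, then combining with $G'\leq G$ via Lemma~\ref{lem-transitive} gives $(G^L)^R\leq G$, so $G^L$ would be reversible in $G$, contradicting that $G$ is in canonical form. Hence the other possibility must hold: there is a left option $(G')^L$ of $G'$ with $G^L\leq(G')^L$. Running the symmetric argument on $(G')^L$, now using $G'\leq G$ and the canonicity of $G'$, produces a left option $G^{L'}$ of $G$ with $(G')^L\leq G^{L'}$. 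Transitivity then gives $G^L\leq G^{L'}$; were these distinct, $G^L$ would be dominated in $G$, contradicting canonical form, so $G^L=G^{L'}$ and therefore $G^L\eq(G')^L$. Since options of canonical games are canonical and both are strictly smaller than the originals, the induction hypothesis yields $G^L=(G')^L$. Thus every left option of $G$ is a left option of $G'$; the reverse inclusion follows by exchanging the roles of $G$ and $G'$, so the left-option sets coincide, and dually the right-option sets, giving $G=G'$.

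The step I expect to be the main obstacle---and the point where this theory departs from the standard one---is the mixed case, say $G=[a]$ atomic and $G'$ composite, with the reverse being dual. Here I must show that this configuration simply cannot occur for equivalent canonical games, and the argument must exploit the nonstandard atomic clauses of Definition~\ref{def-order} together with the passing-option condition. From $[a]\leq G'$ one extracts a left option $(G')^L$ with $[a]\leq(G')^L$, while $G'\leq[a]$ gives $(G')^L\tri[a]$. If $(G')^L$ is composite, then $(G')^L\tri[a]$ produces a right option $((G')^L)^R\leq[a]\eq G'$, hence $((G')^L)^R\leq G'$, making $(G')^L$ reversible in $G'$; if instead $(G')^L$ is atomic, say $[c]$, then $[c]\tri[a]$ and $[a]\leq[c]$ force $c=a$, so $[a]$ is itself a left option of $G'$ with $[a]\eq G'$, i.e.\ a passing option. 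In either case $G'$ fails to be in canonical form, a contradiction. This rules out the mixed case and completes the induction, with the transitivity steps throughout supplied by Lemma~\ref{lem-transitive} and the remaining manipulations obtained by directly unfolding Definition~\ref{def-order}.
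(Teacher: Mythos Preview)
Your proof is correct and follows essentially the same approach as the paper: Conway induction combined with the non-domination, non-reversibility, and non-passing conditions of canonical form. The only difference is organizational—you dispose of the mixed atomic/composite configuration upfront as a separate impossibility, whereas the paper folds that situation into Case~3 of the unfolding of $H\tri G'$ inside the main argument; your arrangement is arguably cleaner, but the underlying ideas are identical.
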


\begin{proof}
  We prove this by induction. Assume $G\eq G'$ and both are in
  canonical form. To show $G=G'$, we must show that $G$ and $G'$ have
  exactly the same left options and right options, and moreover, if
  $G$ and $G'$ are atomic, then they are equal. The latter is clear,
  for if $G=[a]$ and $G'=[a']$, then $G\eq G'$, by definition, holds
  if and only if $a=a'$.

  Now consider any left option $H$ of $G$. We will first show that
  $H\leq H'$ for some left option $H'$ of $G'$. Since $H$ is a left
  option of $G$, we have $H\tri G$. Since $G\eq G'$, we have $H\tri
  G'$. By definition of $\tri$, there are three possibilities:
  \begin{itemize}
  \item Case 1: $H^R\leq G'$ for some right option $H^R$ of $H$. But
    since $G\eq G'$, we then have $H^R\leq G$, so that $H$ is
    reversible via $H^R$ in $G$, contradicting the assumption that $G$
    is in canonical form.
  \item Case 2: $H\leq G'^L$ for some left option $G'^L$ of $G'$. Then
    let $H'=G'^L$.
  \item Case 3: $H=[b]$ and $G'=[a']$ are atomic and $b\leq a'$.  From
    $G'\leq G$, since $G'$ is atomic, it follows that $G'\tri G$,
    i.e., $[a']\tri G$. Since $G$ is not atomic, it follows that
    $[a']\leq G^L$ for some left option $G^L$ of $G$. Since $b\leq
    a'$, we also have $[b]\leq G^L$. Since both $[b]$ and $G^L$ are
    left options of $G$ and therefore cannot be dominated, we have
    $G^L=[b]$. Since $[a']\leq G^L$, we have $a'\leq b$. But we also
    assumed $b\leq a'$, so $a'=b$, so $G'=H$, so $G\eq H$, so that $H$
    is a passing option of $G$, contradicting the assumption that $G$
    is in canonical form.
  \end{itemize}
  We showed that for every left option $H$ of $G$, there is some left
  option $H'$ of $G'$ such that $H\leq H'$. The same argument shows
  that there is some left option $H''$ of $G$ such that $H'\leq
  H''$. By transitivity, this implies $H\leq H''$. Since both $H$ and
  $H''$ are left options of $G$ and $G$ is in canonical form, $H$
  cannot be dominated, so it follows that $H=H''$. Then $H\leq H'\leq
  H''=H$, so that $H\eq H'$. Since $H$ and $H'$ are in canonical form,
  by the induction hypothesis, $H=H'$.

  We have shown that every left option of $G$ is a left option of
  $G'$. A symmetric argument shows that every left option of $G'$ is a
  left option of $G$, so that $G$ and $G'$ have exactly the same left
  options. The analogous property for right options holds by duality.
\end{proof}

\begin{definition}
  A game is {\em short} if it has only finitely many positions;
  equivalently, a game is short if it has finitely many left and right
  options, and all the left and right options are short.
\end{definition}

\begin{lemma}[Existence of canonical form]
  Every short game has a canonical form (i.e., for every $G$, there
  exists $G'$ such that $G\eq G'$ and such that $G'$ is in canonical
  form).
\end{lemma}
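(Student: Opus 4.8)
The plan is to prove existence by well-founded induction on the relation $\ll$, using the birthday $b(G)$ (the rank of $G$ in the well-founded order $\ll$, a natural number since $G$ is short) as the induction parameter, and assuming that every game strictly smaller than $G$ already has a canonical form. The base case is immediate: an atomic game $[a]$ has no options, so it vacuously has no dominated, reversible, or passing options and is already in canonical form. For the inductive step I would first \emph{canonicalize the options}: since every option of $G$ is strictly $\ll$-smaller than $G$, the induction hypothesis supplies a canonical form for each, and replacing every option by its canonical form yields a game $G_1$ with $G_1\eq G$ (by repeated use of Lemma~\ref{lem-composite-monotone} applied in both directions) all of whose options are in canonical form.

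Next I would run a reduction loop on $G_1$ that preserves two invariants: the current game is equivalent to $G$, and all of its options are in canonical form. At each stage, given the current game $G'$: if $G'$ has an option $H$ with $H\eq G'$ (a passing option), then $H$ is already in canonical form by the invariant and $G'\eq H$ by the Simplifying passing options lemma, so I output $H$ and stop; otherwise, if $G'$ has a dominated option I delete it (Removing dominated options lemma), and if $G'$ has a reversible left option $H$, reversible via $K$, I bypass it using Lemma~\ref{lem-bypassing}, replacing $H$ by the left options $K^L$ of $K$ (or by $K$ itself when $K$ is atomic, which is legitimate by Lemma~\ref{lem-atomic-composite}); reversible right options are handled dually. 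The point that makes the invariant survive bypassing is that the newly introduced options are \emph{already} in canonical form: $K$ is a right option of the canonical-form game $H$, so $K$, and hence each $K^L$, is in canonical form. Every step produces a game equivalent to $G'$ and hence to $G$, so both invariants are maintained.

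Termination is the step I expect to be the main obstacle, since bypassing a reversible option can \emph{increase} the number of options rather than decrease it; I would handle it with a multiset measure. Assign to $G'$ the finite multiset of birthdays of its options, and compare such multisets in the Dershowitz--Manna multiset extension of the order on $\N$, which is well-founded. Removing a dominated option deletes one element and so strictly decreases the measure. Bypassing a reversible left option $H$ deletes $b(H)$ and inserts the birthdays of the $K^L$; since $K\ll H$ gives $b(K)<b(H)$ and $K^L\ll K$ gives $b(K^L)<b(K)$ (and in the atomic case the inserted $K$ has birthday $0<b(H)$, as $H$ is composite), every inserted birthday is strictly below $b(H)$, so the measure again strictly decreases. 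Hence the loop terminates. When it halts, either it has returned a passing option that is in canonical form and equivalent to $G$, or it has reached a game $G'$ that is equivalent to $G$, has no dominated, reversible, or passing options, and whose options are all in canonical form — that is, a canonical form of $G$ — completing the induction.
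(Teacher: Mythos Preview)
Your proof is correct. The overall strategy---repeatedly simplify until no dominated, reversible, or passing options remain---is the same as the paper's, but your termination argument is organized differently. The paper uses a single global measure, the total number of positions (i.e., the game-tree size), and observes that every simplification step, applied \emph{anywhere} in $G$, strictly decreases this number; the whole proof is three sentences. You instead stratify: an outer induction on birthday lets you assume all options are already canonical, and then you run the simplification loop only at the top level, with termination via the Dershowitz--Manna multiset ordering on the birthdays of the current options.

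Your version is more explicit about the invariants (in particular, that bypassing introduces only options that are themselves already in canonical form, since they are sub-positions of a canonical option), which the paper leaves implicit. The paper's single size measure is slicker once one checks that bypassing really does decrease it (the removed option $H$ has tree-size strictly larger than the sum of the sizes of the $K^L$ being inserted), but your multiset argument makes the strict decrease completely transparent without that calculation. Either route works; yours trades brevity for structural clarity.
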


\begin{proof}
  We prove this by induction on the total size of the game, i.e., the
  total number of positions in $G$. Repeatedly remove a dominated
  option, bypass a reversible option, or simplify a passing option
  anywhere in $G$. Since each of these steps strictly decreases the
  size, the procedure terminates.
\end{proof}

\subsection{Monotone Games}

Monotone set coloring games, including Hex, are combinatorial games in
the sense of Definition~\ref{def-game}, but they have additional
properties. In particular, in a monotone set coloring game, an
additional move can never hurt a player: adding some stones of a
player's color to a position is never bad for that player. This
motivates the following definitions.

\begin{definition}[Good option, monotone game]
  \label{def-good}
  Let $G$ be a game over a poset $A$. A left option $G^L$ of $G$ is
  said to be {\em good} if $G\leq G^L$. Dually, a right option $G^R$
  of $G$ is {\em good} if $G^R\leq G$. A game $G$ is called {\em
    locally monotone} if all of its left and right options are good.
  It is said to be {\em monotone} if it is locally monotone and
  recursively, all of its options are monotone.
\end{definition}

Note that in this definition, an option is ``good'' if it is
beneficial for the player to whom the option belongs. This deviates
from our usual convention of always taking Left's point of view.

\begin{example}
  Over the booleans $\Bool$, the games $\top$, $\bot$, and
  $\g{\top|\bot}$ are monotone. The game $\g{\bot|\top}$ is not
  monotone.
\end{example}

Let $\Star=\g{\top|\bot}$. This game is called $\Star$ or ``star''
because it is a first-player win for both players, and thus is is
roughly analogous to the nimber $*=\g{0|0}$ from normal play games.
(However, although these two games share a name, they should only be
considered analogous, not actually equal, because they belong to
different classes of games, where wins are propagated in a different
way.) The game $\Star$ has sometimes appeared under this name in the
Hex literature; for example, it inspired the term {\em star
  decomposition} in {\cite{star-decomposition}}.

\begin{example}\label{exa-not-monotone}
  The game $G=\g{\Star|\Star}$ is not monotone. Indeed, since
  $\top\ntri\bot$, we have $\top\nleq\g{\top|\bot}=\Star$, and
  therefore $\top\ntri\g{\Star|\Star}=G$, and hence
  $\Star=\g{\top|\bot}\nleq G$. Since $\Star$ is a right option of
  $G$, this shows that $G$ is not monotone.
\end{example}

\begin{remark}
  If $G=\g{G^L|G^R}$ is monotone, then $G^L,G^R$ are monotone and
  $G^R\leq G^L$. This is obvious from the definition of monotonicity
  and by transitivity, since $G^R\leq G \leq G^L$. However, the
  converse is not true: if $G^R$ and $G^L$ are monotone, assuming
  $G^R\leq G^L$ is not sufficient to imply that $G=\g{G^L|G^R}$ is
  monotone. Example~\ref{exa-not-monotone} is a counterexample with
  $G^L=G^R=\Star$.
\end{remark}

\begin{lemma}\label{lem-tri-reflexive}
  If\/ $G$ is a monotone game, then $G\tri G$.
\end{lemma}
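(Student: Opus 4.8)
The plan is to split into two cases according to whether $G$ is atomic or composite, appealing directly to the definition of $\tri$ in Definition~\ref{def-order}. Notably, no induction is needed: the statement follows immediately once we unfold what monotonicity supplies.

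First, suppose $G=[a]$ is atomic. Since the order on the atom poset $A$ is reflexive, we have $a\leq a$, and hence clause~3 in the definition of $\tri$ gives $[a]\tri[a]$, i.e., $G\tri G$.

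Second, suppose $G$ is composite. By Definition~\ref{def-game}, every composite game has at least one right option $G^R$ (the set of right options is required to be non-empty). Since $G$ is monotone, it is in particular locally monotone, so every right option is good; that is, $G^R\leq G$. Clause~1 in the definition of $\tri$ asks precisely for the existence of a right option $G^R$ with $G^R\leq G$, so it applies and yields $G\tri G$.

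I expect no genuine obstacle here, as the result is a direct consequence of the existence of a good right option in a monotone composite game, together with the trivial atomic base case. The one point worth flagging is that the argument in the composite case relies on the non-emptiness of the set of right options guaranteed by Definition~\ref{def-game}: it is this that makes clause~1 applicable. Without that convention, a ``composite'' game with no right options would have nothing to witness $G\tri G$, and the proof would break down.
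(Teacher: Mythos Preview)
Your proof is correct and follows essentially the same approach as the paper's: split into the atomic and composite cases and, in the composite case, use the existence of a good option to witness $G\tri G$ via the definition. The only (inessential) difference is that the paper picks a good left option and invokes clause~2 of the definition of $\tri$, whereas you pick a good right option and invoke clause~1; these are dual and equally valid.
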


\begin{proof}
  If $G$ is atomic, this is trivial by definition. If $G$ is composite,
  pick any left option $G^L$ (there is at least one). Since $G$ is
  monotone, we have $G\leq G^L$. Thus, by definition of $\tri$, we
  have $G\tri G$.
\end{proof}

\subsection{The Problem with Monotonicity}\label{ssec-problem-monotonicity}

The problem with the concept of monotonicity is that it is a property
of games, not equivalence classes of games. As the following example
shows, the canonical form of a monotone game is not in general
monotone. The absence of canonical forms for monotone games makes it
awkward to work with monotone games up to equivalence.

\begin{example}[Canonical forms of monotone games need not be monotone]
  \label{exa-canonical-not-monotone}
  Consider the atom poset $A=\s{\bot,a,b,\top}$ with top element
  $\top$, bottom element $\bot$, and $a,b$ incomparable.  Consider the
  following games:
  \[
  J = \g{b|\bot}, \quad
  K = \g{\top|a}, \quad
  H = \g{\top|J}, \quad
  G = \g{K,H|\bot}, \quad
  G'=\g{K,b|\bot}.
  \]
  One can check that $G$ is monotone. Moreover, the left option $H$ is
  reversible via $J$ in $G$, so that $G\eq G'$. In fact, $G'$ is the
  canonical form of $G$. However, $G'$ is not monotone, because
  $G'\nleq b$.
\end{example}

Fortunately, this problem has a solution, which we develop in
Section~\ref{sec-fundamental}. We will define the notion of a {\em
  passable game}, which is closed under canonical forms, and we will
prove a game is equivalent to a monotone game if and only if its
canonical form is passable.

\section{Left and Right Equivalence}\label{sec-left-right}

In this section, we introduce some technical notions that will give us
more insight into the structure of games. This will be useful in the
characterization of monotone games in Section~\ref{sec-fundamental}.
The results of this section also have other applications, for example
to the efficient enumeration of games up to equivalence, which we
describe in greater detail in Section~\ref{sec-enumeration}.  Readers
who are in a hurry can skip directly to Section~\ref{sec-fundamental}
and come back here as necessary.

To better understand equivalence of games, we consider some coarser
equivalence relations, which we call left and right equivalence. For
example, we can say that games $H$ and $K$ are ``left equivalent'' if
they are equivalent when used as left options, i.e., if
$\g{H,G^L|G^R}\eq \g{K,G^L|G^R}$ for all $G^L,G^R$. In a similar vein,
it makes sense to consider left and right orderings; for example, we
define $H\leql K$ if $\g{H,G^L|G^R}\leq \g{K,G^L|G^R}$ for all
$G^L,G^R$.

\subsection{The Left and Right Orders}

In the following, we use the letters $S$ and $T$ to denote non-empty
sets of games. The case where $S$ and $T$ are single games is a
special case. For notational convenience, when $S$ consists of a
single game $H$, we write $S=H$ rather than $S=\s{H}$.

\begin{definition}
  Let $S$ and $T$ be non-empty sets of games. The {\em left order} is
  defined as follows: We say that $S\leql T$ if for every (possibly
  empty) set of games $L$ and every non-empty set of games $R$, we have
  $\g{S,L|R}\leq\g{T,L|R}$. The {\em right order} is defined dually:
  we say that $S\leqr T$ if $\g{L|R,S}\leq\g{L|R,T}$ for all
  $R$ and all non-empty $L$.
\end{definition}

By Lemma~\ref{lem-composite-monotone}, $H\leq K$ implies $H\leql
K$. However, the converse is not true. For example, we have
$\g{\top|\g{a|\bot}}\leql a$. This can be seen by noting that as a
left option, $\g{\top|\g{a|\bot}}$ is always reversible, and bypassing
it results in $a$. On the other hand, $\g{\top|\g{a|\bot}}\nleq a$.

The next two lemmas give useful characterizations of the left order.
If $S$ is a set of games and $G$ is a game, we write $S\tri G$ to mean
that $H\tri G$ holds for all $H\in S$. Similarly, we write $S\leq G$
to mean that $H\leq G$ holds for all $H\in S$.

\begin{lemma}\label{lem-102}
  Let $S$ and $T$ be non-empty sets of games over any poset $A$ (it
  does not need to have a top or bottom element). Then the following
  are equivalent:
  \begin{enumerate}\alphalabels
  \item $S\leql T$.
  \item For all $G$, $T\tri G$ implies $S\tri G$.
  \end{enumerate}
  In particular, $H\leql K$ if and only if every right gift horse for
  $K$ is a right gift horse for $H$.
\end{lemma}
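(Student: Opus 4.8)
The plan is to prove the two implications (a)$\Rightarrow$(b) and (b)$\Rightarrow$(a) separately, after noting that the final ``in particular'' clause is simply the special case $S=\{H\}$, $T=\{K\}$ of the equivalence, read through the gift-horse terminology: ``$G$ is a right gift horse for $K$'' means exactly $K\tri G$. The conceptual bridge between the left order (a statement about $\leq$ of composite games) and condition (b) (a statement about $\tri$) will be the Gift Horse Lemma~\ref{lem-gift-horse} together with transitivity (Lemma~\ref{lem-transitive}); the recurring trick is that a game appearing as a left option $H$ of some composite game automatically satisfies $H\tri\g{\dots}$ by reflexivity of $\leq$, and dually a right option $r$ satisfies $\g{\dots}\tri r$.

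For (b)$\Rightarrow$(a) --- the easier direction --- I would fix a possibly empty $L$ and a non-empty $R$ and verify $\g{S,L|R}\leq\g{T,L|R}$ directly from Definition~\ref{def-order}. Writing $G=\g{S,L|R}$ and $H=\g{T,L|R}$, both are composite, so clause~3 is vacuous. Clause~2 is immediate: every right option of $H$ lies in $R$, hence is also a right option of $G$, so $G\tri H^R$ by reflexivity. For clause~1 the left options coming from $L$ are again left options of $H$ and give $\ell\tri H$ for free; the only real content is the left options coming from $S$, and here I would observe that $T\tri H$ holds (each $t\in T$ is a left option of $H$) and apply hypothesis (b) to conclude $S\tri H$, which is exactly what clause~1 demands. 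This yields $G\leq H$ and hence $S\leql T$.

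For (a)$\Rightarrow$(b) I would start from an arbitrary $G$ with $T\tri G$ and aim to produce $S\tri G$. First I would reduce to the case that $G$ is composite: if $G=[c]$ is atomic, replace it by $\g{c|c}\eq[c]$ (Lemma~\ref{lem-atomic-composite}); since $T\tri G$ and $G\leq\g{c|c}$, transitivity (Lemma~\ref{lem-transitive}(a)) gives $T\tri\g{c|c}$, and conversely any conclusion $S\tri\g{c|c}$ transports back to $S\tri G$ along $\g{c|c}\leq G$. With $G$ composite and $T\tri G$, repeated use of the Gift Horse Lemma lets me absorb all of $T$ as left options, giving $G\eq\g{T,G^L|G^R}$, where $G^L,G^R$ denote the options of $G$. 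Now I would instantiate the hypothesis $S\leql T$ at the specific choice $L=G^L$, $R=G^R$ (with $R$ non-empty since $G$ is composite), obtaining $\g{S,G^L|G^R}\leq\g{T,G^L|G^R}\eq G$. Finally, $S\tri\g{S,G^L|G^R}$ holds trivially (each $s\in S$ is a left option of that game), so transitivity of $\tri$ over $\leq$ yields $S\tri G$, as required.

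The main obstacle --- though a mild one --- is the bookkeeping in (a)$\Rightarrow$(b): justifying the ``repeated'' gift-horse step (each successive $t\in T$ remains a left gift horse because the intermediate game stays equivalent to $G$ and $t\tri G$ is preserved along $\eq$), and correctly handling the atomic case, which does not directly fit the gift-horse machinery and must be routed through $[c]\eq\g{c|c}$. Both hinge on the same asymmetric fact that $\tri$-statements on the right-hand side transport along $\eq$ via Lemma~\ref{lem-transitive}(a); once that is in hand, the choice $L=G^L$, $R=G^R$ is the key idea that collapses the ``for all $L,R$'' quantifier of the left order into the single inequality $\g{S,G^L|G^R}\leq G$ that we need. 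The ``in particular'' statement then follows with no further work by specializing $S$ and $T$ to single games.
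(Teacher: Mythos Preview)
Your proof is correct and follows essentially the same approach as the paper's: both directions hinge on the same key instantiation $L=G^L$, $R=G^R$ for (a)$\Rightarrow$(b), and on observing $T\tri\g{T,L|R}$ to trigger (b) in the direction (b)$\Rightarrow$(a). If anything, you are slightly more careful than the paper: you explicitly reduce the atomic case of $G$ in (a)$\Rightarrow$(b) via $[c]\eq\g{c|c}$, whereas the paper silently writes $\g{T,G^L|G^R}$ as though $G$ were already composite.
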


\begin{proof}
  To show that (a) implies (b), assume $S\leql T$ and $T\tri G$. We
  must show $S\tri G$. Since each $K\in T$ is a left gift horse for
  $G$, we have $G \eq \g{T, G^L | G^R}$. Since $S\leql T$, we have
  $\g{S, G^L | G^R} \leq \g{T, G^L | G^R}$, and therefore $\g{S, G^L |
    G^R} \leq G$ by transitivity. Then by definition of $\leq$, we
  have $H\tri G$ for all $H\in S$, thus $S\tri G$ as desired.

  To show the opposite implication, assume (b) holds. We must show
  $S\leql T$. Consider any $L,R$. We must show
  $\g{S,L|R}\leq\g{T,L|R}$.  We clearly have $T\tri \g{T,L|R}$, so by
  assumption (b), we have $S\tri \g{T,L|R}$. Then
  $\g{S,L|R}\leq\g{T,L|R}$ follows directly from the definition of
  $\leq$.

  The final claim of the lemma is just the special case when
  $S=H$ and $T=K$ are singletons.
\end{proof}

\begin{lemma}\label{lem-101}
  Suppose the atom poset $A$ has a bottom element, and let $S,T$ be
  non-empty sets of games over $A$. Then the following are
  equivalent:
  \begin{enumerate}\alphalabels
  \item $S\leql T$,
  \item $\g{S|\bot} \leq \g{T|\bot}$,
  \item $S\tri \g{T|\bot}$.
  \end{enumerate}
\end{lemma}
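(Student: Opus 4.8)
The plan is to prove the three statements equivalent through the cycle (a) $\Rightarrow$ (b) $\Rightarrow$ (c) $\Rightarrow$ (a), noting that the first two implications are direct unwindings of the definitions and that only the last carries genuine content. For (a) $\Rightarrow$ (b), I would specialize the definition of $\leql$ to the empty left set and the right set $\s{\bot}$: taking $L=\emptyset$ and $R=\s{\bot}$ in $\g{S,L|R}\leq\g{T,L|R}$ gives exactly $\g{S|\bot}\leq\g{T|\bot}$. For (b) $\Rightarrow$ (c), I would read off the first defining clause of $\g{S|\bot}\leq\g{T|\bot}$ from Definition~\ref{def-order}: the left options of $\g{S|\bot}$ are precisely the elements of $S$, and that clause asserts that each such option is $\tri\g{T|\bot}$, which is the statement $S\tri\g{T|\bot}$.

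The real work is in (c) $\Rightarrow$ (a). Here I would invoke Lemma~\ref{lem-102}, which reduces the goal $S\leql T$ to proving that $T\tri G$ implies $S\tri G$ for every game $G$. The link between the hypothesis $S\tri\g{T|\bot}$ and this goal is the auxiliary fact
\[
  T\tri G \quad\Longrightarrow\quad \g{T|\bot}\leq G .
\]
Granting it, for any $G$ with $T\tri G$ and any $H\in S$ I would chain $H\tri\g{T|\bot}\leq G$ through transitivity (Lemma~\ref{lem-transitive}(a)) to obtain $H\tri G$, and hence $S\tri G$ as required.

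The crux, and what I expect to be the main obstacle, is establishing this auxiliary fact, since it is the only place where the hypothesis that $A$ has a bottom element is used. Assuming $T\tri G$, I would verify the three conditions of $\g{T|\bot}\leq G$ in Definition~\ref{def-order}. The first condition demands that every left option of $\g{T|\bot}$, i.e.\ every element of $T$, be $\tri G$, which is exactly the hypothesis $T\tri G$. The remaining two conditions are handled uniformly by the bottom element: for each right option $G^R$ of $G$ one has $\bot\leq G^R$ by Lemma~\ref{lem-top}, so the right option $\bot$ of $\g{T|\bot}$ witnesses $\g{T|\bot}\tri G^R$; and should $G$ be atomic, the same inequality $\bot\leq G$ witnesses $\g{T|\bot}\tri G$. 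It is precisely this role of $\bot$ as a right option dominating everything in sight that makes the right-option clause and the atomic clause collapse, and that is the observation on which the whole argument turns.
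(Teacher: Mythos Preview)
Your proof is correct. The steps (a) $\Rightarrow$ (b) and (b) $\Rightarrow$ (c) match the paper exactly. For (c) $\Rightarrow$ (a), you take a slightly different route: you pass through Lemma~\ref{lem-102}, reducing $S\leql T$ to the implication ``$T\tri G \Rightarrow S\tri G$'', and then establish the auxiliary fact $T\tri G \Rightarrow \g{T|\bot}\leq G$ for arbitrary $G$. The paper instead works directly from the definition of $\leql$: given any $L,R$, it observes $\g{T|\bot}\leq\g{T,L|R}$ (a special case of your auxiliary fact, with $G=\g{T,L|R}$), applies transitivity to get $S\tri\g{T,L|R}$, and reads off $\g{S,L|R}\leq\g{T,L|R}$ from the definition of $\leq$. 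Both arguments hinge on the same observation---that $\bot$ as the sole right option of $\g{T|\bot}$ makes the right-option and atomic clauses trivially true---so the difference is purely organizational: your version isolates the key inequality as a standalone fact about arbitrary $G$, while the paper's is marginally shorter by skipping the appeal to Lemma~\ref{lem-102}.
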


\begin{proof}
  (a) $\imp$ (b) follows directly from the definition of $\leql$.  (b)
  $\imp$ (c) follows directly from the definition of $\leq$.  To prove
  (c) $\imp$ (a), assume $S\tri \g{T|\bot}$. Consider any $L$,
  $R$. We must show $\g{S,L|R} \leq \g{T,L|R}$. But we have
  $\g{T|\bot}\leq \g{T,L|R}$, so the assumption $S\tri \g{T|\bot}$
  implies $S\tri\g{T,L|R}$ by Lemma~\ref{lem-transitive}. Then
  $\g{S,L|R} \leq \g{T,L|R}$ follows directly from the
  definition of $\leq$. 
\end{proof}

Note that the condition in the definition of the left order says that
$\g{S,L|R}\leq\g{T,L|R}$ must hold for ``all'' $L,R$. One may wonder
whether we would wind up with a different definition of left order if
we restricted $L$ and $R$, for example to sets of monotone games.
Lemma~\ref{lem-101} shows that this makes no difference, since it
suffices to consider $L=\emptyset$ and $R=\bot$.

\begin{lemma}\label{lem-103}
  Let $A$ be any poset (it does not need to have a top or bottom
  element). Let $H,K$ be games over $A$ such that $H\leql K$. Then
  half of the conditions in the definition of $H\leq K$ are satisfied:
  \begin{enumerate}\alphalabels
  \item All right options $K^R$ satisfy $H\tri K^R$, and
  \item if $K$ is atomic, $H\tri K$.
  \end{enumerate}
\end{lemma}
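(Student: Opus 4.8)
The plan is to reduce both parts to the characterization of the left order given in Lemma~\ref{lem-102}. In its singleton form, that lemma says $H\leql K$ holds if and only if, for every game $G$, the implication $K\tri G \imp H\tri G$ is valid (equivalently, every right gift horse for $K$ is a right gift horse for $H$). So, given the hypothesis $H\leql K$, it suffices in each part to exhibit a game $G$ for which the desired conclusion is exactly $H\tri G$, and then to verify the premise $K\tri G$.

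For part (a), I would fix an arbitrary right option $K^R$ of $K$ and instantiate $G=K^R$. By reflexivity of $\leq$ (the Reflexivity Lemma) we have $K^R\leq K^R$, so clause~1 in the definition of $\tri$ gives $K\tri K^R$. Feeding this premise into Lemma~\ref{lem-102} applied to the hypothesis $H\leql K$ then yields $H\tri K^R$, which is the claim. Since $K^R$ was arbitrary, this covers all right options.

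For part (b), I would assume $K=[b]$ is atomic and instantiate $G=K$. Since $b\leq b$ holds in the poset $A$, clause~3 in the definition of $\tri$ gives $[b]\tri[b]$, that is, $K\tri K$. Applying Lemma~\ref{lem-102} to $H\leql K$ once more yields $H\tri K$, completing part (b).

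I expect essentially no obstacle beyond correctly invoking the characterization: the only nontrivial content is recognizing that $K$ itself stands in the relation $\tri$ to the relevant game, and in both cases this is immediate from reflexivity---at the level of games for $K^R\leq K^R$, and at the level of atoms for $b\leq b$. It is worth noting that this argument uses no hypothesis about top or bottom elements of $A$, exactly matching the stated hypotheses of both Lemma~\ref{lem-102} and the present lemma.
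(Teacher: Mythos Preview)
Your proof is correct and essentially identical to the paper's own proof: both parts invoke Lemma~\ref{lem-102} after observing that $K\tri K^R$ (via reflexivity and clause~1 of $\tri$) in part~(a), and $K\tri K$ (via clause~3 for atoms) in part~(b).
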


\begin{proof}
  By assumption, $H\leql K$. To show (a), consider any right option
  $K^R$ of $K$. Then $K\tri K^R$, and therefore by
  Lemma~\ref{lem-102}, we have $H\tri K^R$, as desired. To show (b),
  assume $K$ is atomic. Then $K\tri K$, and again by
  Lemma~\ref{lem-102}, we have $H\tri K$, as desired.
\end{proof}

The converse implication of Lemma~\ref{lem-103} is not true. For
example, let $H=\g{\top|a}$ and $K=a$. Then conditions (a) and (b) are
satisfied, but $H\not\leql K$.

\begin{lemma}\label{lem-leq-leql-leqr}
  We have $H\leq K$ if and only if both $H\leql K$ and $H\leqr K$ hold.
\end{lemma}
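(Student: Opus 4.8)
The plan is to prove the two implications separately, using Lemma~\ref{lem-composite-monotone} for the forward direction and Lemma~\ref{lem-103} together with its dual for the backward direction. Neither direction requires a fresh induction; both reduce to bookkeeping against earlier results.

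For the forward direction, suppose $H\leq K$. The left half of Lemma~\ref{lem-composite-monotone}, applied with arbitrary option sets $L$ (possibly empty) and $R$ (non-empty) in the roles of $G^L$ and $G^R$, yields $\g{H,L|R}\leq\g{K,L|R}$; as this holds for all admissible $L,R$, we obtain $H\leql K$ by the definition of the left order. The right half of the same lemma gives $H\leqr K$ in exactly the same way. This direction is therefore immediate.

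For the backward direction, assume $H\leql K$ and $H\leqr K$, and verify directly the three clauses of $H\leq K$ in Definition~\ref{def-order}. Clause~2 (every right option $K^R$ satisfies $H\tri K^R$) and the ``$K$ atomic'' half of clause~3 are precisely conclusions (a) and (b) of Lemma~\ref{lem-103} applied to $H\leql K$. The remaining requirements — clause~1 (every left option $H^L$ satisfies $H^L\tri K$) and the ``$H$ atomic'' half of clause~3 — are exactly what the dual of Lemma~\ref{lem-103} delivers from $H\leqr K$. Since the left order dualizes to the right order, and $\tri$ reverses its two arguments under duality, this mirror statement reads: $H\leqr K$ implies that every left option $H^L$ satisfies $H^L\tri K$, and that $H\tri K$ whenever $H$ is atomic. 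Collecting the four outputs, all three clauses of $H\leq K$ are met.

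The only point needing genuine care is the bookkeeping in the last paragraph: one must check that Lemma~\ref{lem-103} and its mirror between them account for \emph{exactly} the clauses of Definition~\ref{def-order}, with the disjunctive atomic clause~3 handled by splitting into the case ``$K$ atomic'' (covered by the $\leql$ hypothesis) and the case ``$H$ atomic'' (covered by the $\leqr$ hypothesis). I expect this matching to be the step most worth spelling out explicitly, since it is the place where a reader might worry that a condition has been overlooked; once it is laid out, the result follows mechanically from the cited lemmas and the duality principle.
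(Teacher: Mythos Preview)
Your proof is correct and follows essentially the same route as the paper: the forward direction via Lemma~\ref{lem-composite-monotone}, and the backward direction by combining Lemma~\ref{lem-103} (giving clause~2 and the ``$K$ atomic'' half of clause~3) with its dual (giving clause~1 and the ``$H$ atomic'' half). The paper's proof is the same argument stated more tersely; your explicit splitting of clause~3 into two halves is exactly the bookkeeping the paper leaves implicit.
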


\begin{proof}
  The left-to-right implication holds by
  Lemma~\ref{lem-composite-monotone}. For the right-to-left
  implication, assume $H\leql K$ and $H\leqr K$.  By
  Lemma~\ref{lem-103} applied to $H\leql K$, half of the conditions
  for $H\leq K$ hold. By the dual of Lemma~\ref{lem-103} applied to
  $H\leqr K$, the other half holds, therefore $H\leq K$.
\end{proof}

\begin{corollary}
  On the class of games over a given atom poset, the relations $\leq$,
  $\leql$, and $\leqr$ are all uniquely determined by the relation
  $\tri$ (i.e., without requiring knowledge of the internal structure
  of games).
\end{corollary}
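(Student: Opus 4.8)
The plan is to assemble three facts already proved in this section, observing that together they express each of $\leq$, $\leql$, and $\leqr$ using only the relation $\tri$. The starting point is Lemma~\ref{lem-102}: for single games it asserts that $H\leql K$ holds if and only if, for every game $G$, $K\tri G$ implies $H\tri G$. The right-hand side quantifies over all games $G$ but refers only to the relation $\tri$ and never to the internal (option) structure of $H$ or $K$. Hence $\leql$ is already a function of $\tri$, and I would record this as the first step.

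The second step handles $\leqr$ by duality. Dualizing Lemma~\ref{lem-102} replaces $\leql$ by $\leqr$ and, since dualizing Definition~\ref{def-order} turns the statement ``$X\tri Y$'' into ``$Y\tri X$'', swaps the arguments of each occurrence of $\tri$. This yields: $H\leqr K$ holds if and only if, for every game $G$, $G\tri K$ implies $G\tri H$. Again this is phrased purely in terms of $\tri$, so $\leqr$ is likewise determined by $\tri$.

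The final step combines the two. By Lemma~\ref{lem-leq-leql-leqr}, $H\leq K$ holds if and only if both $H\leql K$ and $H\leqr K$. Since each conjunct has just been shown to depend only on $\tri$, so does their conjunction, and therefore $\leq$ is determined by $\tri$ as well. This completes the chain from $\tri$ to $\leql$ and $\leqr$, and thence to $\leq$.

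The only real obstacle is bookkeeping rather than mathematical: one must verify that the dual of Lemma~\ref{lem-102} reads exactly as claimed, i.e.\ that under the Duality Remark the relation ``$X\tri Y$'' becomes ``$Y\tri X$'' (which is immediate from dualizing Definition~\ref{def-order}), and one must note that quantifying over the entire class of games $G$ is unproblematic here precisely because $\tri$ is assumed known on that whole class. No new induction or genuinely game-theoretic argument is required; all the content already resides in Lemmas~\ref{lem-102} and~\ref{lem-leq-leql-leqr}.
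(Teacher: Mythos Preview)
Your approach is exactly the paper's: use Lemma~\ref{lem-102} for $\leql$, its dual for $\leqr$, and then Lemma~\ref{lem-leq-leql-leqr} for $\leq$. However, your stated dual formula is wrong. Dualizing ``$H\leql K$ iff for all $G$, $K\tri G \Rightarrow H\tri G$'' turns $H\leql K$ into $K\leqr H$ (not $H\leqr K$), because dualizing $\{H,L\mid R\}\leq\{K,L\mid R\}$ gives $\{L\mid R,K\}\leq\{L\mid R,H\}$. After relabeling, the correct dual reads: $H\leqr K$ iff for all $G$, $G\tri H \Rightarrow G\tri K$. Your version has the implication reversed, and it is actually false: take $H=\bot$, $K=\top$; then $\bot\leqr\top$ holds, but $\top\tri\top$ while $\top\ntri\bot$, so your criterion would fail. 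This does not affect the corollary itself (either formula would show $\leqr$ is determined by $\tri$), but since you flagged the bookkeeping as the only obstacle, it is worth getting it right.
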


\begin{proof}
  By Lemma~\ref{lem-102}, the relation $\leql$ is uniquely determined
  by $\tri$; the relation $\leqr$ is determined dually, and finally
  $\leq$ is determined by Lemma~\ref{lem-leq-leql-leqr}.
\end{proof}

\begin{definition}
  Let $A$ be a poset with top and bottom elements. Let $S$ be a
  non-empty set of games over $A$. We define $\upl S = \g{\top | \g{S |
      \bot}}$. Dually, define $\downr S = \g{\g{\top | S} | \bot}$.
\end{definition}

\begin{lemma}\label{lem-104}
  We have $S\leql T$ if and only if $S \leq \upl T$.
\end{lemma}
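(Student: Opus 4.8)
The plan is to reduce the statement to Lemma~\ref{lem-101}, which already tells us that $S\leql T$ is equivalent to $S\tri\g{T|\bot}$. Thus it suffices to prove the single equivalence $S\leq\upl T\iff S\tri\g{T|\bot}$. Since both of these conditions quantify over the elements of $S$ separately (recall that $S\leq G$ means $H\leq G$ for all $H\in S$, and likewise for $\tri$), it is enough to establish, for an arbitrary game $H$, that $H\leq\upl T$ if and only if $H\tri\g{T|\bot}$. Here I use crucially that $A$ has top and bottom elements, as required by the definition of $\upl$, so that Lemma~\ref{lem-top} is available.

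For the forward direction I would simply unfold the definition of $\leq$. Since $\upl T=\g{\top|\g{T|\bot}}$ is composite with a unique right option $\g{T|\bot}$, one of the defining conditions of $H\leq\upl T$ in Definition~\ref{def-order} is precisely that $H\tri\g{T|\bot}$. So this direction is immediate.

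For the backward direction, assume $H\tri\g{T|\bot}$ and verify the three conditions for $H\leq\upl T$. The right-option condition is exactly the hypothesis, since $\g{T|\bot}$ is the only right option of $\upl T$. The remaining left-option condition and atomic condition are both handled by the left option $\top$ of $\upl T$: for any left option $H^L$ of $H$ we have $H^L\leq\top$ by Lemma~\ref{lem-top}, hence $H^L\tri\upl T$ by clause~(2) of the definition of $\tri$; and if $H$ is atomic then $H\leq\top$ by Lemma~\ref{lem-top} gives $H\tri\upl T$ in the same way (note that $\upl T$ itself is never atomic, so no other subcase of the atomic condition arises). This establishes all conditions, so $H\leq\upl T$.

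Combining this equivalence with Lemma~\ref{lem-101} then finishes the proof. I do not expect any serious obstacle: the only point requiring care is the bookkeeping of the atomic cases in the backward direction, and this is exactly where the presence of $\top$ as a left option of $\upl T$ does all the work. Because $\top$ dominates every game, it trivializes every condition in the definition of $\leq$ except the one coming from the right option $\g{T|\bot}$, which is precisely the content we want to isolate.
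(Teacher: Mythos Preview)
Your proposal is correct and is essentially the same argument as the paper's, just organized slightly differently: you factor the proof through the auxiliary equivalence $H\leq\upl T\iff H\tri\g{T|\bot}$ and invoke Lemma~\ref{lem-101} once at the end, whereas the paper invokes Lemma~\ref{lem-101} separately in each direction. The verification of the three conditions in Definition~\ref{def-order} using Lemma~\ref{lem-top} is identical.
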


\begin{proof}
  For the right-to-left implication, note that $S \leq \g{\top | \g{ T
      | \bot}}$ implies $S \tri \g{T | \bot}$ by the definition of
  $\leq$. Hence $S\leql T$ by Lemma~\ref{lem-101}. For the
  left-to-right implication, assume $S\leql T$, and consider some
  $H\in S$. To show $H \leq \g{\top | \g{T | \bot}}$, we must show
  three things. First, clearly any left option $H^L$ satisfies
  $H^L\leq\top$ by Lemma~\ref{lem-top}. Second, we have $H\tri \g{T |
    \bot}$ by Lemma~\ref{lem-101}. Third, if $H$ is atomic, we must
  show that $H \tri \g{\top | \g{ T | \bot}}$. But this is true for
  all $H$ (atomic or not), because $H\leq\top$.
\end{proof}

\begin{corollary}
  We have $S\leql T$ if and only if $\upl S\leq \upl T$.
\end{corollary}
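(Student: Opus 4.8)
The plan is to derive the corollary directly from Lemma~\ref{lem-104}, which already identifies $S\leql T$ with $S\leq\upl T$, so that only the passage between $S\leq\upl T$ and $\upl S\leq\upl T$ remains. Before splitting into the two directions, I would isolate one auxiliary inequality, namely $S\leq\upl S$ (that is, $s\leq\upl S$ for every $s\in S$). This follows immediately: since $\leq$ is reflexive we have $S\leql S$ by definition of the left order, and feeding this into Lemma~\ref{lem-104} gives $S\leq\upl S$. This small observation will carry one of the two directions on its own.

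For the forward implication I would assume $S\leql T$ and rebuild $\upl S\leq\upl T$ from Definition~\ref{def-order}. By Lemma~\ref{lem-101}, $S\leql T$ is equivalent to $\g{S|\bot}\leq\g{T|\bot}$. Since $\upl S$ and $\upl T$ are both composite, establishing $\upl S\leq\upl T$ reduces to two checks: that the sole left option $\top$ of $\upl S$ satisfies $\top\tri\upl T$, and that the sole right option $\g{T|\bot}$ of $\upl T$ satisfies $\upl S\tri\g{T|\bot}$. The first holds because $\top$ is also a left option of $\upl T$ and $\top\leq\top$ (clause~2 of $\tri$); the second holds because $\g{S|\bot}$ is a right option of $\upl S$ with $\g{S|\bot}\leq\g{T|\bot}$ (clause~1 of $\tri$). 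Together these give $\upl S\leq\upl T$.

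For the reverse implication I would deliberately avoid unfolding $\upl S\leq\upl T$ and instead invoke the auxiliary fact. Assuming $\upl S\leq\upl T$, I take any $s\in S$ and chain $s\leq\upl S\leq\upl T$, so $s\leq\upl T$ by transitivity (Lemma~\ref{lem-transitive}); hence $S\leq\upl T$, and Lemma~\ref{lem-104} converts this back to $S\leql T$. The step I expect to be the main obstacle is precisely the temptation to argue this direction structurally: unfolding $\upl S\tri\g{T|\bot}$ produces a disjunction, either $\g{S|\bot}\leq\g{T|\bot}$ (which is $S\leql T$ and finishes) or $\upl S\leq t$ for some $t\in T$, and the second disjunct is awkward to dispatch directly. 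Routing the whole reverse direction through $S\leq\upl S$ and transitivity sidesteps that case analysis entirely, which is why I would establish the auxiliary inequality at the outset.
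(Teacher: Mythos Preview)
Your proof is correct and follows essentially the same route as the paper. The reverse direction is identical to the paper's; for the forward direction the paper obtains $\upl S\leq\upl T$ from $\g{S|\bot}\leq\g{T|\bot}$ in one step by citing Lemma~\ref{lem-composite-monotone}, whereas you verify the two clauses of Definition~\ref{def-order} by hand---but this is just an inlined instance of that lemma.
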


\begin{proof}
  Left-to-right: $S\leql T$ implies, by definition,
  $\g{S|\bot}\leq\g{T|\bot}$, which implies $\upl S\leq\upl T$ by
  Lemma~\ref{lem-composite-monotone}. Right-to-left: Assume $\upl
  S\leq \upl T$. From $S\leql S$, we get $S\leq\upl S$ by
  Lemma~\ref{lem-104}, hence $S\leq\upl T$ by transitivity, hence
  $S\leql T$ by Lemma~\ref{lem-104}.
\end{proof}

We remark that if the atom poset has top and bottom elements, both the
left order $\leql$ and the right order $\leqr$ admit least upper
bounds (and by duality, greatest lower bounds). Let $H\vee K = \g{H^L,
  K^L | \g{H, K | \bot}}$ denote the least upper bound of $H$ and $K$
with respect to the order $\leq$, as in
Proposition~\ref{prop-lattice}.  Then using Lemma~\ref{lem-104}, we
easily see that $H\vee K$ is also a least upper bound for
$\leql$. Namely, for any $G$, we have $H,K\leql G$ if and only if
$H,K\leq\upl G$ if and only if $H\vee K\leq\upl G$ if and only if
$H\vee K\leql G$. With respect to the right order, the least upper
bound of $H$ and $K$ is not $H\vee K$, but $\downr H\vee \downr K$. To
see why, first note that $\downr(\downr H\vee \downr K)\eq \downr
H\vee \downr K$. Then by the dual of Lemma~\ref{lem-104}, for any $G$,
we have $H,K\leqr G$ if and only if $\downr H,\downr K\leq G$ if and
only if $\downr H\vee \downr K\leq G$ if and only if $\downr(\downr
H\vee \downr K)\leq G$ if and only if $\downr H\vee \downr K\leqr
G$. The same argument of course also applies to least upper bounds of
more than two games.

\subsection{Left and Right Equivalence}\label{ssec-lr-equiv}

\begin{definition}
  Let $S$ and $T$ be non-empty sets of games. We say that $S,T$ are
  {\em left equivalent}, in symbols $S\eql T$, if $S\leql T$ and
  $T\leql S$. In other words, $S$ and $T$ are left equivalent if
  $\g{S,L|R}\eq\g{T,L|R}$ holds whenever $L$ and $R$ are a set of
  games and a non-empty set of games, respectively. Dually, we say
  that $S,T$ are {\em right equivalent}, in symbols $S\eqr T$, if
  $S\leqr T$ and $T\leqr S$.
\end{definition}

\begin{lemma}\label{lem-s-upl-s}
  Let $S$ be a set of games over an atom poset with top and
  bottom. Then $S$ is left equivalent to $\upl S$.
\end{lemma}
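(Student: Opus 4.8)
The goal is to establish the two left-order inequalities $S\leql\upl S$ and $\upl S\leql S$, whose conjunction is exactly $S\eql\upl S$. The whole argument rests on Lemma~\ref{lem-104} (which says $S\leql T$ iff $S\leq\upl T$) together with reflexivity of the orders. As a preliminary observation I would record the auxiliary fact $S\leq\upl S$: since $S\leql S$ holds trivially (it unfolds to $\g{S,L|R}\leq\g{S,L|R}$ for all $L,R$, which is reflexivity of $\leq$), Lemma~\ref{lem-104} immediately upgrades it to $S\leq\upl S$, i.e.\ $H\leq\upl S$ for every $H\in S$.

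The inequality $\upl S\leql S$ is then immediate. Applying Lemma~\ref{lem-104} with the singleton $\s{\upl S}$ in place of the left-hand set and $S$ in place of the right-hand set, the claim $\upl S\leql S$ is equivalent to $\upl S\leq\upl S$, which holds by reflexivity of $\leq$.

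The reverse inequality $S\leql\upl S$ is the only step requiring a little care, and it is where I expect the (minor) obstacle to lie: applying Lemma~\ref{lem-104} naively to this direction would reduce it to $S\leq\upl\upl S$, which is not obviously simpler. Instead I would use the $\tri$-characterization of the left order from Lemma~\ref{lem-102}, according to which it suffices to show that for every game $G$, $\upl S\tri G$ implies $S\tri G$. So suppose $\upl S\tri G$. For any $H\in S$ we have $H\leq\upl S$ by the preliminary observation, and hence $H\tri G$ by Lemma~\ref{lem-transitive}(b) applied to the chain $H\leq\upl S\tri G$. Since $H\in S$ was arbitrary, this gives $S\tri G$, completing the proof.
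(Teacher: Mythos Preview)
Your proof is correct and matches the paper's approach closely. Both arguments derive $S\leq\upl S$ from $S\leql S$ via Lemma~\ref{lem-104}, and both obtain $\upl S\leql S$ from $\upl S\leq\upl S$ via Lemma~\ref{lem-104}. The only difference is in the direction $S\leql\upl S$: the paper simply observes that $S\leq\upl S$ already implies $S\leql\upl S$ (the set analogue of the remark after the definition of $\leql$ that $\leq$ refines $\leql$), whereas you spell this out via Lemma~\ref{lem-102} and transitivity. Your detour is exactly what justifies the paper's one-word ``implies'', so the two arguments are really the same; you just made the step explicit. Incidentally, your worry that Lemma~\ref{lem-104} would reduce this direction to the unhelpful $S\leq\upl\upl S$ is unfounded: that inequality does follow easily (apply your preliminary observation to the singleton $\upl S$ and use transitivity), so that route would also have worked.
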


\begin{proof}
  Applying the left-to-right direction of Lemma~\ref{lem-104} to
  $S\leql S$, we get $S\leq \upl S$, which implies $S\leql\upl
  S$. Conversely, applying the right-to-left direction of
  Lemma~\ref{lem-104} to $\upl S\leq \upl S$, we get $\upl S\leql S$.
  Thus, we have $S\eql\upl S$, as claimed.
\end{proof}

\begin{corollary}\label{cor-upl-s}
  The game $\upl S$ is the maximal element (with respect to the order
  $\leq$) in the left equivalence class of $S$.
\end{corollary}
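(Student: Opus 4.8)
The plan is to establish two things: that $\upl S$ actually lies in the left equivalence class of $S$, and that it sits above every member of that class under $\leq$ (so that $\upl S$ is in fact the greatest element, not merely a maximal one). Both ingredients are already available from the immediately preceding results, so the work is essentially to assemble them.

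Membership is handled directly by Lemma~\ref{lem-s-upl-s}, which gives $S\eql\upl S$; hence $\upl S$ belongs to the left equivalence class of $S$. For the upper-bound property, I would take an arbitrary game $G$ with $G\eql S$ and aim to show $G\leq\upl S$. From $G\eql S$ we extract $G\leql S$, and then apply Lemma~\ref{lem-104} with $T=S$, which states precisely that $G\leql S$ is equivalent to $G\leq\upl S$. This yields $G\leq\upl S$, and since $G$ was an arbitrary element of the class, $\upl S$ dominates the entire left equivalence class of $S$ with respect to $\leq$.

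There is no real obstacle here; the statement follows by chaining the two lemmas. The only point requiring a word of care is bookkeeping: $\eql$ and $\leql$ are defined on non-empty sets of games, whereas $\leq$ is a relation on single games. I would therefore read the phrase \emph{maximal element with respect to $\leq$} as ranging over those single games that are left equivalent to $S$, and note that $\upl S$ is one such game. With that reading, the argument above shows $\upl S$ is the maximum of this collection, which is the assertion of the corollary.
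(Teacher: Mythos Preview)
Your proof is correct and follows essentially the same approach as the paper: both use Lemma~\ref{lem-s-upl-s} for membership and then apply Lemma~\ref{lem-104} to an arbitrary member of the left equivalence class to obtain the upper-bound property. Your additional remark about the bookkeeping between sets and single games is a helpful clarification, but the underlying argument is the same.
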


\begin{proof}
  By Lemma~\ref{lem-s-upl-s}, $\upl S$ is in the left equivalence
  class of $S$; if $T$ is any other member of this left equivalence
  class, then $T\eql S$, hence $T\leql S$, hence $T\leq\upl S$ by
  Lemma~\ref{lem-104}.
\end{proof}

\begin{lemma}\label{lem-eq-eql-eqr}
  We have $H\eq K$ if and only if both $H\eql K$ and $H\eqr K$ hold.
\end{lemma}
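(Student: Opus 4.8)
The plan is to derive this directly from Lemma~\ref{lem-leq-leql-leqr}, which already does all the real work by characterizing the two-sided order $\leq$ as the conjunction of the left order $\leql$ and the right order $\leqr$. The statement to be proved is a symmetric version of that lemma, obtained by running it in both directions, so I expect the proof to be a short unfolding of definitions with no genuine obstacle.

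Concretely, I would first expand the left-hand side using the definition of equivalence: $H\eq K$ holds if and only if both $H\leq K$ and $K\leq H$. To each of these two inequalities I would apply Lemma~\ref{lem-leq-leql-leqr}, obtaining that $H\leq K$ is equivalent to ($H\leql K$ and $H\leqr K$), and that $K\leq H$ is equivalent to ($K\leql H$ and $K\leqr H$). Thus $H\eq K$ is equivalent to the conjunction of all four one-sided inequalities $H\leql K$, $K\leql H$, $H\leqr K$, and $K\leqr H$.

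Next I would regroup this conjunction. The first two conjuncts, $H\leql K$ and $K\leql H$, are by definition exactly $H\eql K$, while the last two, $H\leqr K$ and $K\leqr H$, are by definition exactly $H\eqr K$. Since conjunction is commutative and associative, the fourfold conjunction is equivalent to ($H\eql K$ and $H\eqr K$), which is precisely the right-hand side of the claim. This establishes the biconditional in a single chain of equivalences.

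The only point worth stating carefully is that the regrouping is legitimate because $\eql$ and $\eqr$ are \emph{defined} as the symmetric closures of $\leql$ and $\leqr$ respectively, so no additional lemma is needed beyond Lemma~\ref{lem-leq-leql-leqr} and the definitions of $\eq$, $\eql$, and $\eqr$. I do not anticipate any hard step: the entire content of the lemma is already contained in Lemma~\ref{lem-leq-leql-leqr}, and this statement merely records its symmetric form for later convenience.
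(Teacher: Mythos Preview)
Your proposal is correct and matches the paper's approach exactly: the paper simply says the lemma follows immediately from Lemma~\ref{lem-leq-leql-leqr}, and your argument is the spelled-out version of that remark.
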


\begin{proof}
  This follows immediately from Lemma~\ref{lem-leq-leql-leqr}.
\end{proof}

The following lemma gives a remarkable equivalence between the
collection of all left equivalence classes and the right equivalence
class of $\top$.

\begin{lemma}\label{lem-right-eq-top}
  Let $A$ be a poset with top and bottom. Then for games over $A$,
  each left equivalence class has a unique maximal element (up to
  equivalence), and moreover, these maximal elements are precisely the
  members of the right equivalence class of $\top$.
\end{lemma}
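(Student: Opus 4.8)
The plan is to reduce both assertions to the already-established fact (Corollary~\ref{cor-upl-s}) that for any game $G$ the game $\upl G$ is the greatest element, with respect to $\leq$, of the left equivalence class of $G$. Since a greatest element is in particular maximal, and since any maximal element $M$ of the class satisfies $M\leq\upl G$ and hence $M\eq\upl G$, the first assertion --- existence and uniqueness up to equivalence of a maximal element in each left equivalence class --- is immediate. The substance of the lemma is therefore the second assertion, and I would obtain it by establishing, for an arbitrary game $G$, the single biconditional
\[
G\eq\upl G \iff G\eqr\top.
\]
Here the left-hand side says exactly that $G$ is (equivalent to) the maximal element of its own left equivalence class, while the right-hand side says $G$ lies in the right equivalence class of $\top$. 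Letting $G$ range over all games then identifies the collection of all such maximal elements with the right equivalence class of $\top$, which is what is claimed.

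To prove the biconditional I would first invoke Lemma~\ref{lem-s-upl-s}, giving $G\eql\upl G$ for every $G$. By Lemma~\ref{lem-eq-eql-eqr}, which splits $\eq$ into the conjunction of $\eql$ and $\eqr$, this reduces $G\eq\upl G$ to the single condition $G\eqr\upl G$. It therefore suffices to prove the key claim that $\upl G\eqr\top$ for every game $G$: granting this, transitivity of $\eqr$ turns $G\eqr\upl G$ into the equivalent condition $G\eqr\top$, finishing the proof.

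The main work, and the only step I expect to present any difficulty, is the claim $\upl G\eqr\top$. One inequality is easy: since $\upl G\leq\top$ by Lemma~\ref{lem-top}, and $H\leq K$ implies $H\leqr K$ by Lemma~\ref{lem-composite-monotone}, we get $\upl G\leqr\top$. For the reverse inequality $\top\leqr\upl G$ I would apply the dual of Lemma~\ref{lem-104}, which reads $\top\leqr\upl G$ iff $\downr\top\leq\upl G$. Now $\downr\top=\g{\g{\top|\top}|\bot}$, and since $\g{\top|\top}\eq\top$ by Lemma~\ref{lem-atomic-composite}, Lemma~\ref{lem-composite-monotone} gives $\downr\top\eq\g{\top|\bot}=\Star$. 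Thus it remains only to verify $\Star\leq\upl G$ directly from Definition~\ref{def-order}.

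This last verification is routine. Writing $\Star=\g{\top|\bot}$ and $\upl G=\g{\top|\g{G|\bot}}$, the left option $\top$ of $\Star$ satisfies $\top\tri\upl G$, because $\top$ is itself a left option of $\upl G$ and $\top\leq\top$; the unique right option $\g{G|\bot}$ of $\upl G$ satisfies $\Star\tri\g{G|\bot}$, because $\bot$ is a right option of $\Star$ and $\bot\leq\g{G|\bot}$ by Lemma~\ref{lem-top}; and neither $\Star$ nor $\upl G$ is atomic, so the third clause in the definition of $\leq$ does not apply. Hence $\Star\leq\upl G$, giving $\top\leqr\upl G$ and so $\upl G\eqr\top$. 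With the key claim in hand the biconditional follows, and with it the lemma.
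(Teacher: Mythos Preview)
Your proof is correct and follows essentially the same architecture as the paper's: both use Corollary~\ref{cor-upl-s} for the first assertion, and both reduce the second assertion to the claim $\upl G\eqr\top$ combined with Lemmas~\ref{lem-s-upl-s} and~\ref{lem-eq-eql-eqr}. The only difference is in how $\upl G\eqr\top$ is established: the paper observes that any game of the form $\g{\top\mid\cdot}$ is reversible as a right option (via $\top$, using atomic bypassing from Lemma~\ref{lem-bypassing}) and hence right-equivalent to $\top$, whereas you go through the dual of Lemma~\ref{lem-104} and a direct check that $\Star\leq\upl G$ --- a slightly more computational but equally valid route.
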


\begin{proof}
  Each left equivalence class has a unique maximal element by
  Corollary~\ref{cor-upl-s}. Since this maximal element is of the form
  $\g{\top|G}$, it is reversible when used as a right option, and
  therefore easily seen to be right equivalent to $\top$. Conversely,
  let $G$ be any game that is right equivalent to $\top$. Then
  $G\eql\upl G$ (by Lemma~\ref{lem-s-upl-s}) and $G\eqr \upl G$
  (because they are both right equivalent to $\top$). Hence by
  Lemma~\ref{lem-eq-eql-eqr}, $G\eq\upl G$, i.e., $G$ is the maximal
  element of its left equivalence class, as claimed.
\end{proof}

For a visualization of Lemmas~\ref{lem-eq-eql-eqr} and
{\ref{lem-right-eq-top}}, see Figures~\ref{fig-l3} and {\ref{fig-l4}}
in Section~\ref{sec-enumeration} below.

\section{Passable Games and the Fundamental Theorem}\label{sec-fundamental}

\subsection{Passable Games}

Recall from Lemma~\ref{lem-tri-reflexive} that every monotone game
satisfies $G\tri G$. The converse is clearly not true; for example,
the game $G'=\g{\g{\top|a},b|\bot}$ from
Example~\ref{exa-canonical-not-monotone} satisfies $G'\tri G'$
(because $G'$ is equivalent to a monotone game), but it is not itself
monotone. It turns out that games satisfying $G\tri G$ are an
important class of games, which we will investigate in this section,
culminating in the fundamental theorem of monotone games.

We start with an obvious observation. Recall from
Definition~\ref{def-good} that a left option $G^L$ is called {\em
  good} if $G\leq G^L$, and dually, a right option $G^R$ is {\em good}
if $G^R\leq G$.

\begin{proposition}
  \label{prop-passable}
  Consider a game $G$ over some atom poset $A$. The following are
  equivalent:
  \begin{enumerate}\alphalabels
  \item $G\tri G$.
  \item $G$ is atomic or has at least one good (left or right) option.
  \item $G$ is a left gift horse for itself.
  \item $G$ is a right gift horse for itself.
  \item If Left had the option to pass in the position $G$, it would
    not benefit Left.
  \item If Right had the option to pass in the position $G$, it would
    not benefit Right.
  \end{enumerate}
\end{proposition}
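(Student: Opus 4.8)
The plan is to use condition~(a), $G\tri G$, as a hub and show that every other condition is equivalent to it. Most of these equivalences are direct unwindings of definitions already in place; the only genuine content is in making the passing conditions (e) and (f) precise, after which they follow from the Gift Horse Lemma.

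First I would dispatch (a)~$\iff$~(b) by expanding Definition~\ref{def-order} for the pair $(G,G)$. The relation $G\tri G$ holds iff one of three disjuncts holds: there is a right option $G^R$ with $G^R\leq G$; there is a left option $G^L$ with $G\leq G^L$; or $G=[a]$ is atomic (the side condition $a\leq a$ being automatic). By Definition~\ref{def-good} the first two disjuncts say exactly that $G$ has a good right option or a good left option, and the third says $G$ is atomic, so this disjunction is precisely condition~(b). Conditions (c) and (d) then require no work beyond the terminology fixed after Lemma~\ref{lem-gift-horse}: ``$G$ is a left gift horse for itself'' unwinds to $G\tri G$, and ``$G$ is a right gift horse for itself'' also unwinds to $G\tri G$. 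Both are literal restatements of~(a).

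The substantive step is (a)~$\iff$~(e) and its dual (a)~$\iff$~(f). I would formalize ``giving Left the option to pass in $G$'' as adjoining $G$ itself as an extra left option, producing $\g{G,G^L|G^R}$: after Left passes, the board is unchanged and Right is to move, so the subgame reached by passing has value $G$. By Lemma~\ref{lem-more-options} we always have $G\leq\g{G,G^L|G^R}$, so a pass can never hurt Left; hence ``passing does not benefit Left'' means precisely $\g{G,G^L|G^R}\eq G$. For composite $G$, applying Lemma~\ref{lem-gift-horse} with $H=G$ yields $G\tri G \iff G\eq\g{G,G^L|G^R}$, which is exactly (a)~$\iff$~(e). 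Condition~(f) is handled dually, adjoining $G$ as a right option and invoking the dual half of Lemma~\ref{lem-gift-horse}.

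The main obstacle, modest as it is, is the atomic case, since Lemma~\ref{lem-gift-horse} presupposes composite games. When $G=[a]$ is atomic, (a) holds by definition and (b)--(d) hold trivially; for (e) and (f) I would either rewrite $G$ in its equivalent composite form $\g{a|a}$ via Lemma~\ref{lem-atomic-composite} and rerun the composite argument, or simply observe that an atomic position has an already-determined outcome, so no pass can change it and (e),(f) are vacuously true. In all cases the six conditions collapse together, closing the cycle of equivalences.
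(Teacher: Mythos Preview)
Your proposal is correct and follows essentially the same approach as the paper: you use (a) as the hub, unwind Definition~\ref{def-order} for (b), appeal to the gift-horse terminology for (c) and (d), and reduce (e) and (f) to the Gift Horse Lemma after formalizing a pass as adjoining $G$ as an extra option. The paper handles the atomic case for (e) and (f) by the same w.l.o.g.\ appeal to Lemma~\ref{lem-atomic-composite} that you suggest.
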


\begin{proof}
  The equivalence of (a) and (b) follows directly from the definition
  of $\tri$. Indeed, by definition, for $G\tri G$ to be satisfied, one
  of three conditions must hold: either $G^R\leq G$ (in which case $G$
  has a good right option), or $G\leq G^L$ (in which case $G$ has a
  good left option), or $G$ is atomic. That is precisely what (b) is
  stating.  The equivalence of (a), (c), and (d) is obvious, since
  this is the definition of a gift horse. To see why (c) and (e) are
  equivalent, assume that $G=\g{G^L|G^R}$ is composite (this is
  without loss of generality by Lemma~\ref{lem-atomic-composite}).
  Consider the game $G'=\g{G,G^L|G^R}$. This game has the same moves
  as $G$, except that Left also has the option to pass (i.e., move to
  $G$).  Clearly $G\leq G'$ by Lemma~\ref{lem-more-options}.  To say
  that ``passing does not benefit Left'' means that $G\eq G'$, which
  holds, by the Gift Horse Lemma~\ref{lem-gift-horse}, if and only if
  $G$ is a left gift horse for itself. The equivalence between (d) and
  (f) is dual.
\end{proof}

This motivates the following definition.

\begin{definition}
  A game $G$ is {\em locally passable} if $G\tri G$. A game $G$ is
  {\em passable} if it is locally passable and recursively, all of its
  options are passable.
\end{definition}

The term ``passable'' should not be understood to mean that players
are allowed to pass in these games. Indeed, none of our games
literally allow passing, as that would potentially lead to infinite
plays. Rather, it means that players {\em do not want to pass} in
these games, and therefore, it does not matter whether passing is
allowed or not. Alternatively, we can understand a passable game to be
a game in which passing is ``almost'' allowed, in the sense that such
a game $G=\g{G^L|G^R}$ is {\em equivalent} to a game $\g{G,G^L|G^R,G}$
that has $G$ itself as a left option and a right option.

\begin{lemma}[Canonical form of passable game]
  \label{lem-passable-canonical}
  The canonical form of a passable game is passable.
\end{lemma}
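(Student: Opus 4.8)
The plan is to reduce the statement to the simplification steps used to construct canonical forms. By the proof of the existence of canonical forms, the canonical form of $G$ is obtained from $G$ by a finite sequence of operations, each of which removes a dominated option, bypasses a reversible option, or simplifies a passing option, and each of which strictly decreases the size of the game. So it suffices to show that each such operation turns a passable game into a passable game; then passability propagates through the whole sequence and the terminal game, namely the canonical form, is passable.

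Two preliminary facts will do most of the work. First, local passability is invariant under equivalence: if $G\eq G'$ and $G\tri G$, then $G'\tri G'$. This is immediate from transitivity, since $G'\leq G\tri G$ gives $G'\tri G$ by Lemma~\ref{lem-transitive}(b), and then $G'\tri G\leq G'$ gives $G'\tri G'$ by Lemma~\ref{lem-transitive}(a). Second, every position of a passable game is again passable, directly from the recursive definition. From these two facts one gets a replacement principle: if one option of a composite passable game is replaced by an equivalent passable game, the result is passable. Indeed, the new game is equivalent to the old one by (two applications of) Lemma~\ref{lem-composite-monotone}, hence locally passable by the first fact, and all its options are passable either by hypothesis or because they are unchanged.

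It then suffices to analyze the three operations at the top level and to handle deeper applications by induction on size. Removing a dominated option merely discards an option and leaves an equivalent game whose options are a subset of those of $G$, so passability is clear; simplifying a passing option replaces $G$ by one of its own options $H$ with $H\eq G$, which is passable as a position of $G$. The delicate case is bypassing a reversible left option $H$ that is reversible via $K$, where Lemma~\ref{lem-bypassing} replaces $H$ by the left options $K^L$ of $K$ (or by $K$ itself when $K$ is atomic). The resulting game $G'$ is equivalent to $G$, hence locally passable by the invariance fact; its surviving options are options of $G$ and thus passable, while the newly exposed $K^L$ (resp.\ $K$) are positions of $G$ --- since $K$ is a right option of the left option $H$ --- and so are passable by the second preliminary fact.

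The main obstacle is precisely this bypassing step, because it is the only operation that introduces options not already present in $G$; recognizing the new options as positions of $G$ is what makes the argument go through. Finally, an operation performed strictly inside some option $P$ of $G$ is handled by applying the induction hypothesis to the smaller passable game $P$, which yields an equivalent passable modification $P'$, and then invoking the replacement principle of the second paragraph to conclude that the whole modified game is passable. Since every step preserves passability and the process terminates at the canonical form, the canonical form of a passable game is passable.
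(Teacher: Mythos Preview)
Your proof is correct and follows essentially the same approach as the paper: both hinge on the observation that local passability is closed under equivalence (your first preliminary fact), combined with the fact that every position appearing after simplification is equivalent to some position of the original passable game. The paper compresses your step-by-step induction into the single sentence ``every position occurring in $G'$ is equivalent to some position occurring in $G$,'' whereas you spell out explicitly why each of the three simplification operations preserves passability; your version is more detailed but not conceptually different.
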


\begin{proof}
  First, note that the class of locally passable games is closed under
  equivalence; indeed, if $G\tri G$ and $G'\eq G$, then $G'\tri G'$;
  this follows from Lemma~\ref{lem-transitive}. Now assume $G$ is some
  passable game and $G'$ is its canonical form. Because of the way
  canonical forms can be computed by repeatedly removing dominated
  options, simplifying passing options, and bypassing reversible
  options, every position occurring in $G'$ (including $G'$ itself) is
  equivalent to some position occurring in $G$. Therefore, every
  position in $G'$ is locally passable, and it follows that $G'$ is
  passable.
\end{proof}

We also note that the class of passable games is closed under least
upper bounds (and dually, under greatest lower bounds).

\begin{lemma}\label{lem-passable-lattice}
  If the atom poset has a bottom element, the least upper bound of any
  set of passable games is equivalent to a passable game.
\end{lemma}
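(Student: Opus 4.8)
The plan is to verify that the explicit least upper bound $G$ constructed in the proof of Proposition~\ref{prop-lattice} is itself passable; since $G$ represents the least upper bound value, this suffices. The least upper bound of the empty set is $\bot$, which is atomic and therefore passable, so I may assume the given set $S$ of passable games is non-empty. By Lemma~\ref{lem-atomic-composite} I replace any atomic $a\in S$ by the equivalent game $\g{a|a}$; this remains passable (local passability is preserved under equivalence, and its only option $a$ is passable), so without loss of generality every member of $S$ is composite. Following Proposition~\ref{prop-lattice}, set $G'=\g{S|\bot}$ and $G=\g{L|G'}$, where $L$ is the set of all left options of the games in $S$; then $G$ is a least upper bound of $S$.

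First I would check that $G'=\g{S|\bot}$ is passable. Its left options are the members of $S$, which are passable by hypothesis, and its only right option is $\bot$, which is atomic and passable. For local passability, $\bot\leq G'$ holds by Lemma~\ref{lem-top}, so $\bot$ is a good right option of $G'$ and hence $G'\tri G'$ by the characterization in Proposition~\ref{prop-passable}. Thus $G'$ is passable.

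Next I would show that $G$ is locally passable by proving that its right option $G'$ is good, i.e.\ $G'\leq G$. Unwinding the definition of $\leq$, this requires: every left option of $G'$ (namely each $H\in S$) satisfies $H\tri G$; the unique right option $G'$ of $G$ satisfies $G'\tri G'$; and (vacuously) the atomic clause. The middle condition is exactly the local passability of $G'$ just established, and the atomic clause does not apply since neither game is atomic. For the first condition I use that each $H\in S$ is passable, so $H\tri H$; combined with $H\leq G$ (since $G$ is an upper bound of $S$) and transitivity (Lemma~\ref{lem-transitive}(a)), this yields $H\tri G$. Hence $G'\leq G$, so $G'$ is a good right option and $G\tri G$. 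Finally, the options of $G$ are the members of $L$ (each a left option of some passable $H\in S$, hence passable) together with $G'$ (passable by the previous step), so $G$ is passable, which gives the claim.

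The main obstacle is the step establishing $H\tri G$ for each $H\in S$: this is precisely where passability of the input games is needed, since it is the local passability $H\tri H$ that, through transitivity with $H\leq G$, bridges the upper-bound inequality to the $\tri$-relation required for $G'\leq G$. Everything else is a direct unwinding of the definitions together with Lemma~\ref{lem-top}.
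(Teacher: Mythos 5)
Your proof is correct and follows essentially the same route as the paper: it takes the explicit least upper bound $G=\g{L\mid\g{S\mid\bot}}$ from Proposition~\ref{prop-lattice}, observes that all options are passable, and establishes local passability of $G$ by combining $H\tri H$ (passability of each $H\in S$) with $H\leq G$ (the upper-bound property) and transitivity to conclude that the right option $\g{S\mid\bot}$ is good. The only difference is presentational: you argue for an arbitrary non-empty set $S$ and spell out the verification of $\g{S\mid\bot}\leq G$ that the paper compresses into ``easily seen'' and ``this implies''.
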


\begin{proof}
  Note that the concept of a least upper bound is only well-defined up
  to equivalence, hence the lemma states ``equivalent to a passable
  game''. We will show that the particular least upper bound that was
  constructed in the proof of Proposition~\ref{prop-lattice} is passable.
  
  For simplicity, we show this in the case of two games, but the same
  proof applies to arbitrary non-empty sets of games. Also, the least
  upper bound of the empty set of games is of course $\bot$ and is
  passable. So consider passable games $H,K$.  As in
  Proposition~\ref{prop-lattice}, we may assume without loss of
  generality that $H,K$ are composite. Then the least upper bound is
  $G = \g{H^L, K^L | \g{H, K | \bot}}$. The options of $G$ are easily
  seen to be passable, so it suffices to show that $G$ is locally
  passable. From the upper bound property, we have $H,K\leq G$. Since
  $H$ and $K$ are passable, we have $H\tri H$ and $K\tri K$.  By
  transitivity, we get $H,K\tri G$. This implies $\g{H,K|\bot}\leq
  G$. Since $\g{H,K|\bot}$ is a right option of $G$, this implies
  $G\tri G$ as desired.
\end{proof}

\subsection{The Fundamental Theorem of Monotone Games}

By Lemma~\ref{lem-tri-reflexive}, we know that every monotone game is
passable. Perhaps surprisingly, if $A$ has top and bottom elements,
the converse is true up to equivalence of games. The purpose of the
rest of this section is to prove the following theorem.

\begin{theorem}[Fundamental theorem of monotone games]
  \label{thm-fundamental}
  Assume $A$ has top and bottom elements. Then every passable game
  over $A$ is equivalent to a monotone game over $A$.
\end{theorem}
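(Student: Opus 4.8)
The plan is to prove the theorem by well-founded (Conway) induction on $G$, reducing at each stage to a single local problem: given a passable $G=\g{G^L|G^R}$ all of whose options are already known to be equivalent to monotone games, manufacture a monotone game equivalent to $G$ itself. First I would pass to the canonical form of $G$, which is again passable by Lemma~\ref{lem-passable-canonical}, and then invoke the induction hypothesis on the (strictly smaller) options to assume without loss of generality that every $G^L$ and every $G^R$ is already monotone; replacing an option by an equivalent one does not change the value of $G$, by Lemma~\ref{lem-composite-monotone}. After this reduction the only thing that can fail is \emph{local} monotonicity at the root: some left option need not satisfy $G\leq G^L$ and some right option need not satisfy $G^R\leq G$.

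To repair local monotonicity I would try to replace each left option $G^L$ by a game that is simultaneously (i) left-equivalent to $G^L$, so that the value of $G$ is unchanged (recall that the value of $\g{L|R}$ depends on the left options only up to $\eql$), and (ii) above $G$ in the order $\leq$, so that it becomes a good option; dually for right options using $\eqr$ and $\downr$. The lattice structure is exactly what makes such games available: least upper bounds and greatest lower bounds exist by Proposition~\ref{prop-lattice} and its dual, and by Lemma~\ref{lem-passable-lattice} they are again passable. Concretely the join $G^L\vee G$ is good by construction, and one checks directly from Definition~\ref{def-order} and the characterization of $\eql$ in Lemma~\ref{lem-102} that the replacement $\g{G^L\vee G | G^R\wedge G}$ yields a game that is both equivalent to $G$ and locally monotone; the alternative candidate $\upl G^L$ is, by contrast, automatically monotone when $G^L$ is monotone and is left-equivalent to $G^L$ by Lemma~\ref{lem-s-upl-s}.

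The hard part will be reconciling these two requirements, because no single one of the obvious replacements does everything at once. The join $G^L\vee G$ is good and value-preserving, but its game-theoretic form reintroduces an option equivalent to $G$ (for instance, if $G^L\leq G$ then $G^L\vee G\eq G$), so the new options are only known to be passable and a naive recursion on them need not terminate. The closure $\upl G^L$ is monotone and value-preserving but can fail to be good: there exist canonical passable games with a left option $G^L$ for which $G\not\leql G^L$, and then by Lemma~\ref{lem-104} \emph{no} member of the left-equivalence class of $G^L$ lies above $G$, so goodness cannot be recovered option-by-option. Threading all three conditions at once is therefore where I expect the real work to be. My plan is to use the left/right-equivalence apparatus --- in particular Lemma~\ref{lem-104}, Corollary~\ref{cor-upl-s}, and the identification in Lemma~\ref{lem-right-eq-top} of the maximal elements of left-equivalence classes with the right-equivalence class of $\top$ --- to assemble, from the already-monotone options, a \emph{set} of monotone left options that is left-equivalent to the original left-option set taken as a whole (rather than element by element) and all of whose members dominate $G$, and dually on the right. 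Proving that such an assembly always exists, by induction on the canonical form, is the crux of the argument and the step I expect to be the most delicate.
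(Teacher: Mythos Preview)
Your diagnosis of the difficulty is accurate, but the final plan has a genuine gap, and the concrete obstruction is exactly the one you yourself flagged and then hoped to sidestep. You propose to replace the left-option set $L$ by a set $S$ that is left-equivalent to $L$ \emph{as a whole} and all of whose members dominate $G$. But when $G$ has no good left option at all, such an $S$ cannot exist. Take $A=\s{\bot,a,b,\top}$ with $a,b$ incomparable and $G=\g{a,b\mid a}$: this $G$ is passable (the right option $a$ is good) but neither $G\leq a$ nor $G\leq b$ holds, and one checks directly that $G\ntri\g{a,b\mid\bot}$, hence $G\nleq\upl L$. Now suppose $S\eql L$ and $G\leq s$ for some $s\in S$. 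Then $s\leql L$ gives $s\tri\g{L\mid\bot}$ by Lemma~\ref{lem-101}, whence $G\tri\g{L\mid\bot}$ by transitivity, a contradiction. So no left-equivalent replacement of $L$, elementwise or wholesale, can furnish a good left option; the value of $G$ simply does not admit a good option on that side \emph{among games left-equivalent to the existing ones}.

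The paper's remedy is the step you are missing: rather than replacing existing options, it first \emph{adds} a gift-horse option on the deficient side. Concretely, if $G$ has a good left option $H$ (the case of a good right option is dual), one adjoins the right option $K=\g{H\mid\bot}$: this is a right gift horse because $G\leq H$ gives $G\tri K$, it is monotone by Lemma~\ref{lem-upl-monotone}, and $K\leq G$ is immediate. The resulting game now has at least one good option on \emph{each} side; the paper calls this \emph{semi-monotone}. Only after this preparatory step does the $\upl$/$\downr$ machinery succeed: with a good left option $H$ in hand, one has $G\leq H\leq\upl H$, which is precisely the inequality needed to show that $\upl L'$ is a good left option in the final construction $\g{\upl L'\mid\downr R'}$. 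Your outline becomes a proof once you insert this two-stage decomposition: passable $\to$ semi-monotone (via gift horses) $\to$ monotone (via the $\upl$/$\downr$ replacements you already identified).
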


Together with Lemma~\ref{lem-passable-canonical}, we immediately get
the following characterization of the canonical form of monotone games:

\begin{corollary}
  Assume the atom poset has top and bottom elements and $G$ has a
  canonical form. Then $G$ is equivalent to a monotone game if and
  only if the canonical form of $G$ is passable.
\end{corollary}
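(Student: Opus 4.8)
The plan is to induct on the size of $G$, repairing local monotonicity at each stage by pushing every option to a value that is visibly good, while preserving the equivalence class. Atomic games are already monotone, so I may assume $G=\g{G^L|G^R}$ is composite. Each option, being a passable game smaller than $G$, is by the inductive hypothesis equivalent to a monotone game; replacing each option by such a representative changes neither the value of $G$ (by Lemma~\ref{lem-composite-monotone} applied on both sides) nor its passability (local passability is preserved under equivalence, exactly as in the proof of Lemma~\ref{lem-passable-canonical}). Hence I may assume in addition that every option of $G$ is monotone.

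The central move is to replace each left option $G^L$ by the join $G\vee G^L$ and each right option $G^R$ by the meet $G\wedge G^R$, where $H\vee K=\g{H^L,K^L|\g{H,K|\bot}}$ is the least upper bound of Proposition~\ref{prop-lattice}. First I would verify that this preserves the value. The game $G\vee G^L$ has $\g{G,G^L|\bot}$ as its unique right option, and by the ``first note'' in the proof of Proposition~\ref{prop-lattice} this is the least game above both $G$ and $G^L$; since $G\tri G$ (as $G$ is locally passable) and $G^L\tri G$, it lies $\leq G$, so $G\vee G^L\tri G$. Thus $G\vee G^L$ is a left gift horse for $G$ which dominates $G^L$ (as $G^L\leq G\vee G^L$), so by the Gift Horse Lemma~\ref{lem-gift-horse} and removal of the dominated option $G^L$, adding $G\vee G^L$ and deleting $G^L$ leaves the value fixed. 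Running this over all left options, and dually over all right options via $G\tri G\wedge G^R$, produces $M=\g{G\vee G^L|G\wedge G^R}$ with $M\eq G$. Moreover $M$ is locally monotone: each left option satisfies $M\eq G\leq G\vee G^L$ and each right option satisfies $G\wedge G^R\leq G\eq M$.

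It remains to make the options of $M$ monotone, not just its top level. Here I would record the auxiliary fact that $\upl N$ and $\downr N$ are monotone whenever $N$ is (a short induction whose only nontrivial point is $\g{N|\bot}\leq N$, valid because $N\tri N$). The pertinent observation is that each $G\vee G^L$ and $G\wedge G^R$ is itself passable, by Lemma~\ref{lem-passable-lattice}; replacing each option of $M$ by a monotone game equivalent to it keeps $M$ locally monotone (the inequalities above use the options only up to equivalence) and preserves $M\eq G$, so the resulting game is monotone and equivalent to $G$.

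The hard part is exactly this last step: the auxiliary games $G\vee G^L$ and $G\wedge G^R$ are in general strictly larger than $G$ (the right option of $G\vee G^L$ literally contains $G$), so the monotone representatives they demand are not supplied by the size induction. The tempting shortcut of collapsing the left side to the maximal left-equivalent option $\upl\{G^L\}$ does not work: one can exhibit a passable $G$, for instance $\g{a,b|\g{a|c}}$ over the poset $\bot<c<a<\top$ with $b$ incomparable to $a$ and $c$, which has no good left option and satisfies $G\ntri\g{a,b|\bot}$, so that $\upl\{a,b\}\ngeq G$ and \emph{no} set of options left-equivalent to $\{G^L\}$ can be made good. Thus the genuine work is to arrange the recursion so that the monotone forms of the auxiliary joins and meets are available, either by strengthening the inductive statement or, more plausibly, by realizing the monotone representative of $G$ as a fixed point in the complete lattice of Proposition~\ref{prop-lattice}, exploiting closure of passable values under least upper bounds (Lemma~\ref{lem-passable-lattice}). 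I expect this fixed-point argument, rather than the bookkeeping above, to be the crux.
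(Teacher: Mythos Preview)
You have correctly identified the gap yourself: the joins $G\vee G^L$ contain $G$ as a subposition and are strictly larger than $G$, so the induction hypothesis does not furnish monotone representatives for them. Your proposed repairs---a strengthened invariant or a fixed-point argument in the lattice---are not carried out, and it is unclear what monotone operator on the lattice of values would have the right fixed points. As written, the argument does not close.

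The paper sidesteps the size blow-up by inserting an intermediate notion. Call a composite game \emph{semi-monotone} if it (hereditarily) has at least one good left option and at least one good right option. First, every passable game is made equivalent to a semi-monotone one: by induction assume the options are already semi-monotone; passability gives a good option on one side, say a good left option $H$, and then $K=\g{H|\bot}$ is simultaneously a right gift horse for $G$ (since $G\leq H$ gives $G\tri K$), good as a right option (one checks $K\leq G$ directly from the definition), and semi-monotone---so adjoining $K$ repairs the missing side without ever constructing a game that contains $G$. Second, once good options exist on \emph{both} sides, the $\upl$/$\downr$ construction you considered and rejected actually works: replacing the left options by the single option $\upl\{\upl H:H\in L\}$ (and dually on the right) yields a monotone game, and the verification of local monotonicity uses precisely the existence of a good option on each side. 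Your counterexample $\g{a,b|\g{a|c}}$ is disarmed after the first stage, since it has by then acquired a good left option.

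A minor omission: the Corollary is a biconditional, and you have not addressed the direction ``equivalent to monotone $\Rightarrow$ canonical form passable''. This follows because monotone games are passable and canonical forms of passable games remain passable (both recorded as separate lemmas in the paper); with these and the fundamental theorem in hand, the Corollary is a one-line deduction rather than something to be proved ab initio.
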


The corollary gives an easy method for enumerating equivalence classes
of short monotone games: we can simply enumerate all short passable
games in canonical form.

\subsection{Proof of the Fundamental Theorem}

Recall from Definition~\ref{def-good} and
Proposition~\ref{prop-passable} that a composite game is:
\begin{itemize}
\item {\em locally monotone} if all of its left and right options are
  good, and
\item {\em locally passable} if it has at least one good left or right
  option.
\end{itemize}
To prove the fundamental theorem, we introduce an intermediate notion:
we say that a composite game is
\begin{itemize}
\item {\em locally semi-monotone} if it has at least one good left
  option and at least one good right option.
\end{itemize}
By convention, atomic games are locally semi-monotone as well.
Naturally, we then say that a game is {\em semi-monotone} if it
locally semi-monotone and recursively, all of its options are
semi-monotone. Our strategy for proving the fundamental theorem is: we
will first show that every passable game is equivalent to a
semi-monotone game, and then that every semi-monotone game is
equivalent to a monotone game. Throughout this section, we assume that
all games are over a fixed poset $A$ of atoms with top and bottom
elements.

\begin{lemma}\label{lem-upl-monotone}
  Let $S$ be a set of monotone (respectively semi-monotone, passable)
  games and assume that $S$ forms a $\tri$-clique in the sense that
  $H\tri K$ for all $H,K\in S$. Then $\g{\top|S}$, $\g{S|\bot}$, $\upl
  S$, and $\downr S$ are monotone (respectively semi-monotone,
  passable).  In particular, if $H$ is a monotone game, then so are
  $\g{\top|H}$, $\g{H|\bot}$, $\upl H$, and $\downr H$.
\end{lemma}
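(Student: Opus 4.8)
The plan is to observe that for each of the four games the \emph{local} condition can be verified in its strongest form, namely local monotonicity, using only Lemma~\ref{lem-top} together with the $\tri$-clique hypothesis. Since local monotonicity implies local semi-monotonicity and local passability, the three variants of the lemma will differ \emph{only} in the recursive requirement on options, which is immediate once we know each option lies in the relevant class. I would prove the statements for $\g{\top|S}$ and $\upl S$ directly, and obtain those for $\g{S|\bot}$ and $\downr S$ by duality, after noting that the hypothesis ``$S$ is a set of monotone (resp.\ semi-monotone, passable) games forming a $\tri$-clique'' is self-dual: monotonicity and its two weakenings are self-dual notions, and the $\tri$-clique condition, being imposed on all ordered pairs, dualizes again to a $\tri$-clique condition.

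First I would treat $G=\g{\top|S}$. Its only left option $\top$ is good because $G\leq\top$ by Lemma~\ref{lem-top}. For the right options I claim every $H\in S$ satisfies $H\leq G$, checking the three clauses of $\leq$: any left option $H^L$ gives $H^L\tri G$ because $H^L\leq\top$ and $\top$ is a left option of $G$; the right options of $G$ are exactly the members $K\in S$, and $H\tri K$ holds \emph{precisely} by the $\tri$-clique hypothesis; and if $H$ is atomic, then $H\tri G$ again follows from $H\leq\top$. Hence $G$ is locally monotone. Its options are $\top$, which is atomic and therefore lies in every class, and the members of $S$, which lie in the relevant class by hypothesis; so $G$ is monotone (resp.\ semi-monotone, passable).

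Next I would treat $\upl S=\g{\top|\g{S|\bot}}$, abbreviating $G'=\g{S|\bot}$. Here the left option $\top$ is good by Lemma~\ref{lem-top}, and the right option $G'$ is good because $G'\leq\upl S$: the left options of $G'$ are the members of $S$, each reversing off the left option $\top$ of $\upl S$; the unique right option of $\upl S$ is $G'$ itself, and $G'\tri G'$ holds because $\bot$ is a right option of $G'$ with $\bot\leq G'$ by Lemma~\ref{lem-top}; and no atomic clause applies. Thus $\upl S$ is locally monotone, and its options are $\top$ and $G'$. Since $G'=\g{S|\bot}$ lies in the relevant class (this is the $\g{S|\bot}$ case, obtained below by duality), it follows that $\upl S$ does too.

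The remaining two games follow by the duality principle: exchanging the roles of the two players turns the statement about $\g{\top|S}$ into the statement about $\g{S|\bot}$, and that about $\upl S$ into that about $\downr S$, while preserving each of the three classes and carrying the self-dual hypothesis to one of the same form. The ``in particular'' clause is the singleton case $S=\s{H}$: a singleton is a $\tri$-clique exactly when $H\tri H$, which holds for monotone $H$ by Lemma~\ref{lem-tri-reflexive}, so the monotone case applies. The one load-bearing step — and the sole place the hypothesis is used — is the verification that each $H\in S$ is a good right option of $\g{\top|S}$, i.e.\ $H\leq\g{\top|S}$; this rests squarely on the $\tri$-clique assumption. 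Everything else reduces to applications of Lemma~\ref{lem-top} and routine bookkeeping on the options.
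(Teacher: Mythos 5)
Your proposal is correct and takes essentially the same approach as the paper: the heart of both proofs is the verification that each $H\in S$ is a good right option of $\g{\top|S}$ (Lemma~\ref{lem-top} handling the left-option and atomic clauses, the $\tri$-clique hypothesis handling the right-option clause), with $\g{S|\bot}$ obtained by duality and the final singleton claim via Lemma~\ref{lem-tri-reflexive}. The only minor difference is that you verify $\upl S$ by a direct local-monotonicity check, whereas the paper gets it by applying the first two claims twice, using that $\g{S|\bot}$, once known to lie in the relevant class, satisfies $\g{S|\bot}\tri\g{S|\bot}$ and hence forms a singleton $\tri$-clique; both routes are equally valid.
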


\begin{proof}
  Let $G=\g{\top|S}$. Since all options of $G$ are monotone
  (resp.\@ semi-monotone, passable) by assumption, it suffices to show
  that $G$ is locally monotone. Clearly, we have $G\leq\top$ by
  Lemma~\ref{lem-top}. So all that is left to show for monotonicity is
  that $H\leq G$ for all $H\in S$. So consider some $H\in S$. To show
  $H\leq G$, first consider any left option $H^L$. We must show
  $H^L\tri G$, but this holds because $H^L\leq G^L=\top$. Second,
  consider any right option $G^R$. We must show $H\tri G^R$. But both
  $H,G^R$ are members of $S$, so the claim follows from the assumption
  that $S$ is a $\tri$-clique. Finally, in case $H$ is atomic, we must
  show $H\tri G$. But this is once again obvious since $\top$ is a
  left option of $G$. Therefore we have proved that $\g{\top|S}$ is
  monotone (resp.\@ semi-monotone, passable).
  
  The remaining claims of the lemma are easy: the claim for
  $\g{S|\bot}$ holds by duality, and the claims for $\upl
  S=\g{\top|\g{S|\bot}}$ and $\downr S$ follow by applying the
  previous claims twice. The claims in the lemma's final sentence
  follow because if $H$ is a single monotone game, it is automatically
  a $\tri$-clique.
\end{proof}

\begin{lemma}\label{lem-passable-is-semi-monotone}
  Every passable game is equivalent to a semi-monotone game.
\end{lemma}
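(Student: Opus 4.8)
The plan is to prove the lemma by Conway induction on $G$. If $G$ is atomic it is semi-monotone by convention, so suppose $G=\g{G^L|G^R}$ is composite; then $G$ is locally passable, i.e.\ $G\tri G$, and all of its options are passable. Applying the induction hypothesis to each option, I would first replace every left option by an equivalent semi-monotone game and every right option by an equivalent semi-monotone game, obtaining $\tilde G=\g{\tilde G^L|\tilde G^R}$. Replacing options by equivalent games preserves equivalence (an easy consequence of Lemma~\ref{lem-composite-monotone}), so $\tilde G\eq G$; moreover every option of $\tilde G$ is now semi-monotone, and $\tilde G\tri\tilde G$ since local passability is closed under equivalence via Lemma~\ref{lem-transitive}. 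It then remains only to make $\tilde G$ locally semi-monotone without disturbing its options or its value.

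Since $\tilde G$ is composite and locally passable, Proposition~\ref{prop-passable} guarantees that it already has at least one good option; by duality I may assume it has a good left option $L_0$, so $\tilde G\le L_0$ for some left option $L_0$ of $\tilde G$ (the case of a good right option being symmetric, using $\g{\top|\tilde G^R}$ in place of the game below). The key construction is the auxiliary game $Q=\g{\tilde G^L|\bot}$, which keeps exactly the left options of $\tilde G$ but replaces all right options by $\bot$. I would verify three things about $Q$: first, $Q\le\tilde G$, which is immediate from the definition of $\le$ since $\bot\le\tilde G^R$ for every right option by Lemma~\ref{lem-top}; second, $Q$ is semi-monotone, because its left options are the semi-monotone $\tilde G^L$, its right option $\bot$ is atomic, it has the good right option $\bot$ (as $\bot\le Q$), and it has the good left option $L_0$ (since $Q\le\tilde G\le L_0$ by transitivity); and third, $\tilde G\tri Q$, which holds by the second clause in the definition of $\tri$ because $L_0$ is a left option of $Q$ with $\tilde G\le L_0$.

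With these facts in hand, the dual form of the Gift Horse Lemma~\ref{lem-gift-horse} shows that adding $Q$ as a right option does not change the value, giving $\tilde G\eq\g{\tilde G^L|\tilde G^R,Q}=:G'$, and this $G'$ is the desired game. All of its options are semi-monotone: the $\tilde G^L$ and $\tilde G^R$ by construction, and $Q$ by the check above. And $G'$ is locally semi-monotone, since it retains the good left option $L_0$ (as $G'\eq\tilde G\le L_0$) and now has the good right option $Q$ (as $Q\le\tilde G\eq G'$); hence $G'$ is semi-monotone and $G'\eq G$, closing the induction. I expect the main obstacle to be precisely the temptation to witness local semi-monotonicity by adjoining $\tilde G$ itself as the missing good option: this is equivalence-preserving by Proposition~\ref{prop-passable}, but the adjoined copy of $\tilde G$ need not be semi-monotone, so the recursion would fail to close. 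The construction above sidesteps this by adjoining instead the genuinely semi-monotone gift horse $Q$, whose gift-horse condition is supplied by the good option already present on the other side.
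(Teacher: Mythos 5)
Your proof is correct and takes essentially the same approach as the paper's: induct, replace all options by semi-monotone equivalents, and then restore local semi-monotonicity by adjoining, on the side lacking a good option, a game of the form $\g{\cdot\,|\bot}$ that is simultaneously a gift horse, semi-monotone, and good. The only immaterial difference is that the paper's adjoined game is $\g{H|\bot}$ built from just the one good left option $H$ (so its semi-monotonicity follows from Lemma~\ref{lem-upl-monotone}), whereas yours is $\g{\tilde G^L|\bot}$ built from all left options, with semi-monotonicity checked directly --- both verifications go through.
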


\begin{proof}
  We prove this by induction. Suppose $G$ is a passable game. If $G$
  is atomic, then $G$ is semi-monotone by definition, and there is
  nothing to show. If $G$ is composite, then each of its left and
  right options is equivalent to a semi-monotone game by the induction
  hypothesis; therefore, $G$ is equivalent to some composite game $G'$
  all of whose options are semi-monotone. Since $G'$ is passable, $G'$
  either has a good left option or a good right option. We assume
  without loss of generality that $G'$ has a good left option $H$ (the
  case where $G'$ has a good right option is dual).

  Because $G'$ may not already have a good right option, we will add
  one. We are therefore looking for a game $K$ that is (1) a right
  gift horse for $G'$, so that adding it as a right option to $G'$
  will not change the value of $G'$, (2) semi-monotone, and (3) good,
  i.e., $K\leq G'$. If $K$ satisfies all three conditions, then
  $G''=\g{G'^L|G'^R,K}$ is semi-monotone and equivalent to $G'$, hence
  to $G$. So the only thing left to do is to find a game $K$
  satisfying the above three conditions.

  We claim that $K=\g{H|\bot}$ is the desired game. First, since $H$
  is a good left option of $G'$, we have $G'\leq H$ and therefore
  $G'\tri\g{H|\bot}=K$, so that $K$ is a right gift horse for $G'$.
  Second, $K$ is semi-monotone by Lemma~\ref{lem-upl-monotone}. Third,
  we have $K\leq G'$ since the only left option of $K$ is also a left
  option of $G'$, and $\bot$ is below any right option of $G'$.  So
  indeed, $K$ satisfies properties (1)--(3), which finishes the proof.
\end{proof}

\begin{lemma}\label{lem-semi-monotone-is-monotone}
  Every semi-monotone game is equivalent to a monotone game.
\end{lemma}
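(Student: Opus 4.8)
The plan is to prove the statement by Conway induction on $G$, reducing everything to one construction that replaces the \emph{entire} set of left options (and dually the right options) by a single game which is simultaneously monotone, good, and order-equivalent as an option. First I would dispose of the atomic case (atomic games are monotone) and then, using the induction hypothesis on the options, assume without loss of generality that every option of $G=\g{L|R}$ is already monotone. This is legitimate because replacing an option by an equivalent one preserves the value, and the good options guaranteed by semi-monotonicity survive the replacement (if $G\le H$ and $H\eq H^{*}$ then $G\le H^{*}$), so the modified game is still semi-monotone. Fix a good left option $H$ (so $G\le H$) and a good right option $H'$ (so $H'\le G$).

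The heart of the argument is a ``double-$\upl$'' replacement. Set $S=\{\upl G^{L}: G^{L}\in L\}$ and put $P=\upl S$; dually set $T=\{\downr G^{R}: G^{R}\in R\}$ and $Q=\downr T$. Each $\upl G^{L}$ is monotone because $G^{L}$ is monotone (Lemma~\ref{lem-upl-monotone}), and $S$ is automatically a $\tri$-clique: every member of $S$ has $\top$ as a left option, so for $X,Y\in S$ we get $X\le\top$ (Lemma~\ref{lem-top}) and hence $X\tri Y$ directly from the definition of $\tri$. Lemma~\ref{lem-upl-monotone}, applied to the monotone $\tri$-clique $S$, then gives that $P=\upl S$ is monotone, and dually $Q$ is monotone. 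For value-preservation I would chain left equivalences: $\upl G^{L}\eql G^{L}$ for each option (Lemma~\ref{lem-s-upl-s}) yields $S\eql L$, and $\upl S\eql S$ (again Lemma~\ref{lem-s-upl-s}) yields $P\eql L$; hence $\g{P|R}\eq\g{L|R}=G$, and dually $\g{P|Q}\eq\g{P|R}$, so $M:=\g{P|Q}\eq G$.

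The remaining and most delicate point is that $P$ and $Q$ are \emph{good} options of $M$, and this is exactly where semi-monotonicity (rather than mere passability) is essential. Since $S\eql L$, both $\upl S$ and $\upl L$ are the maximal element of the same left-equivalence class (Corollary~\ref{cor-upl-s}), so $\upl S\eq\upl L$; it therefore suffices to show $G\le\upl L=\g{\top|\g{L|\bot}}$. Every left option $G^{L'}$ of $G$ satisfies $G^{L'}\tri\upl L$ via the left option $\top$ of $\upl L$, and for the unique right option $\g{L|\bot}$ I would verify $G\tri\g{L|\bot}$ by exhibiting the good left option $H\in L$, which is a left option of $\g{L|\bot}$ with $G\le H$. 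This gives $G\le\upl L\eq P$, hence $G\le P$; the dual argument using $H'$ gives $Q\le G$. Since $M\eq G$, it follows that $P$ and $Q$ are good options of $M$, so $M$ is locally monotone with monotone options, i.e.\ monotone, and $M\eq G$.

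The main obstacle to anticipate is reconciling the three simultaneous demands on the replacement option---being monotone, being good, and being left-equivalent to $L$---which no single application of $\upl$ can meet: a bare $\upl L$ is left-equivalent to $L$ but need not be monotone, precisely because $L$ need not be a $\tri$-clique, while replacing options individually cannot push them above $G$. The inner $\upl$ forces cliqueness ``for free'' (every $\upl$-game reverses past any game through its top left option), the outer $\upl$ then produces a genuinely monotone representative via Lemma~\ref{lem-upl-monotone}, and $\upl$-stability (Lemma~\ref{lem-s-upl-s}) keeps the left-equivalence class fixed throughout; goodness, the one property that is not formal, is extracted from the good option furnished by semi-monotonicity.
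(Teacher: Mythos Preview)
Your proposal is correct and follows essentially the same approach as the paper: both proofs use the double-$\upl$ construction $\g{\upl\{\upl G^L : G^L\in L\}\mid \downr\{\downr G^R : G^R\in R\}}$, establish monotonicity via Lemma~\ref{lem-upl-monotone} applied twice (first to each option, then to the resulting $\tri$-clique), and use the existence of a good left option to verify $G\tri\g{L|\bot}$ and hence local monotonicity. Your write-up adds a bit more explicit justification (e.g., why $S$ is a $\tri$-clique, and the passage through $\upl S\eq\upl L$ via Corollary~\ref{cor-upl-s}) but the construction and the logical flow are the same.
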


\begin{proof}
  We prove this by induction. Let $G$ be a semi-monotone game. If $G$
  is atomic, then $G$ is monotone and there is nothing to show. If $G$
  is composite, then each of its left and right options is equivalent
  to a monotone game by the induction hypothesis; therefore, $G$ is
  equivalent to some composite semi-monotone game $G'$ all of whose
  options are monotone.  Let $L$ and $R$ be the set of left and right
  options of $G'$, respectively. Define $L'=\s{\upl H \mid H \in L}$
  and $R'=\s{\downr K\mid K\in R}$. Let $G''=\g{\upl L'|\downr
    R'}$. We claim that $G''$ is the desired game, i.e., we claim that
  $G''\eq G$ and $G''$ is monotone.

  To show $G''\eq G$, recall that by Lemma~\ref{lem-s-upl-s} and its
  dual, each $\upl H$ is left equivalent to $H$ and each $\downr K$ is
  right equivalent to $K$. We therefore have $G'\eq \g{L'|R'}$. Using
  Lemma~\ref{lem-s-upl-s} a second time, we also know that $L'$ is
  left equivalent to $\upl L'$ and $R'$ is right equivalent to $\downr
  R'$. It follows that $G\eq G'\eq \g{\upl L'|\downr R'} = G''$.

  To show that $G''$ is monotone, first note that each $\upl H$ and
  each $\downr K$ is monotone by
  Lemma~\ref{lem-upl-monotone}. Moreover, since $\upl H\tri \upl H'$
  for all $H,H'$, the set $L'$, and dually $R'$, forms a
  $\tri$-clique. Using Lemma~\ref{lem-upl-monotone} a second time, it
  follows that $\upl L'$ and $\downr R'$ are monotone. So all the
  options of $G''$ are monotone. The only thing left to prove is that
  $G''$ is locally monotone, i.e., $\downr R' \leq G''\leq \upl L'$.

  To show $G''\leq\upl L'$, first consider any left option $G''^L$. We
  must show $G''^L\tri \upl L'$, but this is plainly true since
  $G''^L=\upl L'$ (and $\upl L'$ is monotone, hence passable).  Next,
  consider the only right option $\g{L'|\bot}$ of $\upl L'$. We must
  show $G''\tri \g{L'|\bot}$. Since $G'$ is semi-monotone, $G'$ has at
  least one good left option, say $H\in L$. By goodness, $G'\leq H$,
  which implies $G''\leq \upl H$ since $G''\eq G'$ and $H\leq \upl
  H$. Since $\upl H\in L'$, it follows that $G''\tri \g{L'|\bot}$ as
  claimed. Since neither $G''$ nor $\upl L'$ is atomic, this finishes
  the proof of $G''\leq\upl L'$.

  The proof of $\downr R' \leq G''$ is dual.
\end{proof}

\begin{proof}[Proof of Theorem~\ref{thm-fundamental}]
  Theorem~\ref{thm-fundamental} follows directly from
  Lemmas~\ref{lem-passable-is-semi-monotone} and
  {\ref{lem-semi-monotone-is-monotone}}.
\end{proof}

\begin{example}\label{exa-thm-fundamental}
  Consider the poset $A=\s{\bot,a,b,\top}$, with $a$ and $b$
  incomparable. Let $G=\g{a,b|a}$. One can easily check that $a\leq
  G$, and hence $G\tri G$, so $G$ is passable. However, $G$ is
  certainly not monotone; for example, we neither have $G\leq a$ nor
  $G\leq b$. By Theorem~\ref{thm-fundamental}, $G$ is equivalent to a
  monotone game. The particular game constructed in the proof of
  Theorem~\ref{thm-fundamental} is
  \[ \begin{array}{r@{~}c@{~}l}
    G'' &=& \g{\upl(\upl a, \upl b, \upl\g{\top|a}) | \downr\downr a}
    \\ &=& \{\uplx{\uplx a, \uplx b, \uplx{\g{\top|a}}}
    \\ && \qquad\qquad\qquad \mid \downrx{\downrx a}\}.
  \end{array}
  \]
  Indeed, one can check that $G''$ is monotone and equivalent to $G$.
  Note that $G''$ is not the simplest monotone game equivalent to
  $G$. We can obtain a simpler one by removing dominated options and
  bypassing only those reversible options for which bypassing does not
  break monotonicity. By doing so, we obtain the following simpler
  monotone game equivalent to $G$:
  \[ G''' = \g{\g{\top|\g{\g{\top|a},\g{\top|\g{b|\bot}}|\bot}}|a}.
  \]
  Although $G'''$ is (perhaps) the simplest monotone game equivalent
  to $G$, it is not in canonical form, since it still has some
  reversible options. The canonical form of $G'''$ is of course $G =
  \g{a,b|a}$.
\end{example}

\begin{remark}
  In the proof of Lemma~\ref{lem-semi-monotone-is-monotone}, we
  applied two levels of $\upl$ operations: first to each option of
  $G'$, and then to the sets of all left options and of all right
  options. For example, if $G'=\g{G_1,G_2|G_3,G_4}$, then
  $G''=\g{\upl(\upl G_1, \upl G_2)|\downr(\downr G_3, \downr
    G_4)}$. Both steps are necessary; in general, neither $\g{\upl
    G_1, \upl G_2|\downr G_3, \downr G_4}$ nor $\g{\upl (G_1,
    G_2)|\downr (G_3, G_4)}$ is monotone. Indeed, for the game from
  Example~\ref{exa-thm-fundamental}, one can check that neither
  $\g{\upl a, \upl b, \upl\g{\top|a} | \downr a}$ nor $\g{\upl(a, b,
    \g{\top|a}) | \downr a}$ is monotone, but $\g{\upl(\upl a, \upl b,
    \upl\g{\top|a}) | \downr\downr a}$ is.
\end{remark}

\subsection{A Useful Reasoning Principle for Passable Games}

We will prove the following property of passable games, which is
sometimes useful: if all left options satisfy $G^L\leq a$, then $G\leq
a$. Informally, this is true because if it is Left's turn, Left can
achieve at most outcome $a$. If it were possible for Left to do better
when it is Right's turn, then Right would prefer to pass,
contradicting the fact that Right does not prefer to pass in a
passable game.

The formal proof requires a lemma.

\begin{lemma}\label{lem-passable-a}
  Let $G$ be a passable composite game, $a$ an atom, and assume that
  all left options $G^L$ satisfy $G^L\leq a$. Then the following hold
  for all $H$:
  \begin{enumerate}
  \item[(a)] If $H\tri G$ then $H\tri a$.
  \item[(b)] If $H\leq G$ then $H\leq a$.
  \end{enumerate}
\end{lemma}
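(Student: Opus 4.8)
The plan is to prove (a) and (b) simultaneously by induction on $H$ with respect to the well-founded order $\ll$, keeping $G$ and $a$ fixed throughout. Exactly as in the proof of Lemma~\ref{lem-transitive}, I would establish (a) before (b) at each stage, so that the argument for (b) at a given $H$ may legitimately invoke (a) at the same $H$, while both parts may use the induction hypothesis freely on options of $H$, which are strictly smaller. Two features of the setup will do most of the work: first, $a$ is atomic, so by clause~3 in the definition of $\leq$ the relation $H\leq a$ already entails $H\tri a$; second, $G$ is (locally) passable, so $G\tri G$.

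For (a), I would unfold $H\tri G$ into its three defining cases. Since $G$ is composite, the atomic/atomic case cannot occur. In the case where some right option satisfies $H^R\leq G$, the induction hypothesis for (b) applied to the smaller game $H^R$ yields $H^R\leq a$, whence $H\tri a$ directly. In the case where $H\leq G^L$ for some left option $G^L$ of $G$, I would combine this with the hypothesis $G^L\leq a$ and transitivity (Lemma~\ref{lem-transitive}(c)) to get $H\leq a$, and then conclude $H\tri a$ using that $a$ is atomic.

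For (b), since $a$ is atomic, unfolding $H\leq a$ leaves exactly two obligations: that every left option $H^L$ satisfies $H^L\tri a$, and that $H\tri a$. The first follows because $H\leq G$ forces $H^L\tri G$, and then the induction hypothesis for (a) at the smaller game $H^L$ gives $H^L\tri a$. The second is the crux: from $H\leq G$ together with the passability fact $G\tri G$, transitivity (Lemma~\ref{lem-transitive}(b)) gives $H\tri G$, and applying (a) at the same $H$ then yields $H\tri a$.

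The main obstacle is precisely this last step. The left options of $H$ alone control only the ``first-player'' half of $H\leq a$ and cannot by themselves deliver $H\tri a$; the only way to obtain it is to turn the hypothesis $H\leq G$ into $H\tri G$, which is exactly where local passability of $G$ is indispensable. Organizing the induction so that (a) is available for the current $H$ when proving (b)---rather than only for strictly smaller games---is what makes this possible, and I would take care to verify that the argument for (a) never circularly depends on (b) at the same $H$, since it uses (b) only on the strictly smaller $H^R$.
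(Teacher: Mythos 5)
Your proposal is correct and follows essentially the same route as the paper's proof: a joint induction on $H$ with (a) established before (b) at each stage, the two non-atomic cases of $H\tri G$ handled via transitivity and the induction hypothesis for (b), and the crucial step in (b) obtained by combining $H\leq G$ with local passability $G\tri G$ to get $H\tri G$ and then invoking (a) at the same $H$. Your closing remark about the non-circularity of the dependence of (a) on (b) (only at strictly smaller games) is exactly the point that makes the paper's argument sound.
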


\begin{proof}
  We prove (a) and (b) by joint induction on $H$. In each inductive
  case, we prove (a) before (b). To prove (a), assume $H\tri G$. By
  definition of $\tri$, we either have $H\leq G^L$ for some $G^L$, or
  $H^R\leq G$ for some $H^R$. If $H\leq G^L$, we use the assumption
  $G^L\leq a$ to conclude $H\leq a$ by transitivity, and therefore
  also $H\tri a$ by definition of $\leq$. If $H^R\leq G$, then by
  the induction hypothesis (b) we have $H^R\leq a$, therefore $H\tri a$ as
  claimed.

  To prove (b), assume $H\leq G$. We must show $H\leq a$. The
  definition of $\leq$ requires us to prove two things: all $H^L\tri
  a$, and $H\tri a$.  To prove the first claim, consider any
  $H^L$. Since $H\leq G$, we have $H^L\tri G$, and therefore $H^L\tri
  a$ by the induction hypothesis (a). To prove the second claim, note that
  $G$ is passable, so $G\tri G$. From the assumption $H\leq G$, we get
  $H\tri G$ by Lemma~\ref{lem-transitive}. Therefore, by part (a),
  which we already proved for $H$, we have $H\tri a$ as claimed.
\end{proof}

\begin{proposition}\label{prop-passable-a}
  Let $G$ be a passable composite game, $a$ an atom, and assume all
  left options $G^L$ satisfy $G^L\leq a$. Then $G\leq a$.
\end{proposition}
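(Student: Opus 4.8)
The plan is to obtain the proposition as an immediate consequence of Lemma~\ref{lem-passable-a}, which was evidently set up for exactly this purpose. The key observation is that the proposition is the diagonal instance $H=G$ of part~(b) of that lemma. By the Reflexivity Lemma we have $G\leq G$, and the standing hypotheses of the proposition (that $G$ is passable and composite, with every left option $G^L\leq a$) are word-for-word the hypotheses of Lemma~\ref{lem-passable-a}. Applying part~(b) with $H:=G$ therefore yields $G\leq a$ directly. So I would keep the proof essentially to a single line.

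It is worth recording why feeding $H=G$ into the lemma is legitimate: the lemma is established by a joint induction over all games $H$ of statements (a) and (b) simultaneously, with no special exclusion of the case $H=G$. Since $G\leq G$ is available from reflexivity, instantiating (b) at $H=G$ is unproblematic, and no separate induction is needed for the proposition itself.

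If one instead tried to prove $G\leq a$ in isolation by induction, the main obstacle would be precisely the point the lemma isolates. Unfolding the definition of $\leq$ for the atomic target $a$, one must verify two things: that every left option satisfies $G^L\tri a$, and the extra atomic clause $G\tri a$. The first is immediate, since $G^L\leq a$ forces $G^L\tri a$ because $a$ is atomic. Establishing the second, $G\tri a$, is where passability is essential: one has $G\tri G$ because $G$ is passable, but converting this $\tri$-statement about $G$ into the corresponding statement about $a$ is exactly content (a) of the lemma, which in turn feeds on (b) for the right-option case. Bundling (a) and (b) into one induction is what makes the transfer go through, so proving the proposition on its own would merely force one to reconstruct that same bundled induction. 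The cleanest route is therefore to cite Lemma~\ref{lem-passable-a}(b) at $H=G$ and invoke reflexivity.
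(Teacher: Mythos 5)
Your proof is correct and is exactly the paper's proof: the paper also derives the proposition by instantiating Lemma~\ref{lem-passable-a}(b) at $H=G$, with reflexivity supplying $G\leq G$. Your closing remarks about why a standalone induction on $G$ would fail (and why the lemma's induction on $H$ is the right bundling) also match the paper's own commentary following the proposition.
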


\begin{proof}
  This follows by Lemma~\ref{lem-passable-a}(b), with $H=G$.
\end{proof}

Note: not only does Lemma~\ref{lem-passable-a} imply
Proposition~\ref{prop-passable-a}, but also the other way
round. However, since the lemma is proved by induction on $H$, and not
by induction on $G$, we cannot prove the proposition directly without
the lemma.

\section{Games over Linearly Ordered Sets of Atoms}\label{sec-linear}

As we have seen in Section~\ref{ssec-problem-monotonicity}, the
canonical form of a monotone game is not in general monotone; in fact,
this was the reason we introduced the more general class of passable
games. However, perhaps surprisingly, when the set $A$ of atoms
happens to be linearly ordered, the class of monotone games does turn
out to be closed under canonical forms. The purpose of this section is
to prove it. We start with two lemmas that state another useful
property of games over linearly ordered posets.

\begin{lemma}\label{lem-linear-atomic}
  Suppose $A$ is a linearly ordered poset, $a\in A$ is an atom, and
  $H$ is a passable game over $A$. Then we have:
  \begin{enumerate}\alphalabels
  \item $[a]\tri H$ or $H\leq [a]$.
  \item $H\tri [a]$ or $[a]\leq H$.
  \end{enumerate}
\end{lemma}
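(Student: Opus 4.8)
The plan is to prove (a) and (b) by a single joint induction on $H$, carrying both statements in the induction hypothesis for all strictly smaller games. The two statements are dual to one another (exchanging the roles of the two players turns $[a]\tri H$ into $H\tri[a]$ and $H\leq[a]$ into $[a]\leq H$), so at each stage it suffices to establish (a) and then read off (b) by the duality remark. Note that every option of a passable game is again passable, so the induction hypothesis does apply to the options that arise below.

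For the base case, $H=[b]$ is atomic, and statement (a) unfolds via the atomic clauses of Definition~\ref{def-order} into ``$a\leq b$ or $b\leq a$''. This is exactly the trichotomy furnished by linearity of $A$; indeed, this base case is the only place where linearity is used.

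For the inductive step I would take $H$ composite and passable and assume that $[a]\tri H$ fails, aiming to deduce $H\leq[a]$. Since $H$ is composite, the failure of $[a]\tri H$ means $[a]\nleq H^L$ for every left option $H^L$; applying the induction hypothesis for (b) to each (passable) left option $H^L$ then forces $H^L\tri[a]$, which is the first of the two conditions in the definition of $H\leq[a]$. The hard part will be the remaining condition $H\tri[a]$, i.e. producing a right option with $H^R\leq[a]$, and this is exactly where passability enters. From $H\tri H$ and the compositeness of $H$, either some $H^R\leq H$ or some $H\leq H^L$. In the first case the induction hypothesis for (a), applied to the (passable) right option $H^R$, gives $[a]\tri H^R$ or $H^R\leq[a]$; the alternative $[a]\tri H^R$ is impossible, since together with $H^R\leq H$ it would yield $[a]\tri H$ by Lemma~\ref{lem-transitive}, contrary to assumption, so $H^R\leq[a]$ and hence $H\tri[a]$. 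In the second case $H\leq H^L\tri[a]$ gives $H\tri[a]$ directly by Lemma~\ref{lem-transitive}. Either way $H\tri[a]$ holds, completing $H\leq[a]$ and hence (a); statement (b) then follows by duality.
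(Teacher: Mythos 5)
Your proposal is correct and follows essentially the same route as the paper's proof: a joint induction on $H$ in which (b) is obtained from (a) by duality, the atomic base case uses linearity of $A$, and in the composite case the assumption $[a]\ntri H$ yields $H^L\tri[a]$ for all left options via the inductive hypothesis for (b), while passability of $H$ (a good left or good right option) combined with Lemma~\ref{lem-transitive} and the inductive hypothesis for (a) supplies the remaining condition $H\tri[a]$. The only difference is cosmetic: you treat the two passability cases in the opposite order from the paper.
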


\begin{proof}
  We prove this by induction. We first prove (a). If $H=[b]$ is
  atomic, we have $[a]\tri H$ if and only if $a\leq b$ and $H\leq [a]$
  if and only if $b\leq a$, so the claim follows from the fact that
  $A$ is linearly ordered.

  If $H$ is not atomic, assume $[a]\ntri H$. To show $H\leq [a]$,
  first consider any left option $H^L$ of $H$. Since by assumption,
  $[a]\ntri H$, we have $[a]\nleq H^L$, so by the induction hypothesis,
  $H^L\tri [a]$. Therefore, the first part of the definition of $H\leq
  [a]$ is satisfied. The second part is trivially satisfied, since
  $[a]$ is atomic. For the third part, we must show $H\tri [a]$. Since
  $H$ is passable, $H$ either has a good left option or a good right
  option. If $H$ has a good left option $H^L$, then $H\leq H^L$. Since
  we already showed $H^L\tri [a]$ above, it follows by
  Lemma~\ref{lem-transitive} that $H\tri [a]$. On the other hand, if
  $H$ has a good right option $H^R$, then $H^R\leq H$. Since by
  assumption, $[a]\ntri H$, by Lemma~\ref{lem-transitive}, we have
  $[a]\ntri H^R$, therefore by the induction hypothesis, $H^R\leq [a]$,
  which implies $H\tri [a]$ as desired. It follows that $H\leq [a]$.

  The proof of (b) is dual.
\end{proof}

\begin{lemma}\label{lem-linear}
  Suppose $A$ is a linearly ordered poset. Then for all passable games
  $G,H$ over $A$, we have $G\tri H$ or $H\leq G$.
\end{lemma}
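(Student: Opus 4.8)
The plan is to prove the dichotomy ``$G\tri H$ or $H\leq G$'' by well-founded induction, with the atomic cases delegated entirely to Lemma~\ref{lem-linear-atomic} and only the all-composite case requiring the induction hypothesis. The delicate point, discussed last, is the choice of induction measure.

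First I would dispose of the atomic cases, which need no induction. If $G=[a]$ is atomic, then since $H$ is passable the claim ``$[a]\tri H$ or $H\leq[a]$'' is exactly Lemma~\ref{lem-linear-atomic}(a). Dually, if $H=[b]$ is atomic, then applying Lemma~\ref{lem-linear-atomic}(b) to the passable game $G$ with atom $b$ gives exactly ``$G\tri[b]$ or $[b]\leq G$.'' Hence both cases are direct consequences of the previous lemma, and the only work is in the case where $G$ and $H$ are both composite.

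In that main case I would assume $G\ntri H$ and prove $H\leq G$. Unpacking $G\ntri H$ for composite games via the definition of $\tri$, the atomic clause does not apply, so I obtain that $G^R\nleq H$ for every right option $G^R$ of $G$ and $G\nleq H^L$ for every left option $H^L$ of $H$. To establish $H\leq G$ — where, both games being composite, the atomic clause of $\leq$ is vacuous — I must show that $H^L\tri G$ for every left option $H^L$ of $H$, and that $H\tri G^R$ for every right option $G^R$ of $G$. For the former I apply the induction hypothesis to the (passable) pair $(H^L,G)$, obtaining ``$H^L\tri G$ or $G\leq H^L$''; the second disjunct is excluded by $G\nleq H^L$, leaving $H^L\tri G$. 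For the latter I apply the induction hypothesis to the pair $(H,G^R)$, obtaining ``$H\tri G^R$ or $G^R\leq H$''; the second disjunct is excluded by $G^R\nleq H$, leaving $H\tri G^R$. Together these yield $H\leq G$, as required.

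The main obstacle is the induction measure. The two recursive invocations are to the pairs $(H^L,G)$ and $(H,G^R)$, which are \emph{not} smaller than $(G,H)$ in the naive componentwise product order: in each one I have reduced one game to one of its options while leaving the other unchanged, and the arguments appear in mixed orientation relative to $G$ and $H$. The fix is to run the induction on the unordered pair $\{G,H\}$ with respect to the multiset extension of the well-founded relation $\ll$, which is again well-founded. Under this measure, $\{H^L,G\}$ and $\{H,G^R\}$ each arise from $\{G,H\}$ by replacing one element with a strictly $\ll$-smaller one, so both are strictly smaller, and the induction hypothesis applies in whichever orientation of its two arguments I need. With this symmetric measure in place the argument above closes, the atomic cases contributing no recursive calls at all.
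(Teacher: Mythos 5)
Your proof is correct and takes essentially the same route as the paper's: the atomic cases are delegated to Lemma~\ref{lem-linear-atomic}, and in the all-composite case one assumes $G\ntri H$, unpacks this to $G\nleq H^L$ and $G^R\nleq H$, and applies the induction hypothesis to the pairs $(H^L,G)$ and $(H,G^R)$ to conclude $H\leq G$. Your explicit justification of the induction measure --- the multiset extension of $\ll$ on unordered pairs, under which $\{H^L,G\}$ and $\{H,G^R\}$ are strictly smaller than $\{G,H\}$ --- correctly fills in a detail that the paper leaves implicit behind ``we prove this by induction,'' since the naive componentwise order on ordered pairs would not validate these recursive calls.
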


\begin{proof}
  We prove this by induction. If $G$ or $H$ is atomic, then the result
  holds by Lemma~\ref{lem-linear-atomic}. Therefore, assume both $G$
  and $H$ are composite. Suppose that $G\ntri H$. To show $H\leq G$,
  first consider an arbitrary left option $H^L$ of $H$. We must show
  $H^L\tri G$. From the definition of $G\ntri H$, we know that $G\nleq
  H^L$, therefore by the induction hypothesis, $H^L\tri G$ as desired.
  The dual argument shows that $H\tri G^R$ holds for all $G^R$. Since
  $G$ and $H$ are composite, this proves that $H\leq G$, as desired.
\end{proof}

\begin{remark}
  If the games are not passable and $A$ has at least 3 elements,
  Lemmas~\ref{lem-linear-atomic} and {\ref{lem-linear}} are not
  true. The simplest counterexample is $A=\s{\bot,a,\top}$, $G=[a]$,
  $H=\g{\bot|\top}$. We neither have $[a]\tri\g{\bot|\top}$ nor
  $\g{\bot|\top}\leq [a]$.

  If $A=\s{\bot,\top}$, then Lemmas~\ref{lem-linear-atomic} and
  {\ref{lem-linear}} are valid even if the games are not
  passable. Because in that case, Lemma~\ref{lem-linear-atomic} only
  has the two cases $a=\bot$ and $a=\top$, and both happen to be true
  because $\bot\tri H\leq\top $ and $\bot\leq H\tri\top$. Note that
  the proof of Lemma~\ref{lem-linear} goes through in that case as
  well.
\end{remark}

We now come to the main result of this section.

\begin{theorem}\label{thm-passable-linear-monotone}
  Suppose $A$ is a linearly ordered poset, that $G$ is a passable game
  over $A$, and that $G$ is in canonical form. Then $G$ is monotone.
\end{theorem}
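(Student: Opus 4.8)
The plan is to argue by Conway induction on $G$, proving the statement for all canonical passable games over a fixed linearly ordered $A$. If $G$ is atomic it is monotone by definition, so suppose $G$ is composite. Every option of $G$ is a position of $G$, hence again in canonical form, and is passable because $G$ is passable; by the induction hypothesis all options of $G$ are therefore monotone. It thus remains only to show that $G$ is \emph{locally} monotone. Since monotonicity is a self-dual property and the class of canonical passable games over a linear order is closed under duality (the opposite of a linear order is again linear), it suffices to prove that every left option $G^L$ of $G$ is good, i.e.\ that $G\leq G^L$.

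First I would verify $G\leq G^L$ directly from Definition~\ref{def-order}, checking its three clauses. For clause~1, given another left option $G^{L'}$, Lemma~\ref{lem-linear} gives $G^{L'}\tri G^L$ or $G^L\leq G^{L'}$; the latter (when $G^{L'}\neq G^L$) would make $G^L$ a dominated option, contradicting canonicity, while for $G^{L'}=G^L$ the desired $G^L\tri G^L$ holds because $G^L$ is passable. For clause~2, given a right option $(G^L)^R$ of $G^L$, Lemma~\ref{lem-linear} gives $G\tri (G^L)^R$ or $(G^L)^R\leq G$; the latter would exhibit $G^L$ as reversible via $(G^L)^R$ in $G$, again contradicting canonicity. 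These two clauses already establish $G\leq G^L$ whenever $G^L$ is composite, since clause~3 is then vacuous.

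The remaining, and I expect the hardest, case is that of an \emph{atomic} left option $G^L=[a]$, where clause~3 demands the extra condition $G\tri[a]$ --- equivalently, that $G$ possess a right option $G^R\leq[a]$. Here the idea is to use passability decisively. If no such right option existed, then Lemma~\ref{lem-linear-atomic} would force $[a]\tri G^R$ for every right option, whence $[a]\leq G$. Since $G$ is passable it has a good option by Proposition~\ref{prop-passable}; a good \emph{left} option $G^{L_0}$ is impossible, for $[a]\leq G\leq G^{L_0}$ would make $[a]$ dominated when $G^{L_0}\neq[a]$, and would yield $G\leq[a]$ (hence $G\tri[a]$) when $G^{L_0}=[a]$. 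Thus $G$ would have a good right option $G^{R_0}\leq G$ with $G^{R_0}\not\leq[a]$, and by linearity $[a]\tri G^{R_0}$. Exploiting that $G^{R_0}$ is monotone (induction hypothesis) together with $G^{R_0}\leq G$ and canonicity of $G$, one then aims for a contradiction: a left option of $G^{R_0}$ lying above $[a]$ must, via $G^{R_0}\leq G$, either reproduce a left option of $G$ dominating $[a]$, or force $G^{R_0}\leq[a]$.

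Pinning down this last step cleanly --- ruling out the configuration in which a canonical passable $G$ has an atomic left option $[a]$ with no right option below it --- is the crux of the proof. It is exactly the point where linearity, passability, and the absence of dominated and reversible options must all be invoked at once, and where an ordinary appeal to Lemma~\ref{lem-linear} alone is insufficient (applied to the pair $([a],G)$ it yields only the vacuous $[a]\tri G$). I would expect to close it either by a careful descent along the options of $G^{R_0}$, using well-foundedness of $\ll$, or by isolating the needed fact as an auxiliary lemma in the spirit of Proposition~\ref{prop-passable-a}.
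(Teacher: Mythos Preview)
Your induction setup and the verification of clauses~1 and~2 are correct and coincide with the paper's claims (a) and (b): non-domination gives $G^{L'}\tri G^L$ for distinct left options, and non-reversibility gives $G\tri (G^L)^R$. This already establishes $G\leq G^L$ for every \emph{composite} left option $G^L$, exactly as in the paper.

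The gap is in your handling of the atomic left option $[a]$. Your dichotomy ``good left option versus good right option'' is not the right split, and the descent along a good right option $G^{R_0}$ that you sketch does not close: once you reach a right option $K^R$ of some $K$ with $K^R\leq G$, there is no reason $K^R\nleq[a]$, and $K^R$ is not a right option of $G$, so you cannot conclude $G\tri[a]$ from it.

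What you are missing is that you have already done the work. You have just proved that every composite left option of $G$ is good. Hence if $G$ had any composite left option $H$, you would be in your ``good left option'' case with $G^{L_0}=H\neq[a]$, and $[a]\leq G\leq H$ would make $[a]$ dominated --- contradiction. So in the remaining case $G$ has \emph{no} composite left option; all left options are atomic, and by linearity plus non-domination $[a]$ is the unique left option. Now Proposition~\ref{prop-passable-a} (which you yourself flagged as the right tool) applies verbatim and yields $G\leq[a]$.

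This is exactly the paper's argument: it splits the atomic case on whether $G$ has a composite left option. If so, $G\leq H$ and $[a]\nleq H$ give $[a]\nleq G$, whence $G\tri[a]$ by Lemma~\ref{lem-linear}; if not, $[a]$ is the sole left option and Proposition~\ref{prop-passable-a} finishes. Your ``good right option'' branch is a detour that never needed to be entered.
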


\begin{proof}
  We prove this by induction. Since all options of $G$ are passable
  and in canonical form, they are monotone by the induction
  hypothesis. Therefore, all we have to show is that $G$ is locally
  monotone, i.e., all options of $G$ are good. We note that, since $G$
  is in canonical form, none of its options are dominated or
  reversible.  We prove a number of claims in turn.
  \begin{enumerate}
  \item[(a)] For any two left options $H,H'$ of $G$, we have $H\tri H'$.

    Proof: If $H=H'$, then the claim holds because $H$ is passable. If
    $H\neq H'$, then since $H'$ is not dominated, we have $H'\nleq
    H$. Then $H\tri H'$ holds by Lemma~\ref{lem-linear}.
  \item[(b)] For any left option $H$ of $G$ and any right option $H^R$
    of $H$, we have $G\tri H^R$.

    Proof: Since $H$ is not reversible, we must have $H^R\nleq
    G$. Then $G\tri H^R$ holds by Lemma~\ref{lem-linear}.

  \item[(c)] All composite left options of $G$ are good.

    Proof: Let $H$ be a left option of $G$ and assume that $H$ is
    composite. We must show $G\leq H$. From (a), we know that $G^L\tri
    H$ for all $G^L$, and from (b), we know that $G\tri H^R$ for all
    $H^R$. Since both $G$ and $H$ are composite, the third condition
    in the definition of $\leq$ does not apply, and we have $G\leq H$
    as claimed.

  \item[(d)] $G$ has at most one atomic left option.

    Proof: If $a,b$ are distinct atomic left options of $G$, then we
    must either have $a\leq b$ or $b\leq a$ since the poset $A$ is
    linearly ordered. But then either $a$ or $b$ would be dominated,
    contradicting the fact that $G$ is in canonical form.

  \item[(e)] All atomic left options of $G$ are good.

    Proof: Let $a$ be an atomic left option of $G$. We must show
    $G\leq a$. We distinguish two cases: either $G$ has some composite
    left option or it does not. In case $G$ has some composite left
    option $H$, then by (c), we have $G\leq H$. Since $a$ is not
    dominated, we have $a\nleq H$, and therefore $a\nleq G$. Then
    $G\tri a$ holds by Lemma~\ref{lem-linear}. The other condition in
    the definition of $G\leq a$ already holds by (a). Therefore $G\leq
    a$ as claimed. In case $G$ does not have any composite left
    options, then all left options are atomic. By (d), $a$ is the only
    left option of $G$. Then $G\leq a$ holds by
    Proposition~\ref{prop-passable-a}.
  \end{enumerate}
  Together, (c) and (e) imply that all left options of $G$ are good.
  The proof for right options is dual. This finishes the proof of the
  monotonicity of $G$.  
\end{proof}

\begin{corollary}
  For games over a linearly ordered poset $A$, the canonical form of a
  monotone game is monotone.
\end{corollary}

\section{Operations on Games}\label{sec-operations}

\subsection{Sum}\label{ssec-sum}

As already mentioned in Section~\ref{ssec-order}, in the standard
treatment of combinatorial game theory, it is common to define the
negation $-G$ and sum $G+H$ of games before defining the order,
because $G\leq H$ can then be conveniently defined to mean that $H-G$
is a second-player win for Left. We did not take this route here,
because in the games we consider in this paper, the sum of games works
a bit differently than usual. Indeed, as we will see, the sum is
well-behaved only for passable games, so it was necessary to define
passable games, and therefore the ordering on games, before we could
consider sums.

Unsurprisingly, the sum of games is defined in the same way as in
other branches of combinatorial game theory when the games are
composite. As usual, the difference lies in the treatment of
atoms. When $G$ and $H$ are atomic games with respective outcomes $a$
and $b$, we define their sum to be an atomic game whose outcome is the
pair $(a,b)$. This appropriately reflects what goes on when one plays
in multiple disconnected regions in a monotone set coloring game such
as Hex. This leads us to the following definition.

\begin{definition}[The sum of games]\label{def-sum}
  Suppose $G$ and $H$ are games over outcome posets $A$ and $B$,
  respectively. The {\em sum} of $G$ and $H$, written $G+H$, is a game
  over $A\times B$. It is defined as follows.
  \begin{itemize}
    \item $G+H = \g{G^L + H, G + H^L |G^R + H, G + H^R}$, when at
      least one of $G,H$ is composite, and
    \item $[a]+[b] = [(a,b)]$, when $G=[a]$ and $H=[b]$ are both atomic.
  \end{itemize}
\end{definition}

We note that in this definition, we have used our usual convention
that atomic games have no left and right options. Thus, in the cases
where one of $G,H$ is atomic and the other is composite, the
definition specializes to:
\begin{itemize}
\item $[a]+H = \g{[a] + H^L |[a] + H^R}$,
\item $G+[b] = \g{G^L + [b] |G^R + [b]}$.
\end{itemize}

Next, one would expect that we prove that the sum is a monotone
operation, i.e., that $G\leq G'$ and $H\leq H'$ imply $G+H\leq
G'+H'$. However, we will not prove this, because it is not true. In a
nutshell, we built into the definition of the order that
$a\eq\g{a|a}$, so that a player can always, up to equivalence, pass in
any atomic component of a larger game. This is justified when all
games are passable, because in this case, passing is not to any
player's advantage. But it is not justified for non-passable games,
where passing may actually be advantageous. Since we are only
interested in passable games, defining the order in this way is the
right thing to do. But the price to pay is that monotonicity of sum
does not hold for arbitrary games.

\begin{example}[Non-monotonicity of sum]\label{exa-sum-not-monotone}
  Let $G=[a]$ and $H=\g{\bot|\top}$. Also consider the game
  $G'=\g{a|a}$. Then
  \[
  \begin{array}{l@{~}l@{~}l@{}}
    G+H &=& \g{(a,\bot)|(a,\top)}, \\
    G'+H &=& \g{\g{(a,\bot)|(a,\bot)}, \g{(a,\bot)|(a,\top)}|
      \g{(a,\top)|(a,\top)},\g{(a,\bot)|(a,\top)}}.
  \end{array}
  \]
  If we consider the outcome $(a,\top)$ to be winning for Left and
  $(a,\bot)$ to be winning for Right, the game $G+H$ is a
  second-player win for both players, whereas $G'+H$ is a first-player
  win for both players. The example shows that although $G\leq G'$ and
  $G'\leq G$, we have $G+H\nleq G'+H$ and $G'+H\nleq G+H$.  In
  particular, the sum operation is not monotone on these games.
\end{example}

Since the game $H$ is not passable, Example~\ref{exa-sum-not-monotone}
is nothing to worry about. In Corollary~\ref{cor-sum-monotone}, we
will show that the sum operation is well-behaved on passable
games. The following lemma holds for all games (passable or not). The
proof is straightforward and we omit it.

\begin{lemma}\label{lem-pre-monotone-sum}
  Let $a,b\in A$ be atoms, and let $G,H$ by any games over $B$.
  \begin{enumerate}\alphalabels
  \item If $a\leq b$ and $G\tri H$, then $[a]+G\tri [b]+H$.
  \item If $a\leq b$ and $G\leq H$ then $[a]+G\leq [b]+H$.
  \end{enumerate}
  The symmetric properties, about $G+[a]$ and $H+[b]$, also hold.
\end{lemma}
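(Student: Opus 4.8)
The plan is to prove (a) and (b) simultaneously by induction on the pair $(G,H)$, ordered componentwise with respect to the well-founded relation $\ll$, exactly as in the proof of Lemma~\ref{lem-transitive}. As there, I would prove (a) before (b) at each level, so that the verification of (b) is permitted to invoke (a) for the \emph{same} pair $(G,H)$; this ordering is the only delicate point in the bookkeeping. The base case is when both $G=[c]$ and $H=[d]$ are atomic: then $G\leq H$ and $G\tri H$ both reduce to $c\leq d$, and since $[a]+[c]=[(a,c)]$ and $[b]+[d]=[(b,d)]$, the hypotheses $a\leq b$ and $c\leq d$ give $(a,c)\leq(b,d)$ in the product poset $A\times B$, which is exactly $[(a,c)]\leq[(b,d)]$ and $[(a,c)]\tri[(b,d)]$.

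For part (a), I would assume $a\leq b$ and $G\tri H$ and unfold the definition of $\tri$ (Definition~\ref{def-order}) into its clauses. If there is a right option $G^R$ with $G^R\leq H$ (so $G$ is composite), then by the induction hypothesis for (b) applied to the strictly smaller pair $(G^R,H)$ I get $[a]+G^R\leq[b]+H$; since $[a]+G^R$ is a right option of $[a]+G$, clause~1 of $\tri$ yields $[a]+G\tri[b]+H$. Dually, if there is a left option $H^L$ with $G\leq H^L$, the induction hypothesis for (b) on $(G,H^L)$ gives $[a]+G\leq[b]+H^L$, and since $[b]+H^L$ is a left option of $[b]+H$, clause~2 of $\tri$ finishes the case. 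The mixed cases where exactly one of $G,H$ is atomic require no separate treatment: the atomic clause of $\tri$ collapses each of them into one of the two option-based cases just handled.

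For part (b), I would assume $a\leq b$ and $G\leq H$ and verify the three conditions of Definition~\ref{def-order} for $[a]+G\leq[b]+H$. Since $[a]+G$ is composite precisely when $G$ is (and likewise for $H$), the left options of $[a]+G$ are the games $[a]+G^L$ and the right options of $[b]+H$ are the games $[b]+H^R$. The first condition, $[a]+G^L\tri[b]+H$, follows from $G^L\tri H$ (given by $G\leq H$) via the induction hypothesis for (a) on $(G^L,H)$; the second condition is dual, using (a) on $(G,H^R)$. For the third condition, if $[a]+G$ or $[b]+H$ is atomic (i.e.\ $G$ or $H$ is atomic), then $G\tri H$ already holds by the atomic clause of $G\leq H$, and part~(a) applied at the \emph{same} pair $(G,H)$ gives $[a]+G\tri[b]+H$.

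The argument is entirely routine, so the only real obstacle is organizational: one must be careful that the appeal to (a) for condition~3 of $\leq$ is at the same level rather than a strictly smaller pair, which is why (a) must be established first; every other appeal is to a strictly smaller pair and hence a genuine induction hypothesis. The symmetric statements about $G+[a]$ and $H+[b]$ follow by the identical induction with the two coordinates interchanged (equivalently, by transporting along the isomorphism $B\times A\cong A\times B$), so I would simply remark that they hold by symmetry.
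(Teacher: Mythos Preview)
Your proof is correct and follows essentially the same approach as the paper's (omitted) argument: a simultaneous induction on $(G,H)$, unfolding the clauses of $\tri$ for (a) and of $\leq$ for (b), with (a) established first at each level so that the atomic clause in (b) can appeal to it. Your handling of the bookkeeping is careful and accurate.
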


\begin{proposition}[Monotonicity of sum on passable games]\label{prop-sum-monotone}
  Let $G,G'$ be games over a poset $A$, and let $H$ be a passable game
  over a poset $B$. Then:
  \begin{enumerate}\alphalabels
  \item $G\tri G'$ implies $G+H\tri G'+H$.
  \item $G\leq G'$ implies $G+H\leq G'+H$.
  \end{enumerate}
\end{proposition}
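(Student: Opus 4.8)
The plan is to prove (a) and (b) simultaneously by induction on the triple $(G,G',H)$, ordered componentwise by the well-founded relation $\ll$, exactly in the style of the proof of Lemma~\ref{lem-transitive}. At each triple I would establish (a) first and then (b), so that the argument for (b) may freely invoke (a) at the same triple, while the argument for (a) only ever appeals to (b) at strictly smaller triples. Passability of $H$ enters in precisely two ways: every option of a passable game is again passable (Definition of passable), so that $H^L$ and $H^R$ are legitimate passable games on which to recurse; and a passable game satisfies $H\tri H$. The base case, where the component being summed against $H$ is atomic, is supplied by Lemma~\ref{lem-pre-monotone-sum}.

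For (a), assume $G\tri G'$ and split according to the definition of $\tri$. If some right option satisfies $G^R\leq G'$, then $G$ is composite, so $G^R+H$ is a right option of $G+H$ (Definition~\ref{def-sum}), and the induction hypothesis (b) on the smaller triple $(G^R,G',H)$ gives $G^R+H\leq G'+H$, whence $G+H\tri G'+H$. If instead some left option satisfies $G\leq G'^L$, then dually $G'^L+H$ is a left option of $G'+H$, and (b) on $(G,G'^L,H)$ gives $G+H\leq G'^L+H$, whence $G+H\tri G'+H$. In the remaining case $G=[a]$ and $G'=[b]$ with $a\leq b$, I would invoke Lemma~\ref{lem-pre-monotone-sum}(a) together with $H\tri H$, which holds because $H$ is passable, to conclude $[a]+H\tri[b]+H$. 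This atomic case is exactly the point where passability is indispensable, and where Example~\ref{exa-sum-not-monotone} shows the statement fails without it.

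For (b), assume $G\leq G'$ and verify the three defining conditions of $G+H\leq G'+H$. The left options of $G+H$ are of two kinds: an option $G^L+H$ arising from a move in $G$ satisfies $G^L\tri G'$ (from $G\leq G'$), so (a) for the current triple gives $G^L+H\tri G'+H$; an option $G+H^L$ arising from a move in $H$ is handled by the induction hypothesis (b) on the smaller triple $(G,G',H^L)$, which gives $G+H^L\leq G'+H^L$, and since $G'+H^L$ is itself a left option of $G'+H$ this yields $G+H^L\tri G'+H$. The condition on the right options of $G'+H$ is dual, using (a) for moves in $G'$ and (b) on $(G,G',H^R)$ for moves in $H$. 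Finally, if $G+H$ or $G'+H$ is atomic, then by Definition~\ref{def-sum} the corresponding $G$ or $G'$ is atomic, so the third clause of $\leq$ forces $G\tri G'$, and (a) for the current triple gives $G+H\tri G'+H$.

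The main obstacle, and the only genuinely delicate part, is the treatment of the options that arise from playing inside $H$, namely $G+H^L$ and $G+H^R$: these are the steps that force the induction to descend on the third component and that require the options of $H$ to stay passable. Everything else reduces either to (a) or to the base case Lemma~\ref{lem-pre-monotone-sum}, so once the bookkeeping of which component strictly decreases is in place, each individual step is immediate from the definitions of $\leq$, $\tri$, and the sum.
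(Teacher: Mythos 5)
Your proposal is correct and follows essentially the same argument as the paper's own proof: a joint induction on triples $(G,G',H)$ with (a) established before (b) at each stage, the same three-case analysis of $\tri$ in (a) with Lemma~\ref{lem-pre-monotone-sum} and $H\tri H$ handling the atomic case, and the same treatment in (b) of the two kinds of options and the atomic clause via (a). One small mislabeling: in (b), the option $G^L+H$ requires (a) at the strictly smaller triple $(G^L,G',H)$ rather than "the current triple," but that instance is supplied by the induction hypothesis anyway, so nothing breaks.
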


\begin{proof}
  We prove (a) and (b) by joint induction. In each inductive case, we
  prove (a) before (b). Assume the proposition is true for all smaller
  triples of games $(G,G',H)$.

  To prove (a), assume $G\tri G'$. We must show $G+H\tri G'+H$. By the
  definition of $G\tri G'$, there are three cases.  Case 1: $G^R\leq
  G'$ for some right option $G^R$ of $G$.  By the induction
  hypothesis, $G^R+H\leq G'+H$. Since $G^R+H$ is a right option of
  $G+H$, it follows that $G+H\tri G'+H$, as desired.  Case 2: $G\leq
  G'^L$ for some left option $G'^L$ of $G'$. This case is analogous.
  Case 3: $G=[a]$ and $G'=[a']$ are atomic and $a\leq a'$.  Here we
  use the assumption that $H$ is passable (and this is the only place
  in the proof where it is used, apart from inductively using the fact
  that the options of $H$ are passable). Since $H$ is passable, we
  have $H\tri H$, and therefore by Lemma~\ref{lem-pre-monotone-sum},
  we have $[a]+H\tri [a']+H$, i.e., $G+H\tri G'+H$, as claimed.
    
  To prove (b), assume $G\leq G'$. To show $G+H\leq G'+H$, first
  consider any left option $K$ of $G+H$. We must show $K\tri G'+H$.
  There are two cases, depending on what kind of left option $K$ is.
  Case 1: $K=G^L+H$ for some left option $G^L$ of $G$. From $G\leq
  G'$, we get $G^L\tri G'$, and therefore by the induction hypothesis,
  $G^L+H\tri G'+H$ as desired.  Case 2: $K=G+H^L$ for some left option
  $H^L$ of $H$. By the induction hypothesis, since $H^L$ is passable,
  $G+H^L\leq G'+H^L$. Since $G'+H^L$ is a left option of $G'+H$, by
  definition of $\tri$, we have $G+H^L\tri G'+H$, as desired.  This
  proves the first property required for $G+H\leq G'+H$.  Second, we
  must show that $G+H\tri J$ for every right option $J$ of $G'+H$.  This
  case is analogous to the previous one. The remaining thing to show
  is that if $G+H$ or $G'+H$ is atomic, then $G+H\tri G'+H$. But by
  definition of $+$, it follows that $G$ or $G'$ is atomic. Since we
  assumed $G\leq G'$, it follows that $G\tri G'$. Then by (a), we have
  $G+H\tri G'+H$, as desired.
\end{proof}

Of course, the symmetric version of
Proposition~\ref{prop-sum-monotone} also holds, i.e., if $G$ is
passable, then $H\tri H'$ implies $G+H\tri G+H'$ and $H\leq H'$
implies $G+H\leq G+H'$. Using transitivity, we thus obtain the
monotonicity properties of the sum of passable games that are
summarized in the following corollary.

\begin{corollary}\label{cor-sum-monotone}
  Let $G,G'$ be passable games over $A$, and let $H,H'$ be passable
  games over $B$. Then
  \begin{enumerate}\alphalabels
  \item $G\tri G'$ and $H\leq H'$ imply $G+H\tri G'+H'$.
  \item $G\leq G'$ and $H\tri H'$ imply $G+H\tri G'+H'$.
  \item $G\leq G'$ and $H\leq H'$ imply $G+H\leq G'+H'$.
  \end{enumerate}
  In particular, on passable games, the sum is well-defined up to
  equivalence: if $G\eq G'$ and $H\eq H'$, then $G+H\eq G'+H'$.
\end{corollary}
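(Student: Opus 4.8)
The plan is to obtain each of the three implications by splitting the passage from $G+H$ to $G'+H'$ into two one-sided steps—first varying the left summand while holding the right summand fixed at $H$, then varying the right summand while holding the left summand fixed at $G'$—and chaining the two steps with the appropriate clause of the transitivity lemma (Lemma~\ref{lem-transitive}). The one-sided steps are exactly Proposition~\ref{prop-sum-monotone} and its stated symmetric version. Since all four games $G,G',H,H'$ are assumed passable, the passability hypotheses demanded by those results—first that $H$ be passable, then that $G'$ be passable—are always at hand.

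For part (a), I would first apply Proposition~\ref{prop-sum-monotone}(a), using that $H$ is passable, to turn $G\tri G'$ into $G+H\tri G'+H$. Then, using that $G'$ is passable, the symmetric version of Proposition~\ref{prop-sum-monotone}(b) turns $H\leq H'$ into $G'+H\leq G'+H'$. Combining $G+H\tri G'+H\leq G'+H'$ via Lemma~\ref{lem-transitive}(a) yields $G+H\tri G'+H'$. Part (b) is the mirror image: Proposition~\ref{prop-sum-monotone}(b) gives $G+H\leq G'+H$ from $G\leq G'$, the symmetric version of Proposition~\ref{prop-sum-monotone}(a) gives $G'+H\tri G'+H'$ from $H\tri H'$, and Lemma~\ref{lem-transitive}(b) combines $G+H\leq G'+H\tri G'+H'$ into $G+H\tri G'+H'$. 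Part (c) uses the $\leq$-versions throughout: $G+H\leq G'+H$ and $G'+H\leq G'+H'$ chain by Lemma~\ref{lem-transitive}(c) to $G+H\leq G'+H'$.

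Finally, for the ``in particular'' claim, I would unfold $G\eq G'$ and $H\eq H'$ into the four inequalities $G\leq G'$, $G'\leq G$, $H\leq H'$, $H'\leq H$, and apply part (c) twice: once as stated to obtain $G+H\leq G'+H'$, and once with the roles of the primed and unprimed games reversed to obtain $G'+H'\leq G+H$, whence $G+H\eq G'+H'$.

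There is no genuine obstacle here; the mathematical content lives entirely in Proposition~\ref{prop-sum-monotone}, and the corollary is a routine assembly. The only thing to watch is the bookkeeping of the passability hypotheses: each one-sided step requires the \emph{fixed} summand to be passable (first $H$, then $G'$), and one must confirm that in every step the summand being held fixed is indeed among the games assumed passable—which it always is, since the corollary's standing assumption is that all of $G,G',H,H'$ are passable.
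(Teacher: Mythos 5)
Your proposal is correct and follows exactly the paper's argument: the paper derives this corollary from Proposition~\ref{prop-sum-monotone} together with its symmetric version (which requires the fixed summand to be passable), chained by the appropriate clauses of Lemma~\ref{lem-transitive}. Your bookkeeping of which passability hypothesis is used at each step ($H$ first, then $G'$) matches the paper's intent precisely.
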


Note that Corollary~\ref{cor-sum-monotone} also implies that the
sum of passable games is passable. The sum also enjoys other expected
properties: up to obvious isomorphisms of atom sets, the sum is
symmetric and associative, and it has a unit, which is the unique game
over one atom. These properties are straightforward and we omit the
proofs.

From here on for the rest of the paper, all games will be assumed to
be passable. We will usually state this explicitly (for example for
the benefit of readers who skipped the current paragraph), but should
it ever not be stated, the games are assumed to be passable anyway.

\subsection{The Opposite Game}

In standard combinatorial game theory, the negation of a game $G$,
usually written $-G$, is the game obtained by exchanging the roles of
the players. We have a similar operation here, but in addition to
exchanging the roles of the players, we must also invert the outcome
poset. To emphasize that this operation is not an additive inverse for
the sum operation, we call it the opposite game, rather than the
negation, and we denote it by $G\opp$.

If $A$ is a poset, let $A\opp$ denote the opposite poset, which has
the same elements as $A$, but with the opposite order, i.e.,
$a\leq_{A\opp} b$ if and only if $b\leq_{A} a$.

\begin{definition}[Opposite game]
  Let $G$ be a game over a poset $A$. The {\em opposite game} of $G$,
  denoted by $G\opp$, is a game over $A\opp$. It is obtained by
  exchanging the roles of Left and Right. More formally, we define
  $[a]\opp = [a]$ and $\g{G^L|G^R}\opp =
  \g{(G^R)\opp|(G^L)\opp}$.
\end{definition}

Note that $G$ is monotone (respectively, passable) if and only if
$G\opp$ is monotone (respectively, passable). All notions dualize; for
example, we have $G\leq H$ if and only if $H\opp\leq G\opp$, $G\eql H$
if and only if $G\opp\eqr H\opp$, and so on.

\subsection{The Map Operation}\label{ssec-map}

We can use a monotone function $f:A\to B$ to turn a game over $A$ into
a game over $B$.

\begin{definition}[Map operation]
  Let $A,B$ be posets and $f:A\to B$ a monotone function. Given a game
  $G$ over $A$, we define a game $f(G)$ over $B$ as follows:
  \begin{itemize}
  \item $f([a]) = [f(a)]$ for atomic games;
  \item $f(G) = \g{f(G^L) |f(G^R)}$ when $G$ is composite.
  \end{itemize}
  We say that the function $f$ {\em maps} the game $G$ to the game
  $f(G)$. We will often apply maps to sums of games, in which case we
  write $G+_fH$ instead of $f(G+H)$.
\end{definition}

The map operation satisfies some obvious properties; for example, it
is functorial: if $f:A\to B$ and $g:B\to C$, we have $g(f(G)) =
(g\circ f)(G)$, and if $\id:A\to A$ is the identity function, we have
$\id(G) = G$. Also, if $f:A\times A\to A$ is an associative operation,
then $(G+_f H)+_f K = G+_f(H+_f K)$. Finally, as expected, the map
operation is monotone in both arguments, i.e., $f\leq f'$ and $G\leq
G'$ imply $f(G)\leq f'(G')$, and similarly for $\tri$. Also, if $G$ is
passable, then so is $f(G)$.

\begin{remark}
  Any monotone set coloring game $(X,\pi)$ is actually $\pi(G)$, where
  $G$ is a direct sum of $|X|$ copies of the game $\g{\top|\bot}$, and
  $\pi:\Bool^X\to A$ is the payoff function.
\end{remark}

\subsection{Copy-Cat Strategies}\label{ssec-copy-cat}

One difference between the sum operation in our games and in other
kinds of combinatorial games is that there are actually many different
sum operations, depending on what winning condition we want to impose
on the combined game. In a sense, each different monotone function
$f:A\times B\to C$ defines a different sum operation $G\pf H$. Many of
these are not self-dual. As a simple example, consider the boolean
functions ``and'' and ``or''${}:\Bool\times\Bool\to\Bool$. If $G,H$
are games over $\Bool$, then to win the game $G+_{\cand}H$, Left must
win both components, but Right only needs to win one component. The
game $G+_{\cor}H$ is its dual: here Left only needs to win one
component, but Right must win both components.  In Hex, such games
might look like this:
\[
\begin{array}{c@{\hspace{1cm}}c}
  \begin{hexboard}[scale=0.8]
    \shadows
    \board(3,7)
    \foreach\i in {1,...,3} {\black(\i,4)}
    \cell(2,2)\label{$G$}
    \cell(2,6)\label{$H$}
  \end{hexboard}
  &
  \begin{hexboard}[scale=0.8]
    \shadows
    \board(7,3)
    \foreach\i in {1,...,3} {\white(4,\i)}
    \cell(2,2)\label{$G$}
    \cell(6,2)\label{$H$}
  \end{hexboard}
  \\
  \mbox{Black must win both components.}
  &
  \mbox{Black must win one component.}
\end{array}
\]
Games like these were called {\em conjunction} and {\em disjunction}
games in {\cite{VanRijswijck}}.

We can generalize this idea to an arbitrary outcome poset. Given a
poset $A$, define the monotone functions $\lambda,\rho:A\times
A\opp\to\Bool$ as follows:
\[
\lambda(a,b) = \begin{choices}
  \top & \mbox{if $b\leq_A a$,} \\
  \bot & \mbox{otherwise.}
\end{choices}
\qquad\qquad
\rho(a,b) = \begin{choices}
  \top & \mbox{if $a\nleq_A b$,} \\
  \bot & \mbox{otherwise.}
\end{choices}
\]
These functions are each other's duals, in the sense that $(G\pl
H)\opp = H\opp\pr G\opp$. They are called ``\tu{l}ambda'' and
``\tu{r}ho'' because they favor the \tu{L}eft and \tu{R}ight player,
respectively. We have the following properties:

\begin{lemma}\label{lem-copy-cat}
  For all passable games $G,H$:
  \begin{enumerate}\alphalabels
  \item $G\pl G\opp \eq \top$ is a second-player win for Left.
  \item $G\pr G\opp \eq \bot$ is a second-player win for Right.
  \item $G\leq H$ $\iff$ $\top\leq H\pl G\opp$ $\iff$ $G\pr H\opp\leq\bot$.
  \item $G\tri H$ $\iff$ $\top\tri H\pl G\opp$ $\iff$ $G\pr H\opp\tri\bot$.
  \end{enumerate}
\end{lemma}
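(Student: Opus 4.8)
The plan is to first establish the two ``$\lambda$-halves'' of parts (c) and (d), namely the equivalences $G\leq H\iff\top\leq H\pl G\opp$ and $G\tri H\iff\top\tri H\pl G\opp$ for all passable games $G,H$ over a common poset $A$. Once these are in hand, the ``$\rho$-halves'' $G\leq H\iff G\pr H\opp\leq\bot$ and $G\tri H\iff G\pr H\opp\tri\bot$ follow by the duality principle, using the stated identity $(G\pl H)\opp=H\opp\pr G\opp$ together with the facts that the opposite operation reverses $\leq$ and $\tri$ and exchanges $\top$ with $\bot$. Parts (a) and (b) then drop out: taking $H=G$ in the $\lambda$-half of (c) and invoking reflexivity gives $\top\leq G\pl G\opp$, while $G\pl G\opp\leq\top$ holds by Lemma~\ref{lem-top}, so $G\pl G\opp\eq\top$; and $G\pr G\opp\eq\bot$ is its dual. (The ``second-player win'' wording is just the outcome-class reading of $\eq\top$ and $\eq\bot$.)

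I would prove the two $\lambda$-biconditionals by a single joint induction on the pair $(G,H)$ ordered by the product of the well-founded relation $\ll$, proving the $\tri$-statement before the $\leq$-statement within each step. The engine is a computation of the options of $K=H\pl G\opp=\lambda(H+G\opp)$. When $G,H$ are both composite, the left options of $K$ are $H^L\pl G\opp$ and $H\pl(G^R)\opp$, and its right options are $H^R\pl G\opp$ and $H\pl(G^L)\opp$; the mixed and fully atomic cases specialize this, using $[a]+[b]=[(a,b)]$ and $\lambda(a,b)=\top\iff b\leq_A a$. Unwinding the definition of $\tri$ for the composite $K$ and applying the $\leq$ induction hypothesis to the smaller pairs $(G^R,H)$ and $(G,H^L)$ shows that $\top\tri K$ holds iff $\exists G^R.\,G^R\leq H$ or $\exists H^L.\,G\leq H^L$, which is precisely the defining condition for $G\tri H$. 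Unwinding $\top\leq K$ (since $\top$ is atomic this means $\top\tri K$ together with $\top\tri K^R$ for every right option $K^R$), then using the $\tri$-equivalence just proved for $(G,H)$ and the $\tri$ induction hypothesis on $(G^L,H)$ and $(G,H^R)$, matches $\top\leq K$ against the conditions defining $G\leq H$. In the fully atomic case $G=[a]$, $H=[b]$ one checks directly that $K$ is the atomic game $[\top]$ or $[\bot]$ according as $a\leq_A b$ or not, so both biconditionals reduce to $a\leq_A b\iff[a]\leq[b]$; the mixed cases are analogous, using the atomic--composite clauses of Definition~\ref{def-order}.

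The one place where the two sides fail to match on the nose --- and the step I expect to be the main obstacle --- is the $\leq$-statement in the both-composite case. There, unwinding $\top\leq K$ produces three conditions, $G\tri H$ and $\forall G^L.\,G^L\tri H$ and $\forall H^R.\,G\tri H^R$, whereas the definition of $G\leq H$ for composite $G,H$ asks only for the last two (its third clause is vacuous because neither game is atomic). The forward direction therefore needs the extra implication $G\leq H\Rightarrow G\tri H$, and this is exactly where passability is indispensable: since $G$ is passable it is locally passable, i.e.\ $G\tri G$, and then $G\tri G\leq H$ yields $G\tri H$ by Lemma~\ref{lem-transitive}(a); the backward direction simply discards the surplus condition. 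This is the only use of passability in the argument, beyond the routine facts that options of passable games are passable and that $G\opp$, sums, and images under $\lambda$ of passable games are again passable, which keep the whole induction inside the passable world. For non-passable $G$ the implication $G\leq H\Rightarrow G\tri H$ genuinely fails --- for instance $\g{\bot|\top}\leq\g{\bot|\top}$ by reflexivity, yet $\g{\bot|\top}\ntri\g{\bot|\top}$ --- so the copy-cat identities cannot be expected to survive outside the passable setting.
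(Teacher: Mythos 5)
Your proof is correct and is exactly the argument the paper has in mind: the paper omits the proof of this lemma, describing it as a ``straightforward induction on $G$ and $H$'' analogous to the normal-play proof that $G\leq H$ iff $0\leq H+(-G)$, and your joint induction on the pair $(G,H)$ --- computing the options of $H\pl G\opp$ and matching them clause-by-clause against Definition~\ref{def-order}, with the $\rho$-halves and parts (a), (b) obtained by duality, reflexivity, and Lemma~\ref{lem-top} --- is that induction carried out in full. In particular, you correctly isolate the one point where the analogy with normal play needs care: in the composite--composite case, $\top\leq H\pl G\opp$ forces $G\tri H$ in addition to the two clauses defining $G\leq H$, and this surplus condition is exactly what local passability $G\tri G$ plus transitivity supplies (and what genuinely fails for non-passable games such as $\g{\bot|\top}$).
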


\noindent
We omit the proofs, which are straightforward inductions on $G$ and
$H$, and basically the same as the usual proofs of $G\leq H$ if and
only if $0\leq H+(-G)$ and related properties in normal play games. In
particular, the second-player winning strategies employed by Left in
(a) and Right in (b) are the copy-cat strategies, which consist of
always copying the other player's last move in the opposite component
of the game.

\section{Contextual Order and Global Decisiveness}\label{sec-contextual}

\subsection{Contexual Order}\label{ssec-contextual}

As motivated in Section~\ref{sec-outcomes}, our games over an outcome
poset $A$ are intended to represent ``local'' play, such as play in a
particular region of a Hex board. This local play is a component of a
larger ``global'' game. In the global game, success or failure is
measured by winning or losing, i.e., global games are always over the
outcome poset $\Bool=\s{\bot,\top}$. Now that we have introduced sums
and the map operation, we can briefly describe how the local and
global notions hang together. This is not unlike what happens in other
branches of combinatorial game theory, but appropriately adjusted to
accommodate games over outcome posets.

\begin{definition}[Context]
  Let $A$ be a poset. A {\em context} for $A$ is a triple $(B,f,K)$,
  where $B$ is a poset, $f:A\times B\to\Bool$ is a monotone function,
  and $K$ is a passable game over $B$.
\end{definition}

\begin{definition}[Contextual order and equivalence]\label{def-contextual}
  Fix a poset $A$. We define several different {\em contextual order}
  relations on games over $A$, as follows. For passable games $G,H$
  over $A$, we say:
  \begin{itemize}
  \item $G\leqcI H$ if for all contexts $(B,f,K)$, if Left has a
    first-player winning strategy in $G\pf K$, then Left has a
    first-player winning strategy in $H\pf K$.
  \item $G\leqcII H$ if for all contexts $(B,f,K)$, if Left has a
    second-player winning strategy in $G\pf K$, then Left has a
    second-player winning strategy in $H\pf K$.
  \item $G\leqcIII H$ if for all contexts $(B,f,K)$, if Left has a
    second-player winning strategy in $G\pf K$, then Left has a
    first-player winning strategy in $H\pf K$.
  \end{itemize}
  We also write $G\leqc H$ if $G\leqcI H$ and $G\leqcII H$, and $G\eqc
  H$ if $G\leqc H$ and $H\leqc G$. The latter relation is called {\em
    contextual equivalence}.
\end{definition}

We note that up to equivalence, there are only three passable games
over $\Bool$: $\top$, which is a first- and second-player win for
Left; $\Star$, which is a first-player win but not a second-player win
for Left; and $\bot$, which is neither a first- nor a second-player
win for Left. Thus, Left has a first-player winning strategy in some
game $X$ over $\Bool$ if and only if $\Star\leq X$, or equivalently,
$\top\tri X$, and Left has a second-player winning strategy if and
only if $\top\leq X$.

The following proposition shows that the contextual order relations
coincide with the relations we defined in Section~\ref{ssec-order}.

\begin{proposition}\label{prop-context}
  For passable games $G,H$ over $A$, we have:
  \begin{enumerate}\alphalabels
  \item $G\leqcI H$ $\iff$ $G\leq H$.
  \item $G\leqcII H$ $\iff$ $G\leq H$.
  \item $G\leqcIII H$ $\iff$ $G\tri H$.
  \end{enumerate}
\end{proposition}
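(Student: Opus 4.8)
The plan is to restate all three claims in the order-theoretic language of Section~\ref{ssec-order}. Using the characterization recorded just before the proposition --- that Left has a first-player winning strategy in a game $X$ over $\Bool$ iff $\top\tri X$, and a second-player winning strategy iff $\top\leq X$ --- the three conditions unwind to: $G\leqcI H$ iff for every context $(B,f,K)$, $\top\tri G\pf K$ implies $\top\tri H\pf K$; $G\leqcII H$ iff $\top\leq G\pf K$ implies $\top\leq H\pf K$; and $G\leqcIII H$ iff $\top\leq G\pf K$ implies $\top\tri H\pf K$. All three ``$\Leftarrow$'' directions are then uniform and easy: if $G\leq H$ (resp.\ $G\tri H$), then by monotonicity of the sum on passable games (Proposition~\ref{prop-sum-monotone}, using that $K$ is passable) followed by monotonicity of the map operation, we get $G\pf K\leq H\pf K$ (resp.\ $G\pf K\tri H\pf K$), and the desired implication between the boolean conditions follows by transitivity (Lemma~\ref{lem-transitive}).

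For the ``$\Rightarrow$'' directions I would instantiate the quantifier over contexts at the copy-cat context $(A\opp,\lambda,G\opp)$, which is legitimate because $G\opp$ is passable. By Lemma~\ref{lem-copy-cat}(a) we have $G\pl G\opp\eq\top$, so both $\top\tri G\pl G\opp$ and $\top\leq G\pl G\opp$ hold; thus the hypotheses of $\leqcI$, $\leqcII$, and $\leqcIII$ are all satisfied at this context. For (b), $G\leqcII H$ then yields $\top\leq H\pl G\opp$, which is exactly $G\leq H$ by Lemma~\ref{lem-copy-cat}(c). For (c), $G\leqcIII H$ yields $\top\tri H\pl G\opp$, which is exactly $G\tri H$ by Lemma~\ref{lem-copy-cat}(d). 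Both of these forward directions are thus immediate.

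The one case that does not succumb to the bare copy-cat context is the forward direction of (a), and this is the main obstacle. Instantiating $G\leqcI H$ at $(A\opp,\lambda,G\opp)$ only delivers $\top\tri H\pl G\opp$, i.e.\ $G\tri H$ (by Lemma~\ref{lem-copy-cat}(d)), which is strictly weaker than the $G\leq H$ we want: a first-player-win hypothesis naturally produces a $\tri$-conclusion, not a $\leq$-conclusion. The fix is to build a context that converts a first-player question into a second-player one by attaching a tempo-forcing $\Star$ component. Concretely I would take $B=A\opp\times\Bool$, $K=G\opp+\Star$ (passable, as a sum of passable games), and $f(a,b,c)=\lambda(a,b)\cand c$, so that by functoriality of the map operation and associativity of the sum, $G\pf K\eq (G\pl G\opp)+_{\cand}\Star$ and likewise for $H$. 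The key auxiliary fact, provable by checking the three equivalence classes of passable games over $\Bool$ (namely $\top$, $\Star$, and $\bot$), is that for any passable $X$ over $\Bool$ one has $\top\tri X+_{\cand}\Star$ iff $\top\leq X$: indeed $\top+_{\cand}\Star\eq\Star$ is a first-player win, while $\Star+_{\cand}\Star\eq\bot$ and $\bot+_{\cand}\Star\eq\bot$ are not.

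With this gadget in hand, the forward direction of (a) closes as follows. Since $G\pl G\opp\eq\top$, the game $G\pf K\eq\top+_{\cand}\Star\eq\Star$ is a first-player win, so the hypothesis of $G\leqcI H$ applies at this context and gives $\top\tri H\pf K$, i.e.\ $\top\tri (H\pl G\opp)+_{\cand}\Star$. By the auxiliary fact this forces $\top\leq H\pl G\opp$, which is $G\leq H$ by Lemma~\ref{lem-copy-cat}(c). Combined with the easy ``$\Leftarrow$'' direction, this proves (a); the routine verifications I expect to be the only remaining work are the passability and monotonicity bookkeeping for the constructed context and the three-case evaluation of $X+_{\cand}\Star$.
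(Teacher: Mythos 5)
Your proof is correct, and most of it coincides with the paper's own argument: the converse directions are obtained exactly as in the paper from monotonicity of sum and map on passable games, and your forward directions for (b) and (c) use the same copy-cat context $(A\opp,\lambda,G\opp)$ and the same appeals to Lemma~\ref{lem-copy-cat}. The genuine difference is the forward direction of (a). The paper handles it by a contrapositive argument in the \emph{other} copy-cat context $(A\opp,\rho,H\opp)$: by Lemma~\ref{lem-copy-cat}(b), Left has no first-player win in $H\pr H\opp$, so by the contrapositive of $G\leqcI H$ at that context, Left has no first-player win in $G\pr H\opp$; since the only passable values over $\Bool$ are $\bot$, $\Star$, $\top$, this forces $G\pr H\opp\eq\bot$, whence $G\leq H$ by Lemma~\ref{lem-copy-cat}(c). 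Your route instead converts the first-player hypothesis into a second-player conclusion directly, by adjoining a tempo component $\Star$ to the copy-cat context and using the three-case evaluation showing $\top\tri X+_{\cand}\Star$ iff $\top\leq X$ for passable $X$ over $\Bool$. Both are sound. The paper's version is shorter and needs no new context beyond those already supplied by Lemma~\ref{lem-copy-cat}; yours avoids contrapositive reasoning and exhibits a reusable ``tempo gadget,'' at the price of extra bookkeeping the paper never incurs: the product poset $A\opp\times\Bool$, the interchange law $(\lambda\times\id)(X+Y)=\lambda(X)+Y$ (routine, but not among the map-operation properties the paper states explicitly), and well-definedness of sums and maps up to equivalence via Corollary~\ref{cor-sum-monotone} --- all of which you correctly flag as routine verifications.
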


\begin{proof}
  In all three cases, the right-to-left implication follows from the
  monotonicity properties of the sum and map operations. For example,
  $G\leq H$ implies $G\pf K\leq H\pf K$ for all contexts. We must show
  the left-to-right implications. For (a), assume $G\leqcI H$.
  Consider the context $(A\opp,\rho,H\opp)$. We know from
  Lemma~\ref{lem-copy-cat}(b) that Right has a second-player win in
  the game $H\pr H\opp$; therefore Left does not have a first-player
  win in that game. Since $G\leqcI H$, it follows that Left does not
  have first-player win in $G\pr H\opp$, so we must have $G\pr
  H\opp\eq \bot$. Then by Lemma~\ref{lem-copy-cat}(c), we get $G\leq
  H$ as claimed. The proof of (b) is dual. Here we assume $G\leqcII H$
  and use the context $(A\opp,\lambda,G\opp)$. By
  Lemma~\ref{lem-copy-cat}(a), Left has a second-player winning
  strategy in $G\pl G\opp$, and since $G\leqcII H$, Left also has a
  second-player winning strategy in $H\pl G\opp$, so $H\pl
  G\opp\eq\top$, so $G\leq H$ by Lemma~\ref{lem-copy-cat}(c).  The
  proof of (c) is also very similar. Here, we assume $G\leqcIII H$ and
  use the context $(A\opp,\lambda,G\opp)$. By
  Lemma~\ref{lem-copy-cat}(a), Left has a second-player winning
  strategy in $G\pl G\opp$. Therefore, since $G\leqcIII H$, Left has a
  first-player winning strategy in $H\pl G\opp$. This means that
  $\top\tri H\pl G\opp$. Thus, by Lemma~\ref{lem-copy-cat}(d), $G\tri
  H$.
\end{proof}

Proposition~\ref{prop-context} is perhaps not very surprising; similar
results hold in other branches of combinatorial game theory, by and
large with the same proofs. The fact that our a priori notions of
order and equivalence coincide with the contextual order and
equivalence does provide an additional measure of evidence that our
definitions are good. (Note that the order and equivalence of games
over $\Bool$ is likely uncontroversial, since there are only three
passable game values $\s{\bot,\Star,\top}$, and they are completely
determined by the existence of winning strategies for the players.)

But the real reason we stated Definition~\ref{def-contextual} is to be
able to generalize it to a setting where the analog of
Proposition~\ref{prop-context} is false. We do so in
Section~\ref{ssec-global}.

\subsection{Games with Globally Decisive Moves}\label{ssec-global}

Consider the Hex region in Figure~\ref{fig-globally-decisive}. It is a
3-terminal region in which two of Black's terminals are opposing board
edges, and two of White's terminals are opposing board edges. We call
such a region a {\em one-sided fork}, because it is a fork
(Figure~\ref{fig-region-edges}) with the additional property that
connecting terminal 3 to terminal 1 is strictly better than connecting
it to terminal 2, and in fact as good as connecting all three terminals.

Because four of the one-sided fork's terminals are edges of the global
Hex board, it is clear that if Black connects her two edges within the
region, then she immediately wins the global game, rather than just
winning in the local region. In other words, there is no move
whatsoever that White could play outside the region that would trump
such a connection by Black. In this case, we say that the outcome
$\top$ of the local region is ``globally decisive''. Similarly, if
White connects his two edges within the region, he immediately wins
the global game and trumps anything that Black could do outside the
region. Therefore, the outcome $\bot$ is also globally decisive. It
turns out that the theory of regions with globally decisive $\top$ and
$\bot$ is slightly different than that with ordinary $\top$ and
$\bot$; it sometimes (but not always) happens that some games that
would otherwise be inequivalent can become equivalent due to global
decisiveness.
\begin{figure}
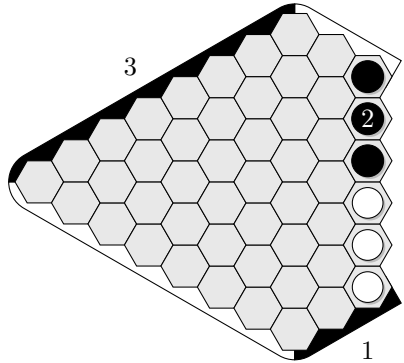

  \[
  \begin{hexboard}[scale=0.8]
    \shadows
    \foreach\i in {1,...,8} {\hex(1,\i)}
    \foreach\i in {1,...,8} {\hex(2,\i)}
    \foreach\i in {1,...,8} {\hex(3,\i)}
    \foreach\i in {1,...,7} {\hex(4,\i)}
    \foreach\i in {1,...,6} {\hex(5,\i)}
    \foreach\i in {1,...,5} {\hex(6,\i)}
    \foreach\i in {1,...,4} {\hex(7,\i)}
    \foreach\i in {1,...,3} {\hex(8,\i)}
    \edge[\sw](1,1)(1,8)
    \edge[\ne\noacutecorner](8,3)(8,1)
    \edge[\nw](1,1)(8,1)
    \edge[\se\noacutecorner](3,8)(1,8)
    \black(8,3)
    \black(7,4)\label{2}
    \black(6,5)
    \white(5,6)
    \white(4,7)
    \white(3,8)
    \cell(5,-0.5)\label{3}
    \cell(1.5,9.5)\label{1}
  \end{hexboard}
  \]
  \caption{A 3-terminal region with globally decisive $\top$ and
    $\bot$, or ``one-sided fork''.}
  \label{fig-globally-decisive}
\end{figure}

In this section, we give a precise definition of global decisiveness
and some examples of how globally decisive equivalence does not
coincide with ordinary equivalence. Much of the theory of global
decisiveness has not yet been worked out and is left for future work.

\begin{definition}
  Let $A$ and $B$ be posets, and assume $A$ has top and bottom
  elements. A monotone function $f:A\times B\to \Bool$ is called {\em
    (left) strict} if $f(\top,b)=\top$ and $f(\bot,b)=\bot$ for all
  $b\in B$.  A context $(B,f,K)$ for $A$ is called {\em strict} if $f$
  is left strict.
\end{definition}

\begin{lemma}\label{lem-strict-game}
  Let $(B,f,K)$ be a strict context over $A$. Then $\top\pf K\eq\top$
  and $\bot\pf K\eq\bot$.
\end{lemma}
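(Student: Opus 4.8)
The plan is to prove both equivalences by Conway induction on the structure of $K$, directly unfolding the definitions of the sum (Definition~\ref{def-sum}) and of the map operation; recall that $\top\pf K$ abbreviates $f(\top+K)$, and that here $\top,\bot$ denote the atomic games $[\top],[\bot]$ over $A$. I would establish $\top\pf K\eq\top$ first and then obtain $\bot\pf K\eq\bot$ by the duality principle, since interchanging Left/Right and $\top/\bot$ turns one statement into the other (left strictness of $f$ is self-dual in exactly the right way, as it asserts both $f(\top,b)=\top$ and $f(\bot,b)=\bot$).

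For the base case, if $K=[b]$ is atomic then $\top+[b]=[(\top,b)]$, so by the definition of the map and left strictness $\top\pf K=[f(\top,b)]=[\top]=\top$, which is in fact a literal equality. For the inductive step, suppose $K$ is composite. Then $\top+K=\g{\top+K^L|\top+K^R}$ (the options coming from $\top$ vanish, as atomic games have no options), and hence $G:=\top\pf K=\g{f(\top+K^L)|f(\top+K^R)}$ is composite with at least one left and one right option. One inclusion, $G\leq\top$, is immediate from Lemma~\ref{lem-top}, since $G$ is a game over $\Bool$. For the reverse inclusion $\top\leq G$, the induction hypothesis gives $f(\top+K^L)\eq\top$ and $f(\top+K^R)\eq\top$ for all options. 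Because $\top$ is atomic and $G$ is composite, Definition~\ref{def-order} reduces $\top\leq G$ to two things: $\top\tri G$, and $\top\tri G^R$ for every right option $G^R$ of $G$. For the first, I would pick any left option $G^L=f(\top+K^L)$ (one exists) and use $\top\leq G^L$ from the induction hypothesis, which gives $\top\tri G$ by the definition of $\tri$. For the second, each $G^R=f(\top+K^R)$ satisfies $\top\leq G^R$ by the induction hypothesis, and $\top\leq G^R$ always implies $\top\tri G^R$. This yields $\top\leq G$, and combined with $G\leq\top$ we conclude $G\eq\top$.

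The statement $\bot\pf K\eq\bot$ is the exact dual of the above, using the dual of Lemma~\ref{lem-top}, the clause $\bot+[b]=[(\bot,b)]$, and $f(\bot,b)=\bot$; alternatively one repeats the induction verbatim with the roles of the order clauses reversed. I expect no genuine obstacle: this is a routine structural induction, and the only point needing care is the correct unfolding of the \emph{atomic} cases of $\leq$ and $\tri$, since $\top$ and $\bot$ are atomic while $\top\pf K$ is composite. It is perhaps worth remarking that passability of $K$ is never actually used in this argument; left strictness of $f$ alone drives the base case, and Lemma~\ref{lem-top} supplies the free half of each equivalence.
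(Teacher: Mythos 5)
Your proof is correct, and it rests on the same underlying fact as the paper's: left strictness forces every atom occurring in $\top\pf K$ to be $\top$, after which a structural induction collapses the game to $\top$. The difference is in execution. The paper treats the lemma as a two-line corollary: since every atom of $\top\pf K$ is $\top$, repeated application of Lemma~\ref{lem-atomic-composite} (i.e., $a\eq\g{a|a}$), together with the implicit substitution of equivalent options justified by Lemma~\ref{lem-composite-monotone}, collapses the game from the leaves up. You instead verify the equivalence directly from Definition~\ref{def-order}: Lemma~\ref{lem-top} gives $\top\pf K\leq\top$ for free, and the atomic clauses of $\leq$ and $\tri$ give $\top\leq\top\pf K$ once the induction hypothesis is applied to the options (your use of clause 3 of the definition of $\leq$ to pass from $\top\leq G^R$ to $\top\tri G^R$ is exactly right). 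Your route is more elementary and self-contained, never invoking the congruence step hidden in the paper's ``repeated application,'' at the cost of being longer; the paper's route is shorter because it reuses machinery already established. Your closing observations are also accurate: passability of $K$ plays no role in either proof (it enters only through the definition of a context), and the $\bot$ claim follows by duality in both treatments, since left strictness is a self-dual hypothesis.
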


\begin{proof}
  By the strictness of $f$, every atom occurring in $\top\pf K$ is
  $\top$, and therefore $\top\pf K\eq\top$ by repeated application of
  Lemma~\ref{lem-atomic-composite}. The second claim is dual.
\end{proof}

We say that a region of a game has {\em globally decisive} $\top$ and
$\bot$ if it can only be played in strict contexts. An example of this
is the one-sided fork in Figure~\ref{fig-globally-decisive}. It then
makes sense to define an ordering and equivalence on games by taking
only strict contexts into account. The following definition does this.

\begin{definition}[Globally decisive order and equivalence]\label{def-global-decisive}
  Let $A$ be a poset with top and bottom elements. The {\em globally
    decisive order} relations $\leqgI$, $\leqgII$, and $\leqgIII$ are
  relations on games over $A$ that are defined in exactly the same way
  as the relations $\leqcI$, $\leqcII$, and $\leqcIII$ in
  Definition~\ref{def-contextual}, except that all contexts are
  restricted to {\em strict} contexts. Specifically:
  \begin{itemize}
  \item $G\leqgI H$ if for all strict contexts $(B,f,K)$, if Left has a
    first-player winning strategy in $G\pf K$, then Left has a
    first-player winning strategy in $H\pf K$.
  \item $G\leqgII H$ if for all strict contexts $(B,f,K)$, if Left has a
    second-player winning strategy in $G\pf K$, then Left has a
    second-player winning strategy in $H\pf K$.
  \item $G\leqgIII H$ if for all strict contexts $(B,f,K)$, if Left has a
    second-player winning strategy in $G\pf K$, then Left has a
    first-player winning strategy in $H\pf K$.
  \end{itemize}
  We also write $G\leqg H$ if $G\leqgI H$ and $G\leqgII H$, and $G\eqg
  H$ if $G\leqg H$ and $H\leqg G$. The latter relation is called {\em
    globally decisive equivalence}.
\end{definition}

We note that these relations are at least as coarse as $\leq$ and
$\tri$; in other words, $G\leq H$ implies $G\leqgI H$ and $G\leqgII
H$, and $G\tri H$ implies $G\leqgIII H$. This is trivial, because if
some condition holds for {\em all} contexts, then it certainly holds
for all {\em strict} contexts.  But in contrast to the situation in
Section~\ref{ssec-contextual}, the globally decisive order does not
usually coincide with $\leq$. In fact, the following example shows
that $\leqgI$ and $\leqgII$ do not even coincide with each other.

\begin{example}
  Let $G=\top$ and $H=\Star=\g{\top|\bot}$. Consider any strict
  context $(B,f,K)$. By Lemma~\ref{lem-strict-game}, $G\pf K\eq \top$,
  so that Left has first- and second-player winning strategies in
  $G\pf K$, regardless of $K$ (effectively, Left has already won $G\pf
  K$). Also, the game $H\pf K$ has $\top\pf K\eq \top$ as a left
  option, and it has $\bot\pf K\eq\bot$ as a right option. Therefore,
  Left has a first-player winning strategy, but no second-player
  winning strategy, in $H\pf K$, again regardless of $K$. It follows
  that $G\leqgI H$ but $G\not\leqgII H$. Also, clearly $G\nleq H$,
  which shows that $\leqgI$ and $\leq$ do not coincide.  A dual
  argument shows that $\leqgII$ implies neither $\leq$ nor $\leqgI$.
\end{example}

The previous example shows that $\leqgI$ and $\leqgII$ do not coincide
with $\leq$, but one may wonder if the relation $\leqg$, defined as
the intersection of $\leqgI$ and $\leqgII$, coincides with $\leq$. The
following example shows that this is not the case. In other words,
even $G\leqgI H$ and $G\leqgII H$ together do not imply $G\leq H$.

\begin{example}\label{exa-g-counterex}
  Consider the linearly ordered poset $A=\s{\bot,a,b,\top}$, with
  $\bot<a<b<\top$. Let $G=\g{\g{\top|\g{b|\g{a|\bot}}} | \bot}$ and
  $H=\g{b,\g{\top|a}|\bot}$. One can check that $G\nleq H$. However,
  we have $G\leqgI H$ and $G\leqgII H$. To see $G\leqgII H$, note that
  Left never has a second-player win in $G$ in any strict context,
  because Right can always move to $\bot$. So $G\leqgII H$ is
  vacuously true.

  Proving $G\leqgI H$ is more tricky and more fun, because it requires
  actually playing the game. Let $G'=\g{b|\g{a|\bot}}$. We first claim
  that $G'\leqgI H$. To that end, let $(B,f,K)$ be some strict context
  and assume that Left has a first-player winning strategy in $G'\pf
  K$. We must show that Left has a first-player winning strategy in
  $H\pf K$. Left's first-player winning strategy in $G'\pf K$ must
  start with a winning move. There are two cases, depending on what
  this move is. Case~1: Left's winning move is $b\pf K$. But since
  $b\pf K$ is also a left option of $H\pf K$, Left then has a
  first-player win in $H\pf K$, which is what we had to prove. Case~2:
  Left's winning move is $\g{b|\g{a|\bot}}\pf K^L$ for some left
  option $K^L$ of $K$.  Since $\g{b|\g{a|\bot}}\pf K^L$ is a
  second-player win for Left, Left must have a first-player win in all
  of its right options, including in $\g{a|\bot}\pf K^L$. Left's
  winning move in $\g{a|\bot}\pf K^L$ can certainly not be in $K^L$,
  because then Right can play $\bot$ and win by strictness. Therefore,
  Left's winning move must be $a\pf K^L$, which means that Left has a
  second-player win in $a\pf K^L$. Now we will show that Left has a
  first-player win in $H\pf K$. We claim that the left option
  $\g{\top|a}\pf K$ is Left's winning move, i.e., that the game
  $\g{\top|a}\pf K$ is a second-player win for Left. So consider any
  right move. If Right moves in $K$, Left wins immediately by playing
  $\top$ and by strictness. If Right plays in the other component, the
  move is $a\pf K$, from which Left has the move $a\pf K^L$, which we
  already showed is winning for Left. Therefore, Left indeed has a
  first-player win in $H\pf K$. This concludes the proof that
  $G'\leqgI H$. Finally, it is easy to check that $G\leq G'$, which
  implies $G\leqgI G'\leqgI H$. The relation $\leqgI$ is transitive
  almost by definition, so $G\leqgI H$ holds.
\end{example}

\begin{remark}\label{rem-g-counterex}
  The games $G$ and $H$ in Example~\ref{exa-g-counterex} actually
  satisfy $G\eqg H$, but not $G\eq H$. Indeed, it is easy to check
  that $H\leq G$, hence $H\leqg G$. Since we proved $G\leqg H$, we
  have $G\eqg H$. Thus, the example shows that globally decisive
  equivalence does not coincide with ordinary equivalence of games.
  We will see a Hex example of this in Section~\ref{ssec-realizable}.
\end{remark}

\begin{remark}
  In Example~\ref{exa-g-counterex}, the proof that $G\leqg H$ does not
  imply $G\leq H$ uses the global decisiveness of both $\top$ and
  $\bot$. Indeed, it is necessary to use them both, because we can
  show: if $\leqg$ were defined with respect to contexts that are
  strict only for $\top$, or only for $\bot$, then $\leqg$ would
  coincide with $\leq$. To see why, it suffices to note that the
  function $\lambda$ defined in Section~\ref{ssec-copy-cat} is left
  strict with respect to $\top$ (but not $\bot$), and the function
  $\rho$ is left strict with respect to $\bot$ (but not $\top$). Now
  if $G\pf K\leq H\pf K$ holds for all $f$ that are strict for $\top$,
  then $G\pl G\opp\leq H\pl G\opp$ holds, which implies $G\leq H$ as
  in Proposition~\ref{prop-context}. Dually, if $G\pf K\leq H\pf K$
  holds for all $f$ that are strict for $\bot$, then $G\pr H\opp\leq
  H\pr H\opp$ holds, which also implies $G\leq H$.
\end{remark}

Having given a definition of global decisiveness and some examples, it
would be nice to know more of its properties. For example, it is not
clear from the definition whether the relation $\leqg$ is decidable,
since it potentially requires looking at all possible contexts, of
which there can be infinitely many. Even better would be to have a
recursive definition for $\leqg$, along similar lines as
Definition~\ref{def-order}, and a canonical form for games up to
globally decisive equivalence. However, this is left for future work.

\section{Enumeration of Game Values}\label{sec-enumeration}

\subsection{An Efficient Algorithm for Enumerating Game Values}
\label{ssec-enumeration-method}

Given a finite poset $A$, there is an easy, but inefficient, method
for enumerating all canonical forms of passable games of finite depth
over $A$. First we enumerate all games of depth 0; these are the
atomic games. Now given the set $\Gg_n$ of all canonical passable
games of depth up to $n$, which is finite, it is easy to construct the
set of all passable games of depth up to $n+1$; each such game is
either atomic, or it has a set of left options and a set of right
options that are subsets of $\Gg_n$. Since there are only finitely
many such subsets of $\Gg_n$, there are only finitely many potential
games at depth $n+1$ to consider. We can disregard all games that are
not passable or are not in canonical form. What is left is the set
$\Gg_{n+1}$ of all canonical passable games of depth up to $n+1$.

The enumeration method described in the previous paragraph is
extremely inefficient. Suppose there are 100 canonical passable games
at depth $n$. Then we must consider $2^{100}-1$ possible sets of left
options and $2^{100}-1$ possible sets of right options, given an
astronomical set of games to consider (most of which turn out not to
be in canonical form). The enumeration can be made far more efficient
using the concepts of left and right equivalence from
Section~\ref{sec-left-right}. The crucial insight is that we do not
need to consider {\em all} games $\g{L|R}$, where $L$ and $R$ are sets
of games of smaller depth. Instead, it suffices to consider just one
representative $L$ of each {\em left equivalence class} of sets of
games, and one representative $R$ of each right equivalence
class. When $L$ ranges over such representatives, all of the games
$\g{L|\bot}$ are passable and distinct. Therefore, the number of left
equivalence classes at each depth is not larger than the number of
games to be enumerated at the next depth. The dual statement holds for
right equivalence classes as well. This means that the number of games
$\g{L|R}$ that must be considered at depth $n+1$ is at most the square
of the number of distinct games that will ultimately be output at
depth $n+1$. The enumeration is therefore reasonably efficient
(relative to the amount of output produced), provided that we can
efficiently enumerate left and right equivalence classes of games.

The enumeration of left (or dually, right) equivalence classes can be
done efficiently as follows. Given a set $X=\s{G_1,\ldots,G_k}$ of
games, we first recursively compute representatives $L_1,\ldots,L_m$
for all the left equivalence classes of non-empty subsets of
$\s{G_1,\ldots,G_{k-1}}$. Then we consider the $2m+1$ sets
$L_1,\ldots,L_m$ and
$\s{G_k},L_1\cup\s{G_k},\ldots,L_m\cup\s{G_k}$. By eliminating
duplicates, we obtain the set of all representatives of left
equivalence classes of subsets of $X$. If the set $X$ has $k$ members,
we need to repeat this step $k$ times, and in each step, the number of
representatives potentially doubles. But since we eliminate duplicates
after each step, rather than only at the end, $m$ never exceeds the
final number of representatives computed. While this number may (or
may not) be exponential as a function of $k$, the runtime is
polynomial as a function of the amount of output produced.

\subsection{Game Values over Specific Atom Sets}\label{ssec-specific-values}

We compute the set of passable game values over various atom sets. Let
us write $\Lin_n$ for the linearly ordered set with $n$ elements.

\begin{itemize}
\item Over the set $\Lin_1=\s{\bot}$, there is only one passable game
  value, and it is $\bot$.
\item Over the set $\Lin_2=\s{\bot,\top}$, there are exactly 3 distinct
  passable game values, and they are $\bot$, $\Star=\g{\top|\bot}$, and
  $\top$.
\item Over the set $\Lin_3=\s{\bot,a,\top}$, there are exactly 8 distinct
  passable game values. They are shown, along with a Hasse diagram for
  the partial order $\leq$, in Figure~\ref{fig-l3}.
  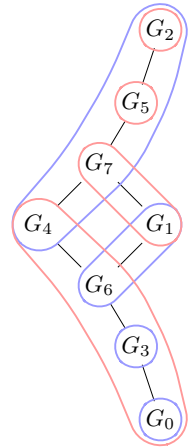
\begin{figure}
    \[
    \m{
      $\begin{array}{lll}
        \multicolumn{3}{l}{\mbox{Depth 0:}}\\
        G_0 &=& \bot\\
        G_1 &=& a\\
        G_2 &=& \top\\[1em]
        \multicolumn{3}{l}{\mbox{Depth 1:}}\\
        G_3 &=& \g{a\mid \bot}\\
        G_4 &=& \g{\top\mid \bot} ~~=~~ \Star\\
        G_5 &=& \g{\top\mid a}\\[1em]
        \multicolumn{3}{l}{\mbox{Depth 2:}}\\
        G_6 &=& \g{G_5\mid \bot}\\
        G_7 &=& \g{\top\mid G_3}\\
      \end{array}$
    }
    \hspace{2cm}
    \m{
      \scalebox{0.9}{$
      \begin{tikzpicture}[scale=0.9]
        \def\r{0.5}
        \def\s{0.6}
        \node(4) at (-1,0) {$G_4$};
        \node(1) at (1,0) {$G_1$};
        \node(6) at (0,-1) {$G_6$};
        \node(3) at (0.6,-2) {$G_3$};
        \node(0) at (1,-3.2) {$G_0$};
        \node(7) at (0,1) {$G_7$};
        \node(5) at (0.6,2) {$G_5$};
        \node(2) at (1,3.2) {$G_2$};
        \draw (0)--(3)--(6)--(4)--(7)--(5)--(2);
        \draw (6)--(1)--(7);
        \draw[color=blue!40,thick,rounded corners=3.4mm] ($(0)+(-0:\r)$) -- ($(0)+(-90:\r)$) -- ($(0)+(-180:\r)$) -- ($(0)+(-270:\r)$) -- cycle;
        \draw[color=blue!40,thick,rounded corners=3.4mm] ($(3)+(-0:\r)$) -- ($(3)+(-90:\r)$) -- ($(3)+(-180:\r)$) -- ($(3)+(-270:\r)$) -- cycle;
        \draw[color=blue!40,thick,rounded corners=3.4mm] ($(6)+(-90:\r)$)
        -- ($(6)+(-180:\r)$)
        -- ($(1)+(-270:\r)$)
        -- ($(1)+(-0:\r)$)
        -- cycle;
        \draw[color=blue!40,thick,rounded corners=3.9mm] ($(4)+(-90:\s)$)
        -- ($(4)+(-180:\s)$)
        -- ($(7)+(-200:\s)$)
        -- ($(5)+(-220:\s)$)
        -- ($(2)+(-240:\s)$)
        -- ($(2)+(-330:\s)$)
        -- ($(5)+(-350:\s)$)
        -- ($(7)+(-370:\s)$)
        -- cycle;
        \draw[color=red!40,thick,rounded corners=3.4mm] ($(2)+(0:\r)$) -- ($(2)+(90:\r)$) -- ($(2)+(180:\r)$) -- ($(2)+(270:\r)$) -- cycle;
        \draw[color=red!40,thick,rounded corners=3.4mm] ($(5)+(0:\r)$) -- ($(5)+(90:\r)$) -- ($(5)+(180:\r)$) -- ($(5)+(270:\r)$) -- cycle;
        \draw[color=red!40,thick,rounded corners=3.4mm] ($(7)+(90:\r)$)
        -- ($(7)+(180:\r)$)
        -- ($(1)+(270:\r)$)
        -- ($(1)+(0:\r)$)
        -- cycle;
        \draw[color=red!40,thick,rounded corners=3.9mm] ($(4)+(90:\s)$)
        -- ($(4)+(180:\s)$)
        -- ($(6)+(200:\s)$)
        -- ($(3)+(220:\s)$)
        -- ($(0)+(240:\s)$)
        -- ($(0)+(330:\s)$)
        -- ($(3)+(350:\s)$)
        -- ($(6)+(370:\s)$)
        -- cycle;
      \end{tikzpicture}
      $}
    }
    \]
    \caption{The passable game values over $\Lin_3=\s{\bot,a,\top}$ and
      their partial order. Left equivalence classes are indicated in
      red and right equivalence classes in blue.}
    \label{fig-l3}
  \end{figure}

\item Over the set $\Lin_4=\s{\bot,a,b,\top}$, with $\bot<a<b<\top$,
  there are exactly 31 distinct passable game values, and they are
  shown, along with a Hasse diagram for the partial order $\leq$, in
  Figure~\ref{fig-l4}.
  \begin{figure}
    \[
    \m{
      $\begin{array}{lll}
        \multicolumn{3}{l}{\mbox{Depth 0:}}\\
        G_{0} &=& \bot \\
        G_{1} &=& a \\
        G_{2} &=& b \\
        G_{3} &=& \top \\[1em]
        \multicolumn{3}{l}{\mbox{Depth 1:}}\\
        G_{4} &=& \g{a\mid \bot} \\
        G_{5} &=& \g{b\mid \bot} \\
        G_{6} &=& \g{\top\mid \bot} \\
        G_{7} &=& \g{b\mid a} \\
        G_{8} &=& \g{\top\mid a} \\
        G_{9} &=& \g{\top\mid b} \\[1em]
        \multicolumn{3}{l}{\mbox{Depth 2:}}\\
        G_{10} &=& \g{b,G_{8}\mid \bot} \\
        G_{11} &=& \g{G_{7}\mid \bot} \\
        G_{12} &=& \g{G_{8}\mid \bot} \\
        G_{13} &=& \g{G_{9}\mid \bot} \\
        G_{14} &=& \g{G_{9}\mid a} \\
        G_{15} &=& \g{\top\mid a,G_{5}} \\
        G_{16} &=& \g{b\mid G_{4}} \\
        G_{17} &=& \g{\top\mid G_{4}} \\
        G_{18} &=& \g{G_{9}\mid G_{4}} \\
        G_{19} &=& \g{\top\mid G_{5}} \\
        G_{20} &=& \g{\top\mid G_{7}} \\[1em]
        \multicolumn{3}{l}{\mbox{Depth 3:}}\\
        G_{21} &=& \g{G_{14}\mid \bot} \\
        G_{22} &=& \g{G_{15}\mid \bot} \\
        G_{23} &=& \g{G_{20}\mid \bot} \\
        G_{24} &=& \g{G_{20}\mid G_{4}} \\
        G_{25} &=& \g{\top\mid G_{16}} \\
        G_{26} &=& \g{\top\mid G_{10}} \\
        G_{27} &=& \g{\top\mid G_{11}} \\
        G_{28} &=& \g{G_{9}\mid G_{11}} \\[1em]
        \multicolumn{3}{l}{\mbox{Depth 4:}}\\
        G_{29} &=& \g{G_{25}\mid \bot} \\
        G_{30} &=& \g{\top\mid G_{21}} \\
      \end{array}$
    }
    \hspace{2cm}
    \m{
      \scalebox{0.9}{$
      \begin{tikzpicture}[scale=0.9]
        \def\r{0.5}
        \def\s{0.6}
        \node(6) at (-1,0) {$G_{6}$};
        \node(18) at (1,0) {$G_{18}$};
        \node(7) at (3,0) {$G_{7}$};
        \node(17) at (0,1) {$G_{17}$};
        \node(28) at (2,1) {$G_{28}$};
        \node(14) at (2.5,2) {$G_{14}$};
        \node(2) at (3,4) {$G_{2}$};
        \node(27) at (0.6,2) {$G_{27}$};
        \node(30) at (1,3) {$G_{30}$};
        \node(15) at (1.2,4) {$G_{15}$};
        \node(19) at (1.8,5) {$G_{19}$};
        \node(8) at (0.8,5) {$G_{8}$};
        \node(26) at (1.4,6) {$G_{26}$};
        \node(25) at (1.4,7) {$G_{25}$};
        \node(20) at (1.4,8) {$G_{20}$};
        \node(9) at (1.4,9) {$G_{9}$};
        \node(3) at (1.4,10) {$G_{3}$};
        \node(13) at (0,-1) {$G_{13}$};
        \node(24) at (2,-1) {$G_{24}$};
        \node(16) at (2.5,-2) {$G_{16}$};
        \node(1) at (3,-4) {$G_{1}$};
        \node(23) at (0.6,-2) {$G_{23}$};
        \node(29) at (1,-3) {$G_{29}$};
        \node(10) at (1.2,-4) {$G_{10}$};
        \node(12) at (1.8,-5) {$G_{12}$};
        \node(5) at (0.8,-5) {$G_{5}$};
        \node(22) at (1.4,-6) {$G_{22}$};
        \node(21) at (1.4,-7) {$G_{21}$};
        \node(11) at (1.4,-8) {$G_{11}$};
        \node(4) at (1.4,-9) {$G_{4}$};
        \node(0) at (1.4,-10) {$G_{0}$};
        \draw (0)--(4)--(11)--(21)--(22)--(5)--(10)--(29)--(23)--(13)
        --(6)--(17)--(27)--(30)--(15)--(8)--(26)--(25)--(20)--(9)--(3);
        \draw (22)--(12)--(1)--(16)--(24)--(18)--(28)--(14)--(2)--(19)--(26);
        \draw (12)--(10);
        \draw (15)--(19);
        \draw (29)--(16);
        \draw (14)--(30);
        \draw (23)--(24)--(7)--(28)--(27);
        \draw (13)--(18)--(17);
        \draw[color=blue!40,thick,rounded corners=3.4mm] ($(0)+(-0:\r)$) -- ($(0)+(-90:\r)$) -- ($(0)+(-180:\r)$) -- ($(0)+(-270:\r)$) -- cycle;
        \draw[color=blue!40,thick,rounded corners=3.4mm] ($(4)+(-0:\r)$) -- ($(4)+(-90:\r)$) -- ($(4)+(-180:\r)$) -- ($(4)+(-270:\r)$) -- cycle;
        \draw[color=blue!40,thick,rounded corners=3.4mm] ($(11)+(-0:\r)$) -- ($(11)+(-90:\r)$) -- ($(11)+(-180:\r)$) -- ($(11)+(-270:\r)$) -- cycle;
        \draw[color=blue!40,thick,rounded corners=3.4mm] ($(21)+(-0:\r)$) -- ($(21)+(-90:\r)$) -- ($(21)+(-180:\r)$) -- ($(21)+(-270:\r)$) -- cycle;
        \draw[color=blue!40,thick,rounded corners=3.4mm] ($(22)+(-0:\r)$) -- ($(22)+(-90:\r)$) -- ($(22)+(-180:\r)$) -- ($(22)+(-270:\r)$) -- cycle;
        \draw[color=blue!40,thick,rounded corners=3.4mm] ($(5)+(-0:\r)$) -- ($(5)+(-90:\r)$) -- ($(5)+(-180:\r)$) -- ($(5)+(-270:\r)$) -- cycle;
        \draw[color=blue!40,thick,rounded corners=3.4mm] ($(10)+(-0:\r)$) -- ($(10)+(-90:\r)$) -- ($(10)+(-180:\r)$) -- ($(10)+(-270:\r)$) -- cycle;
        \draw[color=blue!40,thick,rounded corners=3.4mm] ($(12)+(-90:\r)$)
        -- ($(12)+(-180:\r)$)
        -- ($(1)+(-270:\r)$)
        -- ($(1)+(-0:\r)$)
        -- cycle;
        \draw[color=blue!40,thick,rounded corners=3.4mm] ($(29)+(-100:\r)$)
        -- ($(29)+(-190:\r)$)
        -- ($(16)+(-280:\r)$)
        -- ($(16)+(-10:\r)$)
        -- cycle;
        \draw[color=blue!40,thick,rounded corners=3.9mm] ($(23)+(-100:\s)$)
        -- ($(23)+(-190:\s)$)
        -- ($(24)+(-270:\s)$)
        -- ($(7)+(-270:\s)$)
        -- ($(7)+(-0:\s)$)
        -- ($(24)+(-90:\s)$)
        -- cycle;
        \draw[color=blue!40,thick,rounded corners=3.9mm] ($(13)+(-90:\s)$)
        -- ($(13)+(-180:\s)$)
        -- ($(18)+(-180:\s)$)
        -- ($(28)+(-200:\s)$)
        -- ($(14)+(-240:\s)$)
        -- ($(2)+(-240:\s)$)
        -- ($(2)+(-330:\s)$)
        -- ($(14)+(-350:\s)$)
        -- ($(28)+(-70:\s)$)
        -- ($(13)+(-370:\s)$)
        -- cycle;
        \draw[color=blue!40,thick,rounded corners=3.9mm] ($(6)+(-90:\s)$)
        -- ($(6)+(-180:\s)$)
        -- ($(17)+(-180:\s)$)
        -- ($(27)+(-170:\s)$)
        -- ($(30)+(-180:0.8*\s)$)
        -- ($(15)+(-180:\s)$)
        -- ($(8)+(-180:0.9*\s)$)
        -- ($(26)+(-225:\s)$)
        -- ($(3)+(-225:\s)$)
        -- ($(3)+(-315:\s)$)
        -- ($(26)+(-315:\s)$)
        -- ($(19)+(-0:0.8*\s)$)
        -- ($(15)+(-45:\s)$)
        -- ($(30)+(-20:0.8*\s)$)
        -- ($(27)+(-60:\s)$)
        -- ($(17)+(-80:\s)$)
        -- cycle;
        \draw[color=red!40,thick,rounded corners=3.4mm] ($(3)+(0:\r)$) -- ($(3)+(90:\r)$) -- ($(3)+(180:\r)$) -- ($(3)+(270:\r)$) -- cycle;
        \draw[color=red!40,thick,rounded corners=3.4mm] ($(9)+(0:\r)$) -- ($(9)+(90:\r)$) -- ($(9)+(180:\r)$) -- ($(9)+(270:\r)$) -- cycle;
        \draw[color=red!40,thick,rounded corners=3.4mm] ($(20)+(0:\r)$) -- ($(20)+(90:\r)$) -- ($(20)+(180:\r)$) -- ($(20)+(270:\r)$) -- cycle;
        \draw[color=red!40,thick,rounded corners=3.4mm] ($(25)+(0:\r)$) -- ($(25)+(90:\r)$) -- ($(25)+(180:\r)$) -- ($(25)+(270:\r)$) -- cycle;
        \draw[color=red!40,thick,rounded corners=3.4mm] ($(26)+(0:\r)$) -- ($(26)+(90:\r)$) -- ($(26)+(180:\r)$) -- ($(26)+(270:\r)$) -- cycle;
        \draw[color=red!40,thick,rounded corners=3.4mm] ($(8)+(0:\r)$) -- ($(8)+(90:\r)$) -- ($(8)+(180:\r)$) -- ($(8)+(270:\r)$) -- cycle;
        \draw[color=red!40,thick,rounded corners=3.4mm] ($(15)+(0:\r)$) -- ($(15)+(90:\r)$) -- ($(15)+(180:\r)$) -- ($(15)+(270:\r)$) -- cycle;
        \draw[color=red!40,thick,rounded corners=3.4mm] ($(19)+(90:\r)$)
        -- ($(19)+(180:\r)$)
        -- ($(2)+(270:\r)$)
        -- ($(2)+(0:\r)$)
        -- cycle;
        \draw[color=red!40,thick,rounded corners=3.4mm] ($(30)+(100:\r)$)
        -- ($(30)+(190:\r)$)
        -- ($(14)+(280:\r)$)
        -- ($(14)+(10:\r)$)
        -- cycle;
        \draw[color=red!40,thick,rounded corners=3.9mm] ($(27)+(100:\s)$)
        -- ($(27)+(190:\s)$)
        -- ($(28)+(270:\s)$)
        -- ($(7)+(270:\s)$)
        -- ($(7)+(0:\s)$)
        -- ($(28)+(90:\s)$)
        -- cycle;
        \draw[color=red!40,thick,rounded corners=3.9mm] ($(17)+(90:\s)$)
        -- ($(17)+(180:\s)$)
        -- ($(18)+(180:\s)$)
        -- ($(24)+(200:\s)$)
        -- ($(16)+(240:\s)$)
        -- ($(1)+(240:\s)$)
        -- ($(1)+(330:\s)$)
        -- ($(16)+(350:\s)$)
        -- ($(24)+(70:\s)$)
        -- ($(17)+(370:\s)$)
        -- cycle;
        \draw[color=red!40,thick,rounded corners=3.9mm] ($(6)+(90:\s)$)
        -- ($(6)+(180:\s)$)
        -- ($(13)+(180:\s)$)
        -- ($(23)+(170:\s)$)
        -- ($(29)+(180:0.8*\s)$)
        -- ($(10)+(180:\s)$)
        -- ($(5)+(180:0.9*\s)$)
        -- ($(22)+(225:\s)$)
        -- ($(0)+(225:\s)$)
        -- ($(0)+(315:\s)$)
        -- ($(22)+(315:\s)$)
        -- ($(12)+(0:0.8*\s)$)
        -- ($(10)+(45:\s)$)
        -- ($(29)+(20:0.8*\s)$)
        -- ($(23)+(60:\s)$)
        -- ($(13)+(80:\s)$)
        -- cycle;
      \end{tikzpicture}
      $}
    }
    \]
    \caption{The passable game values over $\Lin_4=\s{\bot,a,b,\top}$ and
      their partial order. Left equivalence classes are indicated in
      red and right equivalence classes in blue.}
    \label{fig-l4}
  \end{figure}
\end{itemize}

Note that by Theorem~\ref{thm-passable-linear-monotone}, since all of
the games enumerated above are over linear atom sets and in canonical
form, these games are not just passable but also monotone.

\begin{remark}
  Figures~\ref{fig-l3} and {\ref{fig-l4}} also show the left and right
  equivalence classes of games. These figures confirm that the
  intersection of each left equivalence class with each right
  equivalence class is at most a singleton, as proved in
  Lemma~\ref{lem-eq-eql-eqr}. They also illustrate that each left
  equivalence class has a unique maximal element, and that these
  elements are exactly the members of the right equivalence class of
  $\top$. This was shown in Lemma~\ref{lem-right-eq-top} for arbitrary
  games, but the same proof also applies to passable games.  We
  further note the set of game values in these figures forms a lattice
  under $\leq$; this was shown in Proposition~\ref{prop-lattice} and
  Lemma~\ref{lem-passable-lattice}.
\end{remark}

From the above, one may wonder whether the collection of passable game
values over a finite atom poset is always finite. This is not the
case. In fact, the following proposition, due to Eric Demer, shows
that the collection of game values is always infinite when the poset $A$
is not linearly ordered.

\begin{proposition}[Demer]\label{prop-infinite}
  Let $A$ be a poset with two incomparable elements $a$ and
  $b$. Define a sequence of games by $G_0=a$ and $G_{n} =
  \g{a,b|G_{n-1}}$ for all $n\geq 1$. Then $G_0,G_1,G_2,\ldots$ is an
  infinite, strictly increasing sequence of passable games over
  $A$. In particular, there are infinitely many non-equivalent
  passable games over $A$.
\end{proposition}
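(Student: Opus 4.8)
The plan is to pin down the order relation on the whole sequence at once, by proving the clean characterization
\[
G_m\leq G_n \iff m\leq n .
\]
Once this is in hand, everything the proposition asks for follows immediately: $G_{n-1}\leq G_n$ says the sequence is increasing, $G_n\nleq G_{n-1}$ says it is strictly increasing, and $G_m\eq G_n\iff m=n$ says the values are pairwise distinct; passability will come out as a by-product (last paragraph). I would work straight from Definition~\ref{def-order}, computing $\leq$ and $\tri$ among the $G_k$ by the usual mutual recursion, and I expect the one substantive step to be a reduction identity of the form $G_m\leq G_n \iff G_{m-1}\leq G_{n-1}$.

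First I would record two easy facts that isolate exactly where the incomparability of $a$ and $b$ is used. (i) For every $k\geq 0$ we have $a\leq G_k$, hence $a\tri G_k$; and $b\tri G_k$ for every $k\geq 1$. These are immediate from the definitions: for composite $G_k$ the left option $a$ (resp.\ $b$) witnesses $a\tri G_k$ (resp.\ $b\tri G_k$) by reflexivity, and $a\leq G_k$ then follows since $a\tri G_k$ together with $a\tri G_{k-1}$ (the unique right option) are precisely the two conditions the definition of $\leq$ demands. The point is that the clause ``all left options $a,b$ satisfy $a,b\tri G_n$'' in $\cdot\leq G_n$ is automatic for $n\geq 1$. (ii) For every $m\geq 1$ we have $G_m\nleq a$ and $G_m\nleq b$: checking $G_m\leq a$ would force the left option $b$ to satisfy $b\tri a$, which fails because $b\nleq a$, and symmetrically for $G_m\leq b$. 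This is the only place the incomparability hypothesis enters, and it is what stops the relation from collapsing.

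The heart of the argument, and the step I expect to be the main obstacle, is the recursion $G_m\leq G_n\iff G_{m-1}\leq G_{n-1}$ for $m,n\geq 1$. To prove it I would unwind Definition~\ref{def-order} twice. Since both games are composite, $G_m\leq G_n$ reduces to the two conditions ``$a,b\tri G_n$'' and ``$G_m\tri G_{n-1}$''; by fact (i) the former is free, so $G_m\leq G_n\iff G_m\tri G_{n-1}$. Next, because $G_m$ is composite, $G_m\tri G_{n-1}$ holds iff either its right option satisfies $G_{m-1}\leq G_{n-1}$, or some left option of $G_{n-1}$ (necessarily $a$ or $b$) satisfies $G_m\leq a$ or $G_m\leq b$; by fact (ii) the second disjunct never fires, leaving $G_m\tri G_{n-1}\iff G_{m-1}\leq G_{n-1}$. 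Combining the two gives the identity. Iterating it down to a base case reduces $G_m\leq G_n$ (for $m\leq n$) to $a=G_0\leq G_{n-m}$, which is true by (i), and (for $m>n$) to $G_{m-n}\leq G_0=a$ with $m-n\geq 1$, which is false by (ii); hence $G_m\leq G_n\iff m\leq n$.

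Finally, passability of each $G_n$ is a short induction. The game $a$ is atomic; for $n\geq 1$ the left options $a,b$ are atomic and $G_{n-1}$ is passable by the inductive hypothesis, so it remains only to check local passability $G_n\tri G_n$. But $G_{n-1}\leq G_n$ (the case $m=n-1$ of the order computation) exhibits a good right option, so $G_n\tri G_n$ by the definition of $\tri$, equivalently by Proposition~\ref{prop-passable}. Together with $G_m\leq G_n\iff m\leq n$, this shows that $G_0,G_1,G_2,\ldots$ is a strictly increasing sequence of pairwise inequivalent passable games over $A$, which proves the proposition.
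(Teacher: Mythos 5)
Your proof is correct, and it takes a genuinely different route from the paper's. The paper splits the work into three separate arguments: it gets $G_n\leq G_{n+1}$ from $a\eq\g{a|a}$ (Lemma~\ref{lem-atomic-composite}) plus Lemma~\ref{lem-composite-monotone}, gets passability via transitivity, and then proves strictness $G_{n+1}\nleq G_n$ by a separate induction whose step unwinds the definitions of $\leq$ and $\tri$ and rules out the left-option escape ($\g{a,b|G_{n+1}}\leq a$ or $\leq b$) exactly as your fact (ii) does. Your single biconditional reduction $G_m\leq G_n\iff G_{m-1}\leq G_{n-1}$ subsumes both of the paper's inductions at once: read in one direction it replaces the monotonicity lemma, and read in the other it is precisely the paper's strictness step, but for arbitrary pairs of indices rather than consecutive ones. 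What your approach buys is a strictly stronger conclusion --- the full order structure $G_m\leq G_n\iff m\leq n$, so the values form a chain of order type $\omega$ --- together with a self-contained argument working directly from Definition~\ref{def-order} that isolates exactly where incomparability of $a$ and $b$ is needed; what the paper's approach buys is brevity, since it reuses machinery (Lemmas~\ref{lem-composite-monotone}, \ref{lem-atomic-composite}, and transitivity) already established. One tiny point of care in your reduction: when $n-1=0$ the game $G_{n-1}=a$ is atomic and has no left options at all, so the second disjunct in the definition of $\tri$ is vacuously false rather than blocked by fact (ii); your argument still goes through, but the parenthetical ``necessarily $a$ or $b$'' only describes the case $n-1\geq 1$.
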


\begin{proof}
  We first show, by induction on $n$, that $G_n\leq G_{n+1}$ for all
  $n\geq 0$. For $n=0$, this is clear because $G_0=a\eq \g{a|a}\leq
  \g{a,b|a}=G_1$.  For the induction step, assume $G_n\leq
  G_{n+1}$. Then $G_{n+1}=\g{a,b|G_n}\leq \g{a,b|G_{n+1}}=G_{n+2}$ by
  Lemma~\ref{lem-composite-monotone}. Thus, $G_0,G_1,G_2,\ldots$ is an
  increasing sequence of games. Next, we show that all $G_n$ are
  passable. It suffices to show that $G_n\tri G_n$ for all $n\geq
  0$. For $n=0$, this is trivial since $G_0=a$ is atomic. For $n\geq
  1$, we have $G_n\tri G_{n-1}$ since $G_{n-1}$ is a right option of
  $G_n$. Since we have already proved $G_{n-1}\leq G_n$, this implies
  $G_n\tri G_n$ by Lemma~\ref{lem-transitive}. Finally, we must show
  that the sequence $(G_n)_{n\in\N}$ is strictly increasing, i.e., we
  must show that $G_{n+1}\nleq G_{n}$, for all $n\geq 0$. We show this
  by induction. The base case $n=0$ holds because $\g{a,b|a}\nleq
  a$. For the induction step, assume $G_{n+1}\nleq G_{n}$. We must
  show $G_{n+2}\nleq G_{n+1}$, i.e., $\g{a,b|G_{n+1}}\nleq
  \g{a,b|G_n}$. Assume, on the contrary, that $\g{a,b|G_{n+1}}\leq
  \g{a,b|G_n}$. Then by definition of $\leq$, we have
  $\g{a,b|G_{n+1}}\tri G_n$. By definition of $\tri$, there are two
  cases. Case 1: $\g{a,b|G_{n+1}}\leq G_n^L$ for some left option
  $G_n^L$ of $G_n$. But this is plainly not the case, since the only
  left options of $G_n$ are $a$ and $b$, and neither is greater than
  $\g{a,b|G_{n+1}}$. Case 2: $G_{n+1}\leq G_n$, but this contradicts
  our induction hypothesis. Therefore $G_{n+2}\nleq G_{n+1}$ as
  claimed. This completes the proof.  
\end{proof}

Next, one may wonder whether it is at least the case that the
collection of passable game values is finite whenever the atom poset
is {\em linearly} ordered and finite.  This is also false: as shown in
{\cite{DS2022-hex-l5}}, there exist infinitely many non-equivalent
passable games over $\Lin_5$. It follows that the atom sets
$\Lin_1,\ldots,\Lin_4$ (and trivially, the empty atom set) are the
only ones over which the collection of passable game values is finite.

Over atom sets where the collection of passable game values is
infinite, we can still enumerate passable games of small depth.

\begin{itemize}
\item Over the set $\Lin_5$, there are exactly 5 canonical-form passable
  games of depth 0, 10 such games of depth 1, 40 games of depth 2, 178
  games of depth 3, and 2962 games of depth 4, for a total of 3195
  canonical-form passable games of depth up to 4.
\item Over the poset $A=\s{\bot,a,b,\top}$, with $a$ and $b$
  incomparable (see Figure~\ref{fig-region-edges}(b)), there are
  exactly 4 canonical-form passable games of depth 0, 11 such games of
  depth 1, and 291 games of depth 2, for a total of 306 games of depth
  up to 2. We enumerated more than 43000 distinct passable games of
  depth 3 before running out of memory.
\item Over the poset $A=\s{\bot,a,b,c,\top}$, with $a$, $b$, and $c$
  incomparable (see Figure~\ref{fig-region}(b)), there are exactly 5
  canonical-form passable games of depth 0 and 33 such games of depth
  1. We enumerated more than 1.8 million distinct passable games of
  depth 2 before running out of memory.
\end{itemize}

\section{Realizability as Hex Positions}\label{sec-realizable}

As we saw, the value of a Hex position is always a passable game.
However, it is not a priori clear whether {\em every} passable
game arises as the value of some Hex position --- and indeed, we will
show in Proposition~\ref{prop-abstract-not-hex} that this is not in
general the case.

\subsection{Realizable Game Values}\label{ssec-realizable}

We start by reporting some positive results on Hex-realizable game
values over small atom posets.

\begin{itemize}
  \item For 2-terminal regions in Hex, the outcome poset is
    $\Bool=\s{\bot,\top}$, and there are three passable game values:
    $\bot$, $\Star$, and $\top$. All of these passable game values are
    realizable as Hex positions, as shown in
    Figure~\ref{fig-realizable-2-terminal}.
  \begin{figure}
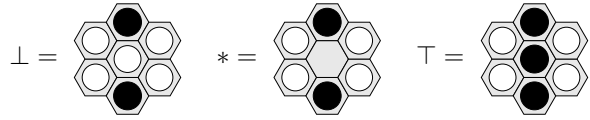

    \def\scale{0.7}
    \def\lift{\mp{0.45}}
    \[
    \bot = \lift{
      \scalebox{\scale}{$\begin{hexboard}
          \noshadows
          \foreach\i in {2,...,3} {\hex(1,\i)}
          \foreach\i in {1,...,3} {\hex(2,\i)}
          \foreach\i in {1,...,2} {\hex(3,\i)}
          \black(3,1)
          \black(1,3)
          \white(2,1)
          \white(1,2)
          \white(2,3)
          \white(3,2)
          \white(2,2)
        \end{hexboard}$}
    }
    \quad
    \Star = \lift{
      \scalebox{\scale}{$\begin{hexboard}
          \noshadows
          \foreach\i in {2,...,3} {\hex(1,\i)}
          \foreach\i in {1,...,3} {\hex(2,\i)}
          \foreach\i in {1,...,2} {\hex(3,\i)}
          \black(3,1)
          \black(1,3)
          \white(2,1)
          \white(1,2)
          \white(2,3)
          \white(3,2)
        \end{hexboard}$}
    }
    \quad
    \top = \lift{
      \scalebox{\scale}{$\begin{hexboard}
          \noshadows
          \foreach\i in {2,...,3} {\hex(1,\i)}
          \foreach\i in {1,...,3} {\hex(2,\i)}
          \foreach\i in {1,...,2} {\hex(3,\i)}
          \black(3,1)
          \black(1,3)
          \white(2,1)
          \white(1,2)
          \white(2,3)
          \white(3,2)
          \black(2,2)
        \end{hexboard}$}
    }
    \]
    \caption{2-terminal Hex positions realizing all passable
      game values over $\Lin_2=\Bool$.}
    \label{fig-realizable-2-terminal}
  \end{figure}

\item For 2-terminal Hex regions with gap (see
  Figure~\ref{fig-region-gap}), the outcome poset is
  $\Lin_3=\s{\bot,a,\top}$. Recall from Figure~\ref{fig-l3} that there
  are 8 passable game values over this poset. All of them are
  realizable as Hex positions, as shown in
  Figure~\ref{fig-realizable-2-terminal-gap}.
  \begin{figure}
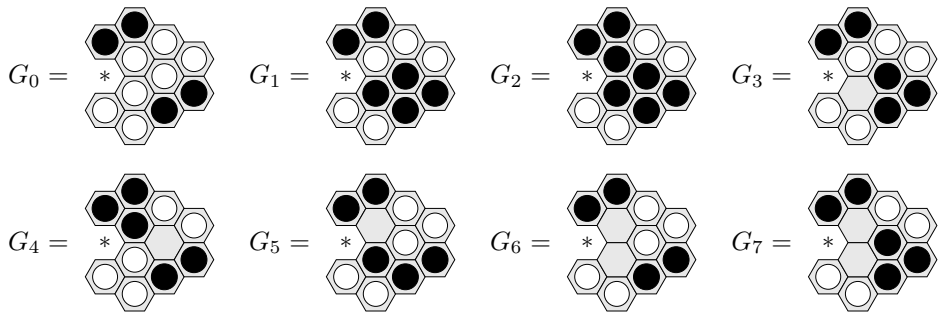

    \def\scale{0.65}
    \def\lift{\mp{0.45}}
    \[
    G_0 = \lift{
      \scalebox{\scale}{$\begin{hexboard}
          \noshadows
          \foreach\i in {3,...,4} {\hex(1,\i)}
          \foreach\i in {3,...,4} {\hex(2,\i)}
          \foreach\i in {1,...,4} {\hex(3,\i)}
          \foreach\i in {1,...,3} {\hex(4,\i)}
          \black(3,1)
          \black(4,1)
          \black(2,4)
          \black(3,4)
          \white(1,3)
          \white(1,4)
          \white(4,2)
          \white(4,3)
          \cell(2,2)\label{\Large$\ast$}
          \white(2,3)
          \white(3,2)
          \white(3,3)
        \end{hexboard}$}
    }
    \quad
    G_1 = \lift{
      \scalebox{\scale}{$\begin{hexboard}
          \noshadows
          \foreach\i in {3,...,4} {\hex(1,\i)}
          \foreach\i in {3,...,4} {\hex(2,\i)}
          \foreach\i in {1,...,4} {\hex(3,\i)}
          \foreach\i in {1,...,3} {\hex(4,\i)}
          \black(3,1)
          \black(4,1)
          \black(2,4)
          \black(3,4)
          \white(1,3)
          \white(1,4)
          \white(4,2)
          \white(4,3)
          \cell(2,2)\label{\Large$\ast$}
          \black(2,3)
          \white(3,2)
          \black(3,3)
        \end{hexboard}$}
    }
    \quad
    G_2 = \lift{
      \scalebox{\scale}{$\begin{hexboard}
          \noshadows
          \foreach\i in {3,...,4} {\hex(1,\i)}
          \foreach\i in {3,...,4} {\hex(2,\i)}
          \foreach\i in {1,...,4} {\hex(3,\i)}
          \foreach\i in {1,...,3} {\hex(4,\i)}
          \black(3,1)
          \black(4,1)
          \black(2,4)
          \black(3,4)
          \white(1,3)
          \white(1,4)
          \white(4,2)
          \white(4,3)
          \cell(2,2)\label{\Large$\ast$}
          \black(2,3)
          \black(3,2)
          \black(3,3)
        \end{hexboard}$}
    }
    \quad
    G_3 = \lift{
      \scalebox{\scale}{$\begin{hexboard}
          \noshadows
          \foreach\i in {3,...,4} {\hex(1,\i)}
          \foreach\i in {3,...,4} {\hex(2,\i)}
          \foreach\i in {1,...,4} {\hex(3,\i)}
          \foreach\i in {1,...,3} {\hex(4,\i)}
          \black(3,1)
          \black(4,1)
          \black(2,4)
          \black(3,4)
          \white(1,3)
          \white(1,4)
          \white(4,2)
          \white(4,3)
          \cell(2,2)\label{\Large$\ast$}
          \white(3,2)
          \black(3,3)
        \end{hexboard}$}
    }
    \]
    \[
    G_4 = \lift{
      \scalebox{\scale}{$\begin{hexboard}
          \noshadows
          \foreach\i in {3,...,4} {\hex(1,\i)}
          \foreach\i in {3,...,4} {\hex(2,\i)}
          \foreach\i in {1,...,4} {\hex(3,\i)}
          \foreach\i in {1,...,3} {\hex(4,\i)}
          \black(3,1)
          \black(4,1)
          \black(2,4)
          \black(3,4)
          \white(1,3)
          \white(1,4)
          \white(4,2)
          \white(4,3)
          \cell(2,2)\label{\Large$\ast$}
          \white(2,3)
          \black(3,2)
        \end{hexboard}$}
    }
    \quad
    G_5 = \lift{
      \scalebox{\scale}{$\begin{hexboard}
          \noshadows
          \foreach\i in {3,...,4} {\hex(1,\i)}
          \foreach\i in {3,...,4} {\hex(2,\i)}
          \foreach\i in {1,...,4} {\hex(3,\i)}
          \foreach\i in {1,...,3} {\hex(4,\i)}
          \black(3,1)
          \black(4,1)
          \black(2,4)
          \black(3,4)
          \white(1,3)
          \white(1,4)
          \white(4,2)
          \white(4,3)
          \cell(2,2)\label{\Large$\ast$}
          \black(2,3)
          \white(3,3)
        \end{hexboard}$}
    }
    \quad
    G_6 = \lift{
      \scalebox{\scale}{$\begin{hexboard}
          \noshadows
          \foreach\i in {3,...,4} {\hex(1,\i)}
          \foreach\i in {3,...,4} {\hex(2,\i)}
          \foreach\i in {1,...,4} {\hex(3,\i)}
          \foreach\i in {1,...,3} {\hex(4,\i)}
          \black(3,1)
          \black(4,1)
          \black(2,4)
          \black(3,4)
          \white(1,3)
          \white(1,4)
          \white(4,2)
          \white(4,3)
          \cell(2,2)\label{\Large$\ast$}
          \white(3,3)
        \end{hexboard}$}
    }
    \quad
    G_7 = \lift{
      \scalebox{\scale}{$\begin{hexboard}
          \noshadows
          \foreach\i in {3,...,4} {\hex(1,\i)}
          \foreach\i in {3,...,4} {\hex(2,\i)}
          \foreach\i in {1,...,4} {\hex(3,\i)}
          \foreach\i in {1,...,3} {\hex(4,\i)}
          \black(3,1)
          \black(4,1)
          \black(2,4)
          \black(3,4)
          \white(1,3)
          \white(1,4)
          \white(4,2)
          \white(4,3)
          \cell(2,2)\label{\Large$\ast$}
          \black(3,3)
        \end{hexboard}$}
    }
    \]
    \caption{2-terminal Hex positions with gap, realizing all 
      passable game values over $\Lin_3$.}
    \label{fig-realizable-2-terminal-gap}
  \end{figure}

\item For one-sided forks (see Figure~\ref{fig-globally-decisive}),
  the outcome poset is also $\Lin_3=\s{\bot,a,\top}$. In this setting
  too, all 8 passable game values are realizable as Hex positions, as
  shown in Figure~\ref{fig-realizable-one-sided-fork}.
  \begin{figure}
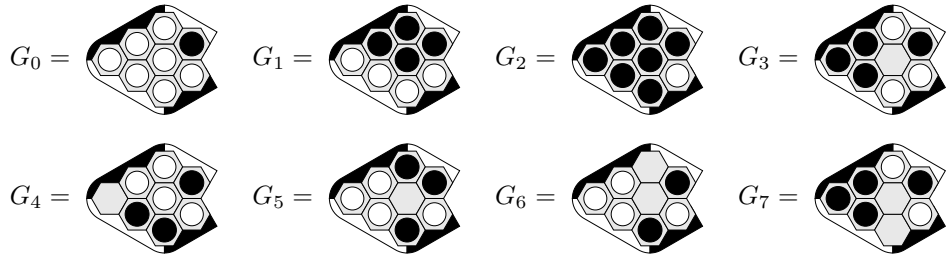

    \def\scale{0.6}
    \def\lift{\mp{0.45}}
    \[
    G_0 = \lift{
      \scalebox{\scale}{$\begin{hexboard}
          \noshadows
          \foreach\i in {1,...,3} {\hex(1,\i)}
          \foreach\i in {1,...,3} {\hex(2,\i)}
          \foreach\i in {1,...,2} {\hex(3,\i)}
          \edge[\sw](1,1)(1,3)
          \edge[\ne\noacutecorner](3,2)(3,1)
          \edge[\nw](1,1)(3,1)
          \edge[\se\noacutecorner](2,3)(1,3)
          \black(3,2)
          \white(2,3)
          \white(1,1)
          \white(1,2)
          \white(1,3)
          \white(2,1)
          \white(2,2)
          \white(3,1)
        \end{hexboard}$}
    }
    \quad
    G_1 = \lift{
      \scalebox{\scale}{$\begin{hexboard}
          \noshadows
          \foreach\i in {1,...,3} {\hex(1,\i)}
          \foreach\i in {1,...,3} {\hex(2,\i)}
          \foreach\i in {1,...,2} {\hex(3,\i)}
          \edge[\sw](1,1)(1,3)
          \edge[\ne\noacutecorner](3,2)(3,1)
          \edge[\nw](1,1)(3,1)
          \edge[\se\noacutecorner](2,3)(1,3)
          \black(3,2)
          \white(2,3)
          \black(3,1)
          \white(1,3)
          \black(2,1)
          \black(2,2)
          \white(1,1)
          \white(1,2)
        \end{hexboard}$}
    }
    \quad
    G_2 = \lift{
      \scalebox{\scale}{$\begin{hexboard}
          \noshadows
          \foreach\i in {1,...,3} {\hex(1,\i)}
          \foreach\i in {1,...,3} {\hex(2,\i)}
          \foreach\i in {1,...,2} {\hex(3,\i)}
          \edge[\sw](1,1)(1,3)
          \edge[\ne\noacutecorner](3,2)(3,1)
          \edge[\nw](1,1)(3,1)
          \edge[\se\noacutecorner](2,3)(1,3)
          \black(3,2)
          \white(2,3)
          \black(1,1)
          \black(1,2)
          \black(1,3)
          \black(2,1)
          \black(2,2)
          \black(3,1)
        \end{hexboard}$}
    }
    \quad
    G_3 = \lift{
      \scalebox{\scale}{$\begin{hexboard}
          \noshadows
          \foreach\i in {1,...,3} {\hex(1,\i)}
          \foreach\i in {1,...,3} {\hex(2,\i)}
          \foreach\i in {1,...,2} {\hex(3,\i)}
          \edge[\sw](1,1)(1,3)
          \edge[\ne\noacutecorner](3,2)(3,1)
          \edge[\nw](1,1)(3,1)
          \edge[\se\noacutecorner](2,3)(1,3)
          \black(3,2)
          \white(2,3)
          \black(1,1)
          \black(1,2)
          \white(1,3)
          \black(2,1)
          \white(3,1)
        \end{hexboard}$}
    }
    \]
    \[
    G_4 = \lift{
      \scalebox{\scale}{$\begin{hexboard}
          \noshadows
          \foreach\i in {1,...,3} {\hex(1,\i)}
          \foreach\i in {1,...,3} {\hex(2,\i)}
          \foreach\i in {1,...,2} {\hex(3,\i)}
          \edge[\sw](1,1)(1,3)
          \edge[\ne\noacutecorner](3,2)(3,1)
          \edge[\nw](1,1)(3,1)
          \edge[\se\noacutecorner](2,3)(1,3)
          \black(3,2)
          \white(2,3)
          \black(1,2)
          \black(1,3)
          \white(2,1)
          \white(2,2)
          \white(3,1)
        \end{hexboard}$}
    }
    \quad
    G_5 = \lift{
      \scalebox{\scale}{$\begin{hexboard}
          \noshadows
          \foreach\i in {1,...,3} {\hex(1,\i)}
          \foreach\i in {1,...,3} {\hex(2,\i)}
          \foreach\i in {1,...,2} {\hex(3,\i)}
          \edge[\sw](1,1)(1,3)
          \edge[\ne\noacutecorner](3,2)(3,1)
          \edge[\nw](1,1)(3,1)
          \edge[\se\noacutecorner](2,3)(1,3)
          \black(3,2)
          \white(2,3)
          \white(1,1)
          \white(2,1)
          \black(3,1)
          \white(1,2)
          \black(1,3)
        \end{hexboard}$}
    }
    \quad
    G_6 = \lift{
      \scalebox{\scale}{$\begin{hexboard}
          \noshadows
          \foreach\i in {1,...,3} {\hex(1,\i)}
          \foreach\i in {1,...,3} {\hex(2,\i)}
          \foreach\i in {1,...,2} {\hex(3,\i)}
          \edge[\sw](1,1)(1,3)
          \edge[\ne\noacutecorner](3,2)(3,1)
          \edge[\nw](1,1)(3,1)
          \edge[\se\noacutecorner](2,3)(1,3)
          \black(3,2)
          \white(2,3)
          \white(1,1)
          \white(2,1)
          \white(1,2)
          \black(1,3)
        \end{hexboard}$}
    }
    \quad
    G_7 = \lift{
      \scalebox{\scale}{$\begin{hexboard}
          \noshadows
          \foreach\i in {1,...,3} {\hex(1,\i)}
          \foreach\i in {1,...,3} {\hex(2,\i)}
          \foreach\i in {1,...,2} {\hex(3,\i)}
          \edge[\sw](1,1)(1,3)
          \edge[\ne\noacutecorner](3,2)(3,1)
          \edge[\nw](1,1)(3,1)
          \edge[\se\noacutecorner](2,3)(1,3)
          \black(3,2)
          \white(2,3)
          \black(1,1)
          \black(1,2)
          \black(2,1)
          \white(3,1)
        \end{hexboard}$}
    }
    \]
    \caption{One-sided forks, realizing all passable game values over
      $\Lin_3$.}
    \label{fig-realizable-one-sided-fork}
  \end{figure}

\item Figure~\ref{fig-3-terminal-decisive-gap} shows a one-sided fork
  with an additional gap marked ``$\ast$'' between the two non-edge
  terminals. We consider the gap to be part of the region. The outcome
  poset for this type of region is the 4-element linearly ordered set
  $\Lin_4=\s{\bot,a,b,\top}$, with $\bot<a<b<\top$. The four outcomes
  are:
  \begin{itemize}
  \item $\bot$: White's board edges are connected.
  \item $a$: Neither player's board edges are connected, and White
    occupies the gap.
  \item $b$: Neither player's board edges are connected, and Black
    occupies the gap.
  \item $\top$: Black's board edges are connected.
  \end{itemize}
  \begin{figure}
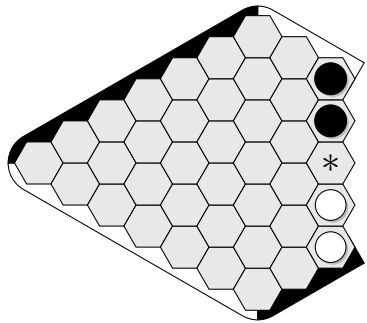

    \[
    \begin{hexboard}[scale=0.8]
      \shadows
      \foreach\i in {1,...,7} {\hex(1,\i)}
      \foreach\i in {1,...,7} {\hex(2,\i)}
      \foreach\i in {1,...,7} {\hex(3,\i)}
      \foreach\i in {1,...,6} {\hex(4,\i)}
      \foreach\i in {1,...,5} {\hex(5,\i)}
      \foreach\i in {1,...,4} {\hex(6,\i)}
      \foreach\i in {1,...,3} {\hex(7,\i)}
      \edge[\sw](1,1)(1,7)
      \edge[\ne\noacutecorner](7,3)(7,1)
      \edge[\nw](1,1)(7,1)
      \edge[\se\noacutecorner](3,7)(1,7)
      \black(7,3)
      \black(6,4)
      \cell(5,5)\label{\Large$\ast$}
      \white(4,6)
      \white(3,7)
    \end{hexboard}
    \]
    \caption{A 3-terminal region with four board edges and a gap, or
      ``one-sided fork with gap''.}
    \label{fig-3-terminal-decisive-gap}
  \end{figure}

  Recall from Figure~\ref{fig-l4} that there are 31 passable game
  values over $\Lin_4$. All of them are realizable in Hex by one-sided
  forks with gap, as shown in
  Figure~\ref{fig-realizable-3-terminal-decisive-gap}.
  \begin{figure}
    \def\scale{0.5}
    \def\scalel{0.7667}
    \def\b#1{\makebox[1.5em][r]{$#1$}}
    \def\xspace{\hspace{-0.7ex}}
    \def\squad{\quad}
    \def\lift{\mp{0.45}}
    \[
    \b{G_{0}} =\xspace \lift{
      \scalebox{\scale}{$\begin{hexboard}
          \noshadows
      \foreach\i in {1,...,4} {\hex(1,\i)}
      \foreach\i in {1,...,4} {\hex(2,\i)}
      \foreach\i in {1,...,3} {\hex(3,\i)}
      \foreach\i in {1,...,2} {\hex(4,\i)}
      \edge[\sw](1,1)(1,4)
      \edge[\ne\noacutecorner](4,2)(4,1)
      \edge[\nw](1,1)(4,1)
      \edge[\se\noacutecorner](2,4)(1,4)
      \black(4,2)
      \white(2,4)
      \cell(3,3)\label{\Large$\ast$}
      \foreach\i in {1,...,4} {\white(1,\i)}
      \foreach\i in {1,...,3} {\white(2,\i)}
      \foreach\i in {1,...,3} {\white(3,\i)}
      \foreach\i in {1,...,1} {\white(4,\i)}
    \end{hexboard}$}
    }
    \squad
    \b{G_{1}} =\xspace \lift{
      \scalebox{\scale}{$\begin{hexboard}
          \noshadows
      \foreach\i in {1,...,4} {\hex(1,\i)}
      \foreach\i in {1,...,4} {\hex(2,\i)}
      \foreach\i in {1,...,3} {\hex(3,\i)}
      \foreach\i in {1,...,2} {\hex(4,\i)}
      \edge[\sw](1,1)(1,4)
      \edge[\ne\noacutecorner](4,2)(4,1)
      \edge[\nw](1,1)(4,1)
      \edge[\se\noacutecorner](2,4)(1,4)
      \black(4,2)
      \white(2,4)
      \cell(3,3)\label{\Large$\ast$}
      \foreach\i in {1,...,4} {\white(1,\i)}
      \foreach\i in {1,...,1} {\black(2,\i)}
      \foreach\i in {2,...,3} {\white(2,\i)}
      \foreach\i in {1,...,2} {\black(3,\i)}
      \foreach\i in {3,...,3} {\white(3,\i)}
      \foreach\i in {1,...,1} {\black(4,\i)}
    \end{hexboard}$}
    }
    \squad
    \b{G_{2}} =\xspace \lift{
      \scalebox{\scale}{$\begin{hexboard}
          \noshadows
      \foreach\i in {1,...,4} {\hex(1,\i)}
      \foreach\i in {1,...,4} {\hex(2,\i)}
      \foreach\i in {1,...,3} {\hex(3,\i)}
      \foreach\i in {1,...,2} {\hex(4,\i)}
      \edge[\sw](1,1)(1,4)
      \edge[\ne\noacutecorner](4,2)(4,1)
      \edge[\nw](1,1)(4,1)
      \edge[\se\noacutecorner](2,4)(1,4)
      \black(4,2)
      \white(2,4)
      \black(3,3)\label{\Large$\ast$}
      \foreach\i in {1,...,4} {\black(\i,1)}
      \foreach\i in {1,...,1} {\white(\i,2)}
      \foreach\i in {2,...,3} {\black(\i,2)}
      \foreach\i in {1,...,2} {\white(\i,3)}
      \foreach\i in {3,...,3} {\black(\i,3)}
      \foreach\i in {1,...,1} {\white(\i,4)}
    \end{hexboard}$}
    }
    \squad
    \b{G_{3}} =\xspace \lift{
      \scalebox{\scale}{$\begin{hexboard}
          \noshadows
      \foreach\i in {1,...,4} {\hex(1,\i)}
      \foreach\i in {1,...,4} {\hex(2,\i)}
      \foreach\i in {1,...,3} {\hex(3,\i)}
      \foreach\i in {1,...,2} {\hex(4,\i)}
      \edge[\sw](1,1)(1,4)
      \edge[\ne\noacutecorner](4,2)(4,1)
      \edge[\nw](1,1)(4,1)
      \edge[\se\noacutecorner](2,4)(1,4)
      \black(4,2)
      \white(2,4)
      \black(3,3)\label{\Large$\ast$}
      \foreach\i in {1,...,4} {\black(\i,1)}
      \foreach\i in {1,...,3} {\black(\i,2)}
      \foreach\i in {1,...,3} {\black(\i,3)}
      \foreach\i in {1,...,1} {\black(\i,4)}
    \end{hexboard}$}
    }
    \]
    \[
    \b{G_{4}} =\xspace \lift{
      \scalebox{\scale}{$\begin{hexboard}
          \noshadows
      \foreach\i in {1,...,4} {\hex(1,\i)}
      \foreach\i in {1,...,4} {\hex(2,\i)}
      \foreach\i in {1,...,3} {\hex(3,\i)}
      \foreach\i in {1,...,2} {\hex(4,\i)}
      \edge[\sw](1,1)(1,4)
      \edge[\ne\noacutecorner](4,2)(4,1)
      \edge[\nw](1,1)(4,1)
      \edge[\se\noacutecorner](2,4)(1,4)
      \black(4,2)
      \white(2,4)
      \cell(3,3)\label{\Large$\ast$}
      \foreach\i in {1,...,4} {\white(1,\i)}
      \foreach\i in {1,...,3} {\white(2,\i)}
      \foreach\i in {1,...,3} {\white(3,\i)}
    \end{hexboard}$}
    }
    \squad
    \b{G_{5}} =\xspace \lift{
      \scalebox{\scale}{$\begin{hexboard}
          \noshadows
      \foreach\i in {1,...,4} {\hex(1,\i)}
      \foreach\i in {1,...,4} {\hex(2,\i)}
      \foreach\i in {1,...,3} {\hex(3,\i)}
      \foreach\i in {1,...,2} {\hex(4,\i)}
      \edge[\sw](1,1)(1,4)
      \edge[\ne\noacutecorner](4,2)(4,1)
      \edge[\nw](1,1)(4,1)
      \edge[\se\noacutecorner](2,4)(1,4)
      \black(4,2)
      \white(2,4)
      \cell(3,3)\label{\Large$\ast$}
      \foreach\i in {1,...,4} {\white(1,\i)}
      \foreach\i in {1,...,3} {\black(2,\i)}
      \foreach\i in {1,...,2} {\white(3,\i)}
      \foreach\i in {1,...,1} {\white(4,\i)}
    \end{hexboard}$}
    }
    \squad
    \b{G_{6}} =\xspace \lift{
      \scalebox{\scale}{$\begin{hexboard}
          \noshadows
      \foreach\i in {1,...,4} {\hex(1,\i)}
      \foreach\i in {1,...,4} {\hex(2,\i)}
      \foreach\i in {1,...,3} {\hex(3,\i)}
      \foreach\i in {1,...,2} {\hex(4,\i)}
      \edge[\sw](1,1)(1,4)
      \edge[\ne\noacutecorner](4,2)(4,1)
      \edge[\nw](1,1)(4,1)
      \edge[\se\noacutecorner](2,4)(1,4)
      \black(4,2)
      \white(2,4)
      \white(3,3)\label{\Large$\ast$}
      \foreach\i in {2,...,4} {\black(1,\i)}
      \foreach\i in {1,...,2} {\white(2,\i)}
      \foreach\i in {1,...,2} {\white(3,\i)}
      \foreach\i in {1,...,1} {\white(4,\i)}
      \white(2,3)
    \end{hexboard}$}
    }
    \squad
    \b{G_{7}} =\xspace \lift{
      \scalebox{\scale}{$\begin{hexboard}
          \noshadows
      \foreach\i in {1,...,4} {\hex(1,\i)}
      \foreach\i in {1,...,4} {\hex(2,\i)}
      \foreach\i in {1,...,3} {\hex(3,\i)}
      \foreach\i in {1,...,2} {\hex(4,\i)}
      \edge[\sw](1,1)(1,4)
      \edge[\ne\noacutecorner](4,2)(4,1)
      \edge[\nw](1,1)(4,1)
      \edge[\se\noacutecorner](2,4)(1,4)
      \black(4,2)
      \white(2,4)
      \cell(3,3)\label{\Large$\ast$}
      \foreach\i in {1,...,4} {\white(1,\i)}
      \foreach\i in {1,...,1} {\black(2,\i)}
      \foreach\i in {2,...,3} {\white(2,\i)}
      \foreach\i in {1,...,2} {\black(3,\i)}
      \foreach\i in {1,...,1} {\black(4,\i)}
    \end{hexboard}$}
    }
    \]
    \[
    \b{G_{8}} =\xspace \lift{
      \scalebox{\scale}{$\begin{hexboard}
          \noshadows
      \foreach\i in {1,...,4} {\hex(1,\i)}
      \foreach\i in {1,...,4} {\hex(2,\i)}
      \foreach\i in {1,...,3} {\hex(3,\i)}
      \foreach\i in {1,...,2} {\hex(4,\i)}
      \edge[\sw](1,1)(1,4)
      \edge[\ne\noacutecorner](4,2)(4,1)
      \edge[\nw](1,1)(4,1)
      \edge[\se\noacutecorner](2,4)(1,4)
      \black(4,2)
      \white(2,4)
      \cell(3,3)\label{\Large$\ast$}
      \foreach\i in {1,...,4} {\black(\i,1)}
      \foreach\i in {1,...,3} {\white(\i,2)}
      \foreach\i in {1,...,2} {\black(\i,3)}
      \foreach\i in {1,...,1} {\black(\i,4)}
    \end{hexboard}$}
    }
    \squad
    \b{G_{9}} =\xspace \lift{
      \scalebox{\scale}{$\begin{hexboard}
          \noshadows
      \foreach\i in {1,...,4} {\hex(1,\i)}
      \foreach\i in {1,...,4} {\hex(2,\i)}
      \foreach\i in {1,...,3} {\hex(3,\i)}
      \foreach\i in {1,...,2} {\hex(4,\i)}
      \edge[\sw](1,1)(1,4)
      \edge[\ne\noacutecorner](4,2)(4,1)
      \edge[\nw](1,1)(4,1)
      \edge[\se\noacutecorner](2,4)(1,4)
      \black(4,2)
      \white(2,4)
      \black(3,3)\label{\Large$\ast$}
      \foreach\i in {1,...,4} {\black(\i,1)}
      \foreach\i in {1,...,3} {\black(\i,2)}
      \foreach\i in {1,...,3} {\black(\i,3)}
    \end{hexboard}$}
    }
    \squad
    \b{G_{10}} =\xspace \lift{
      \scalebox{\scale}{$\begin{hexboard}[scale=\scalel]
          \noshadows
      \foreach\i in {1,...,5} {\hex(1,\i)}
      \foreach\i in {1,...,5} {\hex(2,\i)}
      \foreach\i in {1,...,5} {\hex(3,\i)}
      \foreach\i in {1,...,4} {\hex(4,\i)}
      \foreach\i in {1,...,3} {\hex(5,\i)}
      \edge[\sw](1,1)(1,5)
      \edge[\ne\noacutecorner](5,3)(5,1)
      \edge[\nw](1,1)(5,1)
      \edge[\se\noacutecorner](3,5)(1,5)
      \black(5,3)
      \white(3,5)
      \cell(4,4)\label{\Large$\ast$}
      \foreach\i in {1,...,4} {\white(1,\i)}
      \black(1,5)
      \foreach\i in {1,...,2} {\black(2,\i)}
      \white(2,5)
      \black(3,1)
      \white(3,2)
      \black(3,4)
      \black(4,1)
      \white(4,2)
      \white(4,3)
      \black(5,1)
    \end{hexboard}$}
    }
    \squad
    \b{G_{11}} =\xspace \lift{
      \scalebox{\scale}{$\begin{hexboard}
          \noshadows
      \foreach\i in {1,...,4} {\hex(1,\i)}
      \foreach\i in {1,...,4} {\hex(2,\i)}
      \foreach\i in {1,...,3} {\hex(3,\i)}
      \foreach\i in {1,...,2} {\hex(4,\i)}
      \edge[\sw](1,1)(1,4)
      \edge[\ne\noacutecorner](4,2)(4,1)
      \edge[\nw](1,1)(4,1)
      \edge[\se\noacutecorner](2,4)(1,4)
      \black(4,2)
      \white(2,4)
      \cell(3,3)\label{\Large$\ast$}
      \foreach\i in {1,...,4} {\white(1,\i)}
      \foreach\i in {1,...,1} {\black(2,\i)}
      \foreach\i in {2,...,3} {\white(2,\i)}
      \foreach\i in {1,...,1} {\black(3,\i)}
      \foreach\i in {1,...,1} {\white(4,\i)}
    \end{hexboard}$}
    }
    \]
    \[
    \b{G_{12}} =\xspace \lift{
      \scalebox{\scale}{$\begin{hexboard}
          \noshadows
      \foreach\i in {1,...,4} {\hex(1,\i)}
      \foreach\i in {1,...,4} {\hex(2,\i)}
      \foreach\i in {1,...,3} {\hex(3,\i)}
      \foreach\i in {1,...,2} {\hex(4,\i)}
      \edge[\sw](1,1)(1,4)
      \edge[\ne\noacutecorner](4,2)(4,1)
      \edge[\nw](1,1)(4,1)
      \edge[\se\noacutecorner](2,4)(1,4)
      \black(4,2)
      \white(2,4)
      \cell(3,3)\label{\Large$\ast$}
      \foreach\i in {1,...,3} {\black(\i,1)}
      \foreach\i in {1,...,3} {\white(\i,2)}
      \foreach\i in {1,...,2} {\black(\i,3)}
      \foreach\i in {1,...,1} {\black(\i,4)}
    \end{hexboard}$}
    }
    \squad
    \b{G_{13}} =\xspace \lift{
      \scalebox{\scale}{$\begin{hexboard}
          \noshadows
      \foreach\i in {1,...,4} {\hex(1,\i)}
      \foreach\i in {1,...,4} {\hex(2,\i)}
      \foreach\i in {1,...,3} {\hex(3,\i)}
      \foreach\i in {1,...,2} {\hex(4,\i)}
      \edge[\sw](1,1)(1,4)
      \edge[\ne\noacutecorner](4,2)(4,1)
      \edge[\nw](1,1)(4,1)
      \edge[\se\noacutecorner](2,4)(1,4)
      \black(4,2)
      \white(2,4)
      \black(3,3)\label{\Large$\ast$}
      \foreach\i in {2,...,4} {\white(\i,1)}
      \foreach\i in {1,...,3} {\black(\i,2)}
      \foreach\i in {1,...,3} {\black(\i,3)}
    \end{hexboard}$}
    }
    \squad
    \b{G_{14}} =\xspace \lift{
      \scalebox{\scale}{$\begin{hexboard}
          \noshadows
      \foreach\i in {1,...,4} {\hex(1,\i)}
      \foreach\i in {1,...,4} {\hex(2,\i)}
      \foreach\i in {1,...,3} {\hex(3,\i)}
      \foreach\i in {1,...,2} {\hex(4,\i)}
      \edge[\sw](1,1)(1,4)
      \edge[\ne\noacutecorner](4,2)(4,1)
      \edge[\nw](1,1)(4,1)
      \edge[\se\noacutecorner](2,4)(1,4)
      \black(4,2)
      \white(2,4)
      \cell(3,3)\label{\Large$\ast$}
      \foreach\i in {1,...,4} {\black(\i,1)}
      \black(1,2)
      \white(2,2)
      \white(3,2)
      \white(1,3)
      \black(2,3)
    \end{hexboard}$}
    }
    \squad
    \b{G_{15}} =\xspace \lift{
      \scalebox{\scale}{$\begin{hexboard}[scale=\scalel]
          \noshadows
      \foreach\i in {1,...,5} {\hex(1,\i)}
      \foreach\i in {1,...,5} {\hex(2,\i)}
      \foreach\i in {1,...,5} {\hex(3,\i)}
      \foreach\i in {1,...,4} {\hex(4,\i)}
      \foreach\i in {1,...,3} {\hex(5,\i)}
      \edge[\sw](1,1)(1,5)
      \edge[\ne\noacutecorner](5,3)(5,1)
      \edge[\nw](1,1)(5,1)
      \edge[\se\noacutecorner](3,5)(1,5)
      \black(5,3)
      \white(3,5)
      \cell(4,4)\label{\Large$\ast$}
      \foreach\i in {1,...,4} {\black(\i,1)}
      \white(5,1)
      \foreach\i in {1,...,2} {\white(\i,2)}
      \black(5,2)
      \white(1,3)
      \black(2,3)
      \white(4,3)
      \white(1,4)
      \black(2,4)
      \black(3,4)
      \white(1,5)
    \end{hexboard}$}
    }
    \]
    \[
    \b{G_{16}} =\xspace \lift{
      \scalebox{\scale}{$\begin{hexboard}
          \noshadows
      \foreach\i in {1,...,4} {\hex(1,\i)}
      \foreach\i in {1,...,4} {\hex(2,\i)}
      \foreach\i in {1,...,3} {\hex(3,\i)}
      \foreach\i in {1,...,2} {\hex(4,\i)}
      \edge[\sw](1,1)(1,4)
      \edge[\ne\noacutecorner](4,2)(4,1)
      \edge[\nw](1,1)(4,1)
      \edge[\se\noacutecorner](2,4)(1,4)
      \black(4,2)
      \white(2,4)
      \cell(3,3)\label{\Large$\ast$}
      \foreach\i in {1,...,4} {\white(1,\i)}
      \white(2,1)
      \black(2,2)
      \black(2,3)
      \black(3,1)
      \white(3,2)
    \end{hexboard}$}
    }
    \squad
    \b{G_{17}} =\xspace \lift{
      \scalebox{\scale}{$\begin{hexboard}
          \noshadows
      \foreach\i in {1,...,4} {\hex(1,\i)}
      \foreach\i in {1,...,4} {\hex(2,\i)}
      \foreach\i in {1,...,3} {\hex(3,\i)}
      \foreach\i in {1,...,2} {\hex(4,\i)}
      \edge[\sw](1,1)(1,4)
      \edge[\ne\noacutecorner](4,2)(4,1)
      \edge[\nw](1,1)(4,1)
      \edge[\se\noacutecorner](2,4)(1,4)
      \black(4,2)
      \white(2,4)
      \white(3,3)\label{\Large$\ast$}
      \foreach\i in {2,...,4} {\black(1,\i)}
      \foreach\i in {1,...,3} {\white(2,\i)}
      \foreach\i in {1,...,3} {\white(3,\i)}
    \end{hexboard}$}
    }
    \squad
    \b{G_{18}} =\xspace \lift{
      \scalebox{\scale}{$\begin{hexboard}
          \noshadows
      \foreach\i in {1,...,4} {\hex(1,\i)}
      \foreach\i in {1,...,4} {\hex(2,\i)}
      \foreach\i in {1,...,3} {\hex(3,\i)}
      \foreach\i in {1,...,2} {\hex(4,\i)}
      \edge[\sw](1,1)(1,4)
      \edge[\ne\noacutecorner](4,2)(4,1)
      \edge[\nw](1,1)(4,1)
      \edge[\se\noacutecorner](2,4)(1,4)
      \black(4,2)
      \white(2,4)
      \cell(3,3)\label{\Large$\ast$}
      \black(1,1)
      \black(1,2)
      \white(1,3)
      \black(2,1)
      \black(2,3)
      \black(3,1)
      \white(3,2)
    \end{hexboard}$}
    }
    \squad
    \b{G_{19}} =\xspace \lift{
      \scalebox{\scale}{$\begin{hexboard}
          \noshadows
      \foreach\i in {1,...,4} {\hex(1,\i)}
      \foreach\i in {1,...,4} {\hex(2,\i)}
      \foreach\i in {1,...,3} {\hex(3,\i)}
      \foreach\i in {1,...,2} {\hex(4,\i)}
      \edge[\sw](1,1)(1,4)
      \edge[\ne\noacutecorner](4,2)(4,1)
      \edge[\nw](1,1)(4,1)
      \edge[\se\noacutecorner](2,4)(1,4)
      \black(4,2)
      \white(2,4)
      \cell(3,3)\label{\Large$\ast$}
      \foreach\i in {1,...,3} {\white(1,\i)}
      \foreach\i in {1,...,3} {\black(2,\i)}
      \foreach\i in {1,...,2} {\white(3,\i)}
      \foreach\i in {1,...,1} {\white(4,\i)}
    \end{hexboard}$}
    }
    \]
    \[
    \b{G_{20}} =\xspace \lift{
      \scalebox{\scale}{$\begin{hexboard}
          \noshadows
      \foreach\i in {1,...,4} {\hex(1,\i)}
      \foreach\i in {1,...,4} {\hex(2,\i)}
      \foreach\i in {1,...,3} {\hex(3,\i)}
      \foreach\i in {1,...,2} {\hex(4,\i)}
      \edge[\sw](1,1)(1,4)
      \edge[\ne\noacutecorner](4,2)(4,1)
      \edge[\nw](1,1)(4,1)
      \edge[\se\noacutecorner](2,4)(1,4)
      \black(4,2)
      \white(2,4)
      \cell(3,3)\label{\Large$\ast$}
      \foreach\i in {1,...,4} {\black(\i,1)}
      \foreach\i in {1,...,1} {\white(\i,2)}
      \foreach\i in {2,...,3} {\black(\i,2)}
      \foreach\i in {1,...,1} {\white(\i,3)}
      \foreach\i in {1,...,1} {\black(\i,4)}
    \end{hexboard}$}
    }
    \squad
    \b{G_{21}} =\xspace \lift{
      \scalebox{\scale}{$\begin{hexboard}
          \noshadows
      \foreach\i in {1,...,4} {\hex(1,\i)}
      \foreach\i in {1,...,4} {\hex(2,\i)}
      \foreach\i in {1,...,3} {\hex(3,\i)}
      \foreach\i in {1,...,2} {\hex(4,\i)}
      \edge[\sw](1,1)(1,4)
      \edge[\ne\noacutecorner](4,2)(4,1)
      \edge[\nw](1,1)(4,1)
      \edge[\se\noacutecorner](2,4)(1,4)
      \black(4,2)
      \white(2,4)
      \cell(3,3)\label{\Large$\ast$}
      \black(1,1)
      \black(1,2)
      \white(1,3)
      \black(2,1)
      \white(2,2)
      \black(2,3)
      \black(3,1)
      \white(3,2)
    \end{hexboard}$}
    }
    \squad
    \b{G_{22}} =\xspace \lift{
      \scalebox{\scale}{$\begin{hexboard}[scale=\scalel]
          \noshadows
      \foreach\i in {1,...,5} {\hex(1,\i)}
      \foreach\i in {1,...,5} {\hex(2,\i)}
      \foreach\i in {1,...,5} {\hex(3,\i)}
      \foreach\i in {1,...,4} {\hex(4,\i)}
      \foreach\i in {1,...,3} {\hex(5,\i)}
      \edge[\sw](1,1)(1,5)
      \edge[\ne\noacutecorner](5,3)(5,1)
      \edge[\nw](1,1)(5,1)
      \edge[\se\noacutecorner](3,5)(1,5)
      \black(5,3)
      \white(3,5)
      \cell(4,4)\label{\Large$\ast$}
      \foreach\i in {1,...,4} {\white(1,\i)}
      \black(1,5)
      \white(2,1)
      \black(2,2)
      \white(2,5)
      \white(3,1)
      \black(3,2)
      \black(3,4)
      \white(4,1)
      \black(4,2)
      \white(4,3)
    \end{hexboard}$}
    }
    \squad
    \b{G_{23}} =\xspace \lift{
      \scalebox{\scale}{$\begin{hexboard}
          \noshadows
      \foreach\i in {1,...,4} {\hex(1,\i)}
      \foreach\i in {1,...,4} {\hex(2,\i)}
      \foreach\i in {1,...,3} {\hex(3,\i)}
      \foreach\i in {1,...,2} {\hex(4,\i)}
      \edge[\sw](1,1)(1,4)
      \edge[\ne\noacutecorner](4,2)(4,1)
      \edge[\nw](1,1)(4,1)
      \edge[\se\noacutecorner](2,4)(1,4)
      \black(4,2)
      \white(2,4)
      \cell(3,3)\label{\Large$\ast$}
      \foreach\i in {1,...,3} {\white(\i,1)}
      \foreach\i in {1,...,1} {\white(\i,2)}
      \foreach\i in {2,...,3} {\black(\i,2)}
      \foreach\i in {1,...,1} {\white(\i,3)}
      \foreach\i in {1,...,1} {\black(\i,4)}
    \end{hexboard}$}
    }
    \]
    \[
    \b{G_{24}} =\xspace \lift{
      \scalebox{\scale}{$\begin{hexboard}[scale=\scalel]
          \noshadows
      \foreach\i in {1,...,5} {\hex(1,\i)}
      \foreach\i in {1,...,5} {\hex(2,\i)}
      \foreach\i in {1,...,5} {\hex(3,\i)}
      \foreach\i in {1,...,4} {\hex(4,\i)}
      \foreach\i in {1,...,3} {\hex(5,\i)}
      \edge[\sw](1,1)(1,5)
      \edge[\ne\noacutecorner](5,3)(5,1)
      \edge[\nw](1,1)(5,1)
      \edge[\se\noacutecorner](3,5)(1,5)
      \black(5,3)
      \white(3,5)
      \cell(4,4)\label{\Large$\ast$}
      \foreach\i in {1,...,4} {\white(1,\i)}
      \black(1,5)
      \black(2,1)
      \black(2,2)
      \black(3,1)
      \white(3,2)
      \black(4,1)
      \white(4,2)
      \black(5,1)
    \end{hexboard}$}
    }
    \squad
    \b{G_{25}} =\xspace \lift{
      \scalebox{\scale}{$\begin{hexboard}
          \noshadows
      \foreach\i in {1,...,4} {\hex(1,\i)}
      \foreach\i in {1,...,4} {\hex(2,\i)}
      \foreach\i in {1,...,3} {\hex(3,\i)}
      \foreach\i in {1,...,2} {\hex(4,\i)}
      \edge[\sw](1,1)(1,4)
      \edge[\ne\noacutecorner](4,2)(4,1)
      \edge[\nw](1,1)(4,1)
      \edge[\se\noacutecorner](2,4)(1,4)
      \black(4,2)
      \white(2,4)
      \cell(3,3)\label{\Large$\ast$}
      \white(1,1)
      \white(2,1)
      \black(3,1)
      \white(1,2)
      \black(2,2)
      \white(3,2)
      \white(1,3)
      \black(2,3)
    \end{hexboard}$}
    }
    \squad
    \b{G_{26}} =\xspace \lift{
      \scalebox{\scale}{$\begin{hexboard}[scale=\scalel]
          \noshadows
      \foreach\i in {1,...,5} {\hex(1,\i)}
      \foreach\i in {1,...,5} {\hex(2,\i)}
      \foreach\i in {1,...,5} {\hex(3,\i)}
      \foreach\i in {1,...,4} {\hex(4,\i)}
      \foreach\i in {1,...,3} {\hex(5,\i)}
      \edge[\sw](1,1)(1,5)
      \edge[\ne\noacutecorner](5,3)(5,1)
      \edge[\nw](1,1)(5,1)
      \edge[\se\noacutecorner](3,5)(1,5)
      \black(5,3)
      \white(3,5)
      \cell(4,4)\label{\Large$\ast$}
      \foreach\i in {1,...,4} {\black(\i,1)}
      \white(5,1)
      \black(1,2)
      \white(2,2)
      \black(5,2)
      \black(1,3)
      \white(2,3)
      \white(4,3)
      \black(1,4)
      \white(2,4)
      \black(3,4)
    \end{hexboard}$}
    }
    \squad
    \b{G_{27}} =\xspace \lift{
      \scalebox{\scale}{$\begin{hexboard}
          \noshadows
      \foreach\i in {1,...,4} {\hex(1,\i)}
      \foreach\i in {1,...,4} {\hex(2,\i)}
      \foreach\i in {1,...,3} {\hex(3,\i)}
      \foreach\i in {1,...,2} {\hex(4,\i)}
      \edge[\sw](1,1)(1,4)
      \edge[\ne\noacutecorner](4,2)(4,1)
      \edge[\nw](1,1)(4,1)
      \edge[\se\noacutecorner](2,4)(1,4)
      \black(4,2)
      \white(2,4)
      \cell(3,3)\label{\Large$\ast$}
      \foreach\i in {1,...,3} {\black(1,\i)}
      \foreach\i in {1,...,1} {\black(2,\i)}
      \foreach\i in {2,...,3} {\white(2,\i)}
      \foreach\i in {1,...,1} {\black(3,\i)}
      \foreach\i in {1,...,1} {\white(4,\i)}
    \end{hexboard}$}
    }
    \]
    \[
    \b{G_{28}} =\xspace \lift{
      \scalebox{\scale}{$\begin{hexboard}[scale=\scalel]
          \noshadows
      \foreach\i in {1,...,5} {\hex(1,\i)}
      \foreach\i in {1,...,5} {\hex(2,\i)}
      \foreach\i in {1,...,5} {\hex(3,\i)}
      \foreach\i in {1,...,4} {\hex(4,\i)}
      \foreach\i in {1,...,3} {\hex(5,\i)}
      \edge[\sw](1,1)(1,5)
      \edge[\ne\noacutecorner](5,3)(5,1)
      \edge[\nw](1,1)(5,1)
      \edge[\se\noacutecorner](3,5)(1,5)
      \black(5,3)
      \white(3,5)
      \cell(4,4)\label{\Large$\ast$}
      \foreach\i in {1,...,4} {\black(\i,1)}
      \white(5,1)
      \white(1,2)
      \white(2,2)
      \white(1,3)
      \black(2,3)
      \white(1,4)
      \black(2,4)
      \white(1,5)
    \end{hexboard}$}
    }
    \squad
    \b{G_{29}} =\xspace \lift{
      \scalebox{\scale}{$\begin{hexboard}
          \noshadows
      \foreach\i in {1,...,4} {\hex(1,\i)}
      \foreach\i in {1,...,4} {\hex(2,\i)}
      \foreach\i in {1,...,3} {\hex(3,\i)}
      \foreach\i in {1,...,2} {\hex(4,\i)}
      \edge[\sw](1,1)(1,4)
      \edge[\ne\noacutecorner](4,2)(4,1)
      \edge[\nw](1,1)(4,1)
      \edge[\se\noacutecorner](2,4)(1,4)
      \black(4,2)
      \white(2,4)
      \cell(3,3)\label{\Large$\ast$}
      \white(1,1)
      \white(2,1)
      \white(4,1)
      \white(1,2)
      \black(2,2)
      \white(1,3)
      \black(2,3)
    \end{hexboard}$}
    }
    \squad
    \b{G_{30}} =\xspace \lift{
      \scalebox{\scale}{$\begin{hexboard}
          \noshadows
      \foreach\i in {1,...,4} {\hex(1,\i)}
      \foreach\i in {1,...,4} {\hex(2,\i)}
      \foreach\i in {1,...,3} {\hex(3,\i)}
      \foreach\i in {1,...,2} {\hex(4,\i)}
      \edge[\sw](1,1)(1,4)
      \edge[\ne\noacutecorner](4,2)(4,1)
      \edge[\nw](1,1)(4,1)
      \edge[\se\noacutecorner](2,4)(1,4)
      \black(4,2)
      \white(2,4)
      \cell(3,3)\label{\Large$\ast$}
      \black(1,1)
      \black(1,2)
      \black(1,4)
      \black(2,1)
      \white(2,2)
      \black(3,1)
      \white(3,2)
    \end{hexboard}$}
    }
    \]
    \caption{One-sided forks with gap, realizing all passable game
      values over $\Lin_4$.}
    \label{fig-realizable-3-terminal-decisive-gap}
  \end{figure}
\end{itemize}

\begin{remark}
  While the values of all 31 Hex positions shown in
  Figure~\ref{fig-realizable-3-terminal-decisive-gap} are distinct
  when viewed as abstract combinatorial games, Demer pointed out that
  several of them are actually equivalent {\em as Hex positions}, due
  to the global decisiveness of $\top$ and $\bot$ in these positions
  (see Section~\ref{ssec-global}). Specifically, we have
  \[ G_{21}\eqg G_{22},
  \quad
  G_{10}\eqg G_{29},
  \quad
  G_{30}\eqg G_{15},
  \quad
  G_{26}\eqg G_{25}.
  \]
  For example, the values of $G_{10}$ and $G_{29}$ are
  $\g{b,\g{\top|a}|\bot}$ and $\g{\g{\top|\g{b|\g{a|\bot}}}|\bot}$,
  respectively. These were shown to be equivalent under global
  decisiveness in Example~\ref{exa-g-counterex} and
  Remark~\ref{rem-g-counterex}. The games $G_{15}$ and $G_{30}$ are
  their duals. The remaining equivalences can be shown by a similar
  argument. Perhaps surprisingly, apart from the four equivalences
  shown above, there are no additional collapses of the order under
  global decisiveness. In other words, among the 31 games of
  Figure~\ref{fig-l4} or
  Figure~\ref{fig-realizable-3-terminal-decisive-gap}, there are no
  pairs such that $G_i\leqg G_j$ but $G_i\nleq G_j$, except for
  $G_{22}\leqg G_{21}$, $G_{29}\leqg G_{10}$, $G_{15}\leqg G_{30}$,
  and $G_{25}\leqg G_{26}$.
\end{remark}

We also note that if two Hex positions realize combinatorial game
values that are inequivalent, or even inequivalent up to global
decisiveness, it does not necessarily follow that they are inequivalent
as Hex positions. Since equivalence of Hex positions in a region is
defined relative to all possible ways of embedding the region in a
larger {\em Hex} position, it is possible that equivalence of Hex
positions is a strictly coarser relation than equivalence of abstract
game values. However, apart from the phenomenon of global
decisiveness, which is not specific to Hex, no examples of this are
currently known.

\subsection{Unrealizable Game Values}\label{ssec-unrealizable}

So far, we have seen several settings in which all abstract passable
games over a given outcome poset were realizable as Hex positions. The
following proposition shows that this is not true in general.

\begin{proposition}\label{prop-abstract-not-hex}
  Consider 4-terminal Hex positions. As usual, let $\top$ denote the
  outcome where Black connects all terminals, and let $\bot$ denote
  the outcome where Black connects no terminals. The abstract game
  value $\Star=\g{\top|\bot}$, which is passable, cannot be realized
  by any 4-terminal Hex position.
\end{proposition}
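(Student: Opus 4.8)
The plan is to convert ``value $\eq\Star$'' into a purely geometric fact and then kill that fact by a local planar count. Since $\top$ and $\bot$ are the top and bottom of the $4$-terminal outcome poset, the canonical-form reduction (removing dominated options, together with the Gift Horse Lemma and Lemma~\ref{lem-top}) shows that a $4$-terminal position $P$ — which is composite, as it has empty cells — has value $\eq\Star$ if and only if $\top\tri P$ and $P\tri\bot$; equivalently, Black has a move to a position of value $\top$ and White has a move to a position of value $\bot$. Indeed $\top\tri P$ holds iff some left option equals $\top$, and $P\tri\bot$ iff some right option equals $\bot$, and conversely $\Star\eq P$ gives $\top\tri\Star\leq P$ and $P\leq\Star\tri\bot$, hence both by Lemma~\ref{lem-transitive}. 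Finally, a $4$-terminal position has value $\top$ exactly when Black has a virtual connection joining all four black terminals, and value $\bot$ exactly when White has a virtual connection joining all four white terminals (Black totally separated). So it suffices to show: \emph{no $4$-terminal Hex position simultaneously admits a black move creating a total black virtual connection and a white move creating a total white virtual connection.}

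\textbf{Induction on empty cells and the single-cell contradiction.} I would argue by induction on the number of empty cells. Suppose $P\eq\Star$ with at least two empty cells, and let $x$ (black) give $P_x\eq\top$ and $y$ (white) give $P_y\eq\bot$. If $x\neq y$, let $Q$ be $P$ with those two cells coloured accordingly. From $\top\leq P_x$ we get $\top\tri Q$ (as $Q$ is a right option of $P_x$), and from $P_y\leq\bot$ we get $Q\tri\bot$ (as $Q$ is a left option of $P_y$); by the first paragraph $Q\eq\Star$, with two fewer empty cells — or $Q$ is atomic, which is absurd since $\Star$ is equivalent to no atom. Iterating this lowers the empty-cell count by two, ending either at an atomic position (immediately absurd) or at a single empty cell $z$. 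In the single-cell case, colouring $z$ black fills the board to outcome $\top$ and colouring $z$ white fills it to $\bot$. With all other cells fixed, $z$ must merge the four otherwise-separate black terminal components, so at least four of the at most six neighbours of $z$ are black and lie in four distinct components; dually at least four neighbours of $z$ are white. Since these neighbour sets are disjoint, $z$ would need at least eight neighbours, contradicting the fact that a hexagonal cell has at most six.

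\textbf{The main obstacle.} The genuinely hard step is the degenerate case where the iteration gets stuck: the \emph{only} black move reaching $\top$ and the \emph{only} white move reaching $\bot$ are one and the same cell $z$, yet other empty cells remain. Then the clean ``$x$ black, $y$ white'' reduction is unavailable, and by monotonicity of Hex colouring any other cell black tends to destroy White's $\bot$-move while colouring it white destroys Black's $\top$-move, so it is not clear one can reduce the empty-cell count. I would resolve this by passing from moves to the underlying virtual connections: after $z$ is occupied, Black's total connection is a tree branching at $z$ with four arms reaching the four black terminals, and dually White's is such a tree at $z$; since the four black arms and the four white arms must emanate from $z$ in alternating cyclic order around it, the same ``$8>6$'' planar count applies to $z$ directly, even while other cells are empty. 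Making this carrier/min-cut argument precise — so that a single cell can never be the common pivot of a total black and a total white connection — is the technical heart of the proof, after which the remaining cases follow routinely from the first-paragraph reduction.
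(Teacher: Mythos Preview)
Your reduction in the first paragraph is correct, as is the induction step when $x\neq y$. The genuine gap is precisely the obstacle you identify, and your proposed resolution does not close it. A virtual connection is a second-player \emph{strategy}, not a static subgraph, so the assertion that ``Black's total connection is a tree branching at $z$ with four arms'' is unjustified. After $z$ is colored black the value is $\top$, but in any particular play-out the eventual black chain need not branch at $z$: the cell $z$ might sit on a single path using only two of its neighbours, or might act purely as a threat shaping White's responses without appearing in the final connecting chain at all. The alternation claim (``four black arms and four white arms in alternating cyclic order'') is likewise unsupported. Your $8>6$ count works in the single-empty-cell case exactly because the board is then fully colored and connected components are well-defined; that argument does not transfer to a position with empty cells remaining.

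The paper sidesteps the obstacle with a strategy-stealing argument that uses essentially the same local geometric fact, but applies it \emph{after} play is complete rather than via induction on empty cells. Since the value is $\Star$, Black has a first-player strategy reaching $\top$. If White moves first at an arbitrary cell $x$, Black ignores this and follows that strategy anyway (playing arbitrarily whenever it calls for $x$). When the board is full, the resulting position would have all four black terminals in one component were $x$ black. Now the hexagonal lemma is purely static: recoloring a single cell from black to white increases the number of black components by at most $2$, since a hexagon has at most six neighbours and hence at most three distinct black neighbour-components. Thus the four black terminals lie in at most three components, so at least two remain connected, and White has not achieved $\bot$. This handles every White first move uniformly, with no case split on whether the critical cells coincide.
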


The proof relies on the following lemma.

\begin{lemma}\label{lem-connected-components}
  Consider a Hex region that is completely filled with black and white
  stones. Changing the color of a single stone from black to white can
  increase the number of Black's connected components by at most 2.
\end{lemma}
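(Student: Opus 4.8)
The plan is to work in the adjacency graph of the region's cells, which for hexagonal Hex is a subgraph of the triangular lattice: the vertices are cells, and two vertices are joined by an edge when the corresponding cells are adjacent. Black's connected components are then exactly the connected components of the subgraph induced by the black cells. Recoloring a single cell $s$ from black to white amounts to deleting the vertex $s$ from this black subgraph. Since deleting $s$ only affects the component $C$ that currently contains $s$ --- every other black component is untouched --- it suffices to bound how many pieces $C\setminus\{s\}$ can fall into. If $C\setminus\{s\}$ has $m$ components, then the total number of Black's components changes by exactly $m-1$, so I need to show $m\leq 3$.

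First I would record the crucial local fact about the hexagonal tiling: the neighbors of any cell $s$ occur in a cyclic order $v_1,\ldots,v_d$ (with $d\leq 6$, and merely a path rather than a full cycle when $s$ lies on the boundary of the region or board), and any two cyclically consecutive neighbors $v_i,v_{i+1}$ are themselves adjacent. This holds because exactly three cells meet at each vertex of the tiling, so two cells sharing consecutive edges of $s$ also share an edge with each other. I would call a maximal run of cyclically consecutive black neighbors of $s$ an \emph{arc}, and let $k$ be the number of arcs.

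The heart of the argument is the inequality $m\leq k\leq 3$. For $m\leq k$: every component of $C\setminus\{s\}$ contains at least one black neighbor of $s$, since for any black cell $u\in C$ a shortest path in $C$ from $u$ to $s$ reaches $s$ through some black neighbor $w$, and the portion of that path from $u$ to $w$ avoids $s$, so $u$ and $w$ lie in the same component of $C\setminus\{s\}$. Moreover, two black neighbors lying in the same arc are joined by a chain of consecutive (hence pairwise adjacent) black neighbors, none of which is $s$, so they lie in the same component of $C\setminus\{s\}$. Thus the map sending a black neighbor to its component of $C\setminus\{s\}$ is surjective and constant on each arc, giving $m\leq k$. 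For $k\leq 3$: the black/white coloring of $v_1,\ldots,v_d$ is a binary string around a cycle of length $d\leq 6$; each black arc is followed cyclically by at least one white cell, so $k$ is bounded by the number of white cells, which together with the $k$ black cells (one per arc) forces $2k\leq d$, i.e.\ $k\leq\lfloor d/2\rfloor\leq 3$, with equality on the full hexagon attained by the alternating pattern. (Boundary cells give a path and fewer neighbors, which only decreases the count.)

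Combining these, $m\leq 3$, so recoloring $s$ increases the number of Black's connected components by $m-1\leq 2$, as claimed. I expect the only delicate step to be the local tiling geometry --- carefully justifying that the neighbors of a cell form a cycle with consecutive neighbors adjacent, and checking that the boundary cases (edge and corner cells, as well as the region's own boundary) can only reduce the neighbor count and hence the number of arcs. Everything else is a short graph-theoretic computation.
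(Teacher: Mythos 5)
Your proof is correct and follows essentially the same route as the paper's: the paper's (much terser) argument also reduces to the observation that a hexagonal cell has at most 6 neighbors, so at most 3 black components can be adjacent to it after the recoloring, whence the count increases by at most 2. Your write-up simply makes explicit the steps the paper leaves implicit --- that consecutive neighbors around the hexagon are adjacent, that each new component must contain a black neighbor of the recolored cell, and the arc-counting bound $k\leq\lfloor d/2\rfloor\leq 3$ --- along with the boundary-cell cases.
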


\begin{proof}
  Let $x$ be the cell whose color is being changed. Before the color
  change, $x$ belongs to one black connected component. Since $x$ is a
  hexagon, it has at most 6 neighbors; therefore, after the color
  change, at most 3 black connected components can be adjacent to
  $x$. Since all black connected components that are not adjacent to
  $x$ are unaffected by the color change, this shows that the total
  number of black connected components increases by at most 2.
\end{proof}

\begin{proof}[Proof of Proposition~\ref{prop-abstract-not-hex}]
  Suppose, for the sake of contradiction, that there is some
  4-terminal position with value $\g{\top|\bot}$. Then Black has a
  first-player strategy that allows Black to connect all 4 terminals.
  Now suppose White goes first and plays in some cell $x$. Black can
  simply ignore White's move and follow Black's original strategy. (If
  the strategy ever calls for Black to play at $x$, Black can simply
  pass, or arbitrarily play elsewhere.) This will result in an outcome
  where all 4 terminals would be connected if $x$ was occupied by a
  black stone. By Lemma~\ref{lem-connected-components}, changing a
  single black stone to white can break this connected component into
  at most 3 parts, which implies that at least 2 of Black's terminals
  are still connected. Therefore, White cannot achieve outcome $\bot$,
  contradicting our initial assumption.
\end{proof}

Using essentially the same proof, we can easily derive generalizations
of Proposition~\ref{prop-abstract-not-hex}, such as the following.

\begin{proposition}\label{prop-n-terminal-limit}
  Suppose Black has a strategy that allows Black to connect all $n$
  terminals in an $n$-terminal position. If White is given $k$ free
  moves, then Black still has a strategy for connecting at least
  $n-2k$ terminals.
\end{proposition}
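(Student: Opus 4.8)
The plan is to generalize the proof of Proposition~\ref{prop-abstract-not-hex} by replacing its single color change with $k$ of them and applying Lemma~\ref{lem-connected-components} repeatedly. First I would carry out the strategy transfer. By hypothesis, Black has a strategy that connects all $n$ terminals when Black moves first. Suppose now that White is granted $k$ free moves, played in cells $x_1,\dots,x_k$, and has Black follow the original all-connecting strategy while simply ignoring those free moves, using the same device as before: whenever the strategy prescribes a cell that White has already taken, Black instead plays an arbitrary empty cell (equivalently, Black imagines passing). Exactly as in Proposition~\ref{prop-abstract-not-hex}, the key consequence is that the resulting completed board has the property that \emph{if} the cells $x_1,\dots,x_k$ were all recolored black, then all $n$ terminals would lie in a single black connected component.

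Next I would analyze the actual board, which differs from this idealized all-connected board only in that the $k$ cells $x_1,\dots,x_k$ carry white stones rather than black ones. Starting from the idealized board, in which the terminals occupy one black component, recolor these $k$ cells from black to white one at a time. By Lemma~\ref{lem-connected-components}, each individual recoloring increases the number of Black's connected components by at most $2$. Iterating over all $k$ recolorings therefore shows that, on the actual board, the $n$ terminals are distributed among at most $1+2k$ black connected components. This is the direct analogue of the ``breaks into at most $3$ parts'' step used in the case $k=1$, where one change produced at most $1+2\cdot 1 = 3$ parts.

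Finally I would extract the connectivity bound: from the fact that the terminals occupy at most $1+2k$ components, conclude that Black can still guarantee at least $n-2k$ terminals in a common component. I expect the strategy transfer of the first paragraph to be the main conceptual obstacle, since it is the only genuinely game-theoretic ingredient --- one must check that Black's ``ignore the free moves'' play is always legal and that it really realizes the idealized board up to the recoloring of the $k$ free-move cells. The delicate combinatorial point in the last step is the bookkeeping that passes from ``at most $1+2k$ components'' to ``at least $n-2k$ connected terminals''; the cleanest way to pin this down is an induction on the number of recolorings, maintaining the invariant that after $j$ recolorings at least $n-2j$ terminals still share a component and verifying, via Lemma~\ref{lem-connected-components}, that each single recoloring costs the surviving component at most two terminals. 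Everything else is routine and parallels Proposition~\ref{prop-abstract-not-hex} verbatim.
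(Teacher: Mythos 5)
Your first two steps (the blind strategy transfer and the iterated use of Lemma~\ref{lem-connected-components}) are exactly the route the paper intends: its entire ``proof'' of Proposition~\ref{prop-n-terminal-limit} is the remark that the argument of Proposition~\ref{prop-abstract-not-hex} generalizes. The genuine gap is in your final step, and it is not mere bookkeeping. Lemma~\ref{lem-connected-components} bounds the number of black \emph{components} created by a recoloring, not the number of \emph{terminals} lost. From ``the $n$ terminals lie in at most $1+2k$ components'' the pigeonhole principle yields only $\lceil n/(2k+1)\rceil$ terminals in a common component, which falls short of $n-2k$ as soon as $n\geq 2k+3$; already for $k=1$, $n=5$ it gives $2$ rather than the required $3$. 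Your proposed repair --- the invariant that each single recoloring costs the surviving component at most two terminals --- is likewise not implied by the lemma. Consider a completed board whose black component is a ``hub'': a central cell $x$ with three pairwise non-adjacent black chains attached at alternating neighbors of $x$, each chain ending at a cell adjacent to two black terminal groups (with a single white stone wedged between them). Recoloring $x$ splits this component into three parts containing two terminals each, so a component with $m=6$ terminals loses four of them in one step, not two. Such boards are perfectly realizable in Hex, so no purely combinatorial application of Lemma~\ref{lem-connected-components} to a fixed final board can close the gap. The reason Proposition~\ref{prop-abstract-not-hex} goes through is the numerical coincidence $\lceil 4/3\rceil = 4-2$: for $n=4$, $k=1$ the pigeonhole bound and the claimed bound agree, and that coincidence is precisely what fails to generalize.

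Note also that you misplaced the difficulty: the strategy transfer, which you flagged as the main conceptual obstacle, is the standard and unproblematic part (the paper treats it just as informally). What is actually missing is game-theoretic input in the counting step. The hub board above shows that the statement ``all terminals connected with $x_1,\dots,x_k$ black implies some component of the recolored board contains $n-2k$ terminals'' is simply false for arbitrary completed boards; if the proposition is to be saved, one must use the hypothesis that Black's connection is forced by a \emph{strategy} robust against every White play (for instance, that in hexagonal geometry the three parts of a split each hang off a single cell adjacent to $x$, a bottleneck too fragile to be the outcome of a forcing strategy), and such an argument goes beyond recoloring a single final board. In fairness, the paper's one-sentence proof glosses over the same point, so you have followed its intended route faithfully; but as written, your argument establishes only the bound $\lceil n/(2k+1)\rceil$, not $n-2k$.
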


Results like Propositions~\ref{prop-abstract-not-hex} and
{\ref{prop-n-terminal-limit}} suggest that in Hex, there is some kind
of limit on the amount of advantage a player can gain with a single
move. In combinatorial game theory terms, while Hex is a ``hot'' game
(players never prefer passing to making a move), it is not a ``very
hot'' game (there is an inherent limit on how much a single move can
achieve).

In Proposition~\ref{prop-infinite}, we saw that the class of passable
abstract games over most outcome posets is infinite. However, in light
of Proposition~\ref{prop-abstract-not-hex}, we know that in general,
not all abstract game values are realizable as Hex positions. This
leaves open the question whether certain types of Hex regions (such as
3-terminal regions) admit finitely or infinitely many non-equivalent
Hex-realizable combinatorial values. In fact, since the counterexample
of Proposition~\ref{prop-abstract-not-hex} requires at least four
terminals, it is not even currently known whether there exists an
abstract passable 3-terminal value that is not realizable as a
3-terminal Hex position. One of the simplest passable 3-terminal
values for which no Hex realization is currently known is
$\g{a,\g{\top|b}|\g{a|\bot},b}$.

Note that Lemma~\ref{lem-connected-components}, and therefore
Propositions~\ref{prop-abstract-not-hex} and
{\ref{prop-n-terminal-limit}}, are specific to Hex, and do not apply
to other monotone set coloring games, or even planar connection
games. For example, on a game board that contains an octagonal cell,
it is trivially possible to find a 4-terminal position with value
$\g{\top|\bot}$, namely, a single empty octagon surrounded by
alternating black and white stones.

This leaves open another question, namely, whether all passable game
values can be realized by planar connection games. Failing that, we
may ask whether they are at least realizable by vertex Shannon games,
or failing that, by arbitrary monotone set coloring games. While the
first two questions remain open, the third question was recently
answered in the affirmative: all (short) passable game values are
realizable as monotone set coloring games {\cite{DSW2021-gadgets}}.

\section{Application: Minimal Connecting Sets in \texorpdfstring{$k\times n$}{k^^c3^^97n} Hex}
\label{sec-k-by-n}

The theory developed in this paper has several potential applications
to Hex, and some of these will be discussed in subsequent papers.
Here, we will briefly discuss one such application.

\subsection{Problem Statement}

Consider a Hex board of size $k\times n$, as shown for $k=4$ and
$n=12$ in Figure~\ref{fig-4xn}(a).
\begin{figure}
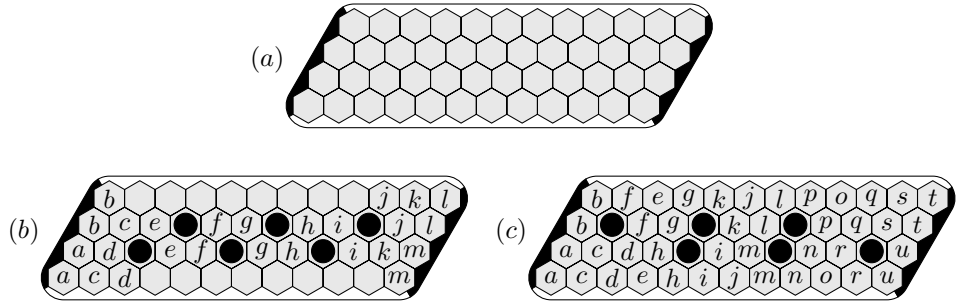

  \def\scale{0.58}
  \[
  (a) \m{$
    \begin{hexboard}[scale=\scale]
      \rotation{30}
      \board(4,12)
    \end{hexboard}
    $}
  \]
  \vspace{1em}
  \[
  (b) \m{$
    \begin{hexboard}[scale=\scale]
      \rotation{30}
      \board(4,12)
      \black(2,3)
      \black(3,4)
      \black(2,6)
      \black(3,7)
      \black(2,9)
      \black(3,10)
      \cell(1,1)\label{$a$}
      \cell(2,1)\label{$a$}
      \cell(3,1)\label{$b$}
      \cell(4,1)\label{$b$}
      \cell(1,2)\label{$c$}
      \cell(3,2)\label{$c$}
      \cell(1,3)\label{$d$}
      \cell(2,2)\label{$d$}
      \cell(3,3)\label{$e$}
      \cell(2,4)\label{$e$}
      \cell(2,5)\label{$f$}
      \cell(3,5)\label{$f$}
      \cell(3,6)\label{$g$}
      \cell(2,7)\label{$g$}
      \cell(2,8)\label{$h$}
      \cell(3,8)\label{$h$}
      \cell(3,9)\label{$i$}
      \cell(2,10)\label{$i$}
      \cell(4,10)\label{$j$}
      \cell(3,11)\label{$j$}
      \cell(2,11)\label{$k$}
      \cell(4,11)\label{$k$}
      \cell(3,12)\label{$l$}
      \cell(4,12)\label{$l$}
      \cell(1,12)\label{$m$}
      \cell(2,12)\label{$m$}
    \end{hexboard}
    $}
  \quad
  (c) \m{$
    \begin{hexboard}[scale=\scale]
      \rotation{30}
      \board(4,12)
      \black(3,2)
      \black(2,5)
      \black(3,5)
      \black(2,8)
      \black(3,8)
      \black(2,11)
      \cell(1,1)\label{$a$}
      \cell(2,1)\label{$a$}
      \cell(3,1)\label{$b$}
      \cell(4,1)\label{$b$}
      \cell(1,2)\label{$c$}
      \cell(2,2)\label{$c$}
      \cell(1,3)\label{$d$}
      \cell(2,3)\label{$d$}
      \cell(4,3)\label{$e$}
      \cell(1,4)\label{$e$}
      \cell(4,2)\label{$f$}
      \cell(3,3)\label{$f$}
      \cell(4,4)\label{$g$}
      \cell(3,4)\label{$g$}
      \cell(1,5)\label{$h$}
      \cell(2,4)\label{$h$}
      \cell(1,6)\label{$i$}
      \cell(2,6)\label{$i$}
      \cell(4,6)\label{$j$}
      \cell(1,7)\label{$j$}
      \cell(4,5)\label{$k$}
      \cell(3,6)\label{$k$}
      \cell(4,7)\label{$l$}
      \cell(3,7)\label{$l$}
      \cell(1,8)\label{$m$}
      \cell(2,7)\label{$m$}
      \cell(1,9)\label{$n$}
      \cell(2,9)\label{$n$}
      \cell(4,9)\label{$o$}
      \cell(1,10)\label{$o$}
      \cell(4,8)\label{$p$}
      \cell(3,9)\label{$p$}
      \cell(4,10)\label{$q$}
      \cell(3,10)\label{$q$}
      \cell(1,11)\label{$r$}
      \cell(2,10)\label{$r$}
      \cell(4,11)\label{$s$}
      \cell(3,11)\label{$s$}
      \cell(4,12)\label{$t$}
      \cell(3,12)\label{$t$}
      \cell(2,12)\label{$u$}
      \cell(1,12)\label{$u$}
    \end{hexboard}
    $}
  \]
  \caption{(a) A Hex board of size $4\times 12$. (b) and (c): two
    virtual connections for Black.}
  \label{fig-4xn}
\end{figure}
It is well-known that White has a first-player winning strategy when
$n\geq k$ and a second-player winning strategy when $n>k$
{\cite{Nash,Gardner}}. The question we would like to answer is: what is the
minimal number of black stones that must be placed on the board to
create a virtual connection between Black's edges?  Here, by ``virtual
connection'', we mean that Black will win the game even if White moves
first, or equivalently, that the game has combinatorial value
$\top$. Two examples of such virtual connections are shown in
Figures~\ref{fig-4xn}(b) and (c). Both of these virtual connections use
6 stones. We will later prove that this is the minimal number required
for a board of size $4\times 12$.

To see that the patterns of stones in Figures~\ref{fig-4xn}(b) and (c)
are indeed virtual connections, it suffices to consider a simple
pairing strategy. Whenever White moves in a cell that is labelled with
a letter, Black moves in the other cell with the same letter. It is
not hard to see that this strategy guarantees a Black connection. We
note that although both patterns use 6 stones, the pattern in
Figure~\ref{fig-4xn}(b) requires less ``space'': the unlabeled cells
are not required for Black's connection, and may as well be occupied
by White. On the other hand, experienced Hex players will have no
trouble seeing that if any of the labelled cells in
Figures~\ref{fig-4xn}(b) and (c) are occupied by White, Black no longer
has a virtual connection.  We also note that the patterns shown in
Figures~\ref{fig-4xn}(b) and (c) both generalize to larger board sizes;
obvious continuations of these patterns work for boards of size
$4\times 15$, $4\times 18$, $4\times 21$, and so on.

Note that we are not claiming that {\em every} virtual connection
admits a simple pairing strategy. We are merely saying that this is
the case for the ones shown in Figures~\ref{fig-4xn}(b) and (c).

So how would one go about proving minimality? More importantly, how
would one do this for, say, all boards of size $k\times n$, for fixed
$k$ and arbitrary $n$? Combinatorial game theory is the perfect tool
for this job. We will illustrate the method for $k=4$, but it also
works for other fixed values of $k$. (However, the computations get
exponentially harder when $k$ increases.)

\subsection{Open Regions}

We start by considering {\em open regions} of height $4$. By this, we
mean a region that includes the left, top, and bottom edges, but not
the right edge, as shown in Figure~\ref{fig-open-region}.
\begin{figure}
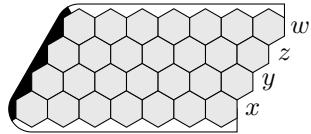

  \def\scale{0.6}
  \[
  \m{$
    \begin{hexboard}[scale=\scale]
      \rotation{30}
      \foreach\i in {1,...,7} {
        \foreach\j in {1,...,4} {
          \hex(\j,\i)
        }
      }
      \edge[\nw](1,1)(4,1)
      \edge[\sw\noobtusecorner](1,1)(1,7)
      \edge[\ne\noacutecorner](4,7)(4,1)
      \cell(1,8)\label{$x$}
      \cell(2,8)\label{$y$}
      \cell(3,8)\label{$z$}
      \cell(4,8)\label{$w$}
    \end{hexboard}
    $}
  \]
  \caption{An open region of height $4$.}
  \label{fig-open-region}
\end{figure}
For an open region that is completely filled with stones, the outcome
class is determined by the following information: whether White's
edges are connected to each other (outcome $\bot$), and if they are
not connected: which of the cells adjacent to the region (labelled
$x,y,z,w$ in Figure~\ref{fig-open-region}) are connected to the left
edge by a black chain, and which such cells are connected to each
other by a black chain. It turns out that for open regions of height
$4$, there are exactly 10 distinct outcomes, representatives of which
are shown in Figure~\ref{fig-open-region-outcomes}. For example, for
outcome $f$, the cells $x$, $z$, and $w$ are connected to the left
edge (and therefore to each other), but $y$ is not. For outcome $h$,
only $x$ is connected to the left edge, but $y$, $z$, and $w$ are
connected to each other. Figure~\ref{fig-open-region-outcomes} also
shows the partial order on these outcomes.
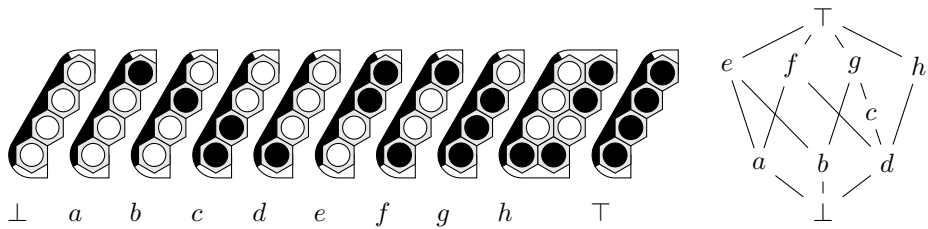
\begin{figure}
  \def\scale{0.6}
  \def\xpace{\hspace{-7.1mm}}
  \def\squad{\hspace{1ex}}
  \[
  \stack{\m{$
    \begin{hexboard}[scale=\scale]
      \rotation{30}
      \foreach\i in {1,...,1} {
        \foreach\j in {1,...,4} {
          \hex(\j,\i)
        }
      }
      \edge[\nw](1,1)(4,1)
      \edge[\sw\noobtusecorner](1,1)(1,1)
      \edge[\ne\noacutecorner](4,1)(4,1)
      \white(1,1)
      \white(2,1)
      \white(3,1)
      \white(4,1)
    \end{hexboard}
    $}}{\bot}
  \xpace
  \stack{\m{$
    \begin{hexboard}[scale=\scale]
      \rotation{30}
      \foreach\i in {1,...,1} {
        \foreach\j in {1,...,4} {
          \hex(\j,\i)
        }
      }
      \edge[\nw](1,1)(4,1)
      \edge[\sw\noobtusecorner](1,1)(1,1)
      \edge[\ne\noacutecorner](4,1)(4,1)
      \white(1,1)
      \white(2,1)
      \white(3,1)
      \black(4,1)
    \end{hexboard}
    $}}{a}
  \xpace
  \stack{\m{$
    \begin{hexboard}[scale=\scale]
      \rotation{30}
      \foreach\i in {1,...,1} {
        \foreach\j in {1,...,4} {
          \hex(\j,\i)
        }
      }
      \edge[\nw](1,1)(4,1)
      \edge[\sw\noobtusecorner](1,1)(1,1)
      \edge[\ne\noacutecorner](4,1)(4,1)
      \white(1,1)
      \white(2,1)
      \black(3,1)
      \white(4,1)
    \end{hexboard}
    $}}{b}
  \xpace
  \stack{\m{$
    \begin{hexboard}[scale=\scale]
      \rotation{30}
      \foreach\i in {1,...,1} {
        \foreach\j in {1,...,4} {
          \hex(\j,\i)
        }
      }
      \edge[\nw](1,1)(4,1)
      \edge[\sw\noobtusecorner](1,1)(1,1)
      \edge[\ne\noacutecorner](4,1)(4,1)
      \black(1,1)
      \black(2,1)
      \white(3,1)
      \white(4,1)
    \end{hexboard}
    $}}{c}
  \xpace
  \stack{\m{$
    \begin{hexboard}[scale=\scale]
      \rotation{30}
      \foreach\i in {1,...,1} {
        \foreach\j in {1,...,4} {
          \hex(\j,\i)
        }
      }
      \edge[\nw](1,1)(4,1)
      \edge[\sw\noobtusecorner](1,1)(1,1)
      \edge[\ne\noacutecorner](4,1)(4,1)
      \black(1,1)
      \white(2,1)
      \white(3,1)
      \white(4,1)
    \end{hexboard}
    $}}{d}
  \xpace
  \stack{\m{$
    \begin{hexboard}[scale=\scale]
      \rotation{30}
      \foreach\i in {1,...,1} {
        \foreach\j in {1,...,4} {
          \hex(\j,\i)
        }
      }
      \edge[\nw](1,1)(4,1)
      \edge[\sw\noobtusecorner](1,1)(1,1)
      \edge[\ne\noacutecorner](4,1)(4,1)
      \white(1,1)
      \white(2,1)
      \black(3,1)
      \black(4,1)
    \end{hexboard}
    $}}{e}
  \xpace
  \stack{\m{$
    \begin{hexboard}[scale=\scale]
      \rotation{30}
      \foreach\i in {1,...,1} {
        \foreach\j in {1,...,4} {
          \hex(\j,\i)
        }
      }
      \edge[\nw](1,1)(4,1)
      \edge[\sw\noobtusecorner](1,1)(1,1)
      \edge[\ne\noacutecorner](4,1)(4,1)
      \black(1,1)
      \white(2,1)
      \white(3,1)
      \black(4,1)
    \end{hexboard}
    $}}{f}
  \xpace
  \stack{\m{$
    \begin{hexboard}[scale=\scale]
      \rotation{30}
      \foreach\i in {1,...,1} {
        \foreach\j in {1,...,4} {
          \hex(\j,\i)
        }
      }
      \edge[\nw](1,1)(4,1)
      \edge[\sw\noobtusecorner](1,1)(1,1)
      \edge[\ne\noacutecorner](4,1)(4,1)
      \black(1,1)
      \black(2,1)
      \black(3,1)
      \white(4,1)
    \end{hexboard}
    $}}{g}
  \xpace
  \stack{\m{$
    \begin{hexboard}[scale=\scale]
      \rotation{30}
      \foreach\i in {1,...,2} {
        \foreach\j in {1,...,4} {
          \hex(\j,\i)
        }
      }
      \edge[\nw](1,1)(4,1)
      \edge[\sw\noobtusecorner](1,1)(1,2)
      \edge[\ne\noacutecorner](4,2)(4,1)
      \black(1,1)
      \white(2,1)
      \white(3,1)
      \white(4,1)
      \black(1,2)
      \white(2,2)
      \black(3,2)
      \black(4,2)
    \end{hexboard}
    $}}{h}
  \xpace
  \stack{\m{$
    \begin{hexboard}[scale=\scale]
      \rotation{30}
      \foreach\i in {1,...,1} {
        \foreach\j in {1,...,4} {
          \hex(\j,\i)
        }
      }
      \edge[\nw](1,1)(4,1)
      \edge[\sw\noobtusecorner](1,1)(1,1)
      \edge[\ne\noacutecorner](4,1)(4,1)
      \black(1,1)
      \black(2,1)
      \black(3,1)
      \black(4,1)
    \end{hexboard}
    $}}{\top}
  \squad
  \mp{0.36}{
    \begin{tikzpicture}[yscale=0.65,xscale=0.85]
      \node(bot) at (0,0) {$\bot$};
      \node(a) at (-1,1) {$a$};
      \node(b) at (0,1) {$b$};
      \node(d) at (1,1) {$d$};
      \node(c) at (0.75,2) {$c$};
      \node(e) at (-1.5,3) {$e$};
      \node(f) at (-0.5,3) {$f$};
      \node(g) at (0.5,3) {$g$};
      \node(h) at (1.5,3) {$h$};
      \node(top) at (0,4) {$\top$};
      \draw (bot) -- (a) -- (f) -- (top);
      \draw (bot) -- (b) -- (e) -- (top);
      \draw (bot) -- (d) -- (c) -- (g) -- (top);
      \draw (d) -- (h) -- (top);
      \draw (a) -- (e);
      \draw (b) -- (g);
      \draw (d) -- (f);
    \end{tikzpicture}
    }
  \]
  \caption{The atomic outcomes for open regions of height $4$.}
  \label{fig-open-region-outcomes}
\end{figure}

Armed with this outcome poset, we can now calculate the value of
positions in open regions. For example, Figure~\ref{fig-open-position}
shows such a position $P$ and its combinatorial value, which we write
$\val(P)$. The point of such combinatorial values is not that they
should be readable and understandable by humans, but that they can be
calculated and compared.
\begin{figure}
  \[
  P = \m{$
    \begin{hexboard}[scale=0.6]
      \rotation{30}
      \foreach\i in {1,...,7} {
        \foreach\j in {1,...,4} {
          \hex(\j,\i)
        }
      }
      \edge[\nw](1,1)(4,1)
      \edge[\sw\noobtusecorner](1,1)(1,7)
      \edge[\ne\noacutecorner](4,7)(4,1)
      \black(2,3)
      \black(3,4)
      \black(2,6)
      \black(3,7)
    \end{hexboard}
    $}
  \]
  \vspace{0.5ex}
  \[
  \begingroup
  \catcode`|=\active
  \let|\mid
  \begin{array}{r@{~}c@{~}l}
    \val(P) &=&
    \{\{\top|g\},\{\top|\{\{\top|h\},\{\top|e\}|\{h|\bot\},\{e|\bot\}\}\}|\{g|\{\{g|d\},\\
    && ~\,\{g|b\}|\{d|\bot\},\{b|\bot\}\}\}, \{\{\{\top|h\},\{\top|g\}|\{h|d\},\{g|d\}\},\\
    && ~\,\{\{\top|h\},\{\top|e\}|\{h|\bot\},\{e|\bot\}\},\{\{\top|g\},\{\top|e\}|\{g|b\},\{e|b\}\}\\
    && ~{}|\{\{h|d\}|\bot\},\{\{g|d\},\{g|b\}|\{d|\bot\},\{b|\bot\}\},\{\{e|b\}|\bot\}\}\}
  \end{array}
  \endgroup
  \]
  \caption{A position $P$ in an open region and its value $\val(P)$.}
  \label{fig-open-position}
\end{figure}

\subsection{Column-Wise Computation}

It is important that values of open regions can be computed
efficiently. Note that the most naive method is not efficient. The Hex
position in Figure~\ref{fig-open-position} has 24 empty cells. If we
were to calculate its value by brute force, we would first obtain a
game with 24 left and right options, each of which has 23 left
and right options and so on, to depth 24. This game would be of
astronomical size, containing roughly $10^{31}$ atomic positions.
Although its canonical form (shown in Figure~\ref{fig-open-position})
is much smaller, it would not be feasible to compute it in this way.

The key to computing values of open positions efficiently is the
following. Consider a {\em column}, which is a region of size $4\times
1$ with top and bottom edges (but no left and right edges), as shown
in Figure~\ref{fig-column}.
A column has exactly 16 distinct atomic outcomes, namely, all of the
ways of filling the column with black and white stones, as shown in
Figure~\ref{fig-column-outcomes}.
Given an open position $P$ of size $4\times n$ and a column $C$, we
write $P+C$ for the open position of size $4\times (n+1)$ obtained by
adding $C$ to the right of $P$. This naturally gives rise to a
function on outcome classes, i.e., for $P\in\s{\bot,a,\ldots,h,\top}$
and $C\in\s{k_0,\ldots,k_{15}}$, we define $f(P,C)$ as the outcome class of
$P+C$.  For example, we have $f(c,k_{10}) = d$, because
\begin{figure}
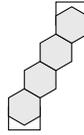

  \[
  \begin{hexboard}[scale=0.6]
    \rotation{30}
    \foreach\i in {1,...,1} {
      \foreach\j in {1,...,4} {
        \hex(\j,\i)
      }
    }
    \edge[\sw\noobtusecorner\noacutecorner](1,1)(1,1)
    \edge[\ne\noacutecorner\noobtusecorner](4,1)(4,1)
  \end{hexboard}
  \]
  \caption{A column of height $4$.}
  \label{fig-column}
\end{figure}
\begin{figure}
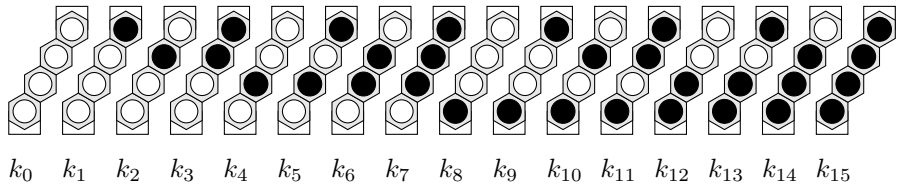

  \def\scale{0.6}
  \def\xpace{\hspace{-7mm}}
  \[
  \stack{\m{$
      \begin{hexboard}[scale=\scale]
        \rotation{30}
        \foreach\i in {1,...,1} {
          \foreach\j in {1,...,4} {
            \hex(\j,\i)
          }
        }
        \edge[\sw\noobtusecorner\noacutecorner](1,1)(1,1)
        \edge[\ne\noacutecorner\noobtusecorner](4,1)(4,1)
        \white(1,1)
        \white(2,1)
        \white(3,1)
        \white(4,1)
      \end{hexboard}
      $}}{k_{0}}
  \xpace
  \stack{\m{$
      \begin{hexboard}[scale=\scale]
        \rotation{30}
        \foreach\i in {1,...,1} {
          \foreach\j in {1,...,4} {
            \hex(\j,\i)
          }
        }
        \edge[\sw\noobtusecorner\noacutecorner](1,1)(1,1)
        \edge[\ne\noacutecorner\noobtusecorner](4,1)(4,1)
        \white(1,1)
        \white(2,1)
        \white(3,1)
        \black(4,1)
      \end{hexboard}
      $}}{k_{1}}
  \xpace
  \stack{\m{$
      \begin{hexboard}[scale=\scale]
        \rotation{30}
        \foreach\i in {1,...,1} {
          \foreach\j in {1,...,4} {
            \hex(\j,\i)
          }
        }
        \edge[\sw\noobtusecorner\noacutecorner](1,1)(1,1)
        \edge[\ne\noacutecorner\noobtusecorner](4,1)(4,1)
        \white(1,1)
        \white(2,1)
        \black(3,1)
        \white(4,1)
      \end{hexboard}
      $}}{k_{2}}
  \xpace
  \stack{\m{$
      \begin{hexboard}[scale=\scale]
        \rotation{30}
        \foreach\i in {1,...,1} {
          \foreach\j in {1,...,4} {
            \hex(\j,\i)
          }
        }
        \edge[\sw\noobtusecorner\noacutecorner](1,1)(1,1)
        \edge[\ne\noacutecorner\noobtusecorner](4,1)(4,1)
        \white(1,1)
        \white(2,1)
        \black(3,1)
        \black(4,1)
      \end{hexboard}
      $}}{k_{3}}
  \xpace
  \stack{\m{$
      \begin{hexboard}[scale=\scale]
        \rotation{30}
        \foreach\i in {1,...,1} {
          \foreach\j in {1,...,4} {
            \hex(\j,\i)
          }
        }
        \edge[\sw\noobtusecorner\noacutecorner](1,1)(1,1)
        \edge[\ne\noacutecorner\noobtusecorner](4,1)(4,1)
        \white(1,1)
        \black(2,1)
        \white(3,1)
        \white(4,1)
      \end{hexboard}
      $}}{k_{4}}
  \xpace
  \stack{\m{$
      \begin{hexboard}[scale=\scale]
        \rotation{30}
        \foreach\i in {1,...,1} {
          \foreach\j in {1,...,4} {
            \hex(\j,\i)
          }
        }
        \edge[\sw\noobtusecorner\noacutecorner](1,1)(1,1)
        \edge[\ne\noacutecorner\noobtusecorner](4,1)(4,1)
        \white(1,1)
        \black(2,1)
        \white(3,1)
        \black(4,1)
      \end{hexboard}
      $}}{k_{5}}
  \xpace
  \stack{\m{$
      \begin{hexboard}[scale=\scale]
        \rotation{30}
        \foreach\i in {1,...,1} {
          \foreach\j in {1,...,4} {
            \hex(\j,\i)
          }
        }
        \edge[\sw\noobtusecorner\noacutecorner](1,1)(1,1)
        \edge[\ne\noacutecorner\noobtusecorner](4,1)(4,1)
        \white(1,1)
        \black(2,1)
        \black(3,1)
        \white(4,1)
      \end{hexboard}
      $}}{k_{6}}
  \xpace
  \stack{\m{$
      \begin{hexboard}[scale=\scale]
        \rotation{30}
        \foreach\i in {1,...,1} {
          \foreach\j in {1,...,4} {
            \hex(\j,\i)
          }
        }
        \edge[\sw\noobtusecorner\noacutecorner](1,1)(1,1)
        \edge[\ne\noacutecorner\noobtusecorner](4,1)(4,1)
        \white(1,1)
        \black(2,1)
        \black(3,1)
        \black(4,1)
      \end{hexboard}
      $}}{k_{7}}
  \xpace
  \stack{\m{$
      \begin{hexboard}[scale=\scale]
        \rotation{30}
        \foreach\i in {1,...,1} {
          \foreach\j in {1,...,4} {
            \hex(\j,\i)
          }
        }
        \edge[\sw\noobtusecorner\noacutecorner](1,1)(1,1)
        \edge[\ne\noacutecorner\noobtusecorner](4,1)(4,1)
        \black(1,1)
        \white(2,1)
        \white(3,1)
        \white(4,1)
      \end{hexboard}
      $}}{k_{8}}
  \xpace
  \stack{\m{$
      \begin{hexboard}[scale=\scale]
        \rotation{30}
        \foreach\i in {1,...,1} {
          \foreach\j in {1,...,4} {
            \hex(\j,\i)
          }
        }
        \edge[\sw\noobtusecorner\noacutecorner](1,1)(1,1)
        \edge[\ne\noacutecorner\noobtusecorner](4,1)(4,1)
        \black(1,1)
        \white(2,1)
        \white(3,1)
        \black(4,1)
      \end{hexboard}
      $}}{k_{9}}
  \xpace
  \stack{\m{$
      \begin{hexboard}[scale=\scale]
        \rotation{30}
        \foreach\i in {1,...,1} {
          \foreach\j in {1,...,4} {
            \hex(\j,\i)
          }
        }
        \edge[\sw\noobtusecorner\noacutecorner](1,1)(1,1)
        \edge[\ne\noacutecorner\noobtusecorner](4,1)(4,1)
        \black(1,1)
        \white(2,1)
        \black(3,1)
        \white(4,1)
      \end{hexboard}
      $}}{k_{10}}
  \xpace
  \stack{\m{$
      \begin{hexboard}[scale=\scale]
        \rotation{30}
        \foreach\i in {1,...,1} {
          \foreach\j in {1,...,4} {
            \hex(\j,\i)
          }
        }
        \edge[\sw\noobtusecorner\noacutecorner](1,1)(1,1)
        \edge[\ne\noacutecorner\noobtusecorner](4,1)(4,1)
        \black(1,1)
        \white(2,1)
        \black(3,1)
        \black(4,1)
      \end{hexboard}
      $}}{k_{11}}
  \xpace
  \stack{\m{$
      \begin{hexboard}[scale=\scale]
        \rotation{30}
        \foreach\i in {1,...,1} {
          \foreach\j in {1,...,4} {
            \hex(\j,\i)
          }
        }
        \edge[\sw\noobtusecorner\noacutecorner](1,1)(1,1)
        \edge[\ne\noacutecorner\noobtusecorner](4,1)(4,1)
        \black(1,1)
        \black(2,1)
        \white(3,1)
        \white(4,1)
      \end{hexboard}
      $}}{k_{12}}
  \xpace
  \stack{\m{$
      \begin{hexboard}[scale=\scale]
        \rotation{30}
        \foreach\i in {1,...,1} {
          \foreach\j in {1,...,4} {
            \hex(\j,\i)
          }
        }
        \edge[\sw\noobtusecorner\noacutecorner](1,1)(1,1)
        \edge[\ne\noacutecorner\noobtusecorner](4,1)(4,1)
        \black(1,1)
        \black(2,1)
        \white(3,1)
        \black(4,1)
      \end{hexboard}
      $}}{k_{13}}
  \xpace
  \stack{\m{$
      \begin{hexboard}[scale=\scale]
        \rotation{30}
        \foreach\i in {1,...,1} {
          \foreach\j in {1,...,4} {
            \hex(\j,\i)
          }
        }
        \edge[\sw\noobtusecorner\noacutecorner](1,1)(1,1)
        \edge[\ne\noacutecorner\noobtusecorner](4,1)(4,1)
        \black(1,1)
        \black(2,1)
        \black(3,1)
        \white(4,1)
      \end{hexboard}
      $}}{k_{14}}
  \xpace
  \stack{\m{$
      \begin{hexboard}[scale=\scale]
        \rotation{30}
        \foreach\i in {1,...,1} {
          \foreach\j in {1,...,4} {
            \hex(\j,\i)
          }
        }
        \edge[\sw\noobtusecorner\noacutecorner](1,1)(1,1)
        \edge[\ne\noacutecorner\noobtusecorner](4,1)(4,1)
        \black(1,1)
        \black(2,1)
        \black(3,1)
        \black(4,1)
      \end{hexboard}
      $}}{k_{15}}
  \]
  \caption{The atomic outcomes for columns of height $4$.}
  \label{fig-column-outcomes}
\end{figure}
\[
\def\scale{0.6}
\m{$
  \begin{hexboard}[scale=\scale]
    \rotation{30} \foreach\i in {1,...,1} { \foreach\j in {1,...,4} {
        \hex(\j,\i) } } \edge[\nw](1,1)(4,1)
    \edge[\sw\noobtusecorner](1,1)(1,1)
    \edge[\ne\noacutecorner](4,1)(4,1) \black(1,1) \black(2,1)
    \white(3,1) \white(4,1)
  \end{hexboard}
  $} +\hspace{-1.5ex} \m{$
  \begin{hexboard}[scale=\scale]
    \rotation{30} \foreach\i in {1,...,1} { \foreach\j in {1,...,4} {
        \hex(\j,\i) } }
    \edge[\sw\noobtusecorner\noacutecorner](1,1)(1,1)
    \edge[\ne\noacutecorner\noobtusecorner](4,1)(4,1) \black(1,1)
    \white(2,1) \black(3,1) \white(4,1)
  \end{hexboard}
  $} =\hspace{-1ex} \m{$
  \begin{hexboard}[scale=\scale]
    \rotation{30} \foreach\i in {1,...,2} { \foreach\j in {1,...,4} {
        \hex(\j,\i) } } \edge[\nw](1,1)(4,1)
    \edge[\sw\noobtusecorner](1,1)(1,2)
    \edge[\ne\noacutecorner](4,2)(4,1) \black(1,1) \black(2,1)
    \white(3,1) \white(4,1) \black(1,2) \white(2,2) \black(3,2)
    \white(4,2)
  \end{hexboard}
  $} \eq\hspace{-1ex} \m{$
  \begin{hexboard}[scale=\scale]
    \rotation{30} \foreach\i in {1,...,1} { \foreach\j in {1,...,4} {
        \hex(\j,\i) } } \edge[\nw](1,1)(4,1)
    \edge[\sw\noobtusecorner](1,1)(1,1)
    \edge[\ne\noacutecorner](4,1)(4,1) \black(1,1) \white(2,1)
    \white(3,1) \white(4,1)
  \end{hexboard}
  $}
\]
This function $f$ on atomic games can then be extended to non-atomic
games as in Section~\ref{ssec-map}. Namely, if $P$ and $C$ are (not
necessarily atomic) positions for an open region and a column, then
$\val(P+C)=\val(P)\pf\val(C)$. From now on, by a slight abuse of
notation, we just write $+$ instead of $\pf$, so that we can use the
notation $P+C$ whether $P$ and $C$ are positions, outcomes, or values.
We occasionally confuse positions with their values or vice versa.

We now have a reasonably efficient method for computing values of open
positions.  Namely, we can start from a position of size $4\times 0$,
then add one column, reduce to canonical form, add the next column,
reduce to canonical form, and so on.

\subsection{Best Patterns}

Now that we have a good method for calculating the value of open
$4\times n$ positions, we need one more ingredient before we can prove
results on the minimality of virtual connections on $4\times
n$-boards.  We certainly cannot do this by calculating the values of
all possible arrangements of black stones. The final key idea is that
we do not need to calculate the values of all such arrangements. It
suffices to calculate the values of the {\em best} arrangements. For
brevity, we will use the term {\em pattern} for a position in an open
region that uses only black stones.

\begin{definition}
  A pattern $P$ is {\em unacceptable} if $\val(P)\tri\bot$. In this
  case, White has a winning move in $P$, so clearly $P$ can never be
  part of a virtual connection for Black.
\end{definition}

\begin{definition}
  To each acceptable pattern $P$, we associate a triple $\triple{P} =
  (G, s, n)$, where:
  \begin{itemize}
  \item $G = \val(P)$ is the combinatorial value of $P$,
  \item $s$ is the number of black stones used, and
  \item $n$ is the {\em width} of $P$, i.e., the number of columns in $P$.
  \end{itemize}
  For convenience, we also define $\triple{P}=(\bot,\infty,0)$ if $P$
  is an unacceptable pattern. For example, for the pattern $P$ from
  Figure~\ref{fig-open-position}, we have $\triple{P} = (G, 4, 7)$,
  where $G$ is the value shown in Figure~\ref{fig-open-position}.  We
  define an ordering on such triples by $(G,s,n)\leq (G',s',n')$ if
  $G\leq G'$ and $s'\leq s$ and $n\leq n'$. We also write $P\leq P'$
  if $\triple{P}\leq\triple{P'}$, and we say that $P'$ is {\em at
    least as good as} $P$. When $P\leq P'$ and $P'\leq P$, we say that
  $P$ and $P'$ are {\em equivalent}. Note that all unacceptable
  patterns are equivalent to each other, and are worse than all
  acceptable patterns.
\end{definition}

Informally, the ordering on triples is justified as follows. If two
patterns $P$ and $P'$ cover the same distance with the same number of
black stones, but $P'$ has a higher combinatorial value than $P$, then
$P'$ is clearly better, in the sense that everything that can be
achieved with $P$ can also be achieved with $P'$. In other words, if
extending $P$ with some additional columns results in a virtual
connection, then so does adding the same columns to $P'$. Next,
if $P$ and $P'$ have the same combinatorial value and cover the same
distance, but $P'$ uses fewer stones, then $P'$ is the better of the
two. Finally, if $P$ and $P'$ have the same combinatorial value and
use the same number of stones, but $P'$ covers a greater distance,
then again $P'$ is better.

We note that if $P=P'+C$, then $\triple{P}$ is completely determined
by $\triple{P'}$ and $C$. In particular, if $P$ and $Q$ are
equivalent, then so are $P+C$ and $Q+C$.

We can now systematically enumerate the {\em best} patterns of each
width $n$, i.e., patterns that are maximal with respect to the
preorder $\leq$. When several patterns are equivalent, we only keep
one of them, selected arbitrarily. The enumeration proceeds as
follows: For $n=0$, there is only one pattern, and it is
automatically best. Now suppose we have the list $P_1,\ldots,P_m$ of
the best patterns of width $n$. We generate a list of patterns of
width $n+1$ by adding each of the 16 possible column patterns (using blank
cells and black stones) to each of $P_1,\ldots,P_m$. Of these, we keep
only the best ones, throwing out the rest.

Perhaps surprisingly, the number of best patterns does not grow
exponentially as the width $n$ increases; it remains bounded. In fact,
in an appropriate sense, the sequence eventually repeats.
Figure~\ref{fig-best} shows all best patterns (up to equivalence) of
width 4, 5, 6, and 7.
\begin{figure}
  \def\scale{0.35}
  \def\stack#1#2{#1\,\mbox{\scriptsize$#2$}}
  \def\sep{\\\\[-0.5ex]}
  \def\xpace{\hspace{-3mm}}
  \[
  \begin{array}[t]{l@{}}
    \stack{\m{$
    \begin{hexboard}[scale=\scale]
      \rotation{30}
      \foreach\i in {1,...,4} {
        \foreach\j in {1,...,4} {
          \hex(\j,\i)
        }
      }
      \edge[\nw](1,1)(4,1)
      \edge[\sw\noobtusecorner](1,1)(1,4)
      \edge[\ne\noacutecorner](4,4)(4,1)
      \black(2,3)
    \end{hexboard}
    $}}{(G_{1},1,4)} 
    \sep
    \stack{\m{$
    \begin{hexboard}[scale=\scale]
      \rotation{30}
      \foreach\i in {1,...,4} {
        \foreach\j in {1,...,4} {
          \hex(\j,\i)
        }
      }
      \edge[\nw](1,1)(4,1)
      \edge[\sw\noobtusecorner](1,1)(1,4)
      \edge[\ne\noacutecorner](4,4)(4,1)
      \black(3,2)
    \end{hexboard}
    $}}{(G_{2},1,4)} 
    \sep
    \stack{\m{$
    \begin{hexboard}[scale=\scale]
      \rotation{30}
      \foreach\i in {1,...,4} {
        \foreach\j in {1,...,4} {
          \hex(\j,\i)
        }
      }
      \edge[\nw](1,1)(4,1)
      \edge[\sw\noobtusecorner](1,1)(1,4)
      \edge[\ne\noacutecorner](4,4)(4,1)
      \black(2,3)
      \black(3,3)
    \end{hexboard}
    $}}{(G_{3},2,4)} 
    \sep
    \stack{\m{$
    \begin{hexboard}[scale=\scale]
      \rotation{30}
      \foreach\i in {1,...,4} {
        \foreach\j in {1,...,4} {
          \hex(\j,\i)
        }
      }
      \edge[\nw](1,1)(4,1)
      \edge[\sw\noobtusecorner](1,1)(1,4)
      \edge[\ne\noacutecorner](4,4)(4,1)
      \black(2,3)
      \black(3,4)
    \end{hexboard}
    $}}{(G_{4},2,4)} 
    \sep
    \stack{\m{$
    \begin{hexboard}[scale=\scale]
      \rotation{30}
      \foreach\i in {1,...,4} {
        \foreach\j in {1,...,4} {
          \hex(\j,\i)
        }
      }
      \edge[\nw](1,1)(4,1)
      \edge[\sw\noobtusecorner](1,1)(1,4)
      \edge[\ne\noacutecorner](4,4)(4,1)
      \black(2,3)
      \black(4,4)
    \end{hexboard}
    $}}{(G_{5},2,4)} 
    \sep
    \stack{\m{$
    \begin{hexboard}[scale=\scale]
      \rotation{30}
      \foreach\i in {1,...,4} {
        \foreach\j in {1,...,4} {
          \hex(\j,\i)
        }
      }
      \edge[\nw](1,1)(4,1)
      \edge[\sw\noobtusecorner](1,1)(1,4)
      \edge[\ne\noacutecorner](4,4)(4,1)
      \black(3,2)
      \black(1,4)
    \end{hexboard}
    $}}{(G_{6},2,4)} 
    \sep
    \stack{\m{$
    \begin{hexboard}[scale=\scale]
      \rotation{30}
      \foreach\i in {1,...,4} {
        \foreach\j in {1,...,4} {
          \hex(\j,\i)
        }
      }
      \edge[\nw](1,1)(4,1)
      \edge[\sw\noobtusecorner](1,1)(1,4)
      \edge[\ne\noacutecorner](4,4)(4,1)
      \black(1,4)
      \black(2,4)
    \end{hexboard}
    $}}{(G_{7},2,4)} 
    \sep
    \stack{\m{$
    \begin{hexboard}[scale=\scale]
      \rotation{30}
      \foreach\i in {1,...,4} {
        \foreach\j in {1,...,4} {
          \hex(\j,\i)
        }
      }
      \edge[\nw](1,1)(4,1)
      \edge[\sw\noobtusecorner](1,1)(1,4)
      \edge[\ne\noacutecorner](4,4)(4,1)
      \black(3,2)
      \black(4,4)
    \end{hexboard}
    $}}{(G_{8},2,4)} 
    \sep
    \stack{\m{$
    \begin{hexboard}[scale=\scale]
      \rotation{30}
      \foreach\i in {1,...,4} {
        \foreach\j in {1,...,4} {
          \hex(\j,\i)
        }
      }
      \edge[\nw](1,1)(4,1)
      \edge[\sw\noobtusecorner](1,1)(1,4)
      \edge[\ne\noacutecorner](4,4)(4,1)
      \black(2,3)
      \black(3,3)
      \black(4,4)
    \end{hexboard}
    $}}{(G_{9},3,4)} 
    \sep
    \stack{\m{$
    \begin{hexboard}[scale=\scale]
      \rotation{30}
      \foreach\i in {1,...,4} {
        \foreach\j in {1,...,4} {
          \hex(\j,\i)
        }
      }
      \edge[\nw](1,1)(4,1)
      \edge[\sw\noobtusecorner](1,1)(1,4)
      \edge[\ne\noacutecorner](4,4)(4,1)
      \black(2,3)
      \black(3,4)
      \black(4,4)
    \end{hexboard}
    $}}{(G_{10},3,4)} 
    \sep
    \stack{\m{$
    \begin{hexboard}[scale=\scale]
      \rotation{30}
      \foreach\i in {1,...,4} {
        \foreach\j in {1,...,4} {
          \hex(\j,\i)
        }
      }
      \edge[\nw](1,1)(4,1)
      \edge[\sw\noobtusecorner](1,1)(1,4)
      \edge[\ne\noacutecorner](4,4)(4,1)
      \black(3,2)
      \black(1,4)
      \black(4,4)
    \end{hexboard}
    $}}{(G_{11},3,4)} 
    \sep
    \stack{\m{$
    \begin{hexboard}[scale=\scale]
      \rotation{30}
      \foreach\i in {1,...,4} {
        \foreach\j in {1,...,4} {
          \hex(\j,\i)
        }
      }
      \edge[\nw](1,1)(4,1)
      \edge[\sw\noobtusecorner](1,1)(1,4)
      \edge[\ne\noacutecorner](4,4)(4,1)
      \black(1,4)
      \black(2,4)
      \black(3,4)
    \end{hexboard}
    $}}{(G_{12},3,4)} 
    \sep
    \stack{\m{$
    \begin{hexboard}[scale=\scale]
      \rotation{30}
      \foreach\i in {1,...,4} {
        \foreach\j in {1,...,4} {
          \hex(\j,\i)
        }
      }
      \edge[\nw](1,1)(4,1)
      \edge[\sw\noobtusecorner](1,1)(1,4)
      \edge[\ne\noacutecorner](4,4)(4,1)
      \black(1,4)
      \black(2,4)
      \black(3,4)
      \black(4,4)
    \end{hexboard}
    $}}{(\top,4,4)} 
  \end{array}
  \xpace
  \begin{array}[t]{l}
  \stack{\m{$
    \begin{hexboard}[scale=\scale]
      \rotation{30}
      \foreach\i in {1,...,5} {
        \foreach\j in {1,...,4} {
          \hex(\j,\i)
        }
      }
      \edge[\nw](1,1)(4,1)
      \edge[\sw\noobtusecorner](1,1)(1,5)
      \edge[\ne\noacutecorner](4,5)(4,1)
      \black(1,4)
      \black(2,4)
    \end{hexboard}
    $}}{(G_{1},2,5)} 
  \sep
  \stack{\m{$
    \begin{hexboard}[scale=\scale]
      \rotation{30}
      \foreach\i in {1,...,5} {
        \foreach\j in {1,...,4} {
          \hex(\j,\i)
        }
      }
      \edge[\nw](1,1)(4,1)
      \edge[\sw\noobtusecorner](1,1)(1,5)
      \edge[\ne\noacutecorner](4,5)(4,1)
      \black(1,4)
      \black(3,2)
    \end{hexboard}
    $}}{(G_{13},2,5)} 
  \sep
  \stack{\m{$
    \begin{hexboard}[scale=\scale]
      \rotation{30}
      \foreach\i in {1,...,5} {
        \foreach\j in {1,...,4} {
          \hex(\j,\i)
        }
      }
      \edge[\nw](1,1)(4,1)
      \edge[\sw\noobtusecorner](1,1)(1,5)
      \edge[\ne\noacutecorner](4,5)(4,1)
      \black(2,3)
      \black(3,4)
    \end{hexboard}
    $}}{(G_{14},2,5)} 
  \sep
    \stack{\m{$
    \begin{hexboard}[scale=\scale]
      \rotation{30}
      \foreach\i in {1,...,5} {
        \foreach\j in {1,...,4} {
          \hex(\j,\i)
        }
      }
      \edge[\nw](1,1)(4,1)
      \edge[\sw\noobtusecorner](1,1)(1,5)
      \edge[\ne\noacutecorner](4,5)(4,1)
      \black(3,2)
      \black(2,5)
    \end{hexboard}
    $}}{(G_{15},2,5)} 
  \sep
    \stack{\m{$
    \begin{hexboard}[scale=\scale]
      \rotation{30}
      \foreach\i in {1,...,5} {
        \foreach\j in {1,...,4} {
          \hex(\j,\i)
        }
      }
      \edge[\nw](1,1)(4,1)
      \edge[\sw\noobtusecorner](1,1)(1,5)
      \edge[\ne\noacutecorner](4,5)(4,1)
      \black(3,2)
      \black(3,5)
    \end{hexboard}
    $}}{(G_{16},2,5)} 
  \sep
    \stack{\m{$
    \begin{hexboard}[scale=\scale]
      \rotation{30}
      \foreach\i in {1,...,5} {
        \foreach\j in {1,...,4} {
          \hex(\j,\i)
        }
      }
      \edge[\nw](1,1)(4,1)
      \edge[\sw\noobtusecorner](1,1)(1,5)
      \edge[\ne\noacutecorner](4,5)(4,1)
      \black(3,2)
      \black(4,5)
    \end{hexboard}
    $}}{(G_{17},2,5)} 
  \sep
    \stack{\m{$
    \begin{hexboard}[scale=\scale]
      \rotation{30}
      \foreach\i in {1,...,5} {
        \foreach\j in {1,...,4} {
          \hex(\j,\i)
        }
      }
      \edge[\nw](1,1)(4,1)
      \edge[\sw\noobtusecorner](1,1)(1,5)
      \edge[\ne\noacutecorner](4,5)(4,1)
      \black(1,5)
      \black(2,3)
    \end{hexboard}
    $}}{(G_{18},2,5)} 
  \sep
    \stack{\m{$
    \begin{hexboard}[scale=\scale]
      \rotation{30}
      \foreach\i in {1,...,5} {
        \foreach\j in {1,...,4} {
          \hex(\j,\i)
        }
      }
      \edge[\nw](1,1)(4,1)
      \edge[\sw\noobtusecorner](1,1)(1,5)
      \edge[\ne\noacutecorner](4,5)(4,1)
      \black(2,3)
      \black(3,4)
      \black(4,5)
    \end{hexboard}
    $}}{(G_{19},3,5)} 
  \sep
    \stack{\m{$
    \begin{hexboard}[scale=\scale]
      \rotation{30}
      \foreach\i in {1,...,5} {
        \foreach\j in {1,...,4} {
          \hex(\j,\i)
        }
      }
      \edge[\nw](1,1)(4,1)
      \edge[\sw\noobtusecorner](1,1)(1,5)
      \edge[\ne\noacutecorner](4,5)(4,1)
      \black(2,5)
      \black(3,2)
      \black(3,5)
    \end{hexboard}
    $}}{(G_{12},3,5)} 
  \sep
    \stack{\m{$
    \begin{hexboard}[scale=\scale]
      \rotation{30}
      \foreach\i in {1,...,5} {
        \foreach\j in {1,...,4} {
          \hex(\j,\i)
        }
      }
      \edge[\nw](1,1)(4,1)
      \edge[\sw\noobtusecorner](1,1)(1,5)
      \edge[\ne\noacutecorner](4,5)(4,1)
      \black(3,2)
      \black(3,5)
      \black(4,5)
    \end{hexboard}
    $}}{(G_{20},3,5)} 
  \sep
    \stack{\m{$
    \begin{hexboard}[scale=\scale]
      \rotation{30}
      \foreach\i in {1,...,5} {
        \foreach\j in {1,...,4} {
          \hex(\j,\i)
        }
      }
      \edge[\nw](1,1)(4,1)
      \edge[\sw\noobtusecorner](1,1)(1,5)
      \edge[\ne\noacutecorner](4,5)(4,1)
      \black(2,5)
      \black(3,2)
      \black(3,5)
      \black(4,5)
    \end{hexboard}
    $}}{(\top,4,5)} 
  \end{array}
  \xpace
  \begin{array}[t]{l}
    \stack{\m{$
    \begin{hexboard}[scale=\scale]
      \rotation{30}
      \foreach\i in {1,...,6} {
        \foreach\j in {1,...,4} {
          \hex(\j,\i)
        }
      }
      \edge[\nw](1,1)(4,1)
      \edge[\sw\noobtusecorner](1,1)(1,6)
      \edge[\ne\noacutecorner](4,6)(4,1)
      \black(2,5)
      \black(3,2)
    \end{hexboard}
    $}}{(G_{21},2,6)} 
  \sep
    \stack{\m{$
    \begin{hexboard}[scale=\scale]
      \rotation{30}
      \foreach\i in {1,...,6} {
        \foreach\j in {1,...,4} {
          \hex(\j,\i)
        }
      }
      \edge[\nw](1,1)(4,1)
      \edge[\sw\noobtusecorner](1,1)(1,6)
      \edge[\ne\noacutecorner](4,6)(4,1)
      \black(2,3)
      \black(3,4)
    \end{hexboard}
    $}}{(G_{22},2,6)} 
  \sep
    \stack{\m{$
    \begin{hexboard}[scale=\scale]
      \rotation{30}
      \foreach\i in {1,...,6} {
        \foreach\j in {1,...,4} {
          \hex(\j,\i)
        }
      }
      \edge[\nw](1,1)(4,1)
      \edge[\sw\noobtusecorner](1,1)(1,6)
      \edge[\ne\noacutecorner](4,6)(4,1)
      \black(2,5)
      \black(3,2)
      \black(3,5)
    \end{hexboard}
    $}}{(G_{3},3,6)} 
  \sep
    \stack{\m{$
    \begin{hexboard}[scale=\scale]
      \rotation{30}
      \foreach\i in {1,...,6} {
        \foreach\j in {1,...,4} {
          \hex(\j,\i)
        }
      }
      \edge[\nw](1,1)(4,1)
      \edge[\sw\noobtusecorner](1,1)(1,6)
      \edge[\ne\noacutecorner](4,6)(4,1)
      \black(2,3)
      \black(3,4)
      \black(4,5)
    \end{hexboard}
    $}}{(G_{23},3,6)} 
    \sep
    \stack{\m{$
    \begin{hexboard}[scale=\scale]
      \rotation{30}
      \foreach\i in {1,...,6} {
        \foreach\j in {1,...,4} {
          \hex(\j,\i)
        }
      }
      \edge[\nw](1,1)(4,1)
      \edge[\sw\noobtusecorner](1,1)(1,6)
      \edge[\ne\noacutecorner](4,6)(4,1)
      \black(2,5)
      \black(3,2)
      \black(3,6)
    \end{hexboard}
    $}}{(G_{24},3,6)} 
  \sep
    \stack{\m{$
    \begin{hexboard}[scale=\scale]
      \rotation{30}
      \foreach\i in {1,...,6} {
        \foreach\j in {1,...,4} {
          \hex(\j,\i)
        }
      }
      \edge[\nw](1,1)(4,1)
      \edge[\sw\noobtusecorner](1,1)(1,6)
      \edge[\ne\noacutecorner](4,6)(4,1)
      \black(2,5)
      \black(3,2)
      \black(4,6)
    \end{hexboard}
    $}}{(G_{25},3,6)} 
  \sep
    \stack{\m{$
    \begin{hexboard}[scale=\scale]
      \rotation{30}
      \foreach\i in {1,...,6} {
        \foreach\j in {1,...,4} {
          \hex(\j,\i)
        }
      }
      \edge[\nw](1,1)(4,1)
      \edge[\sw\noobtusecorner](1,1)(1,6)
      \edge[\ne\noacutecorner](4,6)(4,1)
      \black(2,3)
      \black(2,6)
      \black(3,4)
    \end{hexboard}
    $}}{(G_{7},3,6)} 
  \sep
    \stack{\m{$
    \begin{hexboard}[scale=\scale]
      \rotation{30}
      \foreach\i in {1,...,6} {
        \foreach\j in {1,...,4} {
          \hex(\j,\i)
        }
      }
      \edge[\nw](1,1)(4,1)
      \edge[\sw\noobtusecorner](1,1)(1,6)
      \edge[\ne\noacutecorner](4,6)(4,1)
      \black(2,3)
      \black(3,4)
      \black(4,6)
    \end{hexboard}
    $}}{(G_{26},3,6)} 
    \sep
    \stack{\m{$
    \begin{hexboard}[scale=\scale]
      \rotation{30}
      \foreach\i in {1,...,6} {
        \foreach\j in {1,...,4} {
          \hex(\j,\i)
        }
      }
      \edge[\nw](1,1)(4,1)
      \edge[\sw\noobtusecorner](1,1)(1,6)
      \edge[\ne\noacutecorner](4,6)(4,1)
      \black(2,5)
      \black(3,2)
      \black(3,5)
      \black(4,6)
    \end{hexboard}
    $}}{(G_{9},4,6)} 
  \sep
    \stack{\m{$
    \begin{hexboard}[scale=\scale]
      \rotation{30}
      \foreach\i in {1,...,6} {
        \foreach\j in {1,...,4} {
          \hex(\j,\i)
        }
      }
      \edge[\nw](1,1)(4,1)
      \edge[\sw\noobtusecorner](1,1)(1,6)
      \edge[\ne\noacutecorner](4,6)(4,1)
      \black(2,3)
      \black(3,4)
      \black(4,5)
      \black(4,6)
    \end{hexboard}
    $}}{(G_{27},4,6)} 
  \sep
    \stack{\m{$
    \begin{hexboard}[scale=\scale]
      \rotation{30}
      \foreach\i in {1,...,6} {
        \foreach\j in {1,...,4} {
          \hex(\j,\i)
        }
      }
      \edge[\nw](1,1)(4,1)
      \edge[\sw\noobtusecorner](1,1)(1,6)
      \edge[\ne\noacutecorner](4,6)(4,1)
      \black(2,5)
      \black(3,2)
      \black(3,6)
      \black(4,6)
    \end{hexboard}
    $}}{(G_{28},4,6)} 
  \sep
    \stack{\m{$
    \begin{hexboard}[scale=\scale]
      \rotation{30}
      \foreach\i in {1,...,6} {
        \foreach\j in {1,...,4} {
          \hex(\j,\i)
        }
      }
      \edge[\nw](1,1)(4,1)
      \edge[\sw\noobtusecorner](1,1)(1,6)
      \edge[\ne\noacutecorner](4,6)(4,1)
      \black(2,3)
      \black(2,6)
      \black(3,4)
      \black(3,6)
    \end{hexboard}
    $}}{(G_{12},4,6)} 
    \sep
    \stack{\m{$
    \begin{hexboard}[scale=\scale]
      \rotation{30}
      \foreach\i in {1,...,6} {
        \foreach\j in {1,...,4} {
          \hex(\j,\i)
        }
      }
      \edge[\nw](1,1)(4,1)
      \edge[\sw\noobtusecorner](1,1)(1,6)
      \edge[\ne\noacutecorner](4,6)(4,1)
      \black(2,3)
      \black(2,6)
      \black(3,4)
      \black(3,6)
      \black(4,6)
    \end{hexboard}
    $}}{(\top,5,6)} 
  \end{array}
  \xpace
  \begin{array}[t]{l}
    \stack{\m{$
    \begin{hexboard}[scale=\scale]
      \rotation{30}
      \foreach\i in {1,...,7} {
        \foreach\j in {1,...,4} {
          \hex(\j,\i)
        }
      }
      \edge[\nw](1,1)(4,1)
      \edge[\sw\noobtusecorner](1,1)(1,7)
      \edge[\ne\noacutecorner](4,7)(4,1)
      \black(2,3)
      \black(3,4)
      \black(2,6)
    \end{hexboard}
    $}}{(G_{1},3,7)} 
    \sep
    \stack{\m{$
    \begin{hexboard}[scale=\scale]
      \rotation{30}
      \foreach\i in {1,...,7} {
        \foreach\j in {1,...,4} {
          \hex(\j,\i)
        }
      }
      \edge[\nw](1,1)(4,1)
      \edge[\sw\noobtusecorner](1,1)(1,7)
      \edge[\ne\noacutecorner](4,7)(4,1)
      \black(3,2)
      \black(2,5)
      \black(3,5)
    \end{hexboard}
    $}}{(G_{2},3,7)} 
    \sep
    \stack{\m{$
    \begin{hexboard}[scale=\scale]
      \rotation{30}
      \foreach\i in {1,...,7} {
        \foreach\j in {1,...,4} {
          \hex(\j,\i)
        }
      }
      \edge[\nw](1,1)(4,1)
      \edge[\sw\noobtusecorner](1,1)(1,7)
      \edge[\ne\noacutecorner](4,7)(4,1)
      \black(2,3)
      \black(3,4)
      \black(2,6)
      \black(3,6)
    \end{hexboard}
    $}}{(G_{3},4,7)} 
    \sep
    \stack{\m{$
    \begin{hexboard}[scale=\scale]
      \rotation{30}
      \foreach\i in {1,...,7} {
        \foreach\j in {1,...,4} {
          \hex(\j,\i)
        }
      }
      \edge[\nw](1,1)(4,1)
      \edge[\sw\noobtusecorner](1,1)(1,7)
      \edge[\ne\noacutecorner](4,7)(4,1)
      \black(2,3)
      \black(3,4)
      \black(2,6)
      \black(3,7)
    \end{hexboard}
    $}}{(G_{4},4,7)} 
    \sep
    \stack{\m{$
    \begin{hexboard}[scale=\scale]
      \rotation{30}
      \foreach\i in {1,...,7} {
        \foreach\j in {1,...,4} {
          \hex(\j,\i)
        }
      }
      \edge[\nw](1,1)(4,1)
      \edge[\sw\noobtusecorner](1,1)(1,7)
      \edge[\ne\noacutecorner](4,7)(4,1)
      \black(2,3)
      \black(3,4)
      \black(2,6)
      \black(4,7)
    \end{hexboard}
    $}}{(G_{5},4,7)} 
    \sep
    \stack{\m{$
    \begin{hexboard}[scale=\scale]
      \rotation{30}
      \foreach\i in {1,...,7} {
        \foreach\j in {1,...,4} {
          \hex(\j,\i)
        }
      }
      \edge[\nw](1,1)(4,1)
      \edge[\sw\noobtusecorner](1,1)(1,7)
      \edge[\ne\noacutecorner](4,7)(4,1)
      \black(3,2)
      \black(2,5)
      \black(3,5)
      \black(1,7)
    \end{hexboard}
    $}}{(G_{6},4,7)} 
    \sep
    \stack{\m{$
    \begin{hexboard}[scale=\scale]
      \rotation{30}
      \foreach\i in {1,...,7} {
        \foreach\j in {1,...,4} {
          \hex(\j,\i)
        }
      }
      \edge[\nw](1,1)(4,1)
      \edge[\sw\noobtusecorner](1,1)(1,7)
      \edge[\ne\noacutecorner](4,7)(4,1)
      \black(3,2)
      \black(2,5)
      \black(3,5)
      \black(2,7)
    \end{hexboard}
    $}}{(G_{7},4,7)} 
    \sep
    \stack{\m{$
    \begin{hexboard}[scale=\scale]
      \rotation{30}
      \foreach\i in {1,...,7} {
        \foreach\j in {1,...,4} {
          \hex(\j,\i)
        }
      }
      \edge[\nw](1,1)(4,1)
      \edge[\sw\noobtusecorner](1,1)(1,7)
      \edge[\ne\noacutecorner](4,7)(4,1)
      \black(3,2)
      \black(2,5)
      \black(3,5)
      \black(4,7)
    \end{hexboard}
    $}}{(G_{8},4,7)} 
    \sep
    \stack{\m{$
    \begin{hexboard}[scale=\scale]
      \rotation{30}
      \foreach\i in {1,...,7} {
        \foreach\j in {1,...,4} {
          \hex(\j,\i)
        }
      }
      \edge[\nw](1,1)(4,1)
      \edge[\sw\noobtusecorner](1,1)(1,7)
      \edge[\ne\noacutecorner](4,7)(4,1)
      \black(2,3)
      \black(3,4)
      \black(2,6)
      \black(3,6)
      \black(4,7)
    \end{hexboard}
    $}}{(G_{9},5,7)} 
    \sep
    \stack{\m{$
    \begin{hexboard}[scale=\scale]
      \rotation{30}
      \foreach\i in {1,...,7} {
        \foreach\j in {1,...,4} {
          \hex(\j,\i)
        }
      }
      \edge[\nw](1,1)(4,1)
      \edge[\sw\noobtusecorner](1,1)(1,7)
      \edge[\ne\noacutecorner](4,7)(4,1)
      \black(2,3)
      \black(3,4)
      \black(2,6)
      \black(3,7)
      \black(4,7)
    \end{hexboard}
    $}}{(G_{10},5,7)} 
    \sep
    \stack{\m{$
    \begin{hexboard}[scale=\scale]
      \rotation{30}
      \foreach\i in {1,...,7} {
        \foreach\j in {1,...,4} {
          \hex(\j,\i)
        }
      }
      \edge[\nw](1,1)(4,1)
      \edge[\sw\noobtusecorner](1,1)(1,7)
      \edge[\ne\noacutecorner](4,7)(4,1)
      \black(3,2)
      \black(2,5)
      \black(3,5)
      \black(1,7)
      \black(4,7)
    \end{hexboard}
    $}}{(G_{11},5,7)} 
    \sep
    \stack{\m{$
    \begin{hexboard}[scale=\scale]
      \rotation{30}
      \foreach\i in {1,...,7} {
        \foreach\j in {1,...,4} {
          \hex(\j,\i)
        }
      }
      \edge[\nw](1,1)(4,1)
      \edge[\sw\noobtusecorner](1,1)(1,7)
      \edge[\ne\noacutecorner](4,7)(4,1)
      \black(3,2)
      \black(2,5)
      \black(1,7)
      \black(2,7)
      \black(3,7)
    \end{hexboard}
    $}}{(G_{12},5,7)} 
    \sep
    \stack{\m{$
    \begin{hexboard}[scale=\scale]
      \rotation{30}
      \foreach\i in {1,...,7} {
        \foreach\j in {1,...,4} {
          \hex(\j,\i)
        }
      }
      \edge[\nw](1,1)(4,1)
      \edge[\sw\noobtusecorner](1,1)(1,7)
      \edge[\ne\noacutecorner](4,7)(4,1)
      \black(3,2)
      \black(2,5)
      \black(1,7)
      \black(2,7)
      \black(3,7)
      \black(4,7)
    \end{hexboard}
    $}}{(\top,6,7)} 
  \end{array}
  \]
  \caption{All best patterns of height $k=4$ and widths $n=4, 5, 6,
    7$.}
  \label{fig-best}
\end{figure}
Each pattern $P$ is shown with its triple $\triple{P}$. The game
values $G_{1},\ldots,G_{28}$ have not been written out as they can be
quite long; notice, however, that several values appear more than
once. As is evident from Figure~\ref{fig-best}, for each best pattern
with triple $(G,s,4)$, there is a best pattern with triple
$(G,s+2,7)$, and vice versa.  As remarked above, the best triples of
width $n+1$ are completely determined by the best triples of width
$n$. It follows that the entire sequence of best triples repeats in
the following sense: for all $n\geq 4$, $(G,s,n)$ is a best triple if
and only if $(G,s+2,n+3)$ is a best triple. In other words, whenever
the width increases by $3$, exactly $2$ additional black stones are
required to achieve the same combinatorial value. Moreover,
Figure~\ref{fig-best} shows that the minimum number of stones in an
acceptable pattern of width $4$, $5$, and $6$ is $1$, $2$, and $2$,
respectively. These facts imply the following theorem.

\begin{theorem}\label{thm-minimum-cost}
  For all $n\geq 4$, the minimal number of stones required for a
  virtual connection between Black's edges on a board of size $4\times
  n$ is exactly $\ceil{\frac{2}{3}n-2}$. In table form:
  \[
  \begin{array}{l|ccccccccccccc}
    \mbox{\rm Width of board} &
    4 & 5 & 6 & 7 & 8 & 9 & 10 & 11 & 12 & 13 & 14 & 15 & 16 \\\hline
    \mbox{\rm Minimum stones} &
    1 & 2 & 2 & 3 & 4 & 4 & 5 & 6 & 6 & 7 & 8 & 8 & 9 
  \end{array}
  \]
\end{theorem}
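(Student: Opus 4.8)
The plan is to reduce Theorem~\ref{thm-minimum-cost} to two separable facts: a \emph{correspondence} identifying virtual connections on the full board with acceptable open-region patterns, and a \emph{periodicity} property of the best patterns that converts the finite data of Figure~\ref{fig-best} into a statement about all $n$. Throughout I would work inside the column-wise computation already set up: values of open $4\times n$ positions are built up one column at a time via the sum/map operation, and to each pattern $P$ I attach its triple $\triple{P}=(\val(P),s,n)$ ordered by $(G,s,n)\leq(G',s',n')$ iff $G\leq G'$, $s'\leq s$, and $n\leq n'$. The engine of the whole argument is that appending a fixed column $C$ is monotone on triples: since $\val(P+C)=\val(P)+\val(C)$, Corollary~\ref{cor-sum-monotone} gives $\val(P)\leq\val(Q)\Rightarrow\val(P+C)\leq\val(Q+C)$, while the stone count and width are simply additive in $C$; hence $\triple{P}\leq\triple{Q}$ at equal width implies $\triple{P+C}\leq\triple{Q+C}$. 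Consequently a pattern dominated by another pattern of the same width stays dominated under every extension, so it is legitimate to discard all but the best (maximal) patterns at each width.

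First I would establish the correspondence. The full $4\times n$ board is the width-$n$ open region closed on the right by Black's edge, and this closing is the map $\pi$ sending the outcome $\bot$ to $\bot$ and every other open-region outcome to $\top$. Because $\pi(\top)=\top$ and every non-$\bot$ outcome is collapsed to $\top$, closing is a \emph{strict} (globally decisive) operation in the sense of Section~\ref{ssec-global}, and a short induction in the spirit of Lemma~\ref{lem-strict-game} shows $\pi(\val(P))\eq\top$ exactly when $\val(P)\ntri\bot$, i.e.\ exactly when $P$ is acceptable. (The point is that once the region reaches any non-$\bot$ atom Black has already won, so Black needs only to avoid $\bot$, which acceptability guarantees she can do even as second player.) Thus $P$ yields a virtual connection on the $4\times n$ board iff $P$ is acceptable, and the quantity sought in the theorem equals $A(n)$, the minimum number of black stones in an acceptable width-$n$ pattern.

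The heart of the argument is the periodicity. Let $B_n$ denote the finite set of triples of best patterns of width $n$. By the monotonicity above, $B_{n+1}$ is obtained from $B_n$ by a fixed transition $\Phi$: append each of the $16$ columns to each representative, reduce to canonical form, and keep the maximal triples. Crucially $\Phi$ commutes with the stone-shift $\sigma$ that adds a fixed constant to the stone coordinate of every triple, because appending a column increments the stone count by an amount depending only on the column and not on the running total. I would verify from the finite computation displayed in Figure~\ref{fig-best} that $B_7=\sigma B_4$ with shift $+2$; then, since $B_{n+1}=\Phi(B_n)$ and $\Phi\sigma=\sigma\Phi$, an immediate induction yields $B_{n+3}=\sigma B_n$ with shift $+2$ for all $n\geq4$. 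Equivalently, $(G,s,n)$ is a best triple iff $(G,s+2,n+3)$ is. Reading off the minimum stone count from each $B_n$ then gives the recurrence $A(n+3)=A(n)+2$ for all $n\geq4$.

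Finally I would combine the pieces. The same finite data give the base values $A(4)=1$, $A(5)=2$, $A(6)=2$. Since $\lceil\frac23(n+3)-2\rceil=\lceil\frac23 n-2\rceil+2$ and the closed form matches at $n=4,5,6$, induction on the recurrence gives $A(n)=\ceil{\frac23 n-2}$ for every $n\geq4$, which is the claim. I expect the genuine obstacle to be the periodicity step, and within it two points in particular: justifying rigorously that it suffices to propagate only the best triples (this is exactly where monotonicity of the sum on passable games, Corollary~\ref{cor-sum-monotone}, is indispensable), and carrying out the finite but non-trivial verification that the width-$4$ and width-$7$ best-triple sets agree under the $+2$ shift, including matching the long canonical values $G_1,\ldots,G_{28}$. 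The correspondence lemma and the closing arithmetic are comparatively routine.
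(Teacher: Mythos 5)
Your proposal is correct, and its computational core coincides with the derivation the paper itself gives just before the theorem: the paper also argues that the best triples of width $n+1$ are determined by those of width $n$ (via Corollary~\ref{cor-sum-monotone} and the fact that $\triple{P+C}$ depends only on $\triple{P}$ and $C$), reads off from Figure~\ref{fig-best} that the width-$4$ and width-$7$ best triples match under the shift $s\mapsto s+2$, and concludes the period-three recurrence together with the base values $1,2,2$. The two places where you genuinely diverge are these. First, for the existence (upper-bound) half, the paper never needs your equivalence ``acceptable $\iff$ virtual connection'': it exhibits explicit connecting patterns with pairing strategies (Figure~\ref{fig-4xn}) and only uses the trivial direction that a virtual connection must be acceptable. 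Your converse direction is true, but it is not an instance of Lemma~\ref{lem-strict-game}, which concerns strict two-argument context maps; what your route actually needs is that the closing map $\pi$ (sending $\bot$ to $\bot$ and every other open-region outcome to $\top$) not only preserves but \emph{reflects} the relation ${\tri}\,\bot$, i.e., $\pi(G)\tri\bot$ implies $G\tri\bot$. This holds by a short induction precisely because $\pi^{-1}(\bot)=\s{\bot}$ (a map that collapsed some non-$\bot$ outcome into $\bot$ would destroy reflection), and it is the one extra lemma your argument requires. Second, for the minimality (lower-bound) half, the paper's official proof is not the periodicity induction but a potential-function argument: it defines $\cost(P)=6+3s-2n$ and a monotone benefit $\ben(P)\in\s{-\infty,0,1,\ldots,7}$ determined by which down-set of the $29$ reference values contains $\val(P)$, proves the subadditivity lemma $\ben(P+C)\leq\ben(P)+\cost(C)$ by a finite $29\times 16$ case check plus monotonicity of $\ben$, and then gets $\ben(P)\leq\cost(P)$ by induction, whence $s\geq\frac{2}{3}n-2$ for every acceptable $P$. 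That formalization buys a proof in which one never has to certify that an enumeration of best patterns is exhaustive or that two lists of long canonical values agree exactly---only down-set memberships and one finite table enter---whereas your commutation argument ($\Phi\sigma=\sigma\Phi$ plus $B_7=\sigma B_4$) buys both bounds simultaneously from a single periodicity statement, at the price of exactly the full best-triple verification you identify as the main labor.
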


This confirms --- at least in the case of boards of height 4 --- what
Hex players have long suspected: namely that the asymptotic cost of a
``long distance'' virtual connection is 2 stones per 3 columns of
distance travelled. The optimal connections claimed in
Theorem~\ref{thm-minimum-cost} can be achieved by patterns similar to
the ones shown in Figure~\ref{fig-4xn}.

The size of minimal connecting sets is also known for boards of size
$1\times n$, $2\times n$, $3\times n$, and $5\times n$, and is $n$,
$\ceil{\frac{2}{3}(n-1)}$, $\ceil{\frac{2}{3}n - 1}$, and
$\ceil{\frac{2}{3}n-3}$, respectively, with the exception of the
$5\times 6$ board, where 2 black stones are required. This can be
proved by the same method, although for the cases $k=1,2,3$, there
exist simpler proofs that do not use combinatorial game theory.  Some
typical optimal connections for $5\times n$ are shown in
Figure~\ref{fig-5xn}; note that in addition to the two long-distance
patterns we already saw in Figure~\ref{fig-4xn}, an additional new
pattern emerges in Figure~\ref{fig-5xn}(c). It uses more space,
but still requires 2 stones per 3 columns of distance.
\begin{figure}
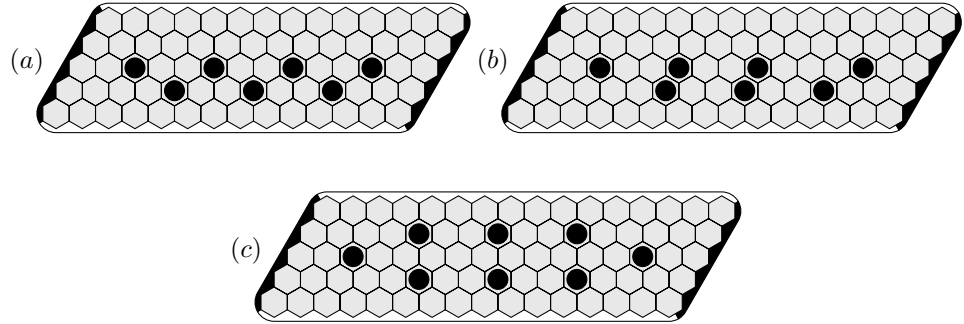

  \def\scale{0.5}
  \def\xspace{\hspace{-0.75ex}}
  \def\squad{\hspace{0.5ex}}
  \[
  (a)\xspace\m{$
    \begin{hexboard}[scale=\scale]
      \rotation{30}
      \board(5,14)
      \black(3,3)
      \black(2,5)
      \black(3,6)
      \black(2,8)
      \black(3,9)
      \black(2,11)
      \black(3,12)
    \end{hexboard}
    $}
  \squad
  (b)\xspace\m{$
    \begin{hexboard}[scale=\scale]
      \rotation{30}
      \board(5,15)
      \black(3,3)
      \black(2,6)
      \black(3,6)
      \black(2,9)
      \black(3,9)
      \black(2,12)
      \black(3,13)
    \end{hexboard}
    $}
  \]
  \vspace{1em}
  \[
  (c)\xspace\m{$
    \begin{hexboard}[scale=\scale]
      \rotation{30}
      \board(5,16)
      \black(3,3)
      \black(4,5)
      \black(2,6)
      \black(4,8)
      \black(2,9)
      \black(4,11)
      \black(2,12)
      \black(3,14)
    \end{hexboard}
    $}
  \]
  \caption{Optimal virtual connections on boards of size
    $5\times 14$, $5\times 15$, and $5\times 16$.}
  \label{fig-5xn}
\end{figure}

\subsection{An Inductive Proof of Theorem~\ref{thm-minimum-cost}}

We have derived Theorem~\ref{thm-minimum-cost} from a computation,
rather than giving a proof in the traditional sense. Of course, it is
possible to convert this computation into a proof by induction.  In
this section, we show how to do this. Given a subset $S$ of any
preordered set, let us write $\downdeal S$ for the down-closure of
$S$, i.e., $\downdeal S = \s{x \mid \text{there exists $y\in S$ such
    that $x\leq y$}}$.

\begin{definition}
  We define the {\em cost} of a pattern $P$ as
  \[
  \cost(P) = 6 + 3s - 2n,
  \]
  where $s$ is the number of black stones and $n$ is the number of
  columns in $P$. We define the {\em benefit} of $P$ as
  \[
  \ben(P) =
  \begin{choices}
    -\infty & \mbox{if $P$ is unacceptable} \\
    0 & \mbox{otherwise, if $\val(P)\in\downdeal\s{G_{21},G_{22}}$,} \\
    1 & \mbox{otherwise, if $\val(P)\in\downdeal\s{G_{1},G_{2}}$,} \\
    2 & \mbox{otherwise, if $\val(P)\in\downdeal\s{G_{13},G_{14},G_{15},G_{16},G_{17},G_{18}}$,} \\
    3 & \mbox{otherwise, if $\val(P)\in\downdeal\s{G_{3},G_{7},G_{23},G_{24},G_{25},G_{26}}$,} \\
    4 & \mbox{otherwise, if $\val(P)\in\downdeal\s{G_{4},G_{5},G_{6},G_{8}}$,} \\
    5 & \mbox{otherwise, if $\val(P)\in\downdeal\s{G_{12},G_{19},G_{20}}$,} \\
    6 & \mbox{otherwise, if $\val(P)\in\downdeal\s{G_{9},G_{27},G_{28}}$,} \\
    7 & \mbox{otherwise. Note that in this case, $\val(P)\in\downdeal\s{\top}$.}
  \end{choices}
  \]
  Here, $G_{1},\ldots,G_{28}$ are the same values that are shown in
  Figure~\ref{fig-best}. Note that $\ben$ is a monotone function.
  We also define the cost of a column to be $\cost(C) = 3s - 2$, where
  $s$ is the number of black stones in it. Note that
  $\cost(P+C)=\cost(P)+\cost(C)$.
\end{definition}

The point of this definition is that we will prove that a position's
benefit is never greater than its cost. The key to this result is the
following lemma.

\begin{lemma}\label{lem-best-induction-step}
  Whenever $P$ is a pattern and $C$ is a column containing only blank
  cells and black stones, $\ben(P+C)\leq\ben(P)+\cost(C)$.
\end{lemma}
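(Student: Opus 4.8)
The plan is to treat Lemma~\ref{lem-best-induction-step} as a finite verification that is made legitimate by the monotonicity of all the ingredients. Recall that $\val(P+C)=\val(P)\pf\val(C)$, that the map-sum $\pf$ is monotone on passable games (by Corollary~\ref{cor-sum-monotone} together with the monotonicity of the map operation from Section~\ref{ssec-map}), and that $\ben$ is a monotone function of the combinatorial value. The crucial structural point is that $\ben$ is defined through down-closures of \emph{explicit finite} sets of representatives; writing $S_b$ for the representative set used in the case ``$\ben=b$'' (so $S_0=\s{G_{21},G_{22}}$, $S_1=\s{G_1,G_2}$, and so on, with $S_7=\s{\top}$), the hypothesis $\ben(P)=b$ forces $\val(P)\in\downdeal S_b$, that is, $\val(P)\leq G$ for some $G\in S_b$. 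Hence $\val(P+C)=\val(P)\pf\val(C)\leq G\pf\val(C)$ by monotonicity of $\pf$, and then $\ben(P+C)\leq\ben(G\pf\val(C))$ by monotonicity of $\ben$. Thus it suffices to check, for each level $b\geq 0$, each representative $G\in S_b$, and each of the finitely many columns $C$ built from blank and black cells, the single numerical inequality $\ben(G\pf\val(C))\leq b+\cost(C)$, where $\cost(C)=3s-2$ and $s$ is the number of black stones in $C$.

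First I would dispose of the unacceptable case $b=-\infty$ separately, since there $b+\cost(C)=-\infty$ and the claim becomes that $P+C$ is again unacceptable. This holds because $\bot$ is absorbing in the left argument of the column-addition function $f$: a completed open region with outcome $\bot$ contains a top-to-bottom white wall lying entirely within $P$'s columns, and this wall survives the addition of $C$ on the right, so $f(\bot,c)=\bot$ for every column $c$. Consequently $\bot\pf\val(C)\eq\bot$ (every atom occurring in it is $\bot$, exactly as in Lemma~\ref{lem-strict-game}), and from $\val(P)\tri\bot$ we obtain $\val(P)\pf\val(C)\tri\bot\pf\val(C)\eq\bot$ using Proposition~\ref{prop-sum-monotone}(a), the monotonicity of the map, and transitivity (Lemma~\ref{lem-transitive}). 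Hence $\val(P+C)\tri\bot$, i.e. $P+C$ is unacceptable, as required.

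For the levels $b\geq 0$, what remains is the finite computation itself: for each pair $(G,C)$ one forms the game $G\pf\val(C)$ (the value $\val(C)$ being read off from the atomic column outcomes $k_0,\dots,k_{15}$), reduces it to canonical form, locates the resulting value in the benefit hierarchy, and confirms the inequality. This is precisely the same column-by-column computation that produced Figure~\ref{fig-best}; indeed the benefit levels were chosen to match that figure, so the inequality holds with equality along the best patterns and strictly below them elsewhere. I expect this verification to be the main obstacle, not because any individual step is hard but because it is voluminous and must be carried out uniformly across all representatives and all sixteen columns. The one conceptual point requiring care is that the eight down-closed benefit regions are genuinely nested, so that $\ben$ is well defined and order-preserving as asserted in its definition; this is what legitimizes replacing the arbitrary pattern $P$ by a single fixed representative $G\in S_b$ in the reduction above.
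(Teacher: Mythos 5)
Your proposal is correct and follows essentially the same route as the paper's proof: reduce an arbitrary pattern to one of the finitely many representatives $G_{1},\ldots,G_{28},\top$ using monotonicity of the (map-)sum on passable games and monotonicity of $\ben$, then settle the finitely many pairs $(G,C)$ by exhaustive computation. The only substantive difference is that you justify the unacceptable case in detail (showing $f(\bot,c)=\bot$, hence $\bot\pf\val(C)\eq\bot$ as in Lemma~\ref{lem-strict-game}, so $\val(P+C)\tri\bot$), a step the paper's proof simply asserts as immediate.
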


\begin{proof}
  We first consider the special case where
  $\val(P)\in\s{G_{1},\ldots,G_{28},\top}$.  Since there are only 29
  patterns and 16 columns to consider, this special case of the lemma
  can be proved by exhaustively checking all 464 cases.

  Now consider some arbitrary pattern $P$ and column $C$. If
  $\ben(P)=-\infty$, then so is $\ben(P+C)$ and there is nothing to
  show. Otherwise, $\ben(P)$ is some number. To illustrate the proof,
  we consider the case $\ben(P)=4$; all other cases are similar.  By
  the definition of $\ben$, we know that $\val(P)\leq G_4$,
  $\val(P)\leq G_5$, $\val(P)\leq G_6$, or $\val(P)\leq G_8$. We
  consider the case $\val(P)\leq G_4$; again, the other cases are
  similar. Now we have
  \[
  \begin{array}{rcll}
    \ben(P+C)
    &\leq& \ben(G_4+C) & \mbox{by monotonicity of $\ben$,} \\
    &\leq& \ben(G_4)+\cost(C) & \mbox{by the above special case,} \\
    &\leq& 4 + \cost(C)  & \mbox{since $\ben(G_4)\leq 4$ by definition of $\ben$,} \\
    &=& \ben(P)+\cost(C) & \mbox{by assumption, since $\ben(P)=4$.}
  \end{array}
  \]
  This proves the lemma.  
\end{proof}

\begin{lemma}\label{lem-best-induction}
  For all patterns $P$ of width $n\geq 4$, $\ben(P)\leq\cost(P)$.
\end{lemma}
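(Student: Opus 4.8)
The plan is to prove this by induction on the width $n$, with the base case $n=4$ handled by the explicit data in Figure~\ref{fig-best} and the inductive step handled by Lemma~\ref{lem-best-induction-step}. The induction is on width precisely because removing one column reduces the width by one while keeping it at least $4$, and because $\cost$ behaves additively under the column operation.

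For the inductive step, suppose the inequality holds for all patterns of some width $n \geq 4$, and let $P$ be a pattern of width $n+1$. I would split off the rightmost column, writing $P = P' + C$, where $P'$ has width $n$ and $C$ is a column containing only blank cells and black stones. Then Lemma~\ref{lem-best-induction-step} gives $\ben(P) \leq \ben(P') + \cost(C)$, the induction hypothesis gives $\ben(P') \leq \cost(P')$, and the identity $\cost(P'+C) = \cost(P') + \cost(C)$ noted in the definition of $\cost$ combines these into $\ben(P) \leq \cost(P') + \cost(C) = \cost(P)$. This step is entirely formal once the step lemma is available.

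The real work is the base case. Here I would first reduce from arbitrary width-$4$ patterns to the finitely many \emph{best} patterns. Since there are only finitely many patterns of a given width, every width-$4$ pattern $P$ satisfies $P \leq P'$ for some best pattern $P'$ of width $4$; that is, $\val(P) \leq \val(P')$ and $P'$ uses no more stones than $P$. Monotonicity of $\ben$ then yields $\ben(P) \leq \ben(P')$, while $\cost(P) = 3s - 2$ for width-$4$ patterns depends only on the stone count $s$ and is nondecreasing in $s$, so $\cost(P') \leq \cost(P)$. Chaining these gives $\ben(P) \leq \ben(P') \leq \cost(P') \leq \cost(P)$, so it suffices to verify $\ben(P') \leq \cost(P')$ for the best patterns $P'$ listed in the $n=4$ column of Figure~\ref{fig-best}.

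Finally I would check those cases directly: for a width-$4$ pattern the cost $3s-2$ equals $1, 4, 7, 10$ for $s = 1, 2, 3, 4$ stones. The one- and two-stone best patterns have values among $G_{1},G_{2}$ (benefit $1$) and $G_{3},\ldots,G_{8}$ (benefit at most $4$), which match the costs $1$ and $4$; for three or more stones the cost is at least $7$, the maximum value $\ben$ can take, so the inequality is automatic. The main obstacle is therefore not the induction but confirming that the benefit function has been tuned so that these few tight low-stone cases genuinely satisfy $\ben \leq \cost$ — which is exactly what the grouping of the values $G_{1},\ldots,G_{28},\top$ in the definition of $\ben$ is engineered to achieve, and which is underwritten by the exhaustive case check already performed for Lemma~\ref{lem-best-induction-step}.
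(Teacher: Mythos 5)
Your proof is correct and follows essentially the same route as the paper: induction on the width, the inductive step being exactly the chain $\ben(P+C)\leq\ben(P)+\cost(C)\leq\cost(P)+\cost(C)=\cost(P+C)$ from Lemma~\ref{lem-best-induction-step} and cost additivity, and the base case $n=4$ reduced to the 13 best patterns of Figure~\ref{fig-best}. The only difference is one of detail, in your favor: you spell out the domination argument (monotonicity of $\ben$ plus the stone-count comparison) and actually carry out the numeric check of the 13 best patterns, which the paper states as ``finitely many patterns to check'' and leaves implicit.
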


\begin{proof}
  We prove this by induction on $n$. For the base case $n=4$, there
  are finitely many patterns to check. Moreover, it suffices to check
  the 13 patterns shown in column 1 of Figure~\ref{fig-best}, because
  they are best, i.e., they maximize the benefit for a given cost, or
  equivalently, minimize the cost for a given benefit.

  For the induction step, suppose the claim is true for patterns of
  width $n$, and consider a pattern $P+C$ of width $n+1$. Then
  $\ben(P+C)\leq\ben(P)+\cost(C)\leq\cost(P)+\cost(C) = \cost(P+C)$,
  where the first inequality holds by
  Lemma~\ref{lem-best-induction-step} and the second inequality holds
  by the induction hypothesis.
\end{proof}

\begin{proof}[Proof of Theorem~\ref{thm-minimum-cost}]
  We have already found virtual connections with the claimed number of
  stones; what must be proved is the minimality. Let $P$ be a virtual
  connection of width $n$, using $s$ stones. Since $P$ is an
  acceptable pattern, we have $0\leq \ben(P)$, hence by
  Lemma~\ref{lem-best-induction}, $0\leq\cost(P)$, i.e., $0\leq 6 + 3s
  - 2n$, i.e., $s\geq \frac{2}{3}n-2$. Since $s$ is an integer, we
  have $s\geq\ceil{\frac{2}{3}n-2}$, as claimed.
\end{proof}

\section{Conclusion and Future Work}\label{sec-conclusion}

We described a theory of combinatorial games over partially ordered
atom sets that is appropriate for Hex and other monotone set coloring
games. The fundamental theorem about these games is that monotone and
passable games are the same thing up to equivalence; however, passable
games are the more robust class, as they are closed under taking
canonical forms. Passable games support many of the usual operations
of combinatorial game theory, including sums and opposite games. We
briefly discussed the notion of global decisiveness, under which
certain games that are otherwise non-equivalent can become equivalent.
We enumerated all passable games up to equivalence for certain small
atom posets, and proved that the class of passable games is infinite
in all other cases. We showed that many game values over small atom
posets are realizable as Hex positions, but also proved that there are
some values that are not realizable in Hex. Finally, we illustrated
the usefulness of this theory by proving a theorem about the minimal
size of connecting sets in $k\times n$ Hex.

Many problems are left open. It is not currently known whether every
passable game value can be realized by a planar connection game or
a vertex Shannon game. It is also not known whether every passable
abstract game value for a 3-terminal region is realizable as a Hex
position, though we proved this to be false for 4-terminal regions.
Indeed, it is not even known whether the number of combinatorial
values that are realizable as 3-terminal Hex positions is finite or
infinite. The theory of global decisiveness has not been worked out in
any detail; in particular, no simple recursive characterization of the
globally decisive order has been given, and no canonical form is known
for games up to global decisiveness. Finally, we found the size of
minimal connecting sets in $k\times n$ Hex for $k\leq 5$. While the
method generalizes to larger $k$, the computations get exponentially
harder as $k$ increases. The answer to this question is not yet known
for $k\geq 6$.

There are many potential applications of this theory that can be
explored. The theory of passable games can be applied to the analysis
of Hex positions, and might potentially lead to useful optimizations
in computer Hex. Specifically, for positions that partition the board
into multiple disjoint regions, current computer Hex implementations
typically have to search many sequences of moves that alternate
between the regions. It may be possible to speed up the analysis by
analyzing each region separately. One situation where this is likely
to be useful is when a region is already {\em settled}, meaning that
its combinatorial value is atomic. In that case, no further moves in
that region need to be explored at all.  Combinatorial game theory may
also lend itself to a more systematic analysis of ladders and ladder
escapes in Hex. It can also be used to reason about, and potentially
simplify the theory of, capture and domination, which helps players
find locally optimal moves and avoid inferior moves. Another possible
application of this theory is to help resolve the inferiority or
non-inferiority of template intrusions, such as Conjecture~1 of
{\cite{Henderson-Hayward}}. Some of these applications are already
being worked on and will appear in forthcoming papers.

\vspace{20pt}
\noindent {\bf Acknowledgements.}
I owe an enormous debt of gratitude to Eric Demer, a master Hex player
who taught me most of what I know about the game, and who has spent
countless late-night hours discussing the combinatorial game theory of
Hex with me. Eric has been an invaluable source of insight, examples,
and counterexamples, and he has generously permitted me to include
many of his contributions in this paper. Among other things, he
suggested the region of Figure~\ref{fig-3-terminal-decisive-gap} as an
example of a Hex region with 4 linearly ordered outcomes, helped find
several of the Hex positions in
Figure~\ref{fig-realizable-3-terminal-decisive-gap}, and introduced me
to the concept of global decisiveness and explained its importance. He
also provided valuable feedback on an earlier draft of this paper. I
would also like to thank the anonymous referee for valuable comments.
Any errors are of course mine. This research was supported by NSERC.


\end{document}